\DeclareFontFamily{T1}{pzc}{}
\DeclareFontShape{T1}{pzc}{m}{it}{1.8 <-> pzcmi8t}{}
\DeclareMathAlphabet{\mathpzc}{T1}{pzc}{m}{it}
\theoremstyle{plain}
\newtheorem{prop}{Proposition}[section]
\newtheorem{lem}[prop]{Lemma}
\newtheorem{cor}[prop]{Corollary}
\newtheorem{thm}[prop]{Theorem}
\newtheorem{theorem}[prop]{Theorem}
\newtheorem{lemma}[prop]{Lemma}
\newtheorem{corollary}[prop]{Corollary}
\theoremstyle{definition}
\newtheorem{defn}[prop]{Definition}
\newtheorem{empt}[prop]{}
\newtheorem{rem}[prop]{Remark}
\theoremstyle{definition}
\newtheorem{definition}[prop]{Definition}
\newtheorem{example}[prop]{Example}
\newtheorem{axiom}[prop]{Axiom}
\newtheorem{remark}[prop]{Remark}
\numberwithin{equation}{section}
\DeclareMathOperator{\Dom}{Dom}              
\newcommand{\vertiii}[1]{{\left\vert\kern-0.25ex\left\vert\kern-0.25ex\left\vert #1
    \right\vert\kern-0.25ex\right\vert\kern-0.25ex\right\vert}}
\newcommand{\Ga}{\Gamma}                     
\newcommand{\Coo}{C^\infty}                  
\newbox\ncintdbox \newbox\ncinttbox 
\newcommand{\Id}{\mathrm{Id}}                
\newcommand{\A}{\mathcal{A}}                 
\renewcommand{\a}{\alpha}                    
\newcommand{\E}{\mathcal{E}}                 
\newcommand{\C}{\mathbb{C}}                  
\newcommand{\G}{\mathcal{G}}                 
\renewcommand{\H}{\mathcal{H}}               
\newcommand{\half}{\tfrac{1}{2}}             
\newcommand{\hookto}{\hookrightarrow}        
\newcommand{\La}{\Lambda}                    
\newcommand{\la}{\lambda}                    
\newcommand{\M}{\mathcal{M}}                 
\newcommand{\N}{\mathbb{N}}                  
\newcommand{\om}{\omega}                     
\newcommand{\eps}{\varepsilon}                    
\newcommand{\Q}{\mathbb{Q}}                  
\newcommand{\R}{\mathbb{R}}                  
\newcommand{\set}[1]{\{\,#1\,\}}             
\renewcommand{\SS}{\mathcal{S}}              
\DeclareMathOperator{\supp}{\mathfrak{supp}}            
\newcommand{\T}{\mathbb{T}}                  
\renewcommand{\th}{\theta}                   
\newcommand{\Z}{\mathbb{Z}}                  
\renewcommand{\.}{\cdot}                     
\newcommand{\sA}{\mathcal{A}} 
\newcommand{\sH}{\mathcal{H}}       
\newcommand{\sS}{\mathcal{S}}       
\newcommand{\Om}{\Omega}       
\newcommand{\al}{\alpha}          
\newcommand{\bt}{\beta}           
\newcommand{\ga}{\gamma}          
\newcommand{\Th}{\Theta}          
\renewcommand{\th}{\theta}        
\def\<#1|#2>{\langle#1\stroke#2\rangle} 
\def\?#1|#2?{\{#1\stroke#2\}}        
\def\<#1,#2>{\langle#1,#2\rangle}            
\def\ee_#1{e_{{\scriptscriptstyle#1}}}       
\def\wick:#1:{\mathopen:#1\mathclose:}       
\newbox\ncintdbox \newbox\ncinttbox 
\newcommand{\stroke}{\mathbin|}   
\newcommand{\End}{\mathrm{End}}       
\newcommand{\Hom}{\mathrm{Hom}}       
\newcommand{\be}{\begin{equation}}
\renewcommand{\ee}{\end{equation}}
\newcommand{\bea}{\begin{eqnarray}}
\newcommand{\eea}{\end{eqnarray}}
\newcommand{\bean}{\begin{eqnarray*}}
	\newcommand{\eean}{\end{eqnarray*}}
\newcommand{\brray}{\begin{array}}
	\newcommand{\erray}{\end{array}}
\title{Coverings of Spectral Triples}
\begin{document}
\maketitle  \setlength{\parindent}{0pt}
\begin{center}
\author{
{\textbf{Petr R. Ivankov*}\\
e-mail: * monster.ivankov@gmail.com \\
}
}
\end{center}

\vspace{1 in}

\noindent

\paragraph{}	
It is well-known that any covering space of a Riemannian manifold has the natural structure of a Riemannian manifold. This article contains a noncommutative generalization of this fact. Since any  Riemannian manifold with a Spin-structure defines a spectral triple, the spectral triple can be regarded as a noncommutative Spin-manifold. Similarly there is an algebraic construction which is a noncommutative generalization of topological covering. This article contains a construction of spectral triple on the "noncommutative covering space".

\tableofcontents

\section{Motivation. Preliminaries}
\subsection{Prototype. Coverings of Riemannian manifolds}
 \paragraph*{} This article proves a noncommutative generalization of the following proposition. 

\begin{prop}\label{comm_cov_mani}(Proposition 5.9 \cite{koba_nomi:fgd})
	\begin{enumerate}
		\item Given a connected manifold $M$ there is a unique (unique up to isomorphism) universal covering manifold, which will be denoted by $\widetilde{M}$.
		\item The universal covering manifold $\widetilde{M}$ is a principal fibre bundle over $M$ with group $\pi_1(M)$ and projection $p: \widetilde{M} \to M$, where $\pi_1(M)$ is the first homotopy group of $M$.
		\item The isomorphism classes of covering spaces over $M$ are in 1:1 correspondence with the conjugate classes of subgroups of $\pi_1(M)$. The correspondence is given as follows. To each subgroup $H$ of $\pi_1(M)$, we associate $E=\widetilde{M}/H$. Then the covering manifold $E$ corresponding to $H$ is a fibre bundle over $M$ with fibre $\pi_1(M)/H$ associated with the principal bundle  $\widetilde{M}(M, \pi_1(M))$. If $H$ is a normal subgroup of $\pi_1(M)$, $E=\widetilde{M}/H$ is a principal fibre bundle with group $\pi_1(M)/H$ and is called a regular covering manifold of $M$.
	\end{enumerate}
\end{prop}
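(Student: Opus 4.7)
The statement is the classical Kobayashi--Nomizu result, so the plan is to reprove it by combining the topological construction of covering spaces with transport of the smooth atlas. Since a manifold $M$ admits a basis of evenly covered, simply connected, coordinate neighborhoods, I would not need any pathological manifold hypotheses beyond connectedness (and implicit local niceness).

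For (1), I would fix a basepoint $x_0 \in M$ and define $\widetilde{M}$ as the set of homotopy classes (with endpoints fixed) of continuous paths in $M$ starting at $x_0$; let $p([\gamma]) \eqdef \gamma(1)$. To topologize, for each $[\gamma]$ with $\gamma(1)=x$ and each simply connected open $U \ni x$, take the basic neighborhood $\widetilde{U}_{[\gamma]} \eqdef \{[\gamma\cdot\sigma] : \sigma \text{ a path in }U \text{ starting at }x\}$. A standard check shows $p$ restricts to a homeomorphism on each such basic set, that $\widetilde{M}$ is Hausdorff and path connected, and (by the usual lifting lemma for homotopies) simply connected. The smooth structure is now forced: declare the charts of $\widetilde{M}$ to be $(\widetilde{U}_{[\gamma]}, \varphi \circ p)$ where $(U,\varphi)$ is a chart of $M$. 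Transition functions on $\widetilde{M}$ coincide locally with those on $M$, so smoothness transfers, and $p$ becomes a local diffeomorphism. Uniqueness is the universal property: any simply connected covering $\widetilde{M}'$ admits a lift $\widetilde{M} \to \widetilde{M}'$ over $M$ fixing chosen basepoints, and by symmetry this lift is a diffeomorphism.

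For (2), I would identify $\pi_1(M, x_0)$ with the group of deck transformations by $\alpha \mapsto \Phi_\alpha$, where $\Phi_\alpha([\gamma]) \eqdef [\alpha \cdot \gamma]$. This is a group isomorphism because $\widetilde{M}$ is simply connected (so deck transformations act simply transitively on each fibre). Each $\Phi_\alpha$ is smooth, as its expression in the charts $(\widetilde{U}_{[\gamma]}, \varphi\circ p)$ is the identity of $\R^{\dim M}$. The action is free and properly discontinuous (each basic neighborhood $\widetilde{U}_{[\gamma]}$ meets its translates only in the trivial case), giving the principal $\pi_1(M)$-bundle structure on $\widetilde{M}\to M$ with discrete structure group.

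For (3), given $H \le \pi_1(M)$, the restricted action of $H$ on $\widetilde{M}$ is again free and properly discontinuous, so $E \eqdef \widetilde{M}/H$ is a smooth manifold and the induced map $E \to M$ is a smooth covering whose fibre is in bijection with $\pi_1(M)/H$; the associated fibre bundle structure is the standard one. Conversely, for a covering $q:E \to M$ with a chosen lift $e_0$ of $x_0$, the subgroup $q_*\pi_1(E,e_0) \le \pi_1(M,x_0)$ is well-defined up to conjugation by a change of basepoint, and the universal lifting criterion identifies $E$ with $\widetilde{M}/q_*\pi_1(E,e_0)$ via a diffeomorphism over $M$. This gives the bijection. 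Finally, if $H$ is normal, then $\pi_1(M)/H$ acts freely and properly discontinuously on $E = \widetilde{M}/H$ with quotient $M$, yielding the principal $\pi_1(M)/H$-bundle structure.

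The genuinely topological content (existence, simple connectedness, and the Galois correspondence) is standard; the only point requiring care is verifying that the charts pulled back through $p$ are consistent and that the deck action is smooth in those charts. In the noncommutative sequel the analogue of the smooth transport of the atlas will presumably be the step that forces the careful construction, so I would organize the argument here with that in mind rather than appealing to a black-box topological statement.
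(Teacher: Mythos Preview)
Your sketch is a correct rendition of the classical path-space construction and Galois correspondence for covering manifolds. Note, however, that the paper does not give its own proof of this proposition: it is quoted verbatim from Kobayashi--Nomizu as background motivation, so there is no in-paper argument to compare against. Your write-up would serve as a self-contained substitute for that citation, and the organizational remark at the end (emphasizing the transport of the smooth atlas as the step analogous to what the paper later does noncommutatively via the $\widetilde{\A}$ construction and the lifted connection) is apt, though the paper itself makes no such explicit analogy at this point.
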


\begin{empt}
	If $\widetilde{M}$ is a covering space of Riemannian manifold $M$ then it is possible to give $\widetilde{M}$ a Riemannian structure such that $\pi: \widetilde{M} \to M$ is a local isometry (this metric is called the {\it covering metric}). cf. \cite{do_carmo:rg} for details.
\end{empt}

   Gelfand-Na\u{i}mark theorem \cite{arveson:c_alg_invt} states the correspondence between  locally compact Hausdorff topological spaces and commutative $C^*$-algebras.

\begin{thm}\label{gelfand-naimark}\cite{arveson:c_alg_invt} (Gelfand-Na\u{i}mark). 
Let $A$ be a commutative $C^*$-algebra and let $\mathcal{X}$ be the spectrum of A. There is the natural $*$-isomorphism $\gamma:A \to C_0(\mathcal{X})$.
\end{thm}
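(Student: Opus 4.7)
The plan is to identify the spectrum $\mathcal{X}$ of $A$ with the space of \emph{characters}, i.e.\ non-zero multiplicative linear functionals $\chi : A \to \C$, equipped with the weak-$*$ topology inherited from $A^*$. Since $A$ is commutative, every maximal ideal is the kernel of exactly one such character, so $\mathcal{X}$ coincides with the maximal ideal space. A standard exercise shows that $\mathcal{X}$ is locally compact Hausdorff (and compact precisely when $A$ is unital), because $\mathcal{X} \cup \{0\}$ is weak-$*$ closed in the unit ball of $A^*$ and hence compact by Banach--Alaoglu. Then I would define the Gelfand transform
\[
\gamma : A \longrightarrow C_0(\mathcal{X}), \qquad \gamma(a)(\chi) := \chi(a),
\]
and verify that $\gamma(a)$ is indeed continuous and vanishes at infinity (the latter via the one-point compactification / unitization of $A$).

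Next, I would check that $\gamma$ is a $*$-homomorphism. Multiplicativity and linearity are immediate from the defining properties of characters. The key algebraic step is that $\gamma$ intertwines the involutions: since every $a\in A$ is normal, one writes $a = a_1 + i a_2$ with self-adjoint $a_j$, and uses that for a self-adjoint element of a $C^*$-algebra the spectrum lies in $\R$, hence $\chi(a_j)\in\R$ for each character $\chi$. This yields $\chi(a^*) = \overline{\chi(a)}$. To show $\gamma$ is isometric, I would combine the $C^*$-identity with commutativity: for any $a$,
\[
\|\gamma(a)\|_\infty^2 = \|\gamma(a^*a)\|_\infty = r(a^*a) = \|a^*a\| = \|a\|^2,
\]
where $r$ is the spectral radius and the identity $r(b) = \sup_{\chi}|\chi(b)|$ on a commutative Banach algebra is the main ingredient.

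Finally, surjectivity comes from the Stone--Weierstrass theorem. The image $\gamma(A)$ is a $*$-subalgebra of $C_0(\mathcal{X})$; it separates points of $\mathcal{X}$ by the very definition of $\mathcal{X}$ (two distinct characters must differ on some $a$), and for each $\chi\in\mathcal{X}$ there is some $a$ with $\chi(a)\neq 0$ (otherwise $\chi\equiv 0$). Hence $\gamma(A)$ is uniformly dense in $C_0(\mathcal{X})$, and since $\gamma$ is isometric its image is also closed, so $\gamma$ is an isometric $*$-isomorphism onto $C_0(\mathcal{X})$. Naturality in $A$ is a routine check: any unital $*$-homomorphism pulls characters back to characters, inducing the evident continuous map of spectra.

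The step requiring the most care is the isometry estimate, because it hinges on the non-trivial Banach-algebraic identity $r(b)=\sup_\chi|\chi(b)|$, which in turn rests on the fact that in a unital commutative Banach algebra every maximal ideal is the kernel of a character (a Zorn/Hahn--Banach argument) and that $\lambda\notin\mathrm{sp}(b)$ iff $b-\lambda$ lies outside every maximal ideal. In the non-unital setting one first passes to the unitization $\widetilde{A}$, whose spectrum is the one-point compactification of $\mathcal{X}$, and then restricts; this reduction is the main bookkeeping obstacle but introduces no new conceptual difficulty.
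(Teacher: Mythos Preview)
Your argument is the standard, correct proof of the commutative Gelfand--Na\u{i}mark theorem. However, the paper does not give its own proof of this statement: it is merely quoted from \cite{arveson:c_alg_invt} as background material, so there is nothing to compare against. Your write-up would serve perfectly well as a self-contained justification.
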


\paragraph{}So any (noncommutative) $C^*$-algebra may be regarded as a generalized (noncommutative)  locally compact Hausdorff topological space. Articles \cite{ivankov:qnc,pavlov_troisky:cov} contain noncommutative analogs of coverings. The spectral triple \cite{hajac:toknotes,varilly:noncom} can be regarded as a noncommutative generalization of Riemannian manifold. Having analogs of both coverings and Riemannian manifolds one can proof  a noncommutative generalization of the Proposition \ref{comm_cov_mani}.

  \newpage
Following table contains a list of special symbols.
\newline
\begin{tabular}{|c|c|}
	\hline
	Symbol & Meaning\\
	\hline
	& \\
	$A^G$  & Algebra of $G$-invariants, i.e. $A^G = \{a\in A \ | \ ga=a, \forall g\in G\}$\\
	$\mathrm{Aut}(A)$ & Group * - automorphisms of $C^*$  algebra $A$\\
	$B(\H)$ & Algebra of bounded operators on a Hilbert space $\H$\\
	$\mathbb{C}$ (resp. $\mathbb{R}$)  & Field of complex (resp. real) numbers \\
	$C(\mathcal{X})$ & $C^*$ - algebra of continuous complex valued \\
	& functions on a space $\mathcal{X}$\\
	$C_0(\mathcal{X})$ & $C^*$ - algebra of continuous complex valued functions on a topological\\
	&    space $\mathcal{X}$ equal to $0$ at infinity\\
	$C_c(\mathcal{X})$ & Algebra of continuous complex valued functions on a \\
	&  topological  space $\mathcal{X}$ with compact support\\
	$G(\widetilde{\mathcal{X}}~|~\mathcal{X})$ & Group of covering transformations of covering projection  $\widetilde{\mathcal{X}} \to \mathcal{X}$ \cite{spanier:at}  \\
	
	$\H$ &Hilbert space \\
	$\mathcal{K}= \mathcal{K}(\H)$ & $C^*$ - algebra of compact operators \\
	$K(A)$ & Pedersen ideal of $C^*$-algebra $A$\\
	$\varinjlim$ & Direct limit \\
	$\varprojlim$ & Inverse limit \\
	$M(A)$  & A multiplier algebra of $C^*$-algebra $A$\\
	$\mathbb{M}_n(A)$  & The $n \times n$ matrix algebra over $C^*-$ algebra $A$\\
	$\mathbb{N}$  & A set of positive integer numbers\\
	$\mathbb{N}^0$  & A set of nonnegative integer numbers\\
		$\supp~a$  & The support of the element $a \in C_0\left(\mathcal{X} \right)$ \\
	

	
	$\mathbb{Z}$ & Ring of integers \\
	
	$\mathbb{Z}_n$ & Ring of integers modulo $n$ \\
	$\overline{k} \in \mathbb{Z}_n$ & An element in $\mathbb{Z}_n$ represented by $k \in \mathbb{Z}$  \\
	$X \backslash A$ & Difference of sets  $X \backslash A= \{x \in X \ | \ x\notin A\}$\\
	$|X|$ & Cardinal number of the finite set\\ $f|_{A'}$& Restriction of a map $f: A\to B$ to $A'\subset A$, i.e. $f|_{A'}: A' \to B$\\ 
	\hline
\end{tabular}

\break


\subsection{Topology}

\subsubsection{Locally compact spaces}
\begin{defn}\cite{munkres:topology}
	If $\phi: \mathcal X \to \mathbb{C}$  is continuous then the \textit{support} of $\phi$ is defined to be the closure of the set $\phi^{-1}\left(\mathbb{\C}\backslash \{0\}\right)$ Thus if $x$ lies outside the support, there is some neighborhood of $x$ on which $\phi$ vanishes. Denote by $\supp \phi$ the support of $\phi$.
\end{defn}

There are two equivalent definitions of $C_0\left(\mathcal{X}\right)$ and both of them are used in this article.
\begin{defn}\label{c_c_def_1}
	An algebra $C_0\left(\mathcal{X}\right)$ is the norm closure of the algebra $C_c\left(\mathcal{X}\right)$ of compactly supported continuous functions.
\end{defn}
\begin{defn}\label{c_c_def_2}
	A $C^*$-algebra $C_0\left(\mathcal{X}\right)$ is given by the following equation
	\begin{equation*}
	C_0\left(\mathcal{X}\right) = \left\{\varphi \in C_b\left(\mathcal{X}\right) \ | \ \forall \varepsilon > 0 \ \ \exists K \subset \mathcal{X} \ ( K \text{ is compact}) \ \& \ \forall x \in \mathcal X \backslash K \ \left|\varphi\left(x\right)\right| < \varepsilon  \right\},
	\end{equation*}
	i.e.
	\begin{equation*}
	\left\|\varphi|_{\mathcal X \backslash K}\right\| < \varepsilon.
	\end{equation*}
\end{defn}

\begin{defn}\cite{munkres:topology}
	Let $\left\{\mathcal U_\alpha\in \mathcal X\right\}_{\alpha \in J}$ be an indexed open covering of $\mathcal{X}$. An indexed family of functions 
	\begin{equation*}
	\phi_\alpha : \mathcal X \to \left[0,1\right]
	\end{equation*}
	is said to be a {\it partition of unity }, dominated by $\left\{\mathcal{U}_\alpha \right\}_{\alpha \in J}$, if:
	\begin{enumerate}
		\item $\phi_\alpha\left(\mathcal X \backslash \mathcal U_\alpha\right)= \{0\}$
		\item The family $\left\{\supp\left(\phi_\alpha\right) = \mathfrak{cl}\left(\left\{x \in \mathcal X \ | \ \phi_\alpha > 0 \right\}\right)\right\}$ is locally finite.
		\item $\sum_{\alpha \in J}\phi_\alpha\left(x\right)=1$ for any $x \in \mathcal X$.
	\end{enumerate}
\end{defn}

\begin{thm}\cite{munkres:topology}
	Let $\mathcal X$ be a paracompact Hausdorff space; let $\left\{\mathcal U_\alpha\in \mathcal X\right\}_{\alpha \in J}$ be an indexed open covering of $\mathcal{X}$. Then there exists a partition of unity, dominated by $\left\{\mathcal{U}_\alpha \right\}$.  
\end{thm}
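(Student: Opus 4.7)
The plan is to prove the theorem by the classical \emph{shrinking-plus-Urysohn} technique. The input is a paracompact Hausdorff space $\mathcal{X}$ with an indexed open cover $\{\mathcal{U}_\alpha\}_{\alpha \in J}$, and I need to produce continuous $\phi_\alpha : \mathcal{X} \to [0,1]$ satisfying the three listed conditions. The key structural fact I will use throughout is that a paracompact Hausdorff space is normal, which legitimizes Urysohn's lemma.

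First, I would replace $\{\mathcal{U}_\alpha\}$ by a locally finite open cover with the \emph{same} index set $J$, still refining $\{\mathcal{U}_\alpha\}$. Paracompactness gives some locally finite open refinement $\{\mathcal{R}_\beta\}_{\beta \in K}$; for each $\beta$ pick an index $f(\beta) \in J$ with $\mathcal{R}_\beta \subset \mathcal{U}_{f(\beta)}$ and set
\[
\mathcal{V}'_\alpha \;=\; \bigcup_{\beta \in f^{-1}(\alpha)} \mathcal{R}_\beta .
\]
A routine check (using local finiteness of $\{\mathcal{R}_\beta\}$) shows $\{\mathcal{V}'_\alpha\}_{\alpha \in J}$ is a locally finite open cover of $\mathcal{X}$ with $\mathcal{V}'_\alpha \subset \mathcal{U}_\alpha$.

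Next, I would apply the shrinking lemma for locally finite covers in normal spaces \emph{twice}, to produce locally finite open covers $\{\mathcal{V}_\alpha\}_{\alpha \in J}$ and $\{\mathcal{W}_\alpha\}_{\alpha \in J}$ with
\[
\mathfrak{cl}(\mathcal{W}_\alpha) \subset \mathcal{V}_\alpha , \qquad \mathfrak{cl}(\mathcal{V}_\alpha) \subset \mathcal{V}'_\alpha \subset \mathcal{U}_\alpha .
\]
For each $\alpha$, Urysohn's lemma applied to the disjoint closed sets $\mathfrak{cl}(\mathcal{W}_\alpha)$ and $\mathcal{X} \setminus \mathcal{V}_\alpha$ yields a continuous $\psi_\alpha : \mathcal{X} \to [0,1]$ equal to $1$ on $\mathfrak{cl}(\mathcal{W}_\alpha)$ and to $0$ off $\mathcal{V}_\alpha$. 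Then I set
\[
\Psi(x) \;=\; \sum_{\alpha \in J} \psi_\alpha(x) , \qquad \phi_\alpha(x) \;=\; \frac{\psi_\alpha(x)}{\Psi(x)} .
\]
Because $\{\mathcal{V}_\alpha\}$ is locally finite, the sum defining $\Psi$ is locally finite and hence $\Psi$ is continuous; because $\{\mathcal{W}_\alpha\}$ is a cover, at every $x$ at least one summand equals $1$, so $\Psi(x) \geq 1$ and division is harmless. The functions $\phi_\alpha$ clearly satisfy $\sum_\alpha \phi_\alpha \equiv 1$ and vanish on $\mathcal{X} \setminus \mathcal{V}_\alpha \supset \mathcal{X} \setminus \mathcal{U}_\alpha$, giving condition~(1), while $\supp \phi_\alpha \subset \mathfrak{cl}(\mathcal{V}_\alpha)$ together with local finiteness of $\{\mathcal{V}_\alpha\}$ yields condition~(2).

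The step I expect to be the main obstacle is the amalgamation-plus-shrinking bookkeeping: paracompactness only directly produces a refinement with a new index set, and the shrinking lemma has to be stated and invoked for locally finite covers in a normal space. Once those two lemmas are in hand (both are standard in Munkres), the Urysohn construction and normalization are essentially mechanical.
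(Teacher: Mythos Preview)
Your argument is correct and is precisely the standard shrinking-plus-Urysohn proof one finds in Munkres. Note that the paper itself does not prove this theorem at all: it is stated with a citation to \cite{munkres:topology} and used as a black box, so there is no ``paper's own proof'' to compare against beyond the textbook source you have already reproduced.
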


\subsubsection{Coverings}
\begin{defn}\label{comm_cov_pr_defn}\cite{spanier:at}
	Let $\widetilde{\pi}: \widetilde{\mathcal{X}} \to \mathcal{X}$ be a continuous map. An open subset $\mathcal{U} \subset \mathcal{X}$ is said to be {\it evenly covered } by $\widetilde{\pi}$ if $\widetilde{\pi}^{-1}(\mathcal U)$ is the disjoint union of open subsets of $\widetilde{\mathcal{X}}$ each of which is mapped homeomorphically onto $\mathcal{U}$ by $\widetilde{\pi}$. A continuous map $\widetilde{\pi}: \widetilde{\mathcal{X}} \to \mathcal{X}$ is called a {\it covering projection} if each point $x \in \mathcal{X}$ has an open neighborhood evenly covered by $\widetilde{\pi}$. $\widetilde{\mathcal{X}}$ is called the {
		\it covering space} and $\mathcal{X}$ the {\it base space} of the covering.
\end{defn}
\begin{defn}\cite{spanier:at}
	A fibration $p: \mathcal{\widetilde{X}} \to \mathcal{X}$ with unique path lifting is said to be  {\it regular} if, given any closed path $\omega$ in $\mathcal{X}$, either every lifting of $\omega$ is closed or none is closed.
\end{defn}
\begin{defn}\cite{spanier:at}
	A topological space $\mathcal X$ is said to be \textit{locally path-connected} if the path components of open sets are open.
\end{defn}
\paragraph{} Denote by $\pi_1$ the functor of fundamental group \cite{spanier:at}.
\begin{thm}\label{locally_path_lem}\cite{spanier:at}
	Let $p: \widetilde{\mathcal X} \to \mathcal X$ be a fibration with unique path lifting and assume that a nonempty $\widetilde{\mathcal X}$ is a locally path-connected space. Then $p$ is regular if and only if for some $\widetilde{x}_0 \in  \widetilde{\mathcal X}$, $\pi_1\left(p\right)\pi_1\left(\widetilde{\mathcal X}, \widetilde{x}_0\right)$ is a normal subgroup of $\pi_1\left(\mathcal X, p\left(\widetilde{x}_0\right)\right)$.
\end{thm}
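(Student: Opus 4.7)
The plan is to fix notation $x_0 = p(\widetilde{x}_0)$, $G = \pi_1(\mathcal X, x_0)$, and $H = \pi_1(p)\pi_1(\widetilde{\mathcal X}, \widetilde{x}_0) \subset G$, and to translate the geometric condition of regularity into a statement about conjugates of $H$ inside $G$ by exploiting unique path lifting. The basic bridge is the following: for any closed loop $\omega$ in $\mathcal X$ at $x_0$, its unique lift $\widetilde{\omega}$ starting at $\widetilde{x}_0$ is closed if and only if $[\omega] \in H$; more generally, the lift of $\omega$ starting at some other point $\widetilde{x}_1 \in p^{-1}(x_0)$ is closed exactly when $[\omega]$ lies in the analogous subgroup $\pi_1(p)\pi_1(\widetilde{\mathcal X}, \widetilde{x}_1)$.

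The technical heart of the proof is a conjugation formula. Given $[\alpha] \in G$, I would lift a representative loop $\alpha$ to a path $\widetilde{\alpha}$ starting at $\widetilde{x}_0$, ending at some $\widetilde{x}_1 \in p^{-1}(x_0)$. Conjugating based loops at $\widetilde{x}_1$ by $\widetilde{\alpha}$ and then projecting down with $p$ should give, via repeated appeals to uniqueness of path lifting,
\[
\pi_1(p)\pi_1(\widetilde{\mathcal X}, \widetilde{x}_1) \;=\; [\alpha]^{-1} H [\alpha].
\]
Since every element of $G$ arises in this fashion, the family $\{\pi_1(p)\pi_1(\widetilde{\mathcal X}, \widetilde{x}_1) : \widetilde{x}_1 \in p^{-1}(x_0) \text{ in the path component of } \widetilde{x}_0\}$ coincides exactly with the family of $G$-conjugates of $H$.

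Granting this formula, both implications follow directly. If $H$ is normal in $G$, then every conjugate $[\alpha]^{-1} H [\alpha]$ equals $H$, so for any loop $\omega$ at $x_0$ the condition $[\omega] \in H$ controlling closedness of its lift is independent of the base point in $p^{-1}(x_0)$; this gives regularity for loops based at $x_0$, and for loops based at arbitrary $x \in \mathcal X$ one transports the argument along a path in $\widetilde{\mathcal X}$, which is where local path-connectedness of $\widetilde{\mathcal X}$ enters (path components are open and the covering restricts nicely to each one). Conversely, if $p$ is regular, then for every $[\alpha] \in G$ closedness at $\widetilde{x}_0$ and closedness at the terminal point $\widetilde{x}_1$ of the lift of $\alpha$ must agree, forcing $H = [\alpha]^{-1} H [\alpha]$ for every $[\alpha] \in G$, i.e.\ $H \triangleleft G$.

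The main obstacle is proving the conjugation formula with sufficient care: one must verify that the lift of a composition $\alpha \cdot \beta \cdot \alpha^{-1}$ of loops is the concatenation of the corresponding lifted paths started at the appropriate fiber points, so that membership of $[\beta]$ in $\pi_1(p)\pi_1(\widetilde{\mathcal X},\widetilde{x}_1)$ precisely corresponds to membership of $[\alpha][\beta][\alpha]^{-1}$ in $H$. A secondary technical nuisance is the passage from the distinguished base point $x_0$ to an arbitrary base point in $\mathcal X$ in the definition of regularity; this is handled by the local path-connectedness hypothesis, which keeps liftings and conjugations confined to a single path component of $\widetilde{\mathcal X}$.
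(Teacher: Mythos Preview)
The paper does not supply its own proof of this theorem; it is quoted verbatim from Spanier with the citation \cite{spanier:at} and no argument. Your outline is the standard proof one finds there: the conjugation formula $\pi_1(p)\pi_1(\widetilde{\mathcal X},\widetilde{x}_1)=[\alpha]^{-1}H[\alpha]$ is exactly the key lemma, and both implications follow from it as you describe. So there is nothing to compare against in the paper itself, and your proposal is correct and in line with the source the paper cites.
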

\begin{defn}\label{cov_proj_cov_grp}\cite{spanier:at}
	Let $p: \mathcal{\widetilde{X}} \to \mathcal{X}$ be a covering.  A self-equivalence is a homeomorphism $f:\mathcal{\widetilde{X}}\to\mathcal{\widetilde{X}}$ such that $p \circ f = p$. This group of such homeomorphisms is said to be the {\it group of covering transformations} of $p$ or the {\it covering group}. Denote by $G\left( \mathcal{\widetilde{X}}~|~\mathcal{X}\right)$ this group.
\end{defn}

\begin{prop}\cite{spanier:at}
	If $p: \mathcal{\widetilde{X}} \to \mathcal{X}$ is a regular covering and $\mathcal{\widetilde{X}}$ is connected and locally path connected, then $\mathcal{X}$ is homeomorphic to space of orbits of $G\left( \mathcal{\widetilde{X}}~|~\mathcal{X}\right)$, i.e. $\mathcal{X} \approx \mathcal{\widetilde{X}}/G\left( \mathcal{\widetilde{X}}~|~\mathcal{X}\right) $. So $p$ is a principal bundle.
\end{prop}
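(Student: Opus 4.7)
The plan is to construct the induced orbit map $\bar p: \widetilde{\mathcal X}/G\to\mathcal X$, where $G = G(\widetilde{\mathcal X}\mid\mathcal X)$, and show it is a homeomorphism. Since every $g\in G$ satisfies $p\circ g=p$ by Definition~\ref{cov_proj_cov_grp}, the projection $p$ descends to a well-defined continuous map $\bar p$ on the orbit space. Surjectivity of $\bar p$ is inherited from $p$; the substantive content is injectivity, which is equivalent to the assertion that $G$ acts transitively on every fiber of $p$.

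To prove transitivity, fix two points $\widetilde x_0,\widetilde x_1\in p^{-1}(x)$. Since $\widetilde{\mathcal X}$ is connected and locally path-connected it is path-connected, so there is a path $\widetilde\omega$ from $\widetilde x_0$ to $\widetilde x_1$, and $\omega:=p\circ\widetilde\omega$ is a loop in $\mathcal X$ based at $x$. Regularity together with Theorem~\ref{locally_path_lem} yields normality of $H:=\pi_1(p)\pi_1(\widetilde{\mathcal X},\widetilde x_0)$ in $\pi_1(\mathcal X, x)$. Classical covering-space theory identifies points of $p^{-1}(x)$ with cosets of $H$ via unique path lifting, and normality ensures that the map sending each coset $H\gamma$ to $[\omega]^{-1}H\gamma$ is realized by a self-equivalence of $\widetilde{\mathcal X}$ — an element of $G$ — whose value at $\widetilde x_0$ is $\widetilde x_1$.

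With transitivity in hand $\bar p$ is a continuous bijection. The quotient map $q:\widetilde{\mathcal X}\to\widetilde{\mathcal X}/G$ is open because $G$ acts by homeomorphisms, and $p$ is open because any covering projection is a local homeomorphism; since $p=\bar p\circ q$ with $q$ surjective, $\bar p$ is open and hence a homeomorphism. For the principal bundle statement one observes that $G$ acts freely — any deck transformation fixing a point agrees with the identity on a whole evenly covered neighborhood, hence on all of $\widetilde{\mathcal X}$ by connectedness — and properly discontinuously, because the sheets over any evenly covered neighborhood are permuted freely by $G$. This yields the principal $G$-bundle structure.

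The main obstacle is the transitivity step: producing a \emph{global} self-homeomorphism of $\widetilde{\mathcal X}$ from a path joining two fiber points. Naively lifting $\omega$ only moves the single point $\widetilde x_0$; promoting this to a self-equivalence requires the loop to act consistently on \emph{every} fiber, and this consistency is precisely what normality of $H$ supplies through Theorem~\ref{locally_path_lem}. Without regularity the $\pi_1$-action on fibers would fail to descend to an action of $\pi_1/H$ by deck transformations, so the hypothesis is used in an essential way.
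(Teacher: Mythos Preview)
The paper does not supply its own proof of this proposition; it is quoted verbatim from Spanier with the citation \cite{spanier:at}. Your argument is correct and is precisely the standard one from that source: reduce the homeomorphism $\widetilde{\mathcal X}/G\cong\mathcal X$ to transitivity of $G$ on fibers, obtain transitivity from normality of $\pi_1(p)\pi_1(\widetilde{\mathcal X},\widetilde x_0)$ via the lifting criterion, and conclude by openness of $p$ and $q$. Note that the transitivity assertion you prove is exactly what the paper records separately as Corollary~\ref{top_cov_from_pi1_cor}, also cited from Spanier, so your proof and the paper's chain of references coincide.
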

\begin{cor}\label{top_cov_from_pi1_cor}\cite{spanier:at}
	Let $p: \widetilde{\mathcal X} \to \mathcal X$ be a fibration with a unique path lifting. If $ \widetilde{\mathcal X}$ is connected and locally path-connected and $\widetilde{x}_0 \in \widetilde{\mathcal X}$ then $p$ is regular if and only if $G\left(\widetilde{\mathcal X}~|~{\mathcal X} \right)$ transitively acts on each fiber of $p$, in which case 
	$$
	\psi: G\left(\widetilde{\mathcal X}~|~{\mathcal X} \right) \approx \pi_1\left(\mathcal X, p\left( \widetilde{x}_0\right)  \right) / \pi_1\left( p\right)\pi_1\left(\widetilde{\mathcal X}, \widetilde{x}_0 \right).  
	$$
\end{cor}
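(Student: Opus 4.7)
\bigskip

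\noindent\textbf{Proof proposal.} The plan is to build a natural injection $\psi: G(\widetilde{\mathcal X}\,|\,\mathcal X) \hookrightarrow p^{-1}(x_0)$ (for $x_0 = p(\widetilde{x}_0)$), identify $p^{-1}(x_0)$ with the coset space $\pi_1(\mathcal X, x_0)/\pi_1(p)\pi_1(\widetilde{\mathcal X}, \widetilde{x}_0)$ via path lifting, and then see that both transitivity of $G$ on fibres and the existence of a group structure on the coset space come down to the same normality condition already captured by Theorem \ref{locally_path_lem}.

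\smallskip

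First I would set up the bijection between the fibre and a coset space. Because $\widetilde{\mathcal X}$ is connected and locally path-connected it is path-connected. For each loop $\omega$ in $\mathcal X$ at $x_0$, unique path lifting gives a unique lift $\widetilde\omega$ starting at $\widetilde{x}_0$; the endpoint $\widetilde\omega(1)\in p^{-1}(x_0)$ depends only on the homotopy class $[\omega]$, and two classes give the same endpoint iff they differ by an element of $N:=\pi_1(p)\pi_1(\widetilde{\mathcal X}, \widetilde{x}_0)$. This yields a bijection $\Phi: \pi_1(\mathcal X, x_0)/N \to p^{-1}(x_0)$.

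\smallskip

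Next I would define $\psi$. Given $g \in G(\widetilde{\mathcal X}\,|\,\mathcal X)$, the point $g\widetilde{x}_0$ lies in $p^{-1}(x_0)$; pick any path $\widetilde\alpha$ from $\widetilde{x}_0$ to $g\widetilde{x}_0$ and set $\psi(g) = [p\circ\widetilde\alpha]\cdot N$. The key lemma to prove along the way is that a covering transformation $g$ is determined by the value $g(\widetilde{x}_0)$: if $g(\widetilde{x}_0) = g'(\widetilde{x}_0)$, then unique path lifting forces $g=g'$ everywhere (lift a path to $\widetilde y$ starting at $\widetilde{x}_0$ and compare the two lifts downstairs in $\mathcal X$). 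Hence $\psi$ is injective, and its image inside $p^{-1}(x_0) \cong \pi_1(\mathcal X,x_0)/N$ coincides with the $G$-orbit of $\widetilde{x}_0$. Consequently $G$ acts transitively on $p^{-1}(x_0)$ if and only if $\psi$ is surjective; by an evident equivariance argument, transitivity on one fibre forces transitivity on every fibre.

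\smallskip

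Then I would tie this to regularity. To promote $\psi$ to a group isomorphism one needs $\pi_1(\mathcal X, x_0)/N$ to actually be a group, i.e.\ $N$ must be normal; by Theorem \ref{locally_path_lem} this is exactly the regularity of $p$. Conversely, if $G$ acts transitively then for each $[\omega]\in\pi_1(\mathcal X,x_0)$ there is $g\in G$ with $g\widetilde x_0=\widetilde\omega(1)$, and the straightforward verification that $\psi$ respects composition (lift, apply $g$, concatenate) shows $N$ is the kernel of a homomorphism and hence normal, giving regularity again by Theorem \ref{locally_path_lem}. The resulting map $\psi$ is then a group homomorphism, injective by the rigidity step above and surjective by transitivity.

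\smallskip

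I expect the main obstacle to be the careful verification that $\psi$ is multiplicative and independent of the auxiliary choice of path $\widetilde\alpha$ from $\widetilde{x}_0$ to $g\widetilde{x}_0$: different choices change $[p\circ\widetilde\alpha]$ by an element of $\pi_1(p)\pi_1(\widetilde{\mathcal X}, \widetilde{x}_0)$, which must be absorbed into $N$, and multiplicativity requires combining a path with the $g$-translate of another path, which uses the fact that covering transformations commute with path lifting. Once this bookkeeping is in place the remaining statements follow formally from the bijection $\Phi$ and Theorem \ref{locally_path_lem}.
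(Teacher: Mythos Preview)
The paper does not give its own proof of this corollary: it is quoted verbatim from Spanier's \textit{Algebraic Topology} with the citation \cite{spanier:at} and no argument is supplied. So there is no in-paper proof to compare against.

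That said, your outline is the standard argument one finds in Spanier (Chapter~2, \S6): identify the fibre $p^{-1}(x_0)$ with the coset set $\pi_1(\mathcal X,x_0)/N$ via endpoint-of-lift, observe that a deck transformation is determined by its value at a single point (unique path lifting), so $G\hookrightarrow p^{-1}(x_0)$, and then use Theorem~\ref{locally_path_lem} to equate regularity with normality of $N$, which in turn is what makes the coset set a group and $\psi$ a homomorphism. Your identification of the delicate point---well-definedness and multiplicativity of $\psi$, handled by translating lifted paths by deck transformations---is exactly right. Nothing is missing from your sketch; it would serve as a complete proof once the bookkeeping you flag is written out.
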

\begin{rem}
	Above results are copied from \cite{spanier:at}. Below  the \textit{covering projection} word is replaced with \textit{covering}.

\subsubsection{Vector bundles}\label{top_vb_sub_sub}
\paragraph{}
We refer to \cite{karoubi:k} for a notion of {\it (locally trivial) vector bundle} with base $\mathcal X$ and an {\it inverse image} of a vector bundle. 
For any topological space $\mathcal X$ there is a category $\mathrm{Vect}(\mathcal{X})$ of vector bundles with base $\mathcal X$. Denote by $\Ga\left({\mathcal X}, {E}\right)$ the  $C_b\left(\mathcal{X} \right)$-module of continuous sections of $E$. $\Ga\left({\mathcal X}, {E}\right)$ can be regarded as both left and right $C_b\left(\mathcal{X} \right)$-module.
\begin{empt}\label{vb_inv_img_funct}
	If $f: \mathcal X \to \mathcal Y$ is a continuous map then there is an {\it inverse image functor}  $f^*:\mathrm{Vect}(\mathcal{Y})\to\mathrm{Vect}(\mathcal{X})$ (cf. \cite{karoubi:k}). 
\end{empt}
\begin{defn}\label{bundle_supp_defn}
	For any $\xi \in \Ga(\mathcal X, E)$ the closure of the set
	$$
	\mathcal X \backslash \bigcup_{\substack{a \in C_0\left( \mathcal X \right) \\ a \xi = 0}} \supp~a \subset \mathcal X
	$$
	is said to be the \textit{support} of $\xi$. The support is denoted by $\supp~\xi$. For any subset $\mathcal U \subset \mathcal X$ denote by
	\be\nonumber
	\Ga\left(\mathcal U, E|_{\mathcal U}\right)= \left\{\xi \in \Ga\left({\mathcal X}, {E}\right) | \supp~\xi\subset \mathcal U \right\}
	\ee
\end{defn}
\begin{rem}
	For any $a \in C_0\left( \mathcal X\right)$ and $\xi \in   \Ga(\mathcal X, E)$ following condition holds
	\be\label{supp_inc_eqn}
	\supp~a\xi \subset \supp~a.
	\ee
\end{rem}
\begin{empt}\label{lift_constr}
	Let $S$ be a vector bundle over $\mathcal X$.
Let $\pi : \widetilde{\mathcal X} \to \mathcal X$ be a covering, and let  $\widetilde{S}= \pi^*S$ be the inverse image. If $\widetilde{\mathcal U}\subset \widetilde{\mathcal X}$ is an open set which is mapped homeomorphically onto $\pi \left( \widetilde{\mathcal U}\right) = {\mathcal U}$, then there are natural $\C$-isomorphisms
	\be\label{lift_desc_eqn}
	\begin{split}
	\mathfrak{desc}_\pi : C_0\left( \widetilde{\mathcal U}\right) \xrightarrow{\approx} C_0\left( {\mathcal U}\right),\\
	\mathfrak{desc}_\pi : \Ga\left( \widetilde{\mathcal U}, \widetilde{S}_{ \widetilde{\mathcal U}} \right) \xrightarrow{\approx} \Ga\left( {\mathcal U}, {S}_{ {\mathcal U}} \right),\\
	\mathfrak{lift}_{\widetilde{\mathcal U}}  :C_0\left({\mathcal U}\right) \xrightarrow{\approx} C_0\left( \widetilde{\mathcal U}\right),\\
	\mathfrak{lift}_{\widetilde{\mathcal U}} : \Ga\left( {\mathcal U}, {S}_{ \widetilde{\mathcal U}} \right) \xrightarrow{\approx}\Ga\left( \widetilde{\mathcal U}, \widetilde{S}_{ \widetilde{\mathcal U}} \right).
	\end{split}
	\ee
	We use identical notions $\mathfrak{desc}_\pi$, $\mathfrak{lift}_{\widetilde{\mathcal U}}$ for different maps, however the meaning of this maps depends on arguments, as well as the meaning of C++ functions, so the identity will not occur contradictions. Moreover we will often write  $\mathfrak{desc}$ instead $\mathfrak{desc}_\pi$
\end{empt}
\begin{defn}\label{lift_defn}
	Let us consider the situation \ref{lift_constr}.
	If $\xi \in \Ga\left( {\mathcal U}, {S}_{ \widetilde{\mathcal U}} \right)$, $\widetilde{\xi} \in \Ga\left( \widetilde{\mathcal U}, \widetilde{S}_{ \widetilde{\mathcal U}} \right)$, $a \in C_0\left({\mathcal U} \right)$,  $\widetilde{a} \in C_0\left( \widetilde{\mathcal U}\right)$ then we say
	\begin{itemize}
		\item $	\mathfrak{lift}_{\widetilde{\mathcal U}}\left(\xi \right)$ is $\widetilde{\mathcal{U}}$-\textit{lift} of $\xi$, 
			\item $	\mathfrak{lift}_{\widetilde{\mathcal U}}\left(a \right)$ is $\widetilde{\mathcal{U}}$-\textit{lift} of $a$, 
		\item $\mathfrak{desc}_\pi\left( \widetilde{\xi}\right) $ is the $\pi$-\textit{descend} or simply \textit{descend}  of $\widetilde{\xi}$,
	\item $\mathfrak{desc}_\pi\left( \widetilde{a}\right) $ is the $\pi$-\textit{descend} or simply \textit{descend} of $\widetilde{a}$.
	\end{itemize}
\end{defn}
	If $a \in C_0\left(\mathcal{X} \right)$ and $\xi \in \Ga(\mathcal X, E)$ are such that   $\supp a, \supp \xi \in \mathcal{U}$ then
	
	\be\label{lift_product_eqn}
	\mathfrak{lift}_{\widetilde{\mathcal{U}}}\left(a \xi\right)= \mathfrak{lift}_{\widetilde{\mathcal{U}}}\left(a \right)\mathfrak{lift}_{\widetilde{\mathcal{U}}}\left(\xi \right).
	\ee

	If $\widetilde{\xi} \in   \Ga\left( \widetilde{\mathcal X},\widetilde{ E}\right)$ then the following implication holds
	\be\label{supp_lift_desc_eqn}
	\supp~ \widetilde{\xi} \subset \widetilde{\mathcal U} \Rightarrow \widetilde{\xi}=\mathfrak{lift}_{\widetilde{\mathcal{U}}}\left( \mathfrak{desc}_\pi \left(\widetilde{\xi} \right) \right).
	\ee
\begin{definition}\label{local_op_defn}
	A $\mathbb{C}$-(anti)linear map $\varphi$ from $\Ga(\mathcal X, E)$ (resp. dense $\mathbb{C}$-subspace of $X \subset \Ga(\mathcal X, E)$) to $\Ga(\mathcal X, E)$ is said to be \textit{local} if  $\supp ~\varphi \xi \subset \supp \xi$ for any $\xi \in \Ga(\mathcal X, E)$ (resp. $\xi \in X$).
\end{definition}
\begin{empt}\label{top_bundle_defn}
		Let $\pi:\widetilde{\mathcal X} \to \mathcal{X}$ be a covering, and let $E \in \mathrm{Vect}\left(\mathcal{X}\right)$, $\widetilde{E}=\pi^*E$ be an inverse image. If $\widetilde{\mathcal U} \subset \widetilde{\mathcal{X}} $ is an open subset such that a restriction  $\pi |_{\mathcal U}:\widetilde{ \mathcal U} \to \pi (\widetilde{\mathcal U})= \mathcal U$ is a homeomorphism. Then there are *-isomorphism $C\left( \widetilde{\mathcal U} \right) \xrightarrow{\approx} C\left( \mathcal{U}\right) $ and isomorphism $\Ga\left(\widetilde{\mathcal U}, \widetilde{E}|_{\widetilde{\mathcal U}}\right) \approx \Ga\left(\mathcal U, E|_{\mathcal U}\right)$.
	If $\varphi$ is a local $\mathbb{C}$-(anti)linear map  from $\Ga(\mathcal X, E)$ (resp. dense $\mathbb{C}$-subspace of $X \subset \Ga(\mathcal X, E)$) to $\Ga(\mathcal X, E)$ then $\varphi$ naturally induces the local $\mathbb{C}$-(anti)linear map $\widetilde{\varphi}$ from $\Ga\left(\widetilde{\mathcal X}, \widetilde{E}\right)$ (resp. dense $\mathbb{C}$-subspace of $\widetilde{X} \subset \Ga\left(\widetilde{\mathcal X}, \widetilde{E}\right)$) to $\Ga\left(\widetilde{\mathcal X}, \widetilde{E}\right)$, such that there is the following commutative diagram.
	\newline
	\begin{tikzpicture}
	\matrix (m) [matrix of math nodes,row sep=3em,column sep=4em,minimum width=2em]
	{
		\widetilde{X}|_{\widetilde{\mathcal U}}= \widetilde{X} \bigcap \Ga\left(\widetilde{\mathcal U}, E|_{\widetilde{\mathcal U}}\right) & \Ga\left(\widetilde{\mathcal U}, \widetilde{E}|_{\widetilde{\mathcal U}}\right)   \\
		X|_{\mathcal U}= X \bigcap \Ga\left(\mathcal U, E|_{\mathcal U}\right) &  \Ga\left(\mathcal U, \widetilde{E}|_{\mathcal U}\right) \\};
	\path[-stealth]
	(m-1-1) edge node [above] {$\widetilde\varphi|_{\widetilde{\mathcal{U}}}$} (m-1-2)
	(m-2-1) edge node [above] {$\varphi|_{\mathcal{U}}$} (m-2-2)
	(m-1-1) edge node [left]  {$\approx$} (m-2-1)
	(m-1-2) edge node [right] {$\approx$} (m-2-2);
	\end{tikzpicture}
	
\end{empt}

\begin{defn}\label{inv_image_defn}
Let us consider the situation \ref{top_bundle_defn}.	The map $\widetilde\varphi$ is said to be an {\it inverse image} or $\pi$-\textit{lift} of $\varphi$.
\end{defn}

\begin{remark}
	If $G=G\left(\widetilde{\mathcal X}~|~{\mathcal X} \right)$ is the group of covering transformations then for any $g \in G$ and $\xi \in \Ga\left(\widetilde{\mathcal U}, E|_{\widetilde{\mathcal U}}\right)$ following condition holds
	\be\label{lift_g_eqn}
	g\widetilde{\xi} = \mathfrak{lift}_{g\widetilde{\mathcal{U}}}\left( \mathfrak{desc}_\pi \left(\widetilde{\xi} \right) \right).
	\ee
\end{remark}


\begin{empt}\label{top_herm_bundle_constr}
	Let $\mathcal X$ be a topological space and $S$ the complex linear bundle on $\mathcal X$. Suppose that for any $x \in \mathcal{X}$ there is the scalar product $\left( \cdot, \cdot \right)_x: S_x \times S_x \to \C$ and there is a measure $\mu_{\mathcal{X}}$ on $\mathcal X$. If $\Ga\left( M, S\right) $ is the space of continuous sections
	of $S$ then we suppose that for any $\xi, \eta \in  \Ga\left( M, S\right)$ the map $\mathcal X \to \C$ given by $x \mapsto \left( \xi_x, \eta_x\right)_x$ is continuous. There is the scalar product $\left( \cdot, \cdot \right) :  \Ga\left( M, S\right) \times \Ga\left( M, S\right) \to \C$ given by
	$$
	\left( \xi, \eta\right)\stackrel{\text{def}}{=} \int_{\mathcal X}\left( \xi_x, \eta_x\right)d~\mu_{\mathcal{X}}
	$$
	Denote by $L^2\left( \mathcal X, S, \mu_{\mathcal X}\right) $ or $L^2\left( \mathcal X, S\right) $ the Hilbert norm completion of $\Ga\left( M, S\right) $. There is the natural representation
	\begin{equation}\label{comm_bundle_repr}
	C_0\left(\mathcal{X} \right) \to B\left( L^2\left( \mathcal X, S\right)\right). 
	\end{equation}
\end{empt}
\begin{defn}
	In the situation of \ref{top_herm_bundle_constr} we say that $S$ is \textit{Hermitian vector bundle}.
\end{defn}

\subsection{Inverse limits of coverings}\label{inf_to}

\paragraph{} This subsection is concerned with a topological construction of the inverse limit in the category of coverings. 

\end{rem}
\begin{defn}\label{top_sec_defn}
	The sequence of regular finite-fold coverings  	
	\begin{equation*}
	\mathcal{X} = \mathcal{X}_0 \xleftarrow{}... \xleftarrow{} \mathcal{X}_n \xleftarrow{} ... 
	\end{equation*}
	is said to be a \textit{(topological)  finite covering sequence} if following conditions hold:
	\begin{itemize}
		\item   The space $\mathcal{X}_n$ is a  second-countable \cite{munkres:topology} locally compact connected Hausdorff space for any $n \in \mathbb{N}^0$,
		\item If $k < l < m$ are any nonnegative integer numbers then there is the natural exact sequence
		$$
		\{e\}\to	G\left(\mathcal X_m~|~\mathcal X_l\right) \to 	G\left(\mathcal X_m~|~\mathcal X_k\right)\to 	G\left(\mathcal X_l~|~\mathcal X_k\right)\to \{e\}.
		$$ 
	\end{itemize} 
	For any finite covering sequence we will use a following notation
	\begin{equation*}
	\mathfrak{S} = \left\{\mathcal{X} = \mathcal{X}_0 \xleftarrow{}... \xleftarrow{} \mathcal{X}_n \xleftarrow{} ...\right\}= \left\{ \mathcal{X}_0 \xleftarrow{}... \xleftarrow{} \mathcal{X}_n \xleftarrow{} ...\right\},~~\mathfrak{S} \in \mathfrak{FinTop}.
	\end{equation*}
	
\end{defn}
\begin{defn}\label{top_cov_trans_defn} Let  $\left\{\mathcal{X} = \mathcal{X}_0 \xleftarrow{}... \xleftarrow{} \mathcal{X}_n \xleftarrow{} ...\right\} \in \mathfrak{FinTop}$, and let
	$\widehat{\mathcal{X}} = \varprojlim \mathcal{X}_n$ be the inverse limit  in the category of topological spaces and continuous maps (cf. \cite{spanier:at}). If $\widehat{\pi}_0: \widehat{\mathcal{X}} \to \mathcal{X}_0$ is the natural continuous map then homeomorphism $g$ of the space $\widehat{\mathcal{X}}$ is said to be a \textit{covering  transformation} if the following condition holds
	$$
	\widehat{\pi}_0 = \widehat{\pi}_0 \circ g.
	$$
	The group $\widehat{G}$ of covering homeomorphisms is said to be the \textit{group of  covering  transformations} of $\mathfrak S$. Denote by $G\left(\widehat{\mathcal{X}}~|~\mathcal X \right)\stackrel{\mathrm{def}}{=}\widehat{G}$. 
\end{defn}
\begin{defn}\label{top_coh_defn}
	Let $\mathfrak{S} = \left\{ \mathcal{X}_0 \xleftarrow{}... \xleftarrow{} \mathcal{X}_n \xleftarrow{} ...\right\}$ be 
	a finite covering sequence. The pair $\left(\mathcal{Y},\left\{\pi^{\mathcal Y}_n\right\}_{n \in \mathbb{N}} \right) $ of a (discrete) set $\mathcal{Y}$ with and  
	surjective  maps $\pi^{\mathcal Y}_n:\mathcal{Y} \to \mathcal X_n$ is said to be a \textit{coherent system} if for any $n \in \mathbb{N}^0$ a following diagram  
	\newline
	\begin{tikzpicture}
	\matrix (m) [matrix of math nodes,row sep=3em,column sep=4em,minimum width=2em]
	{
		& \mathcal{Y}  &  \\
		\mathcal{X}_n &  &  \mathcal{X}_{n-1} \\};
	\path[-stealth]
	(m-1-2) edge node [left] {$\pi^{\mathcal Y}_n~$} (m-2-1)
	(m-1-2) edge node [right] {$~\pi^{\mathcal Y}_{n-1}$} (m-2-3)
	(m-2-1) edge node [above] {$\pi_n$}  (m-2-3);
	
	\end{tikzpicture}
	\newline
	is commutative.	
\end{defn}

\begin{defn}\label{comm_top_constr_defn}
	Let $\mathfrak{S} = \left\{ \mathcal{X}_0 \xleftarrow{}... \xleftarrow{} \mathcal{X}_n \xleftarrow{} ...\right\}$ be 
	a topological finite covering sequence. A coherent system $\left(\mathcal{Y},\left\{\pi^{\mathcal Y}_n\right\} \right)$ is said to
	be a \textit{connected covering} of $\mathfrak{S}$ if $\mathcal Y$ is a connected topological space and $\pi^{\mathcal Y}_n$ is a regular covering  for any $n \in \mathbb{N}$.  We will use following notation $\left(\mathcal{Y},\left\{\pi^{\mathcal Y}_n\right\} \right)\downarrow \mathfrak{S}$ or simply $\mathcal{Y} \downarrow \mathfrak{S}$.
\end{defn}
\begin{defn}\label{top_spec_defn}
	Let $\left(\mathcal{Y},\left\{\pi^{\mathcal Y}_n\right\} \right)$ be  a coherent system   of $\mathfrak{S}$ and $y \in \mathcal{Y}$. A subset  $\mathcal V \subset \mathcal{Y}$ is said to be \textit{special} if $\pi^{\mathcal Y}_0\left(\mathcal{V} \right)$ is evenly covered by $\mathcal{X}_1 \to \mathcal{X}_0$ and for any  $n \in \mathbb{N}^0$ following conditions hold:
	\begin{itemize}
		\item $\pi^{\mathcal Y}_n\left(\mathcal{V} \right) \subset \mathcal X_n$ is  an open connected set, 
		\item The restriction $\pi^{\mathcal Y}_n|_{\mathcal V}:\mathcal{V}\to \pi^{\mathcal Y}_n\left( {\mathcal V}\right) $ is a bijection.
	\end{itemize}
	
\end{defn}
\begin{rem}
	If $\left(\mathcal{Y},\left\{\pi^{\mathcal Y}_n\right\} \right)$ is  a covering  of $\mathfrak{S}$ then the topology of $\mathcal{Y}$ is generated by special sets.
\end{rem}

\begin{defn}\label{comm_top_constr_morph_defn}
	Let us consider the situation of the Definition \ref{comm_top_constr_defn}. A \textit{morphism} from $\left(\mathcal{Y}',\left\{\pi^{\mathcal Y'}_n\right\}\right)\downarrow\mathfrak{S}$ to $\left(\mathcal{Y}'',\left\{\pi^{\mathcal Y''}_n\right\}\right)\downarrow\mathfrak{S}$ is a covering  $f: \mathcal{Y}' \to \mathcal{Y}''$ such that
	$$
	\pi_n^{\mathcal Y''} \circ f= \pi_n^{\mathcal Y'} 
	$$
	for any $n \in \N$.
	
\end{defn}
\begin{empt}\label{comm_top_constr}
	There is a category with objects and morphisms described by Definitions \ref{comm_top_constr_defn}, \ref{comm_top_constr_morph_defn}. Denote by $\downarrow \mathfrak S$ this category.
\end{empt}
\begin{lem}\label{top_universal_covering_lem}\cite{ivankov:qnc}
	There is the final object of the category $\downarrow \mathfrak S$  described in \ref{comm_top_constr}.
\end{lem}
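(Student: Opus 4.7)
The plan is to construct the final object as a distinguished connected component of the topological inverse limit $\widehat{\mathcal X} = \varprojlim \mathcal X_n$, equipped with the natural continuous projections $\widehat\pi_n : \widehat{\mathcal X} \to \mathcal X_n$. Fix a point $\widehat x_0 \in \widehat{\mathcal X}$, and let $\widetilde{\mathcal X} \subset \widehat{\mathcal X}$ be the connected component of $\widehat x_0$, endowed with the restrictions $\pi_n^{\widetilde{\mathcal X}} = \widehat\pi_n|_{\widetilde{\mathcal X}}$. The pair $(\widetilde{\mathcal X}, \{\pi_n^{\widetilde{\mathcal X}}\})$ will be shown to be an object of $\downarrow \mathfrak S$ and to carry a unique morphism from any other object.

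The first main step is to verify that each $\pi_n^{\widetilde{\mathcal X}}: \widetilde{\mathcal X} \to \mathcal X_n$ is a regular covering. I will use the special subsets of Definition \ref{top_spec_defn}: for any $\widehat x \in \widehat{\mathcal X}$ and any $n$, by the evenly-covered condition for $\mathcal X_{n+1}\to \mathcal X_n$ together with the finite-fold regularity of each $\mathcal X_m \to \mathcal X_n$ for $m > n$, one can build a decreasing cofinal family of open sets whose projections to $\mathcal X_n$ are connected neighborhoods of $\widehat\pi_n(\widehat x)$ mapped bijectively by $\widehat\pi_n$. The resulting special sets form a neighborhood basis at every point of $\widehat{\mathcal X}$, which shows both local triviality of $\widehat\pi_n$ above any evenly covered patch and the openness of the connected components. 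Hence $\widetilde{\mathcal X}$ is an open–closed subset and $\pi_n^{\widetilde{\mathcal X}}$ inherits local triviality. Surjectivity is obtained by noting that the image of $\widetilde{\mathcal X}$ in $\mathcal X_n$ is open (by the local trivializations), closed (by the compactness of the profinite fibers), nonempty and lies in the connected space $\mathcal X_n$.

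Regularity of $\pi_n^{\widetilde{\mathcal X}}$ will be deduced from the exact sequence
$$
\{e\} \to G(\mathcal X_m \mid \mathcal X_l) \to G(\mathcal X_m \mid \mathcal X_n) \to G(\mathcal X_l \mid \mathcal X_n) \to \{e\}
$$
built into Definition \ref{top_sec_defn}. Passing to the inverse limit one gets a profinite group $\widehat G = \varprojlim_{m} G(\mathcal X_m \mid \mathcal X_n)$ of homeomorphisms of $\widehat{\mathcal X}$ over $\mathcal X_n$, acting transitively on each fiber. The stabilizer of the component $\widetilde{\mathcal X}$ acts transitively on each fiber of $\pi_n^{\widetilde{\mathcal X}}$, which by Corollary \ref{top_cov_from_pi1_cor} gives regularity. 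Connectedness of $\widetilde{\mathcal X}$ is true by construction.

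For the universal property, let $(\mathcal Y, \{\pi_n^{\mathcal Y}\}) \downarrow \mathfrak S$ be arbitrary. The universal property of the inverse limit in the category of topological spaces yields a continuous map $f: \mathcal Y \to \widehat{\mathcal X}$ with $\widehat\pi_n \circ f = \pi_n^{\mathcal Y}$. Since $\mathcal Y$ is connected, $f(\mathcal Y)$ lies in a single connected component of $\widehat{\mathcal X}$; after choosing basepoints compatibly with $\widehat x_0$, this component is $\widetilde{\mathcal X}$. A local computation using special sets on both sides, together with the fact that each $\pi_n^{\mathcal Y}$ is a covering, shows that $f$ is itself a covering (hence a morphism in $\downarrow \mathfrak S$), and uniqueness is forced by the relations $\pi_n^{\widetilde{\mathcal X}} \circ f = \pi_n^{\mathcal Y}$. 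The main obstacle in the argument is the verification in the first main step that the restricted projection $\pi_n^{\widetilde{\mathcal X}}$ is genuinely a covering, since inverse limits of coverings are in general not coverings; the special sets and the profinite character of the fibers are precisely what rescue the argument.
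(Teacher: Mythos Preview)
Your approach has a genuine gap precisely at the step you flag as ``the main obstacle'': the claim that special sets form a neighborhood basis in the inverse-limit topology on $\widehat{\mathcal X}=\varprojlim\mathcal X_n$ is false in general, and without it the whole construction collapses.

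Take $\mathcal X_n=S^1$ with each bonding map the degree--$2$ cover $z\mapsto z^2$. Then $\widehat{\mathcal X}$ is the dyadic solenoid $\Sigma_2$, which is a compact \emph{connected} abelian group. Hence the connected component $\widetilde{\mathcal X}$ you form is all of $\Sigma_2$. The fibre of $\widehat\pi_n:\Sigma_2\to S^1$ is the $2$--adic integers $\mathbb Z_2$ with its natural (non-discrete, Cantor) topology, so for a small arc $U\subset S^1$ the preimage $\widehat\pi_n^{-1}(U)\cong U\times\mathbb Z_2$ is \emph{not} a disjoint union of open sets mapped homeomorphically to $U$. Thus $\widehat\pi_n$ is not a covering projection, and $(\widetilde{\mathcal X},\{\widehat\pi_n\})$ is not an object of $\downarrow\mathfrak S$ at all. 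Correspondingly, the special sets of Definition~\ref{top_spec_defn} inside $\Sigma_2$ are exactly the slices $U\times\{c\}$ for $c\in\mathbb Z_2$; since singletons are not open in $\mathbb Z_2$, these slices are not open in $\Sigma_2$, so they cannot be a neighborhood basis. The ``profinite character of the fibres'' is in fact the \emph{obstruction}, not the rescue: profinite sets are compact and totally disconnected but not discrete.

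Several downstream steps then fail as written. Your surjectivity argument (image open and closed in $\mathcal X_n$) uses that $\pi_n^{\widetilde{\mathcal X}}$ is a local homeomorphism, which you have not established. In the universal-property step you say ``after choosing basepoints compatibly with $\widehat x_0$, this component is $\widetilde{\mathcal X}$''; but the map $f:\mathcal Y\to\widehat{\mathcal X}$ is \emph{uniquely} determined by the inverse-limit universal property, so there is no freedom to move $f(\mathcal Y)$ into a preassigned component. Any construction of a final object must instead produce a genuine covering space (for instance via the subgroup $\bigcap_n \pi_1(\pi_n)\,\pi_1(\mathcal X_n,\tilde x_n)\lhd\pi_1(\mathcal X_0,x_0)$, or equivalently by retopologising with the special sets rather than using the inverse-limit topology), not a connected component of $\widehat{\mathcal X}$ with its limit topology. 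The paper itself does not supply a proof here---it defers to \cite{ivankov:qnc}---so there is no in-paper argument to compare with, but the route you propose does not reach the target.
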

\begin{defn}\label{top_topological_inv_lim_defn}
	The final object $\left(\widetilde{\mathcal{X}},\left\{\pi^{\widetilde{\mathcal X}}_n\right\} \right)$ of the category $\downarrow\mathfrak{S}$ is said to be the \textit{(topological) inverse limit} of $\downarrow\mathfrak{S}$.  The notation $\left(\widetilde{\mathcal{X}},\left\{\pi^{\widetilde{\mathcal X}}_n\right\} \right) = \varprojlim \downarrow \mathfrak{S}$ or simply $~\widetilde{\mathcal{X}} =  \varprojlim \downarrow\mathfrak{S}$ will be used. 
\end{defn}

\subsection{Hilbert modules}
\paragraph{} We refer to \cite{blackadar:ko} for definition of Hilbert $C^*$-modules, or simply Hilbert modules.
 For any $\xi, \zeta \in X_A$ let us define an $A$-endomorphism $\theta_{\xi, \zeta}$ given by  $\theta_{\xi, \zeta}(\eta)=\xi \langle \zeta, \eta \rangle_{X_A}$ where $\eta \in X_A$. Operator  $\theta_{\xi, \zeta}$ shall be denoted by $\xi \rangle\langle \zeta$. Norm completion of algebra generated by operators $\theta_{\xi, \zeta}$ is said to be an algebra of compact operators $\mathcal{K}(X_A)$. We suppose that there is a left action of $\mathcal{K}(X_A)$ on $X_A$ which is $A$-linear, i.e. action of  $\mathcal{K}(X_A)$ commutes with action of $A$.
   
 \subsection{$C^*$-algebras and von Neumann algebras}
 \paragraph{}In this section I follow to \cite{pedersen:ca_aut}.
 \begin{defn}
 	\label{strong_topology}\cite{pedersen:ca_aut} Let $\H$ be a Hilbert space. The {\it strong} topology on $B\left(\H\right)$ is the locally convex vector space topology associated with the family of seminorms of the form $x \mapsto \|x\xi\|$, $x \in B(\H)$, $\xi \in \H$.
 \end{defn}
 \begin{defn}\label{weak_topology}\cite{pedersen:ca_aut} Let $\H$ be a Hilbert space. The {\it weak} topology on $B\left(\H\right)$ is the locally convex vector space topology associated with the family of seminorms of the form $x \mapsto \left|\left(x\xi, \eta\right)\right|$, $x \in B(\H)$, $\xi, \eta \in \H$.
 \end{defn}
 
 \begin{thm}\label{vN_thm}\cite{pedersen:ca_aut}
 	Let $M$ be a $C^*$-subalgebra of $B(\H)$, containing the identity operator. The following conditions are equivalent:
 	\begin{itemize}
 		\item $M=M''$ where $M''$ is the bicommutant of $M$;
 		\item $M$ is weakly closed;
 		\item $M$ is strongly closed.
 	\end{itemize}
 \end{thm}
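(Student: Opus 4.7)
The plan is to prove the three-way equivalence by establishing the cycle $M = M'' \Rightarrow$ weakly closed $\Rightarrow$ strongly closed $\Rightarrow M = M''$. The first two implications are soft. For any subset $S \subseteq B(\H)$, the commutant $S'$ is weakly closed: for each $a \in S$ and $\xi, \eta \in \H$ the map $x \mapsto ((xa - ax)\xi, \eta)$ is weakly continuous, so $S'$ is an intersection of weakly closed sets. In particular $M'' = (M')'$ is weakly closed, giving the first implication. The second implication is immediate because the strong topology is finer than the weak topology on $B(\H)$.

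The essential content is the third implication. Since $M \subseteq M''$ holds automatically, I need to show that every $T \in M''$ lies in the strong closure of $M$; equivalently, given $\xi_1, \ldots, \xi_n \in \H$ and $\eps > 0$, I must find $A \in M$ with $\|(T-A)\xi_i\| < \eps$ for all $i$. First I would handle the case $n = 1$: let $p$ be the orthogonal projection onto the closed subspace $K = \overline{M\xi_1}$. Since $M$ is a self-adjoint algebra, $K$ is $M$-invariant and so is $K^\perp$, which forces $p \in M'$. Because $T \in M''$ commutes with $p$, and because $1 \in M$ implies $\xi_1 \in K$ so $p\xi_1 = \xi_1$, we get $T\xi_1 = Tp\xi_1 = pT\xi_1 \in K$. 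Hence some $A \in M$ satisfies $\|T\xi_1 - A\xi_1\| < \eps$.

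For general $n$, I would use the amplification trick. Consider the diagonal embedding $\pi: B(\H) \to M_n(B(\H)) = B(\H^n)$ sending $x \mapsto \mathrm{diag}(x,\ldots,x)$. A direct matrix computation shows $\pi(M)' = M_n(M')$ inside $M_n(B(\H))$, and taking commutants once more gives $\pi(M)'' = \pi(M'')$; in particular $\pi(T) \in \pi(M)''$. The algebra $\pi(M)$ also contains the identity of $B(\H^n)$, so the $n=1$ argument applied to $\pi(T)$ and the single vector $\xi = (\xi_1, \ldots, \xi_n) \in \H^n$ produces $A \in M$ with $\|\pi(T-A)\xi\|^2 = \sum_i \|(T-A)\xi_i\|^2 < \eps^2$, yielding the required simultaneous approximation. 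The main obstacle is precisely this bicommutant step: the technical heart is the identification of the commutant of a diagonally embedded algebra, together with the observation that $M' \cdot \xi$ being an $M$-invariant subspace can be extracted via the projection $p$. Once this is set up, strong closedness of $M$ forces $T \in M$, closing the cycle.
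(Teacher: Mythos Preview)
The paper does not supply its own proof of this statement: it is quoted directly from Pedersen's \emph{$C^*$-algebras and their automorphism groups} as a background result, with no argument given. Your proof is the standard one for the von Neumann bicommutant theorem and is correct. The cycle you set up is the usual route; the key step---showing that a strongly closed self-adjoint unital subalgebra equals its bicommutant via the projection onto $\overline{M\xi_1}$ and the amplification trick $\pi(M)'' = \pi(M'')$---is handled properly. There is nothing to compare against in the paper itself.
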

 
 \begin{defn}
 	Any $C^*$-algebra $M$ is said to be a {\it von Neumann algebra} or a {\it $W^*$- algebra} if $M$ satisfies to the conditions of the Theorem \ref{vN_thm}.
 \end{defn}
 \begin{defn} \cite{pedersen:ca_aut}
 	Let $A$ be a $C^*$-algebra, and let $S$ be the state space of $A$. For any $s \in S$ there is an associated representation $\pi_s: A \to B\left( \H_s\right)$. The representation $\bigoplus_{s \in S} \pi_s: A \to \bigoplus_{s \in S} B\left(\H_s \right)$ is said to be the \textit{universal representation}. The universal representation can be regarded as $A \to B\left( \bigoplus_{s \in S}\H_s\right)$.  
 \end{defn} 
 \begin{defn}\label{env_alg_defn}\cite{pedersen:ca_aut}
 	Let   $A$ be a $C^*$-algebra, and let $A \to B\left(\H \right)$ be the universal representation $A \to B\left(\H \right)$. The strong closure of $\pi\left( A\right)$ is said to be   the  {\it enveloping von Neumann algebra} or  the {\it enveloping $W^*$-algebra}  of $A$. The enveloping  von Neumann algebra will be denoted by $A''$.
 \end{defn}
 
 \begin{thm}\label{env_alg_thm}\cite{pedersen:ca_aut}
 	For each non-degenerate representation $\pi: A \to B\left(\H \right)$ of a $C^*$-algebra $A$ there is a unique  normal morphism of  $A''$ onto $\pi\left( A\right)''$ which extends $\pi$.   
 \end{thm}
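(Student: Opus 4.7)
The plan is to realize $\pi$ concretely inside the universal representation $\pi_u\colon A \to B(\H_u)$ and then restrict the action of $A''=\pi_u(A)''$ to the relevant invariant subspace. First, decompose $(\pi,\H)$ as an orthogonal direct sum of cyclic sub-representations $(\pi_\alpha,\H_\alpha)$ with unit cyclic vectors $\xi_\alpha$, by a standard Zorn's lemma argument on pairwise orthogonal cyclic subspaces. By the GNS theorem, each $\pi_\alpha$ is unitarily equivalent to the GNS representation $\pi_{\omega_\alpha}$ associated with the state $\omega_\alpha(a):=\langle \pi(a)\xi_\alpha,\xi_\alpha\rangle_\H$; let $U_\alpha\colon \H_{\omega_\alpha}\to \H_\alpha$ be an intertwining unitary.

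Next, since $\pi_u = \bigoplus_{s\in S}\pi_s$ acts on $\H_u=\bigoplus_{s\in S}\H_s$, each $\H_s$ is $\pi_u(A)$-invariant, so the projection $p_s$ onto $\H_s$ lies in the commutant $\pi_u(A)'$. Therefore every $T\in A''$ commutes with $p_s$ and restricts to an operator $T_s\in B(\H_s)$, and $T\mapsto T_s$ is a normal $*$-homomorphism. Define
$$
\tilde\pi\colon A''\longrightarrow B(\H), \qquad \tilde\pi(T):=\bigoplus_\alpha U_\alpha\, T_{\omega_\alpha}\, U_\alpha^*.
$$
This is a well-defined bounded operator because $\|T_{\omega_\alpha}\|\leq \|T\|$ uniformly in $\alpha$, and it is a $*$-homomorphism since each building block (restriction to an invariant summand, unitary conjugation, orthogonal direct sum) preserves the algebraic structure.

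To finish, I would verify two things. \emph{First, $\tilde\pi$ extends $\pi$:} evaluating on $T=\pi_u(a)$ gives $\tilde\pi(\pi_u(a))=\bigoplus_\alpha U_\alpha \pi_{\omega_\alpha}(a) U_\alpha^*=\bigoplus_\alpha \pi_\alpha(a)=\pi(a)$. \emph{Second, the image of $\tilde\pi$ equals $\pi(A)''$:} the image is a von Neumann algebra containing $\pi(A)$, hence contains $\pi(A)''$; conversely, since $\pi_u(A)$ is strongly dense in $A''$ and $\tilde\pi$ is normal, $\tilde\pi(A'')$ lies in the strong closure of $\pi(A)$, which is $\pi(A)''$. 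Uniqueness then follows because any two normal extensions must agree on the strongly dense subalgebra $\pi_u(A)\subset A''$ and hence on all of $A''$, by Kaplansky's density theorem and the strong (equivalently $\sigma$-weak) continuity of normal maps on bounded sets.

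The main obstacle is establishing normality of the assembly map $T\mapsto \bigoplus_\alpha U_\alpha T_{\omega_\alpha}U_\alpha^*$ when the index family of cyclic summands is infinite. Each constituent map $T\mapsto T_{\omega_\alpha}$ is manifestly normal, but the passage to an infinite direct sum requires showing that on norm-bounded subsets of $A''$ the finite partial sums converge $\sigma$-weakly to the full sum in $B(\H)$. This follows from the uniform bound $\|T_{\omega_\alpha}\|\leq \|T\|$ and the compatibility of the $\sigma$-weak topology on $B(\H)$ with the orthogonal decomposition $\H=\bigoplus_\alpha \H_\alpha$; granting this, everything else reduces to routine bookkeeping.
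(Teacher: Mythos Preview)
The paper does not supply its own proof of this theorem; it is quoted verbatim from Pedersen's book \cite{pedersen:ca_aut} as background material, so there is nothing in the paper to compare your argument against.

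That said, your proof is correct and is essentially the standard textbook argument (indeed, it is close to Pedersen's own proof in \S3.7). The decomposition of $(\pi,\H)$ into cyclic summands, the identification of each summand with a GNS piece of the universal representation, and the assembly of $\tilde\pi$ by restriction and conjugation are all sound. Your remaining concern about normality of the infinite direct sum is not a real obstacle: it suffices to check that $\tilde\pi$ preserves suprema of bounded increasing nets of self-adjoints, and this is immediate because each restriction $T\mapsto T_{\omega_\alpha}$ does, and a direct sum of positive operators increases to the direct sum of the limits. Equivalently, $\sigma$-weak continuity on bounded sets can be tested against vector functionals $\langle\,\cdot\,\eta,\eta\rangle$ with $\eta\in\H$, and each such functional involves only countably many summands with square-summable contributions, so dominated convergence applies. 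The uniqueness argument via strong density of $\pi_u(A)$ in $A''$ is also correct; Kaplansky is not strictly needed once you know normal maps are $\sigma$-weakly continuous and $\pi_u(A)$ is $\sigma$-weakly dense.
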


 \begin{lem}\label{increasing_convergent_w}\cite{pedersen:ca_aut} Let $\Lambda$ be an increasing net in the partial ordering.  Let $\left\{x_\lambda \right\}_{\la \in \La}$ be an increasing net of self-adjoint operators in $B\left(\H\right)$, i.e. $\la \le \mu$ implies $x_\la \le x_\mu$. If $\left\|x_\la\right\| \le \ga$ for some $\ga \in \mathbb{R}$ and all $\la$ then $\left\{x_\lambda \right\}$ is strongly convergent to a self-adjoint element $x \in B\left(\H\right)$ with $\left\|x_\la\right\| \le \ga$.
 \end{lem}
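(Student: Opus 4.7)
The plan is to split the argument into three stages: establish weak convergence of the net, identify a candidate limit operator $x$, and then upgrade weak convergence to strong convergence using self-adjointness.

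First I would fix $\xi \in \H$ and observe that $\la \mapsto (x_\la \xi, \xi)$ is an increasing net of real numbers bounded above by $\ga \|\xi\|^2$, hence convergent in $\R$. By the polarization identity applied to each sesquilinear form $(x_\la \cdot, \cdot)$, this gives convergence of $(x_\la \xi, \eta)$ for every $\xi, \eta \in \H$. The assignment
\[
B(\xi, \eta) := \lim_\la (x_\la \xi, \eta)
\]
defines a sesquilinear form of norm at most $\ga$, so by the Riesz representation theorem there is a unique $x \in B(\H)$ with $(x\xi, \eta) = B(\xi, \eta)$; it satisfies $\|x\| \le \ga$, and self-adjointness of each $x_\la$ passes to $x$ in the limit.

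Next I would check $x_\la \le x$ for every $\la$. For fixed $\la$ and any $\mu \ge \la$ one has $((x_\mu - x_\la)\xi, \xi) \ge 0$; letting $\mu$ range yields $((x - x_\la)\xi, \xi) \ge 0$, so $x - x_\la$ is positive. To upgrade weak convergence to strong convergence I would use the standard trick for positive self-adjoint operators: for $y := x - x_\la \ge 0$ and any $\xi \in \H$,
\[
\|y\xi\|^2 = (y^2 \xi, \xi) \le \|y\| \cdot (y \xi, \xi) \le 2\ga \, ((x - x_\la)\xi, \xi),
\]
where the first inequality uses the Cauchy--Schwarz-type bound $y^2 \le \|y\| y$ valid for any positive operator. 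Since the right-hand side tends to zero by the already-established weak convergence, $x_\la \xi \to x \xi$ in norm.

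The main obstacle, and really the only subtle point, is the order-passage to the limit that yields $x - x_\la \ge 0$; this requires knowing that weak convergence preserves the inequality $((x_\mu - x_\la)\xi, \xi) \ge 0$ under the (directed) limit in $\mu$, which is immediate from the definition of $x$ via Riesz. All remaining steps---polarization, bounding the sesquilinear form, the Cauchy--Schwarz estimate $y^2 \le \|y\| y$---are routine. Note that the stated bound $\|x_\la\| \le \ga$ in the conclusion evidently refers to the limit $\|x\| \le \ga$, which is built into the construction.
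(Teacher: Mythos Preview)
Your proof is correct and is essentially the standard argument (often attributed to Vigier) that appears in Pedersen's book. The paper itself does not prove this lemma; it is stated with a citation to \cite{pedersen:ca_aut} and used as a black box, so there is no alternative argument in the paper to compare against.
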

 \paragraph*{}    For each $x\in B(\H)$ we define the {\it range projection} of $x$ (denoted by $[x]$) as projection on the closure of $x\H$. If $M$ is a von Neumann algebra and $x \in M$ then $[x]\in M$.
 
 \begin{prop}\label{polar_decomposition_prop}\cite{pedersen:ca_aut}
 	For each element $x$ in   a von Neumann algebra $M$ there is a unique partial isometry $u\in M$ and positive $\left|x\right| \in M_+$ with $uu^*=[|x|]$ and  $x=|x|u$.
 \end{prop}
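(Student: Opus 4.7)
The plan is to reduce to the usual Hilbert-space construction of the polar decomposition and then show the resulting partial isometry lies in $M$ via the bicommutant characterization from Theorem \ref{vN_thm}. First I would set $|x| := (xx^*)^{1/2}$, which lies in $M_+$ because $M$, being a $C^*$-subalgebra of $B(\H)$, is closed under the continuous functional calculus applied to the self-adjoint element $xx^*$. Write $p := [|x|]$ for the range projection, which also lies in $M$ since $M$ is a von Neumann algebra.

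Next I would construct $u$ by first building $u^*$. On the subspace $|x|\H$ define $u^*(|x|\xi) := x^*\xi$. This is well defined and isometric because
\begin{equation*}
\||x|\xi\|^2 = \langle |x|^2\xi,\xi\rangle = \langle xx^*\xi,\xi\rangle = \|x^*\xi\|^2,
\end{equation*}
so the map factors through $|x|\xi$ and is norm-preserving. Extend $u^*$ by continuity to $\overline{|x|\H} = p\H$ and by zero on $(1-p)\H$; the resulting operator is a partial isometry with initial projection $p$ and final projection $[x^*]$. Taking adjoints gives a partial isometry $u \in B(\H)$ with $uu^* = p = [|x|]$. The identity $x = |x|u$ then follows from
\begin{equation*}
\langle |x|u\xi,\eta\rangle = \langle \xi, u^*|x|\eta\rangle = \langle \xi, x^*\eta\rangle = \langle x\xi,\eta\rangle.
\end{equation*}

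The crucial step, and the one I expect to be the main obstacle, is showing $u \in M$. By Theorem \ref{vN_thm} it is enough to prove $u^* \in M''$, i.e.\ that $u^*$ commutes with every $y \in M'$. On a vector $|x|\xi \in |x|\H$ one computes $u^*y|x|\xi = u^*|x|y\xi = x^*y\xi = yx^*\xi = yu^*|x|\xi$, using that $y$ commutes with $|x|$ and with $x^*$ (since both lie in $M$). By continuity this extends to $p\H$. On the orthogonal complement $(1-p)\H$ the operator $u^*$ vanishes, and since $y$ commutes with $|x|$ it also commutes with $p$, so $y$ preserves $(1-p)\H$; hence both $u^*y\xi$ and $yu^*\xi$ vanish there. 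Therefore $u^*\in M''=M$.

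Finally, for uniqueness suppose $x = |x|u = |x|u'$ with $uu^* = u'u'^* = [|x|]$. Taking adjoints and multiplying gives $xx^* = |x|uu^*|x| = |x|\cdot[|x|]\cdot|x| = |x|^2$, which forces $|x|=(xx^*)^{1/2}$; this pins down $|x|$. For $u$, note $|x|(u-u')=0$, so $\mathrm{ran}(u-u')\subset \ker|x| = (1-p)\H$. On the other hand, $uu^*=u'u'^*=p$ implies $u = pu$ and $u' = pu'$, so $\mathrm{ran}(u-u')\subset p\H$. The intersection is $\{0\}$, and uniqueness follows.
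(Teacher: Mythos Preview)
Your argument is correct; this is the standard construction of the polar decomposition together with the bicommutant verification that $u\in M$. The paper itself does not supply a proof of this proposition but merely quotes it from \cite{pedersen:ca_aut}, so there is no ``paper's own proof'' to compare against beyond noting that Pedersen's treatment follows essentially the same route you take here.
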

 \begin{defn}\label{polar_decomposition_defn}
 	The formula $x=|x|u$ in the Proposition \ref{polar_decomposition_prop} is said to be the \textit{polar decomposition}.
 \end{defn}
 \begin{empt}\label{comm_gns_constr}
 	Any separable $C^*$-algebra $A$ has a state $\tau$ which induces a faithful GNS representation  \cite{murphy}. There is a $\mathbb{C}$-valued product on $A$ given by
 	\begin{equation*}
 	\left(a, b\right)=\tau\left(a^*b\right).
 	\end{equation*}
 	This product induces a product on $A/\mathcal{I}_\tau$ where $\mathcal{I}_\tau =\left\{a \in A \ | \ \tau(a^*a)=0\right\}$. So $A/\mathcal{I}_\tau$ is a pre-Hilbert space. Let denote by $L^2\left(A, \tau\right)$ the Hilbert  completion of $A/\mathcal{I}_\tau$.  The Hilbert space  $L^2\left(A, \tau\right)$ is a space of a  GNS representation  $A\to B\left(L^2\left(A, \tau\right) \right) $. Also there is the natural $\C$-linear map
 	\be\label{from_a_to_l2_eqn}
 	\begin{split}
 		A \to L^2\left(A, \tau\right),\\
 		a \mapsto a + \mathcal{I}_\tau.
 		\end{split}
 	\ee
 	The image of $A$ is a dense subspace of $L^2\left(A, \tau\right)$.
 \end{empt}

 \subsection{Connections}
 \begin{defn}\cite{connes:ncg94}
 	\begin{enumerate}
 		\item [(a)]  A \textit{cycle} of dimension $n$ is a triple $\left(\Om, d, \int \right)$ where $\Om = \bigoplus_{j=0}^n\Om^j$  is a graded algebra over $\C$, $d$ is a graded derivation of degree 1 such that $d^2=0$, and $\int :\Om^n \to \C$ is a closed graded trace on $\Om$,
 		\item[(b)] Let $\A$  be an algebra over $\C$. Then a \textit{cycle over} $\A$ is given by a cycle $\left(\Om, d, \int \right)$	and a homomorphism $\A \to \Om^0$.
 	\end{enumerate}
 \end{defn}
 \begin{defn}\label{conn_defn}\cite{connes:ncg94}
 	Let $\A\xrightarrow{\rho} \Om$ be a cycle over $\A$, and $\E$ a finite projective module over $\A$.
 	Then a \textit{connection} $\nabla$ on $\E$ is a linear map  $\nabla: \E \to \E \otimes_{\A} \Om^1$ such that
 	\be\label{conn_prop_eqn}
 	\nabla\left(\xi x \right) =  \nabla\left(\xi \right) x =  \xi \otimes d\rho\left(x \right) ; ~ \forall \xi \in \E, ~ \forall x \in \A.
 	\ee
 \end{defn}
 Here $\E$ is a right module over $\A$ and $\Om^1$ is considered as a bimodule over $\A$.
 
 \begin{prop}\label{conn_prop}\cite{connes:ncg94}
 	Following conditions hold:	
 	\begin{enumerate} 
 		\item[(a)] 	Let $e \in \End_{\A}\left( \E\right)$ be an idempotent and $\nabla$ is a connection on $\E$; then 
 		\be\label{idem_conn}
 		\xi \mapsto \left(e \otimes 1 \right) \nabla \xi
 		\ee
 		is a connection on $e\E$,
 		\item[(b)] Any finite projective module $\E$ admits a connection,
 		\item[(c)]  The space of connections is an affine space over the vector space $\Hom_{\sA}\left(\E, \E \otimes_{\A} \Om^1 \right)$, 
 		\item[(d)] Any connection $\nabla$ extends uniquely up to a linear map of  $\widetilde{\mathcal E}= \mathcal E \otimes_{\A} \Om$ into itself
 		such that
 		\be
 		\nabla\left(\xi \otimes \om \right) = \nabla\left(\xi \right) \om + \xi \otimes d\om; ~~\forall \xi \in \mathcal E, ~ \om \in \Om. 
 		\ee
 	\end{enumerate}
 \end{prop}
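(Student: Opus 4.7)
The plan is to treat the four parts in order, since each successive claim builds on the earlier ones.

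For part (a), I would simply verify the defining property \eqref{conn_prop_eqn} for the candidate map $\nabla' \colon e\mathcal{E} \to e\mathcal{E} \otimes_{\sA} \Om^1$, $\xi \mapsto (e \otimes 1)\nabla\xi$. Since $\xi \in e\mathcal{E}$ means $e\xi = \xi$, for $x \in \sA$ I would compute
\[
\nabla'(\xi x) = (e\otimes 1)\bigl(\nabla(\xi)x + \xi \otimes d\rho(x)\bigr) = \nabla'(\xi)x + \xi \otimes d\rho(x),
\]
which gives the Leibniz rule. The image lies in $e\mathcal{E} \otimes_{\sA} \Om^1$ by construction.

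For part (b), the strategy is to exhibit a connection on a free module and then invoke (a). On $\sA^n$ define the componentwise Grassmann connection $\nabla_0(a_1,\dots,a_n) = (d\rho(a_1),\dots, d\rho(a_n)) \in \sA^n \otimes_{\sA} \Om^1$; the Leibniz rule for $d\rho$ gives \eqref{conn_prop_eqn} immediately. Since $\mathcal{E}$ is finite projective it is isomorphic to $e\sA^n$ for some idempotent $e \in \End_{\sA}(\sA^n)$, and by part (a) the formula $\xi \mapsto (e\otimes 1)\nabla_0(\xi)$ supplies a connection on $\mathcal{E}$.

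For part (c), given two connections $\nabla_1, \nabla_2$, the Leibniz terms cancel in $(\nabla_1 - \nabla_2)(\xi x)$, so $\nabla_1 - \nabla_2$ is $\sA$-linear, hence lies in $\Hom_{\sA}(\mathcal{E}, \mathcal{E}\otimes_{\sA}\Om^1)$; conversely adding any such $\sA$-linear map to a connection yields another connection, so the set of connections is a torsor over this $\Hom$-space. Combined with (b), it is nonempty, hence an affine space.

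For part (d), the extension is forced on elementary tensors by
\[
\widetilde{\nabla}(\xi \otimes \om) \;=\; \nabla(\xi)\,\om \,+\, \xi \otimes d\om,
\]
giving uniqueness. The main thing to check is well-definedness on $\mathcal{E}\otimes_{\sA}\Om$: for $a \in \sA$ the two expansions must agree, i.e.\ $\widetilde\nabla(\xi a \otimes \om) = \widetilde\nabla(\xi \otimes \rho(a)\om)$. Expanding the left side via \eqref{conn_prop_eqn} produces $\nabla(\xi)a\om + \xi \otimes d\rho(a)\cdot\om + \xi a \otimes d\om$, while expanding the right side using the graded Leibniz rule for $d$ on $\Om$ produces $\nabla(\xi)\rho(a)\om + \xi \otimes d\rho(a)\cdot\om + \xi \otimes \rho(a)\,d\om$; the two sides match using the $\sA$-bimodule structure of $\Om$. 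Once well-defined on degree one, one extends inductively to all of $\widetilde{\mathcal E} = \mathcal{E}\otimes_{\sA}\Om$ by the same formula, and the graded Leibniz identity on $\widetilde\nabla$ follows by bilinearity. The only subtle step is this compatibility with the $\sA$-tensor relation in degree one; all other verifications are routine.
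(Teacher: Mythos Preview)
The paper does not supply its own proof of this proposition: it is quoted verbatim from \cite{connes:ncg94} and used as a black box, so there is nothing in the paper to compare your argument against. Your proof is the standard one and is correct in all four parts. One small remark on (d): no induction is needed---the formula $\widetilde\nabla(\xi\otimes\om)=\nabla(\xi)\om+\xi\otimes d\om$ is already defined for $\om$ of arbitrary degree, and the single well-definedness check you wrote (over the relation $\xi a\otimes\om=\xi\otimes\rho(a)\om$, using that $\rho(a)$ has degree~$0$ so the graded Leibniz sign is~$+1$) suffices for all of $\Om$ at once.
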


 \subsection{Finite Galois coverings}\label{fin_gal_cov_sec}
 \paragraph*{} Here I follow to \cite{auslander:galois}. Let $A \hookto \widetilde{A}$ be an injective homomorphism of unital algebras, such that
 \begin{itemize}
 	\item $\widetilde{A}$ is a projective finitely generated $A$-module,
 	\item There is an action $G \times \widetilde{A} \to \widetilde{A}$ of a finite group $G$ such that $$A = \widetilde{A}^G=\left\{\widetilde{a}\in \widetilde{A}~|~g\widetilde{a}=\widetilde{a}; ~\forall g \in G\right\}.$$
 \end{itemize}
 Let us consider the category $\mathscr{M}^G_{\widetilde{A}}$ of $G-\widetilde{A}$ modules, i.e.  any object $M \in \mathscr{M}^G_{\widetilde{A}}$ is a $\widetilde{A}$-module with equivariant action of $G$, i.e. for any $m \in M$ a following condition holds
 $$
 g\left(\widetilde{a}m \right)=  \left(g\widetilde{a} \right) \left(gm \right) \text{ for any } \widetilde{a} \in \widetilde{A}, ~ g \in G.
 $$
 Any morphism $\varphi: M \to N$ in the category $\mathscr{M}^G_{\widetilde{A}}$ is $G$- equivariant, i.e.
 $$
 \varphi\left( g m\right)= g \varphi\left( m\right)   \text{ for any } m \in M, ~ g \in G.
 $$
 Let $\widetilde{A}\left[ G\right]$ be an algebra such that $\widetilde{A}\left[ G\right] \approx \widetilde{A}\times G$ as an Abelian group and a multiplication law is given by
 $$
 \left( a, g\right)\left( b, h\right) =\left(a\left(gb \right), gh  \right).
 $$
 The category $\mathscr{M}^G_{\widetilde{A}}$ is equivalent to the category $\mathscr{M}_{\widetilde{A}\left[ G\right]}$ of $\widetilde{A}\left[ G\right]$ modules. Otherwise in \cite{auslander:galois} it is proven that if $\widetilde{A}$ is a finitely generated, projective $A$-module then there is an  equivalence between a category $\mathscr{M}_{A}$ of $A$-modules and the category $\mathscr{M}_{\widetilde{A}\left[ G\right]}$. It turns out that the category $\mathscr{M}^G_{\widetilde{A}}$ is equivalent to the category $\mathscr{M}_{A}$.

\subsection{Spectral triples}
 
 \paragraph{}
 This section contains citations of  \cite{hajac:toknotes}. 
 \subsubsection{Definition of spectral triples}
 \begin{defn}
 	\label{df:spec-triple}\cite{hajac:toknotes}
 	A (unital) {\it {spectral triple}} $(\A, \H, D)$ consists of:
 	\begin{itemize}
 		\item
 		a pre-$C^*$-algebra $\A$ with an involution $a \mapsto a^*$, equipped
 		with a faithful representation on:
 		\item
 		a \emph{Hilbert space} $\H$; and also
 		\item
 		a \emph{selfadjoint operator} $D$ on $\mathcal{H}$, with dense domain
 		$\Dom D \subset \H$, such that $a(\Dom D) \subseteq \Dom D$ for all 
 		$a \in \mathcal{A}$.
 	\end{itemize}
  \end{defn}
There is a set of axioms for  spectral triples described in \cite{hajac:toknotes,varilly:noncom}. In this article the regularity axiom is used only.
\begin{axiom}\label{regularity_axiom}\cite{varilly:noncom}(Regularity) 
For any $a \in \A$, $[D,a]$ is a bounded operator on~$\H$, and both
$a$ and $[D,a]$ belong to the domain of smoothness
$\bigcap_{k=1}^\infty \Dom(\delta^k)$ of the derivation $\delta$
on~$B(\H)$ given by $\delta(T) \stackrel{\mathrm{def}}{=} [\left|D\right|,T]$.
\end{axiom}
 
 \begin{lem}
	\label{lm:proj-approx}\cite{hajac:toknotes}
	Let $\sA$ be an unital Fr\'echet pre-$C^*$-algebra, whose
	$C^*$-completion is~$A$. If $\tilde{q} = \tilde{q}^2 = \tilde{q}^*$ is
	a projection in $A$, then for any $\eps > 0$, we can find a projection
	$q = q^2 = q^* \in \sA$ such that $\|q - \tilde{q}\| < \eps$.
\end{lem}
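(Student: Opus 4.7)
The plan is to exploit two facts about pre-$C^*$-algebras: density of $\sA$ in $A$, and stability of $\sA$ under holomorphic functional calculus (which is part of the standard definition of a pre-$C^*$-algebra, and in particular is guaranteed for unital Fr\'echet pre-$C^*$-algebras). The strategy is: approximate $\tilde q$ by a self-adjoint element $a\in\sA$, observe that $a$ is \emph{almost} a projection so that its spectrum clusters near $\{0,1\}$, and then cut with a holomorphic function to produce a genuine projection inside $\sA$.

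More concretely, first I would use density to pick $b\in\sA$ with $\|b-\tilde q\|<\delta$ for a $\delta>0$ to be fixed later (small compared to $\eps$). Replacing $b$ by $a=\tfrac12(b+b^*)\in\sA$ keeps the bound $\|a-\tilde q\|<\delta$ since $\tilde q^*=\tilde q$, and now $a=a^*$. From $\tilde q^2=\tilde q$ one computes
\begin{equation*}
\|a^2-a\|\le \|a^2-\tilde q^2\|+\|\tilde q-a\|\le (\|a\|+\|\tilde q\|)\delta+\delta,
\end{equation*}
which is small. Since $a$ is self-adjoint, this forces $\mathrm{sp}(a)\subset (-r,r)\cup(1-r,1+r)$ for an $r=r(\delta)\to 0$ as $\delta\to 0$: indeed if $\lambda\in\mathrm{sp}(a)$ then $\lambda^2-\lambda\in\mathrm{sp}(a^2-a)$, so $|\lambda^2-\lambda|\le\|a^2-a\|$, which confines $\lambda$ to a small neighbourhood of the roots $0$ and $1$.

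Next I would choose a function $f$ that is holomorphic on an open set $U\supset\mathrm{sp}(a)$ (take $U$ to be two small disjoint open discs centred at $0$ and $1$) and that equals $0$ on the disc around $0$ and $1$ on the disc around $1$. Then $f^2=f$ on $U$, so $q:=f(a)$ (defined via the Riesz holomorphic functional calculus in $A$) satisfies $q^2=q$ and $q^*=q$ because $f$ is real on the real part of $U$ and $a$ is self-adjoint. Crucially, a pre-$C^*$-algebra is stable under holomorphic functional calculus, so $q\in\sA$. Finally, since $f$ is uniformly close to the identity on $\mathrm{sp}(\tilde q)\subset\{0,1\}$ and $f(\tilde q)=\tilde q$, the continuity of the holomorphic functional calculus (or alternatively a direct contour estimate using the resolvents of $a$ and $\tilde q$) gives
\begin{equation*}
\|q-\tilde q\|=\|f(a)-f(\tilde q)\|\le C\,\|a-\tilde q\|<C\delta,
\end{equation*}
where $C$ depends only on the fixed contour. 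Choosing $\delta<\eps/C$ at the start completes the construction.

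The main technical point, and the step I would be most careful about, is the invocation of holomorphic functional calculus inside $\sA$: one must check that the hypothesis ``Fr\'echet unital pre-$C^*$-algebra'' is strong enough that the Riesz integral $\tfrac{1}{2\pi i}\oint_\Gamma f(z)(z-a)^{-1}\,dz$ converges in the Fr\'echet topology of $\sA$ and yields an element of $\sA$. This is exactly the spectral-invariance property built into the notion of a pre-$C^*$-algebra, so once that is quoted, the estimates above are routine. The rest of the argument is just choosing $\delta$ small enough that both the location of $\mathrm{sp}(a)$ and the final norm bound lie within the prescribed tolerance $\eps$.
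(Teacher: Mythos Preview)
Your argument is correct and is exactly the standard one: approximate by a self-adjoint element of $\sA$, use the spectral gap forced by $\|a^2-a\|$ being small, and apply holomorphic functional calculus with the locally constant function $f$ to land back in $\sA$. The paper does not actually supply a proof of this lemma; it is quoted from \cite{hajac:toknotes} as a known result, and the proof there follows the same route you describe.
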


\subsubsection{Representations of  smooth algebras}\label{s_repr}
 
 \paragraph*{}
Let $(\A, \H, D)$ be a spectral triple. Similarly to \cite{bram:atricle} we  define a representation of $\pi^1:\A \to B(\H^2)$ given by
 \begin{equation}\label{s_diff1_repr_equ}
 \pi^1(a) =  \begin{pmatrix} a & 0\\
 [D,a] & a\end{pmatrix}.
 \end{equation}
 We can inductively construct  representations $\pi^s: \A \to B\left(\H^{2^s}\right)$ for any $s \in \mathbb{N}$. If $\pi^s$ is already constructed then  $\pi^{s+1}: \A \to B\left(\H^{2^{s+1}}\right)$ is given by
 \begin{equation}\label{s_diff_repr_equ}
 \pi^{s+1}(a) =  \begin{pmatrix}  \pi^{s}(a) & 0 \\ \left[D,\pi^s(a)\right] &  \pi^s(a)\end{pmatrix}
 \end{equation}
 where we assume diagonal action of $D$ on $\H^{2^s}$, i.e.
 \begin{equation*}
 D \begin{pmatrix} x_1\\ ... \\ x_{2^s}
 \end{pmatrix}= \begin{pmatrix} D x_1\\ ... \\ D x_{2^s}
 \end{pmatrix}; \ x_1,..., x_{2^s}\in \H.
 \end{equation*}
 For any $s \in \N^0$ there is a seminorm $\left\|\cdot \right\|_s$  on $\A$ given by
 \begin{equation}\label{s_semi_eqn}
\left\|a \right\|_s = \left\| \pi^{s}(a) \right\|.
 \end{equation}
 The definition of spectral triple requires that $\A$ is a Fr\'echet algebra with respect to seminorms $\left\|\cdot \right\|_s$.

\subsubsection{Noncommutative differential forms}\label{ass_cycle_sec}
 \paragraph*{} 
 Any spectral triple $\left( \A, \H, D\right)$  naturally defines a cycle $\rho : \A \to \Om_D$ (cf. Definition \ref{conn_defn}). 
 In particular for any spectral triple there is an $\A$-bimodule $\Om^1_D\subset B\left(\H \right) $ of differential forms which is the $\C$-linear span of operators given by
 \begin{equation}\label{dirac_d_module}
 a\left[D, b \right];~a,b \in \A.
 \end{equation}
 There is the differential map
 \begin{equation}\label{diff_map}
 \begin{split}
 d: \A \to \Om^1_D, \\
 a \mapsto \left[D, a \right].
 \end{split}
 \end{equation}

 \begin{definition}\label{ass_cycle_defn}
 We say that that both the cycle $\rho : \A \to \Om_D$ and the differential \eqref{diff_map} are \textit{associated} with the triple  $\left( \A, \H, D\right)$. We say that  $\A$-bimodule $\Om^1_D$ is the \textit{module of differential forms associated} with the spectral triple  $\left( \A, \H, D\right)$.
 \end{definition}
In case of associated cycles the connection equation \eqref{conn_prop_eqn} has the following form
\be\label{conn_triple_eqn}
\nabla \left(\xi a \right) = \nabla \left(\xi \right) a + \xi \left[ D,a\right] .
\ee

   \section{Noncommutative finite-fold coverings}
   \subsection{Basic construction}

   \begin{definition}
   	If $A$ is a $C^*$- algebra then an action of a group $G$ is said to be {\it involutive } if $ga^* = \left(ga\right)^*$ for any $a \in A$ and $g\in G$. The action is said to be \textit{non-degenerated} if for any nontrivial $g \in G$ there is $a \in A$ such that $ga\neq a$. 
   \end{definition}
   \begin{definition}\label{fin_def_uni}
   	Let $A \hookto \widetilde{A}$ be an injective *-homomorphism of unital $C^*$-algebras. Suppose that there is a non-degenerated involutive action $G \times \widetilde{A} \to \widetilde{A}$ of a finite group $G$, such that $A = \widetilde{A}^G\stackrel{\text{def}}{=}\left\{a\in \widetilde{A}~|~ a = g a;~ \forall g \in G\right\}$. There is an $A$-valued product on $\widetilde{A}$ given by
   	\begin{equation}\label{finite_hilb_mod_prod_eqn}
   	\left\langle a, b \right\rangle_{\widetilde{A}}=\sum_{g \in G} g\left( a^* b\right) 
   	\end{equation}
   	and $\widetilde{A}$ is an $A$-Hilbert module. We say that a triple $\left(A, \widetilde{A}, G \right)$ is an \textit{unital noncommutative finite-fold  covering} if $\widetilde{A}$ is a finitely generated projective $A$-Hilbert module.
   \end{definition}
   \begin{remark}
   	Above definition is motivated by the Theorem \ref{pavlov_troisky_thm}.
   \end{remark}
   \begin{definition}\label{fin_comp_def}
   	Let $A$, $\widetilde{A}$ be $C^*$-algebras and let  $A \hookto \widetilde{A}$ be an inclusion such  that following conditions hold:
   	\begin{enumerate}
   		\item[(a)] 
   		There are unital $C^*$-algebras $B$, $\widetilde{B}$  and inclusions 
   		$A \subset B$,  $\widetilde{A}\subset \widetilde{B}$ such that $A$ (resp. $B$) is an essential ideal of $\widetilde{A}$ (resp. $\widetilde{B}$) and $A = B\bigcap \widetilde{A}$,
   		\item[(b)] There is an unital  noncommutative finite-fold covering $\left(B ,\widetilde{B}, G \right)$,
   		\item[(c)] $G\widetilde{A} = \widetilde{A}$.
   	\end{enumerate}
   	
   	The triple $\left(A, \widetilde{A},G \right)$ is said to be a \textit{noncommutative finite-fold covering with compactification}. 
   \end{definition}
     \begin{remark}
   	Any unital noncommutative finite-fold covering is a noncommutative finite-fold covering with compactification.
   \end{remark}
   \begin{definition}\label{fin_def}
   	Let $A$, $\widetilde{A}$ be $C^*$-algebras, $A\hookto\widetilde{A}$ an injective *-homomorphism and $G\times \widetilde{A}\to \widetilde{A}$ an involutive non-degenerated action of a finite group $G$  such  that following conditions hold:
   	\begin{enumerate}
   		\item[(a)] 
   		$A \cong \widetilde{A}^G \stackrel{\mathrm{def}}{=} \left\{a\in \widetilde{A}  ~|~ Ga = a \right\}$,
   		\item[(b)] 
   		There is a family $\left\{\widetilde{I}_\la \subset \widetilde{A} \right\}_{\la \in \La}$ of closed ideals of $\widetilde{A}$ such that 
   		\be\label{gi-i}
   		G\widetilde{I}_\la = \widetilde{I}_\la.
   		\ee
   		Moreover $\bigcup_{\la \in \La} \widetilde{I}_\la$ (resp. $\bigcup_{\la \in \La} \left( A \bigcap \widetilde{I}_\la\right) $ ) is a dense subset of $\widetilde{A}$ (resp. $A$), and for any $\la \in \La$ there is a natural  noncommutative finite-fold covering with compactification $\left(\widetilde{I}_\la \bigcap A, \widetilde{I}_\la , G \right)$.  
   	\end{enumerate}
   	We say that the triple  $\left(A, \widetilde{A},G \right)$ is a \textit{noncommutative finite-fold covering}.
   \end{definition}

   \begin{remark}
   	The Definition \ref{fin_def} is motivated by the Theorem \ref{comm_fin_thm}.
   \end{remark}
   \begin{remark}
   	Any noncommutative finite-fold covering with compactification is a  noncommutative finite-fold covering.
   \end{remark}
   \begin{definition}
   	The injective *-homomorphism $A \hookto \widetilde{A}$ 
   	from the Definition \ref{fin_def}
   	is said to be a \textit{noncommutative finite-fold covering}.
   \end{definition}
   \begin{definition}\label{hilbert_product_defn}
   	Let $\left(A, \widetilde{A}, G\right)$ be a    noncommutative finite-fold covering.  The algebra  $\widetilde{A}$  is a Hilbert $A$-module with an $A$-valued  product given by
   	\begin{equation}\label{fin_form_a}
   	\left\langle a, b \right\rangle_{\widetilde{A}} = 
   	\sum_{g \in G} g(a^*b); ~ a,b \in \widetilde{A}.
   	\end{equation}
   	We say that this structure of Hilbert $A$-module is {\it induced by the covering} $\left(A, \widetilde{A}, G\right)$. Henceforth we shall consider $\widetilde{A}$ as a right $A$-module, so we will write $\widetilde{A}_A$. 
   \end{definition}

   
   The group $G$ is said to be the \textit{covering transformation group} (of $\left(A, \widetilde{A},G \right)$ ) and we use the following notation
   \begin{equation}\label{group_cov_eqn}
   G\left(\widetilde{A}~|~A \right) \stackrel{\mathrm{def}}{=} G.
   \end{equation}
   \subsection{Induced representation}\label{induced_repr_fin_sec}
   
   \begin{empt}\label{induced_repr_constr}
   	Let $\left(A, \widetilde{A}, G\right)$ be a noncommutative finite-fold covering, and let $\rho: A \to B\left(\H\right)$ be a representation. If $X=\widetilde{A}\otimes_A \H$ is the algebraic tensor product then there is a sesquilinear $\C$-valued product $\left(\cdot, \cdot\right)_{X}$ on $X$  given by
   	\begin{equation}\label{induced_prod_equ}
   	\left(a \otimes \xi, b \otimes \eta \right)_{X}= 
   	\left(\xi, 	\left\langle a, b \right\rangle_{\widetilde{A}} \eta\right)_{\H}
   	\end{equation}
   	where $ \left(\cdot, \cdot\right)_{\H}$ means the Hilbert space product on $\H$, and $\left\langle \cdot, \cdot \right\rangle_{\widetilde{A}}$ is given by \eqref{fin_form_a}. So $X$ is a pre-Hilbert space. There is a natural map $\widetilde{A} \times \left( \widetilde{A}\otimes_A \H \right)\to \widetilde{A}\otimes_A \H$ given by
   \be\label{ind_act_form}
   \begin{split}
   	\widetilde{A} \times \left( \widetilde{A}\otimes_A \H \right)\to \widetilde{A}\otimes_A \H,\\
   	(a, b \otimes \xi) \mapsto ab \otimes \xi.
   \end{split}
    \ee
   \end{empt}
   
   \begin{defn}\label{induced_repr_defn}
   	
   	Use notation of the Definition \ref{hilbert_product_defn}, and \ref{induced_repr_constr}.
   	If $\widetilde{\H}$ is the Hilbert completion of  $X=\widetilde{A}\otimes_A \H$ then the map \eqref{ind_act_form} induces the representation $\widetilde{\rho}: \widetilde{A} \to B\left( \widetilde{\H} \right)$. We say that $\widetilde{\rho}$ \textit{is induced by the pair} $\left(\rho,\left(A, \widetilde{A}, G\right)  \right)$.  
   \end{defn}
   \begin{rem}
   	Below any $\widetilde a \otimes \xi\in\widetilde{A}\otimes_A \H$ will be regarded as element in $\widetilde{\H}$.
   \end{rem}
   \begin{lem}\cite{ivankov:qnc}
   	If $A \to B\left(\H \right) $ is faithful then $\widetilde{\rho}: \widetilde{A} \to B\left( \widetilde{\H} \right)$ is faithful. 
   \end{lem}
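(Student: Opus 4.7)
My plan is to argue by contradiction (or contrapositive) directly from the construction of the inner product on $X=\widetilde{A}\otimes_A \H$. Suppose $\widetilde{a}\in\widetilde{A}$ satisfies $\widetilde{\rho}(\widetilde{a})=0$. Since simple tensors $b\otimes \xi$ are dense in $\widetilde{\H}$, this forces $\widetilde{a}b\otimes\xi = 0$ in $\widetilde{\H}$ for every $b\in\widetilde{A}$ and $\xi\in\H$, and in particular the $\widetilde{\H}$-norm of that vector vanishes. Since the $\widetilde{\H}$-norm on elements coming from $X$ is computed by the sesquilinear form \eqref{induced_prod_equ}, this means
\[
\bigl(\xi,\,\rho(\langle \widetilde{a}b,\widetilde{a}b\rangle_{\widetilde{A}})\,\xi\bigr)_{\H}=0
\qquad\text{for all }\xi\in\H.
\]

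Next I would unpack the $A$-valued inner product using \eqref{fin_form_a}, obtaining $\langle \widetilde{a}b,\widetilde{a}b\rangle_{\widetilde{A}}=\sum_{g\in G}g(b^*\widetilde{a}^*\widetilde{a}b)$. This element is positive in $A$ (hence self-adjoint), so the vanishing of $(\xi,\rho(\cdot)\xi)_\H$ for every $\xi$ implies $\rho\bigl(\sum_{g\in G} g(b^*\widetilde{a}^*\widetilde{a}b)\bigr)=0$; by faithfulness of $\rho$ we conclude
\[
\sum_{g\in G} g(b^*\widetilde{a}^*\widetilde{a}b)=0\qquad\text{in } A.
\]

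Now I would use the key positivity observation: each summand $g(b^*\widetilde{a}^*\widetilde{a}b)=\bigl(g(\widetilde{a})g(b)\bigr)^*\bigl(g(\widetilde{a})g(b)\bigr)$ is a positive element of $\widetilde{A}$, so a vanishing sum of positive elements in a $C^*$-algebra forces each one to vanish. Taking $g=e$ yields $b^*\widetilde{a}^*\widetilde{a}b=0$, hence $\widetilde{a}b=0$ for every $b\in\widetilde{A}$. Choosing $b=\widetilde{a}^*$ gives $\widetilde{a}\widetilde{a}^*=0$, and the $C^*$-identity $\|\widetilde{a}\|^2=\|\widetilde{a}\widetilde{a}^*\|$ forces $\widetilde{a}=0$, establishing faithfulness of $\widetilde{\rho}$.

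The argument is essentially mechanical: there is no real obstacle beyond keeping track of which side of the tensor product carries the module structure and remembering that $\widetilde{A}$ need not be unital (so the last step of the argument uses $b=\widetilde{a}^*$ rather than $b=1$). The only conceptual point to highlight is the passage from ``$\widetilde{\rho}(\widetilde{a})(b\otimes\xi)=0$ in $\widetilde{\H}$'' to ``$(\widetilde{a}b\otimes\xi,\widetilde{a}b\otimes\xi)_X=0$'', which is valid because the $\widetilde{\H}$-norm restricted to the image of $X$ is exactly the semi-norm induced by $(\cdot,\cdot)_X$ of \ref{induced_repr_constr}.
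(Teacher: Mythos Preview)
The paper does not give its own proof of this lemma; it is quoted from \cite{ivankov:qnc} without argument. Your proof is correct and is exactly the standard route: vanishing of $\widetilde{\rho}(\widetilde{a})$ on simple tensors forces $\rho\bigl(\langle \widetilde{a}b,\widetilde{a}b\rangle_{\widetilde{A}}\bigr)=0$, faithfulness of $\rho$ kills the $A$-valued inner product, and positivity of the individual summands $g(b^*\widetilde{a}^*\widetilde{a}b)$ in $\widetilde{A}$ then forces $\widetilde{a}b=0$ for all $b$.

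Two small remarks. First, your caution about non-unitality is appropriate for the general Definition~\ref{fin_def}, but note that the spectral-triple applications later in the paper work with the unital Definition~\ref{fin_def_uni}, where $b=1_{\widetilde{A}}$ is available. Second, the passage ``$\rho(p)\xi$ has zero inner product with $\xi$ for all $\xi$, hence $\rho(p)=0$'' uses that $p=\langle \widetilde{a}b,\widetilde{a}b\rangle_{\widetilde{A}}\ge 0$ in $A$, so $\rho(p)\ge 0$ and a positive operator with vanishing quadratic form is zero; you state this but it is worth making explicit that positivity (not just self-adjointness) is what is used here.
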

   
   \begin{empt}
   	Let $\left(A, \widetilde{A}, G\right)$ be a  noncommutative finite-fold covering, let $\rho: A \to B\left(\H \right)$ be a faithful non-degenerated representation, and let  $\widetilde{\rho}: \widetilde{A} \to B\left( \widetilde{\H} \right)$ is induced by the pair $\left(\rho,\left(A, \widetilde{A}, G\right)  \right)$. There is the natural action of $G$ on $\widetilde{\H}$ induced by the map
   	$$
   	g \left( \widetilde{a} \otimes \xi\right)  = \left( g\widetilde{a} \right) \otimes \xi; ~ \widetilde{a} \in \widetilde{A}, ~ g \in G, ~ \xi \in \H. 
   	$$
   	There is the natural orthogonal inclusion 
   	\be\label{hilb_fin_inc_eqn}
   	\H \hookto \widetilde{\H}
   	\ee
   	 induced by inclusions
   	$$
   	A \subset\widetilde{A}; ~~ A \otimes_A \H \subset\widetilde{A} \otimes_A \H.
   	$$
   	If $\widetilde{A}$ is an unital $C^*$-algebra then the inclusion \eqref{hilb_fin_inc_eqn} is given by
   	\be\label{hilb_fin_inc_map_eqn}
   	\begin{split}
   	\varphi:		\H \hookto \widetilde{\H},\\
   		\xi \mapsto 1_{\widetilde{A}} \otimes \xi
   	\end{split}
   	\ee
   	where $1_{\widetilde{A}} \otimes \xi\in \widetilde{A} \otimes_A \H$ is regarded as element of $\widetilde{\H}$. 
   The inclusion \eqref{hilb_fin_inc_map_eqn} is not isometric. From 
   $$
   \left\langle 1_{\widetilde{A}}, 1_{\widetilde{A}} \right\rangle = \sum_{g \in G\left({\widetilde{A}}~|~A \right)} g1^2_{\widetilde{A}} = \left|G\left(\widetilde{A}~|~A \right) \right| 1_{A}
   $$
   it turns out
   \be
   \left(\xi, \eta \right)_{\H}= \frac{1}{\left|G\left(\widetilde{A}~|~A \right) \right|}\left( 	\varphi\left(\xi \right), \varphi\left(\eta \right)\right)_{\widetilde{\H}}; ~~ \forall \xi, \eta \in \H 
   \ee

   	Action of $g\in G\left({\widetilde{A}}~|~A \right)$ on $\widetilde{A}$ can be defined by representation as $g \widetilde{a} = g \widetilde{a} g^{-1}$, i.e.
   	$$
   	(g\widetilde{a}) \xi = g\left(\widetilde{a} \left( g^{-1}\xi \right)  \right);~ \forall \xi \in \widetilde{\H}.
   	$$
   	  	   \end{empt}
   
   	

   \begin{defn}\label{mult_G_act_defn} If $M\left(\widetilde{A} \right)$ is the multiplier algebra of $\widetilde{A}$ then there is the natural action of $G$ on $M\left(\widetilde{A} \right)$ such that for any $\widetilde{a}\in M\left(\widetilde{A} \right)$, $\widetilde{b}\in\widetilde{A}$ and $g \in G$ a following condition holds
   	$$
   	\left(g \widetilde{a} \right)\widetilde{b} = g\left(\widetilde{a} \left( g^{-1}\widetilde{b} \right)  \right)
   	$$
   	We say that action of $G$ on $M\left(\widetilde{A} \right)$ is \textit{induced} by the action  of $G$ on $\widetilde{A}$.
   \end{defn}

 \subsection{Coverings of spectral triples}\label{triple_fin_cov}

 \paragraph*{}
 
 	Let  $\left( \A, \H, D\right)$ 
 	be a spectral triple, and let $A$ is the $C^*$-norm completion of $\A$. Let $\left(A, \widetilde{A}, G \right)$ be an unital noncommutative finite-fold covering. Let $\rho: A \to B\left(\H \right)$ be a natural representation given by the spectral triple $\left( \A, \H, D\right)$, and let $\widetilde{\rho}: \widetilde{A} \to B\left( \widetilde{\H}\right)$ be a representation induced by the pair $\left( \rho,\left(A, \widetilde{A}, G \right) \right)$.  
 	The algebra $\widetilde{A}$ is a finitely generated projective $A$-module, it turns out following direct sum
 	$$
 	\widetilde{A} \bigoplus Q \cong A^n.
 	$$
 	of $A$-modules. So there is a projector $p \in \mathbb{M}_n\left(A \right)$ such that
 	$
 		\widetilde{A} \cong pA^n
 	$
 	as $A$-module.  $\A$ is dense in $A$ and $\A$ is closed with respect homomorphic calculus, it turns out that there is a projector $\widetilde{p} \in \mathbb{M}_n\left(\A \right)$ such that $\left\| \widetilde{p} - p\right\|  < 1$, so one has
 	$$
 		\widetilde{A} \cong	\widetilde{p}A^n.
 	$$
 	From $	\widetilde{A} \subset \End_A\left(	\widetilde{A} \right)$ and  $\End_A\left(	\widetilde{A} \right) = \widetilde{p}~\mathbb{M}_n\left(A \right)\widetilde{p} \subset \mathbb{M}_n\left(A \right)$ it follows that there is the following inclusion of $C^*$-algebras
 	$
 	\widetilde{A} \subset \mathbb{M}_n\left(A \right)
 	$. Both $\widetilde{A}$ and $\mathbb{M}_n\left(A \right)$ are finitely generated projective $A$ modules, it turns out that there is an $A$-module $P$ such that
 	$$
 	\widetilde{A} \bigoplus P \cong	\mathbb{M}_n\left(A \right) .
 	$$
 	Taking into account inclusions  $\widetilde{A} \subset \mathbb{M}_n\left(A \right)$ and $\mathbb{M}_n\left(\A \right) \subset \mathbb{M}_n\left(A \right)$ one can define the intersection of algebras
 \begin{equation}\label{a_smooth_eqn}
 	\widetilde{\A}= \widetilde{A} \bigcap \mathbb{M}_n\left(\A \right).
 \end{equation}
 From \cite{varilly_bondia} it turns out that $\mathbb{M}_n\left(\A \right)$ is closed with respect to holomorphic functional calculus.
 	Both $\mathbb{M}_n\left(\A \right)$ and $\widetilde{A}$ are closed with respect to holomorphic functional calculus, so $\widetilde{\A}$ is closed with respect to holomorphic functional calculus, i.e. $\widetilde{\A}$ is a pre-$C^*$-algebra. From 
 	$$
 	\mathbb{M}_n\left(\A \right) \cong \widetilde{\A} \bigoplus \left(P \bigcap 	\mathbb{M}_n\left(\A \right)\right) 
 	$$
 	it turns out that $\widetilde{\A}$ is a finitely generated projective $\A$ module.

  \begin{lemma}\label{dense_lem}
  	The algebra $\widetilde{\A}$ is a dense subalgebra of $\widetilde{A}$ with respect to the $C^*$-norm topology.
  \end{lemma}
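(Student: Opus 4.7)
The plan is to use the direct-sum decomposition $\mathbb{M}_n(A) = \widetilde{A} \oplus P$ of right $A$-modules and its smooth counterpart $\mathbb{M}_n(\A) = \widetilde{\A} \oplus (P \cap \mathbb{M}_n(\A))$, both already set up in the paragraph preceding the lemma, to transport approximants from $\mathbb{M}_n(\A)$ back into $\widetilde{\A}$ via the associated projection. Since the embedding $\widetilde{A} \hookrightarrow \mathbb{M}_n(A)$ is an injective $\ast$-homomorphism of $C^*$-algebras, hence isometric, density of $\widetilde{\A}$ in $\widetilde{A}$ (in its own $C^*$-norm) is equivalent to density in the $C^*$-norm inherited from $\mathbb{M}_n(A)$.

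Concretely, let $E : \mathbb{M}_n(A) \to \widetilde{A}$ be the $A$-linear projection with kernel $P$. As an idempotent in $\End_A\bigl(\mathbb{M}_n(A)\bigr)$, the operator $E$ is bounded for the $C^*$-norm on $\mathbb{M}_n(A)$; moreover, the $\A$-level decomposition shows that $E$ sends $\mathbb{M}_n(\A)$ into $\widetilde{\A}$, since for $x = y + z$ with $y \in \widetilde{\A}$ and $z \in P \cap \mathbb{M}_n(\A)$ we have $E(x) = y \in \widetilde{\A}$.

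With this projection in hand, density is then a few lines. Given $\widetilde{a} \in \widetilde{A}$, density of $\A$ in $A$ yields a sequence $c_k \in \mathbb{M}_n(\A)$ with $c_k \to \widetilde{a}$ in the $C^*$-norm of $\mathbb{M}_n(A)$. Setting $\widetilde{a}_k := E(c_k) \in \widetilde{\A}$, continuity of $E$ and the identity $E(\widetilde{a}) = \widetilde{a}$ give $\widetilde{a}_k \to \widetilde{a}$, proving density.

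The main obstacle I anticipate is justifying that $E$ really does preserve $\mathbb{M}_n(\A)$ — equivalently, that the decomposition $\mathbb{M}_n(\A) = \widetilde{\A} \oplus (P \cap \mathbb{M}_n(\A))$ is legitimate, as this essentially requires the complementary summand $P$ to have been chosen compatibly with the smooth structure. If the complement $P$ was produced only at the $A$-level and does not restrict well, the remedy is to construct the idempotent $E$ explicitly inside the pre-$C^*$-algebra $\End_\A\bigl(\mathbb{M}_n(\A)\bigr)$, using the smooth projection $\widetilde{p} \in \mathbb{M}_n(\A)$ and a projection-approximation argument in the spirit of Lemma~\ref{lm:proj-approx} to obtain a smooth idempotent close to the original one, whose image may be moved back onto $\widetilde{A}$ by a small similarity — and the density argument above then goes through unchanged.
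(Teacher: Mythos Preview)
Your approach is correct and essentially the same as the paper's. The paper makes your projection $E$ explicit by realizing it as left multiplication by a smooth projector $\widetilde{p}_M \in \mathbb{M}_{n^2}(\A)$ (obtained, exactly as in the remedy you sketch in your last paragraph, by approximating the $A$-level projector $p_M$ cutting out $\widetilde{A}$ inside $A^{n^2}\cong\mathbb{M}_n(A)$), which simultaneously gives boundedness and preservation of $\mathbb{M}_n(\A)$; the density argument then proceeds identically to yours.
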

\begin{proof}
	There is the isomorphism $A^{n^2}\approx \mathbb{M}_n\left(A \right)$ of right $A$-modules. Since $\widetilde{A}$ is a projective right $A$-module, there is a projector $p_M\in \mathbb{M}_{n^2}\left(A \right)$  it follows that 
	$$
\widetilde{A} \cong p_M A^{n^2}	
	$$
	The algebra $\mathbb{M}_{n^2}\left(\A \right)$ is dense in $\mathbb{M}_{n^2}\left(A \right)$, so there is a projector $\widetilde{p}_M\in \mathbb{M}_{n^2}\left(\A \right)$ such that such that $\left\| \widetilde{p}_M - p_M\right\|  < 1$, it turns out
\be\label{pm_eqn}
	\widetilde{\A} \cong \widetilde{p}_M \A^{n^2}
\ee
For any $\widetilde{a}\in \widetilde{A}$ there is a sequence $\left\lbrace \widetilde{a}_n \in \mathbb{M}_{n}\left(\A \right)\right\rbrace_{n \in \N}$ such that 
$$
\lim_{n \to \infty} \widetilde{a}_n = \widetilde{a}
$$
in sense of $C^*$-norm topology. The sequence can be regarded as a sequence $\left\lbrace \widetilde{a}_n \in \A^{n^2}\right\rbrace_{n \in \N}$. From \eqref{pm_eqn} it turns out that if $\widetilde{b}_n=\widetilde{p}_M \widetilde{a}_n \in \widetilde{A}$ then
$$
\lim_{n \to \infty} \widetilde{b}_n = \lim_{n \to \infty} \widetilde{p}_M\widetilde{a}_n = \widetilde{p}_M\lim_{n \to \infty} \widetilde{a}_n =  \widetilde{p}_M\widetilde{a}=\widetilde{a}.
$$
Otherwise from $\widetilde{a}_n  \in \A^{n^2}$ and $\widetilde{p}_M\in \mathbb{M}_{n^2}\left(\A \right)$ it turns out that  $\widetilde{b}_n =\widetilde{p}_M\widetilde{a}_n  \in \A^{n^2}\cong \mathbb{M}_n\left(\A \right)$, so one has  $\widetilde{b}_n  \in \widetilde{A} \bigcap \mathbb{M}_n\left(\A \right)\cong \widetilde{\A}$. Hence for any $\widetilde{a}\in \widetilde{A}$ there is a sequence $\left\lbrace \widetilde{b}_n \in \widetilde{\A}\right\rbrace_{n \in \N}$ such that
$$
\lim_{n \to \infty} \widetilde{b}_n = \widetilde{a}.
$$
\end{proof}

 \begin{definition}\label{smooth_defn}
 In the above situation we say that the unital noncommutative finite-fold covering $\left(A, \widetilde{A}, G \right)$ is \textit{smoothly invariant} if $G\widetilde{\A} = \widetilde{\A}$.
 \end{definition}
\begin{lem}\label{smooth_matr_lem}
	Let us use the above notation. Suppose that the right $A$-module $\widetilde{A}_A$ is generated by a finite set $\left\{\widetilde{a}_1, \dots, \widetilde{a}_n \right\}$, i.e. 
	$$
	\widetilde{A}_A = \sum_{j =1}^{n} \widetilde{a}_jA,
	$$
	such that following conditions hold:
	\begin{enumerate}
		\item[(a)] $\left\langle \widetilde{a}_j, \widetilde{a}_k\right\rangle_{\widetilde{A}} \in \A$ for any $j,k=1, \dots, n$,
		\item[(b)] The set $\left\{\widetilde{a}_1, \dots, \widetilde{a}_n \right\}$ is $G$-invariant, i.e. $g \widetilde{a}_j \in \left\{\widetilde{a}_1, \dots, \widetilde{a}_n \right\}$ for any $j = 1,\dots,n$ and $g \in G$.
	\end{enumerate}
Then following conditions hold:
\begin{enumerate}
	\item[(i)] 
	\be\label{smooth_cond_eqn}
	\widetilde{A} \bigcap \mathbb{M_n\left(\A \right) }= \left\lbrace 	\widetilde{a}\in 	\widetilde{A}~|~  \left\langle \widetilde{a}_j, \widetilde{a}\widetilde{a}_k\right\rangle_{\widetilde{A}} \in \A; ~\forall j,k =1, \dots, n\right\} 
	\ee
	\item[(ii)] The unital noncommutative finite-fold covering $\left(A, \widetilde{A}, G \right)$ is smoothly invariant.
\end{enumerate}
\end{lem}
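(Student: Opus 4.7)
The strategy is to prove (i) by realizing the embedding $\widetilde{A}\hookrightarrow\mathbb{M}_n(A)$ explicitly in terms of the given generators, and then to obtain (ii) from (i) through a short reindexing argument exploiting the $G$-invariance hypothesis~(b). For (i), consider the adjointable surjection $\phi:A^n\to\widetilde{A}$ of Hilbert $A$-modules sending $e_k\mapsto\widetilde{a}_k$, with adjoint $\phi^*(\widetilde{b})=(\langle\widetilde{a}_j,\widetilde{b}\rangle_{\widetilde{A}})_j$, Gram matrix $P:=\phi^*\phi\in\mathbb{M}_n(A)$, and $T:=\phi\phi^*\in\End_A(\widetilde{A})$. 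Hypothesis~(a) is precisely the statement that $P\in\mathbb{M}_n(\A)$, while $T$ is invertible because an adjointable surjection between finitely generated Hilbert $C^*$-modules always has $\phi\phi^*$ invertible. The element $p:=\phi^*T^{-1}\phi\in\mathbb{M}_n(A)$ is a self-adjoint projection that commutes with $P$ and satisfies $Pp=pP=P$, so $p$ is the spectral projection of $P$ onto the non-zero part of its spectrum. Since the invertibility of $T$ forces $0$ to be an isolated point of $\spec(P)$, the closure of $\mathbb{M}_n(\A)$ under holomorphic functional calculus yields $p\in\mathbb{M}_n(\A)$ and, by the same argument, the partial inverse $P^\dagger\in\mathbb{M}_n(\A)$.

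The map $\iota:=\phi^*T^{-1}:\widetilde{A}\to pA^n$ sends $\widetilde{a}_j$ to $pe_j$ and is the Hilbert-module isomorphism inverse to $\phi|_{pA^n}$; together with $\End_A(\widetilde{A})\cong p\mathbb{M}_n(A)p\subset\mathbb{M}_n(A)$, it realizes the embedding $\widetilde{A}\hookrightarrow\mathbb{M}_n(A)$ by left multiplication, in which $\widetilde{a}$ is represented by the matrix $M(\widetilde{a}):=\phi^*T^{-1}L_{\widetilde{a}}\phi\in p\mathbb{M}_n(A)p$. Setting $\Lambda(\widetilde{a}):=(\langle\widetilde{a}_j,\widetilde{a}\widetilde{a}_k\rangle_{\widetilde{A}})_{jk}=\phi^*L_{\widetilde{a}}\phi$, one verifies the reciprocal identities
\[
\Lambda(\widetilde{a})=P\cdot M(\widetilde{a}),\qquad M(\widetilde{a})=P^\dagger\cdot\Lambda(\widetilde{a}),
\]
using $\phi^*\phi=P$, $\phi\phi^*=T$, and $M(\widetilde{a})=pM(\widetilde{a})p$. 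Because both $P$ and $P^\dagger$ have entries in $\A$, this gives $M(\widetilde{a})\in\mathbb{M}_n(\A)$ if and only if $\Lambda(\widetilde{a})\in\mathbb{M}_n(\A)$, i.e., if and only if $\langle\widetilde{a}_j,\widetilde{a}\widetilde{a}_k\rangle_{\widetilde{A}}\in\A$ for all $j,k$, which is the asserted equality~(i).

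For (ii), let $\widetilde{a}\in\widetilde{\A}$ and $g\in G$. Hypothesis~(b) provides a permutation $\sigma_g$ of $\{1,\dots,n\}$ with $g\widetilde{a}_j=\widetilde{a}_{\sigma_g(j)}$. Using the defining formula $\langle x,y\rangle_{\widetilde{A}}=\sum_{h\in G}h(x^*y)$ together with the substitution $h=h'g^{-1}$ and the homomorphism property $\sigma_{hg^{-1}}=\sigma_h\sigma_{g^{-1}}$, a direct computation gives
\[
\langle\widetilde{a}_j,(g\widetilde{a})\widetilde{a}_k\rangle_{\widetilde{A}}=\langle\widetilde{a}_{\sigma_{g^{-1}}(j)},\widetilde{a}\widetilde{a}_{\sigma_{g^{-1}}(k)}\rangle_{\widetilde{A}},
\]
which belongs to $\A$ by~(i), so $g\widetilde{a}\in\widetilde{\A}$. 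The main obstacle is the holomorphic-functional-calculus step inside~(i): one needs $0$ to be separated from the rest of $\spec(P)$ (which follows from the invertibility of $T$) in order to conclude that the spectral projection $p$ and the partial inverse $P^\dagger$ both live in $\mathbb{M}_n(\A)$ and can be used to compare $M(\widetilde{a})$ with the more concrete matrix $\Lambda(\widetilde{a})$.
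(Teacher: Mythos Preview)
Your proof is correct and follows essentially the same route as the paper. Both arguments introduce the Gram operator (your $P=\phi^*\phi$, the paper's $S$), observe it lies in $\mathbb{M}_n(\A)$ by hypothesis~(a), use finite generation to see that $0$ is isolated in its spectrum, and then invoke holomorphic functional calculus to get the support projection and partial inverse in $\mathbb{M}_n(\A)$; the equivalence in~(i) then follows from the two reciprocal identities relating the embedding matrix to $\Lambda(\widetilde a)=\bigl(\langle\widetilde a_j,\widetilde a\,\widetilde a_k\rangle\bigr)_{jk}$, and part~(ii) is the same reindexing via $G$-invariance of the inner product. Your presentation is somewhat cleaner in that it keeps the operator $T=\phi\phi^*\in\End_A(\widetilde A)$ explicitly separate from the matrix $P\in\mathbb{M}_n(A)$, whereas the paper uses the single symbol $S$ for both roles, but the mathematics is the same.
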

\begin{proof}(i)

	If $S \in \End\left( \widetilde{A}\right)_A$ is given by  
\be\label{s_matr}
	S = \sum_{j=1}^{n}\widetilde{a}_j\left\rangle \right\langle\widetilde{a}_j
\ee
	then $S$ is self-adjoint. Moreover $S$ is represented by a matrix $\left\{S_{jk}=\left\langle \widetilde{a}_j, \widetilde{a}_k\right\rangle_{\widetilde{A}} \right\}_{j,k=1,\dots,n} \in \mathbb{M}_n\left( \A\right)$. From the Corollary 1.1.25  of \cite{jensen_thomsen:kk} it turns out that $S$ is strictly positive. Otherwise $\widetilde{A}_A$ is a finitely generated right $A$-module, so from the Exercise  15.O of \cite{wegge_olsen} it follows that $S$ is invertible, i.e. there is $T \in \End\left( \widetilde{A}\right)_A $ such that
	$$
	ST = TS = 1_{\End\left( \widetilde{A}\right)_A}.
	$$
	If we consider $S$ as element of $\mathbb{M}_m\left( \A\right)$ then the spectrum of $S$ is a subset of $\mathcal{U}_0 \bigcup \mathcal{U}_1 \subset \C$ such that
	\begin{itemize}
		\item Both $\mathcal{U}_0$ and $\mathcal{U}_1$ are open sets.
		\item $\mathcal{U}_0 \bigcap \mathcal{U}_1 = \emptyset$,
		\item $0 \in \mathcal{U}_0$ and $0$ is the unique point of the spectrum of $S$ which lies in $\mathcal{U}_0$.
	\end{itemize}
If $\phi, \psi$ are homomorphic functions on $\mathcal{U}_0 \bigcup \mathcal{U}_1$ given by 
\bean\nonumber
\phi|_{\mathcal{U}_0}= \psi|_{\mathcal{U}_0} \equiv 0,\\
\phi|_{\mathcal{U}_1}= 1,\\
\psi|_{\mathcal{U}_1}= z \mapsto \frac{1}{z}~.
\eean
Then following conditions hold:
\begin{itemize}
	\item $p = \phi\left( S\right)$ is a projector, such that $ \widetilde{A}_A \approx p A^n$ as a right $A$-module,
	\item $p \in \mathbb{M}_n\left(\A \right)$,
	\item $\psi\left( S\right) =T\in \mathbb{M}_n\left(\A \right)$ and $TS = ST = 1_{\End\left( \widetilde{A}\right)_A}$. 
\end{itemize}
If $\widetilde{a} \in A$ then from $TS = ST = 1_{\End\left( \widetilde{A}\right)_A}$ it turns out
$$
\widetilde{a} = TS\widetilde{a}ST = T\left( \sum_{j = 1}^n \widetilde{a}_j\left\rangle \right\langle\widetilde{a}_j\right)\widetilde{a}\left( \sum_{k = 1}^n \widetilde{a}_k\left\rangle \right\langle\widetilde{a}_k\right) T= TM^{\widetilde{a}}T
$$
where $M^{\widetilde{a}}\in  \mathbb{M}_n\left(A \right) $ is a matrix given by
$$
M^{\widetilde{a}}=\left\{M^{\widetilde{a}}_{jk}=\left\langle \widetilde{a}_j, \widetilde{a}\widetilde{a}_k\right\rangle_{\widetilde{A}} \right\}_{j,k=1,\dots,n}~.
$$
From $T \in \mathbb{M}_n\left(\A \right)$ it turns out that 
$$
M^{\widetilde{a}} \in \mathbb{M}_n\left(\A \right) \Rightarrow TM^{\widetilde{a}}T\in \mathbb{M}_n\left(\A \right). 
$$
Conversely from $S \in \mathbb{M}_n\left(\A \right)$ it follows that
$$
TM^{\widetilde{a}}T \in \mathbb{M}_n\left(\A \right)\Rightarrow STM^{\widetilde{a}}TS =M^{\widetilde{a}} \in \mathbb{M}_n\left(\A \right), 
$$
so one has
$$
M^{\widetilde{a}} \in \mathbb{M}_n\left(\A \right) \Leftrightarrow TM^{\widetilde{a}}T\in \mathbb{M}_n\left(\A \right). 
$$
Hence
$\widetilde{a} \in \mathbb{M}_n\left(\A \right)$ if and only if $\left\langle\widetilde{a}_j, \widetilde{a}\widetilde{a}_k\right\rangle_{\widetilde{A}}\in \A$ for any $j,k = 1, \dots n$. 
\\
(ii) Note that given by \eqref{finite_hilb_mod_prod_eqn} product is $G$-invariant, i.e. $\left\langle \widetilde{a} , \widetilde{b}  \right\rangle_{\widetilde{A}}= \left\langle g\widetilde{a} , g\widetilde{b}  \right\rangle_{\widetilde{A}}$ for any $g \in G$, it follows that
$$
\left\langle \widetilde{a}_j , \left( g\widetilde{a}\right)  \widetilde{a}_k  \right\rangle_{\widetilde{A}}=\left\langle g^{-1}\widetilde{a}_j , \widetilde{a} \left( g^{-1}\widetilde{a}_k\right)   \right\rangle_{\widetilde{A}}.
$$
Otherwise from  (b) it follows that there are $j',k' \in \left\{1, \dots, n\right\}$ such that
$\widetilde{a}_{j'}=g^{-1}\widetilde{a}_j$ and $\widetilde{a}_{k'}=g^{-1}\widetilde{a}_k$, hence $\left\langle \widetilde{a}_j , \left( g\widetilde{a}\right)  \widetilde{a}_k  \right\rangle_{\widetilde{A}} = \left\langle \widetilde{a}_{j'} ,  \widetilde{a}  \widetilde{a}_{k'}  \right\rangle_{\widetilde{A}}~$. So for any $g \in G$ the condition 
$$
\left\langle \widetilde{a}_j , \left( g\widetilde{a}\right)  \widetilde{a}_k  \right\rangle_{\widetilde{A}}\in \A,~~ \forall j,k =1, \dots n
$$
is equivalent to
$$
\left\langle \widetilde{a}_{j'} , \widetilde{a} \widetilde{a}_{k'}  \right\rangle_{\widetilde{A}}\in \A,~~ \forall j',k' =1, \dots n.
$$
It turns out that $g\left(  \widetilde{A} \bigcap \mathbb{M}_n\left(\A \right)\right) =\widetilde{A} \bigcap \mathbb{M}_n\left(\A \right)
$, or equivalently  $$G\left(  \widetilde{A} \bigcap \mathbb{M}_n\left(\A \right)\right) =\widetilde{A} \bigcap \mathbb{M}_n\left(\A \right)
,$$
hence from \eqref{a_smooth_eqn} one has
$$G\widetilde{\A}  =\widetilde{\A} 
.$$

\end{proof}
In the following text we suppose that  the unital noncommutative finite-fold covering $\left(A, \widetilde{A}, G \right)$ is smoothly invariant.
 	 From the Proposition \ref{conn_prop}  it follows that there is a connection
 	$$
 	\nabla' : \widetilde{\A} \to \widetilde{\A} \otimes_{\A} \Om^1_D.
 	$$
 
Let us define a connection
 	\begin{equation}\label{equiv_eqn}
 	\begin{split}
 	\widetilde{\nabla}:\widetilde{\A} \to \widetilde{\A} \otimes_{\A} \Om^1_D,\\
 	\widetilde{\nabla}\left(\widetilde{a} \right) = \frac{1}{\left|G \right|} \sum_{g \in G} g^{-1}\left(\nabla'\left( g\widetilde{a}\right) . \right)
 	\end{split}
 	\end{equation}
 	The connection $\widetilde{\nabla}$ is $G$-\textit{equivariant}, i.e.
 \be\label{equiv_conn_eqn}
 	\widetilde{\nabla}\left(g\widetilde{a} \right)= g\left( \widetilde{\nabla}\left(\widetilde{a} \right)\right) ; \text{ for any } g \in G, ~\widetilde{a} \in \widetilde{\A}.
 \ee
 	Let  $\H^\infty= \bigcap_{n =1}^\infty \Dom D^n$, and let us define an operator $\widetilde{D} : \widetilde{\A} \otimes_{\A} \H^\infty \to \widetilde{\A} \otimes_{\A} \H^\infty$ such that if $\widetilde{a} \in \widetilde{\A}$ and
 	\begin{equation*}
 	\begin{split}
 	\nabla\left( 	\widetilde{a}\right) = \sum_{j = 1}^m\widetilde{a}_j \otimes \om_j 
 	\end{split}
 	\end{equation*}
 	then
 	\begin{equation}\label{d_defn}
 	\widetilde{D}\left(	\widetilde{a} \otimes \xi \right) = \sum_{j = 1}^m\widetilde{a}_j \otimes \om_j\left( \xi\right) + \widetilde{a} \otimes D\xi \Leftrightarrow \left[\widetilde{D}, \widetilde{a}\right] = \widetilde{\nabla}\left( \widetilde{a}\right).
 	\end{equation}
 	The space $\widetilde{\A} \otimes_{\A} \H^\infty$ is a dense subspace of the Hilbert space $\widetilde{\H} = \widetilde{A} \otimes_{A} \H$.
 	It turns out  $\widetilde{D}$ can be regarded as an unbounded operator on $\widetilde{\H}$. 
 \begin{lem}\label{conn_exist_lem} If the unital noncommutative finite-fold covering $\left(A, \widetilde{A}, G \right)$ is smoothly invariant then there is the unique $G$-equivariant connection $$\widetilde{\nabla}:\widetilde{\A} \to \widetilde{\A} \otimes_{\A} \Om^1_D.$$
\end{lem}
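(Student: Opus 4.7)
My plan is to prove existence by verifying that the averaging formula \eqref{equiv_eqn} already produces a well-defined $G$-equivariant connection, and then to address uniqueness by examining the difference of two such connections. The starting point for existence is Proposition \ref{conn_prop}(b), which furnishes some (not necessarily equivariant) connection $\nabla' : \widetilde{\A} \to \widetilde{\A} \otimes_{\A} \Om^1_D$; this applies here because $\widetilde{\A}$ was shown to be a finitely generated projective $\A$-module in the discussion preceding the lemma (in particular, Lemma \ref{dense_lem} together with the decomposition $\widetilde{\A} \cong \widetilde{p}_M \A^{n^2}$).

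For existence I will verify in order: well-definedness, the Leibniz property, and $G$-equivariance. Well-definedness of \eqref{equiv_eqn} requires exactly the smooth invariance hypothesis $G\widetilde{\A} = \widetilde{\A}$, which guarantees $g\widetilde{a} \in \widetilde{\A}$ and hence $\nabla'(g\widetilde{a}) \in \widetilde{\A} \otimes_{\A} \Om^1_D$; the $G$-action descends across the $\A$-balanced tensor product because $\A = \widetilde{A}^G$ is pointwise fixed, so $g^{-1}$ applied to the first tensor factor brings the result back into $\widetilde{\A} \otimes_{\A} \Om^1_D$. For the Leibniz identity, the key observation is that for $x \in \A$ the equality $g(\widetilde{a} x) = (g\widetilde{a}) x$ holds, so applying $\nabla'$ and then $g^{-1}$ yields $\bigl(g^{-1}\nabla'(g\widetilde{a})\bigr)\cdot x + \widetilde{a}\otimes dx$, and averaging over $g$ produces $\widetilde{\nabla}(\widetilde{a} x) = \widetilde{\nabla}(\widetilde{a})\cdot x + \widetilde{a}\otimes dx$. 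The equivariance \eqref{equiv_conn_eqn} follows from the reindexing $g \mapsto g h^{-1}$ in the averaging sum.

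For uniqueness, let $\widetilde{\nabla}_1$ and $\widetilde{\nabla}_2$ be two $G$-equivariant connections. By Proposition \ref{conn_prop}(c) their difference $\phi = \widetilde{\nabla}_1 - \widetilde{\nabla}_2$ lies in $\Hom_{\A}(\widetilde{\A}, \widetilde{\A}\otimes_{\A} \Om^1_D)$ and is automatically $G$-equivariant. My plan is to show that this equivariant Hom space vanishes by invoking the equivalence of categories $\mathscr{M}_A \simeq \mathscr{M}^G_{\widetilde{A}}$ from Section \ref{fin_gal_cov_sec} to translate the $G$-equivariant $\A$-linear constraint on $\phi$ into an $\widetilde{A}[G]$-module condition, and then exploiting that $\Om^1_D$ is generated by commutators $[D,a]$ with $a\in\A$, on which $G$ acts trivially. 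This reduction is where I expect the main obstacle to lie: it is precisely the step in which one must use more than the formal averaging argument and instead rely on the specific interplay between the Galois datum $(A,\widetilde{A},G)$ and the bimodule structure of $\Om^1_D$ inherited from $[D,\,\cdot\,]$. Without an input of this kind, the averaging construction alone only shows that any $G$-equivariant $\widetilde{\nabla}$ is a fixed point of the operator $\nabla' \mapsto |G|^{-1}\sum_{g} g^{-1}\nabla'(g\,\cdot\,)$, which is tautological; forcing $\phi=0$ is where real work is required.
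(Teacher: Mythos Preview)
Your existence argument is correct and is exactly the paper's: Proposition~\ref{conn_prop}(b) supplies some connection $\nabla'$, and the averaging formula \eqref{equiv_eqn} (whose well-definedness, Leibniz property, and equivariance you verify carefully) produces the desired $G$-equivariant $\widetilde{\nabla}$.

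For uniqueness, however, your strategy diverges from the paper's and runs into a real obstacle. You aim to show that the space $\Hom^{G}_{\A}\bigl(\widetilde{\A},\,\widetilde{\A}\otimes_{\A}\Om^1_D\bigr)$ of $G$-equivariant right-$\A$-linear maps is \emph{zero}. But this space is not zero: under the very equivalence $\mathscr{M}^{G}_{\widetilde{\A}}\simeq \mathscr{M}_{\A}$ from Section~\ref{fin_gal_cov_sec} that you invoke, the $G$-$\widetilde{\A}$-module $\widetilde{\A}$ corresponds to $\A$ and $\widetilde{\A}\otimes_{\A}\Om^1_D$ corresponds to $\Om^1_D$, so the equivariant Hom identifies with $\Hom_{\A}(\A,\Om^1_D)\cong \Om^1_D$, which is nonzero whenever $[D,\cdot]$ is nontrivial. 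Your observation that $G$ acts trivially on $[D,a]$ does not help here, because that triviality is already built into how $G$ acts on $\widetilde{\A}\otimes_{\A}\Om^1_D$ (via the first factor only) and hence into the identification just made; it does not force the Hom to vanish.

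What the paper actually does is different and subtler. It does not claim the equivariant Hom vanishes; instead it uses the categorical equivalence to set up a \emph{bijection} between $G$-equivariant connections $\widetilde{\nabla}$ on $\widetilde{\A}$ and connections $\nabla$ on the trivial right $\A$-module $\A$ (concretely: restrict $\widetilde{\nabla}$ to $\A\subset\widetilde{\A}$; $G$-equivariance forces the image into $(\widetilde{\A}\otimes_{\A}\Om^1_D)^G=\Om^1_D$). Connections on $\A$ are an affine space over $\Om^1_D$, and among them there is a distinguished one, the differential $a\mapsto[D,a]$. The ``unique'' $\widetilde{\nabla}$ in the lemma is the one corresponding to this $d$ under the bijection, i.e.\ the unique $G$-equivariant connection whose restriction to $\A$ is $[D,\cdot]$. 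So the uniqueness is relative to this normalization, not absolute; your attempt to force $\phi=0$ unconditionally was bound to fail.
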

\begin{proof} From the equation \eqref{equiv_eqn} it follows that a $G$-equivariant connection exists. Let us prove the uniqueness of it. 
	It follows from the Proposition \ref{conn_prop} that the space of connections is an affine space over the vector space $\Hom_{\sA}\left(\widetilde{\A}, \widetilde{\A} \otimes_{\A} \Om^1_D \right)$.  The space of $G$-equivariant connections is an affine space over the vector space $\Hom^{G}_{\sA}\left(\widetilde{\A}, \widetilde{\A} \otimes_{\A} \Om^1_D \right)$ of $G$-equivariant morphisms, i.e. morphisms in the category $\mathscr{M}^G_{\widetilde{\A}}$ (cf. \ref{fin_gal_cov_sec}). However from  $\ref{fin_gal_cov_sec}$ it follows that the category  $\mathscr{M}^G_{\widetilde{\A}}$ is equivalent to the category $\mathscr{M}_{\A}$ of $\A$-modules. It turns out that there is a 1-1 correspondence between connections
	$$
	\nabla:	\A \to \A \otimes_{\A} \Om^1_D= \Om^1_D
	$$
	and $G$-equivariant connections
	$$
	\widetilde{\nabla}:	\widetilde{\A} \to \widetilde{\A} \otimes_{\A} \Om^1_D.
	$$
	It follows that thee is the unique $G$-equivariant $\widetilde{\nabla}$ connection which corresponds to 
	\begin{equation*}
	\begin{split}
	\nabla:	\A \to \A \otimes_{\A} \Om^1_D= \Om^1_D,\\
	a \mapsto\left[ D, a\right]. 
	\end{split}
	\end{equation*}
	
\end{proof}
\begin{defn}\label{triple_conn_lift_defn}
	The operator $\widetilde{D}$ given by \eqref{d_defn} is said to be $\left(A, \widetilde{A}, G \right)$-\textit{lift} of $D$. The spectral triple $\left( \widetilde{\A}, \widetilde{\H}, \widetilde{D}\right)$  is said to be the  $\left(A, \widetilde{A}, G \right)$-\textit{lift} of $\left( \A, \H, D \right)$.
\end{defn}

 \section{Noncommutative infinite coverings}
 \subsection{Basic construction}\label{bas_constr}

 This section contains a noncommutative generalization of infinite coverings.
 \begin{definition}\label{comp_defn}
 	Let
 	\begin{equation*}
 	\mathfrak{S} =\left\{ A =A_0 \xrightarrow{\pi_1} A_1 \xrightarrow{\pi_2} ... \xrightarrow{\pi_n} A_n \xrightarrow{\pi^{n+1}} ...\right\}
 	\end{equation*}
 	be a sequence of $C^*$-algebras and noncommutative finite-fold coverings such that:
 	\begin{enumerate}
 		\item[(a)] Any composition $\pi_{n_1}\circ ...\circ\pi_{n_0+1}\circ\pi_{n_0}:A_{n_0}\to A_{n_1}$ corresponds to the noncommutative covering $\left(A_{n_0}, A_{n_1}, G\left(A_{n_1}~|~A_{n_0}\right)\right)$;
 		\item[(b)] If $k < l < m$ then $G\left( A_m~|~A_k\right)A_l = A_l$ (Action of $G\left( A_m~|~A_k\right)$ on $A_l$ means that $G\left( A_m~|~A_k\right)$ acts on $A_m$, so $G\left( A_m~|~A_k\right)$ acts on $A_l$ since $A_l$ a subalgebra of $A_m$);
 		\item[(c)] If $k < l < m$ are nonegative integers then there is the natural exact sequence of covering transformation groups
 		\begin{equation*}
 		\{e\}\to G\left(A_{m}~|~A_{l}\right) \xrightarrow{\iota} G\left(A_{m}~|~A_{k}\right)\xrightarrow{\pi}G\left(A_{l}~|~A_{k}\right)\to\{e\}
 		\end{equation*}
 		where the existence of the homomorphism $G\left(A_{m}~|~A_{k}\right)\xrightarrow{\pi}G\left(A_{l}~|~A_{k}\right)$ follows from (b).
 		
 	\end{enumerate}
 	The sequence
 	$\mathfrak{S}$
 	is said to be an \textit{(algebraical)  finite covering sequence}. 
 	For any finite covering sequence we will use the notation $\mathfrak{S} \in \mathfrak{FinAlg}$.
 \end{definition}
 \begin{definition}\label{equiv_act_defn}
 	Let $\widehat{A} = \varinjlim A_n$  be the $C^*$-inductive limit \cite{murphy}, and suppose that $\widehat{G}= \varprojlim G\left(A_n~|~A \right) $ is the projective limit of groups \cite{spanier:at}. There is the natural action of $\widehat{G}$ on $\widehat{A}$. A non-degenerate faithful representation $\widehat{A} \to B\left( \H\right) $ is said to be \textit{equivariant} if there is an action of $\widehat{G}$ on $\H$ such that for any $\xi \in \H$ and $g \in  \widehat{G}$ the following condition holds
 	\begin{equation}\label{equiv_act_eqn}
 	\left(ga \right) \xi = g\left(a\left(g^{-1}\xi \right)  \right) .
 	\end{equation}
 \end{definition}
\begin{example}
	Let $\rho:\widehat{A} \to B\left(\H \right)$ be a non-degenerate faithful representation, let $\H_g \approx \H$ and let $\rho_g: \widehat{A} \to B\left(\H_g \right)$ is given by $\rho_g\left(a \right) = \rho\left( ga\right)$, for any $g \in \widehat{G}$. There is the natural action of $\widehat{G}$ on $\bigoplus_{g \in  \widehat{G}} \H_g$ which transopses summands of the direct sum and
	$$
	\bigoplus_{g \in  \widehat{G}} \rho_g: \widehat{A} \to B\left(	\bigoplus_{g \in  \widehat{G}} \H_g \right) 
	$$ 
	is an equivariant representation.
\end{example}

 \begin{definition}\label{special_el_defn}
 	Let $\pi:\widehat{A} \to B\left( \H\right) $ be an equivariant representation.  A positive element  $\overline{a}  \in B\left(\H \right)_+ $ is said to be \textit{special} (with respect to $\pi$) if following conditions hold:
 	\begin{enumerate}
 		\item[(a)] For any $n \in \mathbb{N}^0$  the following  series 
 		\begin{equation*}
 		\begin{split}
 		a_n = \sum_{g \in \ker\left( \widehat{G} \to  G\left( A_n~|~A \right)\right)} g  \overline{a} 
 		\end{split}
 		\end{equation*}
 		is strongly convergent and the sum lies in   $A_n$, i.e. $a_n \in A_n $;		
 		\item[(b)]
 		If $f_\eps: \R \to \R$ is given by 
 		\begin{equation}\label{f_eps_eqn}
 		f_\eps\left( x\right)  =\left\{
 		\begin{array}{c l}
 		0 &x \le \eps \\
 		x - \eps & x > \eps
 		\end{array}\right.
 		\end{equation}
 		then for any $n \in \mathbb{N}^0$ and for any $z \in A$   following  series 
 		\begin{equation*}
 		\begin{split}
 		b_n = \sum_{g \in \ker\left( \widehat{G} \to  G\left( A_n~|~A \right)\right)} g \left(z  \overline{a} z^*\right) ,\\
 		c_n = \sum_{g \in \ker\left( \widehat{G} \to  G\left( A_n~|~A \right)\right)} g \left(z  \overline{a} z^*\right)^2,\\
 		d_n = \sum_{g \in \ker\left( \widehat{G} \to  G\left( A_n~|~A \right)\right)} g f_\eps\left( z  \overline{a} z^* \right) 
 		\end{split}
 		\end{equation*}
 		are strongly convergent and the sums lie in   $A_n$, i.e. $b_n,~ c_n,~ d_n \in A_n $; 
 		\item[(c)] For any $\eps > 0$ there is $N \in \N$ (which depends on $\overline{a}$ and $z$) such that for any $n \ge N$ the following condition holds
 		\begin{equation}\label{square_condition_equ}
 		\begin{split}
 		\left\| b_n^2 - c_n\right\| < \eps.
 		\end{split}
 		\end{equation}	
 	\end{enumerate}
 	
 	An element  $\overline{   a}' \in B\left( \H\right) $ is said to be \textit{weakly special} if 
 	$$
 	\overline{   a}' = x\overline{a}y; \text{ where }   x,y \in \widehat{A}, \text{ and } \overline{a} \in B\left(\H \right)  \text{ is special}.
 	$$
 	
 \end{definition}
 \begin{lem}\label{stong_conv_inf_lem}\cite{ivankov:qnc}
 	If $\overline{a} \in B\left( \H\right)_+$ is a special element and ${G}_n=\ker\left( \widehat{G} \to  G\left( A_n~|~A \right)\right)$ then from
 	\begin{equation*}
 	\begin{split}
 	a_n = \sum_{g \in {G}_n} g \overline{a},
 	\end{split}
 	\end{equation*}
 	it follows that $\overline{a} = \lim_{n \to \infty} a_n$ in the sense of the strong convergence. Moreover  one has $\overline{a} =\inf_{n \in \N}a_n$.
 \end{lem}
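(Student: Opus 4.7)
The plan is as follows. First I observe that the family $\{G_n\}_{n \in \N^0}$ is a decreasing chain of subgroups of $\widehat{G}$, since the quotients $G\left(A_n~|~A\right)$ grow with $n$, and that its intersection is trivial: $\bigcap_n G_n = \{e\}$ by the very definition of $\widehat{G} = \varprojlim G\left(A_n~|~A\right)$. Because $\overline{a} \ge 0$ and the action of $\widehat{G}$ is by $*$-automorphisms, every summand $g\overline{a}$ is positive; therefore $a_n - a_{n+1} = \sum_{g \in G_n \setminus G_{n+1}} g\overline{a} \ge 0$, so $\{a_n\}$ is decreasing, and $a_n \ge \overline{a}$ (take the $g = e$ summand and discard the rest). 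In particular $0 \le a_n \le a_0$ with $a_0 \in A \subset B(\H)$, so $\|a_n\| \le \|a_0\|$.

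Next I prove the strong convergence $a_n \xi \to \overline{a}\xi$ for every $\xi \in \H$. Setting $c_n = a_n - \overline{a} \ge 0$ and noting $c_n \le \|a_0\|\cdot 1$, one has the standard operator inequality $c_n^2 \le \|a_0\|\, c_n$, whence
\[
\|c_n \xi\|^2 = (c_n^2\xi,\xi) \le \|a_0\|\,(c_n\xi,\xi).
\]
So it is enough to prove the weak convergence $(c_n\xi,\xi) \to 0$. By hypothesis (a) of Definition \ref{special_el_defn}, the series $(a_0\xi,\xi) = \sum_{g \in \widehat{G}} (g\overline{a}\xi,\xi)$ converges, and all terms are nonnegative. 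Hence the set $S = \{g \in \widehat{G}\setminus\{e\} : (g\overline{a}\xi,\xi) > 0\}$ is countable and the series is summable. Given $\eps > 0$, choose a finite $F \subset S$ with $\sum_{g \in S \setminus F} (g\overline{a}\xi,\xi) < \eps$. For each $g \in F$, the trivial intersection yields some $n_g$ with $g \notin G_{n_g}$; setting $N = \max_{g \in F} n_g$ we have $F \cap G_n = \emptyset$ for all $n \ge N$, whence
\[
(c_n\xi,\xi) = \sum_{g \in (G_n \cap S)} (g\overline{a}\xi,\xi) \le \sum_{g \in S\setminus F} (g\overline{a}\xi,\xi) < \eps.
\]

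Finally, the infimum assertion is immediate: $\overline{a}$ is a lower bound of $\{a_n\}$, and any other self-adjoint lower bound $c$ satisfies $(c\xi,\xi) \le (a_n\xi,\xi) \to (\overline{a}\xi,\xi)$ for every $\xi$, so $c \le \overline{a}$. As an alternative route, one could first invoke Lemma \ref{increasing_convergent_w} applied to the increasing bounded net $\{-a_n\}$ to deduce strong convergence to some self-adjoint $b$, and then identify $b = \overline{a}$ by the weak-convergence argument above. The main obstacle is the reduction of strong convergence to weak convergence (handled by the positivity bound $c_n^2 \le \|a_0\|\, c_n$) together with the tail estimate, which genuinely uses both hypothesis (a) of the special-element definition and the trivial intersection $\bigcap_n G_n = \{e\}$; the rest is bookkeeping.
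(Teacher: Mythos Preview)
The paper does not supply its own proof of this lemma; it is stated with a citation to \cite{ivankov:qnc} and left unproved here. Your argument is correct and complete: the monotonicity $a_0 \ge a_1 \ge \cdots \ge \overline{a}$ from positivity of each $g\overline{a}$, the reduction of strong to weak convergence via the inequality $c_n^2 \le \|a_0\|\,c_n$ for $c_n = a_n - \overline{a}$, and the tail estimate using $\bigcap_n G_n = \{e\}$ (which indeed holds by the definition of $\widehat{G}$ as a projective limit) are exactly the ingredients needed, and the infimum claim follows immediately from weak convergence of a decreasing net.
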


 \begin{cor}\label{special_cor}\cite{ivankov:qnc}
 	Any weakly special element lies in the enveloping von Neumann algebra $\widehat{A}''$ of $\widehat{A}=\varinjlim A_n$. If $\overline{A}_\pi \subset B\left( \H\right)$ is the $C^*$-norm completion of an algebra generated by weakly special elements then $\overline{A}_\pi \subset \widehat{A}''$.
 \end{cor}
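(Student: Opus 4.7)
The plan is to reduce everything to Lemma \ref{stong_conv_inf_lem} and the basic topological properties of the enveloping von Neumann algebra $\widehat{A}''$. Recall that $\widehat{A}''$ is, by construction, a von Neumann algebra acting on $\bigoplus_{s \in S}\H_s$; by Theorem \ref{vN_thm} it is both weakly and strongly closed, and by Theorem \ref{env_alg_thm} the equivariant representation $\pi:\widehat{A}\to B(\H)$ extends to a normal morphism $\widehat{A}'' \to \pi(\widehat{A})''\subset B(\H)$. So it suffices to show that every weakly special element lies in the strong closure of $\pi(\widehat{A})$ inside $B(\H)$.

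First I would handle a \emph{special} element $\overline{a} \in B(\H)_+$. By Definition \ref{special_el_defn}(a), for each $n$ the sum $a_n = \sum_{g \in G_n} g\overline{a}$ converges strongly and belongs to $A_n \subset \widehat{A}$. By Lemma \ref{stong_conv_inf_lem}, $\overline{a} = \lim_{n\to\infty} a_n$ in the strong operator topology. Since each $a_n$ lies in $\widehat{A}$ and $\widehat{A}''$ is strongly closed, we conclude $\overline{a} \in \widehat{A}''$.

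Next, for a weakly special element $\overline{a}' = x\overline{a}y$ with $x,y \in \widehat{A}$ and $\overline{a}$ special, both $x$ and $y$ lie in $\widehat{A} \subset \widehat{A}''$ and $\overline{a} \in \widehat{A}''$ by the previous step. Since $\widehat{A}''$ is an algebra, $\overline{a}' = x\overline{a}y \in \widehat{A}''$. This proves the first assertion of the corollary.

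Finally, let $\overline{A}_\pi$ be the $C^*$-norm closure in $B(\H)$ of the $*$-algebra generated by weakly special elements. Every generator lies in $\widehat{A}''$, so the $*$-algebra they generate lies in $\widehat{A}''$. Since $\widehat{A}''$ is itself a $C^*$-algebra (being a von Neumann algebra, it is norm-closed in $B(\H)$), passing to the norm closure preserves the inclusion, so $\overline{A}_\pi \subset \widehat{A}''$. The only subtlety, and the reason this is labelled a corollary rather than a one-line remark, is keeping track of which ambient $B(\H)$ one works in: the special elements are defined with respect to the equivariant representation $\pi$, so the inclusion $\overline{A}_\pi \subset \widehat{A}''$ should be read via the normal extension $\widehat{A}''\to \pi(\widehat{A})''$ supplied by Theorem \ref{env_alg_thm}. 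No further obstacle arises.
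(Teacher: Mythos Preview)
The paper does not actually give its own proof of this corollary; it is simply cited from \cite{ivankov:qnc}, as is the preceding Lemma~\ref{stong_conv_inf_lem}. Your argument is correct and is exactly the expected deduction: Lemma~\ref{stong_conv_inf_lem} gives $\overline{a}$ as a strong limit of the $a_n\in A_n\subset\widehat{A}$, strong closure puts it in $\pi(\widehat{A})''$, multiplication by $x,y\in\widehat{A}$ handles weakly special elements, and norm-closure of a von Neumann algebra handles $\overline{A}_\pi$. Your closing remark about reading the inclusion via the normal extension of Theorem~\ref{env_alg_thm} is the right way to reconcile the ambient Hilbert spaces, and is the only point where any care is needed.
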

 \begin{lem}\cite{ivankov:qnc}
 	If $\overline{a}\in B\left( \H\right)$ is special, (resp.  $\overline{a}'\in B\left( \H\right)$ weakly special) then for any  $g \in \widehat{G}$ the element  $g\overline{a}$ is special, (resp. $g\overline{a}'$ is weakly special).
 \end{lem}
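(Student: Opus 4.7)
The plan is to exploit three simple facts: the subgroups $G_n = \ker\!\left(\widehat{G}\to G(A_n\mid A)\right)$ are normal in $\widehat{G}$; every element of $A$ is pointwise fixed by $\widehat{G}$, hence commutes with the unitary implementing any $g\in\widehat{G}$ on $\H$; and the $\widehat{G}$-action preserves each $A_n$ via the projection $\widehat{G}\to G(A_n\mid A)$. With these in hand, the verification of conditions (a), (b), (c) of Definition \ref{special_el_defn} for $g\overline{a}$ reduces to reindexing the corresponding sums for $\overline{a}$.

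Concretely, for condition (a) I would compute
\[
\sum_{h\in G_n}h(g\overline{a})=\sum_{h\in G_n}(hg)\overline{a}=\sum_{h'\in G_n}(gh')\overline{a}=g\!\left(\sum_{h'\in G_n}h'\overline{a}\right)=g(a_n),
\]
where the second equality uses normality ($hg=g(g^{-1}hg)$ with $g^{-1}hg\in G_n$), and the last uses that conjugation by the unitary representing $g$ is strongly continuous on the norm-bounded increasing net of partial sums controlled by Lemma \ref{stong_conv_inf_lem}. Since $g$ acts on $A_n$ as a covering transformation through the projection $\widehat{G}\to G(A_n\mid A)$, the element $g(a_n)$ lies in $A_n$, establishing~(a).

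For condition (b) the key observation is that because $z\in A$ is $\widehat{G}$-fixed, the unitary for $g$ commutes with $z$ and with $z^*$, so
\[
z\,(g\overline{a})\,z^*=g(z\overline{a}z^*),\qquad f_\eps\!\left(z(g\overline{a})z^*\right)=g\!\left(f_\eps(z\overline{a}z^*)\right),
\]
the second identity following from functional calculus and the fact that conjugation by $g$ is a $*$-automorphism of $B(\H)$. Repeating the reindexing argument above then yields $b_n'=g(b_n)$, $c_n'=g(c_n)$, $d_n'=g(d_n)$, all of which lie in $A_n$. Condition (c) is immediate because $g$ acts isometrically, so $\|b_n'^2-c_n'\|=\|g(b_n^2-c_n)\|=\|b_n^2-c_n\|$. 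The weakly special statement then follows formally: if $\overline{a}'=x\overline{a}y$ with $x,y\in\widehat{A}$, then $g\overline{a}'=(gx)(g\overline{a})(gy)$, and $gx,gy\in\widehat{A}$ while $g\overline{a}$ is special by the first part.

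No step is a real obstacle; the only point requiring some care is making sure the strong convergence survives the reindexing, which is why I rely on Lemma \ref{stong_conv_inf_lem} to view the partial sums as a norm-bounded increasing net on which conjugation by the unitary implementing $g$ is continuous. Everything else is bookkeeping with normal subgroups and the fact that $A$-elements commute with the $\widehat{G}$-unitaries.
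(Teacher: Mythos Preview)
Your argument is correct and is the natural one; the paper does not supply a proof here but simply cites the result from \cite{ivankov:qnc}, so there is nothing to compare against. One minor remark: the survival of strong convergence under conjugation by $g$ follows directly from boundedness of the implementing operator and its inverse, so the appeal to Lemma~\ref{stong_conv_inf_lem} is not really needed at that step.
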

 \begin{cor}\label{disconnect_group_action_cor}\cite{ivankov:qnc}
 	If $\overline{A}_\pi \subset B\left( \H\right)$ is the $C^*$-norm completion of algebra generated by weakly special elements, then there is a natural action of $\widehat{G}$ on $\overline{A}_\pi$.
 \end{cor}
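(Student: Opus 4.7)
The plan is to reduce this statement to the preceding lemma together with the observation that the equivariant representation of $\widehat{A}$ on $\H$ extends the action of $\widehat{G}$ to all of $B(\H)$ by isometric $*$-automorphisms. Specifically, the equivariance axiom \eqref{equiv_act_eqn} means that the action of $\widehat{G}$ on $\H$ is implemented by a family of unitaries $\{U_g\}_{g \in \widehat{G}}$, and conjugation $\alpha_g(T) \stackrel{\mathrm{def}}{=} U_g T U_g^{*}$ defines an isometric $*$-automorphism of $B(\H)$ which on $\widehat{A} \subset B(\H)$ restricts to the given action $a \mapsto ga$.

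First I would invoke the preceding lemma to conclude that $\alpha_g$ sends weakly special elements to weakly special elements for every $g \in \widehat{G}$. Since $\alpha_g$ is a $*$-homomorphism, it then preserves the $*$-subalgebra $B_\pi \subset B(\H)$ algebraically generated by weakly special elements; applying the same observation to $g^{-1}$ gives the two-sided inclusion $\alpha_g(B_\pi) = B_\pi$.

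Next, because $\alpha_g$ is isometric in operator norm, the restriction of $\alpha_g$ to $B_\pi$ extends uniquely by continuity to an isometric $*$-automorphism of the $C^*$-norm closure $\overline{A}_\pi$. The group law $\alpha_{gh} = \alpha_g \circ \alpha_h$ holds on $B_\pi$ and extends by continuity to $\overline{A}_\pi$, so $g \mapsto \alpha_g|_{\overline{A}_\pi}$ gives the required action of $\widehat{G}$ on $\overline{A}_\pi$. Its naturality is visible from the fact that on the dense $*$-subalgebra generated by $\widehat{A}$ together with special elements it coincides with the action already defined.

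The step requiring care is the verification that the continuous extension of $\alpha_g$ actually lands inside $\overline{A}_\pi$ rather than only in the larger algebra $B(\H)$; this is secured by combining the invariance $\alpha_g(B_\pi) = B_\pi$ with the isometry, so norm-limits of elements of $B_\pi$ map to norm-limits of elements of $B_\pi$. Beyond this routine check no serious obstacle arises: the corollary is essentially a formal consequence of the preceding lemma once the equivariant representation is used to realise the abstract $\widehat{G}$-action as spatial conjugation on $B(\H)$.
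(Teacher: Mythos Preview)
Your argument is correct and is precisely the natural proof one would expect. Note, however, that the paper does not actually supply its own proof of this corollary: it is quoted from the external reference \cite{ivankov:qnc}, so there is nothing in the present text to compare against beyond the immediately preceding lemma. Your deduction from that lemma---extending the conjugation action $\alpha_g$ from the $*$-algebra generated by weakly special elements to its norm closure via isometry---is exactly the intended mechanism.

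One small point worth tightening: the definition of an equivariant representation (Definition~\ref{equiv_act_defn}) only posits an action of $\widehat{G}$ on $\H$ satisfying \eqref{equiv_act_eqn}, without explicitly asserting that this action is by unitaries. Your claim that conjugation by $U_g$ is an isometric $*$-automorphism of $B(\H)$ uses unitarity. In practice this is the intended hypothesis (and is how the action is used throughout, e.g.\ in Remark~\ref{a_act_hilb_rem}), but if you want the argument to be fully self-contained you should either note that unitarity is being assumed or observe that the formula $(ga)\xi = g(a(g^{-1}\xi))$ already forces $\alpha_g$ to be a $*$-automorphism of the image of $\widehat{A}$, and that the preceding lemma then gives invariance of the weakly special elements directly, after which norm-isometry follows because $*$-automorphisms of $C^*$-algebras are automatically isometric.
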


 \begin{definition}\label{main_defn_full}
 	Let $\mathfrak{S} =\left\{ A =A_0 \xrightarrow{\pi^1} A_1 \xrightarrow{\pi^2} ... \xrightarrow{\pi^n} A_n \xrightarrow{\pi^{n+1}} ...\right\}$ be an  algebraical  finite covering sequence. Let  $\pi:\widehat{A} \to B\left( \H\right) $ be an equivariant representation.	 Let $\overline{A}_\pi \subset B\left( \H\right)$ be the $C^*$-norm completion of algebra generated by weakly special elements. We say that $\overline{A}_\pi$ is the {\it disconnected inverse noncommutative limit} of $\downarrow\mathfrak{S}$ (\textit{with respect to $\pi$}). 
 	The triple  $\left(A, \overline{A}_\pi, G\left(\overline{A}_\pi~|~ A\right)\stackrel{\mathrm{def}}{=} \widehat{G}\right)$ is said to be the  {\it disconnected infinite noncommutative covering } of $\mathfrak{S}$ (\textit{with respect to $\pi$}). If $\pi$ is the universal representation then "with respect to $\pi$" is dropped and we will write
 	$\left(A, \overline{A}, G\left(\overline{A}~|~ A\right)\right)$.	
 \end{definition}
 \begin{definition}\label{main_sdefn}
 	Any  maximal irreducible subalgebra $\widetilde{A}_\pi \subset \overline{A}_\pi$  is said to be a {\it connected component} of $\mathfrak{S}$ ({\it with respect to $\pi$}). The maximal subgroup $G_\pi\subset G\left(\overline{A}_\pi~|~ A\right)$ among subgroups $G\subset G\left(\overline{A}_\pi~|~ A\right)$ such that $G\widetilde{A}_\pi=\widetilde{A}_\pi$  is said to be the $\widetilde{A}_\pi$-{\it  invariant group} of $\mathfrak{S}$. If $\pi$ is the universal representation then "with respect to $\pi$" is dropped. 
 \end{definition}
 
 \begin{rem}
 	From the Definition \ref{main_sdefn} it follows that $G_\pi \subset G\left(\overline{A}_\pi~|~ A\right)$ is a normal subgroup.
 \end{rem}
 \begin{definition}\label{good_seq_defn} Let $$\mathfrak{S} = \left\{ A =A_0 \xrightarrow{\pi^1} A_1 \xrightarrow{\pi^2} ... \xrightarrow{\pi^n} A_n \xrightarrow{\pi^{n+1}} ...\right\} \in \mathfrak{FinAlg},$$ and let $\left(A, \overline{A}_\pi, G\left(\overline{A}_\pi~|~ A\right)\right)$ be a disconnected infinite noncommutative covering of $\mathfrak{S}$ with respect to an equivariant representation $\pi: \varinjlim A_n\to B\left(\H \right) $. Let $\widetilde{A}_\pi\subset \overline{A}_\pi$  be a connected component of $\mathfrak{S}$ with respect to $\pi$, and let $G_\pi \subset  G\left(\overline{A}_\pi~|~ A\right)$  be the $\widetilde{A}_\pi$ -  invariant group of $\mathfrak{S}$.
 	Let  $h_n : G\left(\overline{A}_\pi~|~ A\right) \to  G\left( A_n~|~A \right)$ be the natural surjective  homomorphism. The  representation $\pi: \varinjlim A_n\to B\left(\H \right)$ is said to be \textit{good} if it satisfies to following conditions:
 	\begin{enumerate}
 		\item[(a)] The natural *-homomorphism $ \varinjlim A_n \to  M\left(\widetilde{A}_\pi \right)$ is injective,
 		\item[(b)] If $J\subset G\left(\overline{A}_\pi~|~ A\right)$ is a set of representatives of $G\left(\overline{A}_\pi~|~ A\right)/G_\pi$, then the algebraic direct sum
 		\begin{equation*}
 		\bigoplus_{g\in J} g\widetilde{A}_\pi
 		\end{equation*}
 		is a dense subalgebra of $\overline{A}_\pi$,
 		\item [(c)] For any $n \in \N$ the restriction $h_n|_{G_\pi}$ is an epimorphism, i. e. $h_n\left(G_\pi \right) = G\left( A_n~|~A \right)$.
 	\end{enumerate}
 	If $\pi$ is the universal representation we say that $\mathfrak{S}$ is \textit{good}.
 \end{definition}

 \begin{definition}\label{main_defn}
 	Let $\mathfrak{S}=\left\{A=A_0 \to A_1 \to ... \to A_n \to ...\right\} \in \mathfrak{FinAlg}$ be  an algebraical  finite covering sequence. Let $\pi: \widehat{A} \to B\left(\H \right)$ be a good representation.    A connected component $\widetilde{A}_\pi \subset \overline{A}_\pi$  is said to be the {\it inverse noncommutative limit of $\downarrow\mathfrak{S}$ (with respect to $\pi$)}. The $\widetilde{A}_\pi$-invariant group $G_\pi$  is said to be the {\it  covering transformation group of $\mathfrak{S}$} ({\it with respect to $\pi$}).  The triple $\left(A, \widetilde{A}_\pi, G_\pi\right)$ is said to be the  {\it infinite noncommutative covering} of $\mathfrak{S}$  ({\it with respect to $\pi$}). 
 	We will use the following notation 
 	\begin{equation*}
 	\begin{split}
 	\varprojlim_\pi \downarrow \mathfrak{S}\stackrel{\mathrm{def}}{=}\widetilde{A}_\pi,\\
 	G\left(\widetilde{A}_\pi~|~ A\right)\stackrel{\mathrm{def}}{=}G_\pi.
 	\end{split}
 	\end{equation*}	
 	If $\pi$ is the universal representation then "with respect to $\pi$" is dropped and we will write $\left(A, \widetilde{A}, G\right)$, $~\varprojlim \downarrow \mathfrak{S}\stackrel{\mathrm{def}}{=}\widetilde{A}$ and  $ G\left(\widetilde{A}~|~ A\right)\stackrel{\mathrm{def}}{=}G$.
 \end{definition}
 
  \begin{definition}\label{inf_hilb_prod_defn}
 	Let $\mathfrak{S}=\left\{A=A_0 \to A_1 \to ... \to A_n \to ...\right\} \in \mathfrak{FinAlg}$ be  an algebraical  finite covering sequence. Let $\pi: \widehat{A} \to B\left(\H \right)$ be a good representation.   Let $\left(A, \widetilde{A}_\pi, G_\pi\right)$  be the  infinite noncommutative covering of $\mathfrak{S}$  ( with respect to $\pi$). Let $K\left( \widetilde{A}_\pi\right)$ be the Pedersen ideal of  $\widetilde{A}_\pi$. We say that $\mathfrak{S}$ \textit{allows inner product (with respect to $\pi$)} if following conditions hold
 	\begin{enumerate}
 \item[(a)]  Any $\widetilde{a} \in K\left( \widetilde{A}_\pi\right)$ is weakly special,
 \item[(b)]
 	For any $n \in \N$, and $\widetilde{a}, \widetilde{b} \in K\left( \widetilde{A}_\pi\right)$ the series
 		\begin{equation*}
 	\begin{split}
 a_n = \sum_{g \in \ker\left( \widehat{G} \to  G\left( A_n~|~A \right)\right)} g \left(\widetilde{a}^* \widetilde{b}  \right) 
 	\end{split}
 	\end{equation*}
 	is strongly convergent and $a_n \in A_n$.
 	\end{enumerate}
 	
 \end{definition}

\begin{rem}\label{inf_hilb_prod_rem}
	If $\mathfrak{S}$ allows  inner product (with respect to $\pi$) then $K\left( \widetilde{A}_\pi\right)$ is a pre-Hilbert $A$ module such that the inner product is given by
 		\begin{equation*}
\begin{split}
\left\langle \widetilde{a}, \widetilde{b}  \right\rangle  = \sum_{g \in  \widehat{G}} g \left(\widetilde{a}^* \widetilde{b}  \right) \in A 
\end{split}
\end{equation*}
where the above series is strongly convergent. The completion of  $K\left( \widetilde{A}_\pi\right)$ with respect to a norm
\begin{equation*}
\begin{split}
\left\| \widetilde{a}\right\| = \sqrt{\left\| \left\langle \widetilde{a}, \widetilde{a}  \right\rangle\right\|}
\end{split}
\end{equation*}
is an $A$-Hilbert module. Denote by $X_A$ this completion. The ideal $K\left( \widetilde{A}_\pi\right)$ is a left $\widetilde{A}_\pi$-module, so $X_A$ is also $\widetilde{A}_\pi$-module. Sometimes we will write  $_{\widetilde{A}_\pi}X_A$ instead  $X_A$.
	
\end{rem}
   \begin{definition}\label{inf_hilb_mod_defn}
 	Let $\mathfrak{S}=\left\{A=A_0 \to A_1 \to ... \to A_n \to ...\right\} \in \mathfrak{FinAlg}$ and  $\mathfrak{S}$ allows inner product (with respect to $\pi$) then $K\left( \widetilde{A}_\pi\right)$ then we say that given by the Remark  \ref{inf_hilb_prod_rem} $A$-Hilbert module $_{\widetilde{A}_\pi}X_A$ \textit{corresponds to the pair} $\left(\mathfrak{S}, \pi \right) $. If $\pi$ is the universal representation then we say that $_{\widetilde{A}}X_A$ \textit{corresponds to } $\mathfrak{S}$.
  \end{definition}

\subsection{Induced representation}\label{inf_ind_repr_subsection}

\paragraph*{} 	Let $\pi:\widehat{A} \to B\left(\overline{\H}_\pi \right)$ be a good representation. Let  $\left(A, \widetilde{A}_\pi, G_\pi\right)$ be an infinite noncommutative covering  with respect to $\pi$ of $\mathfrak{S}$. Denote by $\overline{W}_\pi \subset B\left(\overline{\H}_\pi \right)$ the $\widehat{A}$-bimodule of weakly special elements, and denote by
\begin{equation}\label{wealky_spec_eqn}
\widetilde{W}_\pi = \overline{W}_\pi \bigcap  \widetilde{A}_\pi.
\end{equation}
If $\pi$ is the universal representation then we write $\widetilde{W}$ instead $\widetilde{W}_\pi$.

\begin{lem}\label{w_conv_lem}\cite{ivankov:qnc}
	If $\widetilde{a}, \widetilde{b} \in \widetilde{W}_\pi $ are weakly special elements  then a series
	$$
	\sum_{g \in G_\pi} g\left(\widetilde{a}^*\widetilde{b} \right)
	$$
	is strongly convergent.
\end{lem}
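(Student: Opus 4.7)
My plan is first to reduce the statement to the diagonal case $\widetilde{a} = \widetilde{b}$ by a Cauchy--Schwarz estimate, and then to establish this diagonal case by a uniform-boundedness argument exploiting the defining properties of special elements. For the reduction, fix weakly special $\widetilde{a},\widetilde{b}\in\widetilde{W}_\pi$ and for a finite subset $F\subset G_\pi$ set $T_F = \sum_{g \in F} g(\widetilde{a})^*g(\widetilde{b})$. Viewing $T_F = C^*D$ where $C,D\colon \overline{\H}_\pi^{|F|}\to\overline{\H}_\pi$ are the row operators with entries $g(\widetilde{a})$ and $g(\widetilde{b})$, the matrix Cauchy--Schwarz inequality gives
\[
\|T_F\xi\|^2 \leq \Bigl\|\sum_{g \in F} g(\widetilde{a}^*\widetilde{a})\Bigr\|\cdot\Bigl\langle\xi,\sum_{g \in F} g(\widetilde{b}^*\widetilde{b})\,\xi\Bigr\rangle
\]
for every $\xi \in \overline{\H}_\pi$. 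Once I have shown that $\sum_{g \in G_\pi} g(\widetilde{c}^*\widetilde{c})$ converges strongly (hence is a bounded positive operator) for every weakly special $\widetilde{c}$, applying the inequality to $T_{F_2}-T_{F_1}=T_{F_2\setminus F_1}$ (with $F_1\subset F_2$) shows $\{T_F\xi\}$ is a Cauchy net, yielding the desired strong convergence.

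For the diagonal case, fix $\widetilde{c}=x\overline{a}y$ weakly special. The partial sums of $\sum_g g(\widetilde{c}^*\widetilde{c})$ form an increasing net of positive operators, so by Lemma~\ref{increasing_convergent_w} it suffices to obtain a uniform operator-norm bound. Using the operator inequality
\[
\widetilde{c}^*\widetilde{c} = y^*\overline{a}(x^*x)\overline{a}y \leq \|x\|^2\|\overline{a}\|\,y^*\overline{a}y,
\]
the task reduces to bounding $\sum_{g \in F} g(y^*\overline{a}y)$. For $y\in A_n$, condition (c) of Definition~\ref{good_seq_defn} yields the short exact sequence $1\to K_n\to G_\pi\to G(A_n\,|\,A)\to 1$ with $K_n = G_\pi\cap\ker h_n$, so one can split
\[
\sum_{g \in G_\pi} g(y^*\overline{a}y) = \sum_{[g]\in G_\pi/K_n} g\Bigl(y^*\Bigl(\sum_{k\in K_n} k(\overline{a})\Bigr)y\Bigr),
\]
where the inner sum is dominated by $\sum_{g\in G_n} g(\overline{a}) = a_n\in A_n$ coming from property~(a) of a special element, and the outer sum is finite because $|G(A_n\,|\,A)|<\infty$.

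The main obstacle is extending the argument from $y\in A_n$ to an arbitrary $y\in\widehat{A} = \overline{\bigcup_n A_n}$. A naive norm approximation $y_m\to y$ with $y_m\in A_{n_m}$ does not immediately yield a uniform bound on $\sum_g g(y^*\overline{a}y)$, since the estimate above depends on $n_m$ through both $|G(A_{n_m}\,|\,A)|$ and $\|a_{n_m}\|$. This is exactly where property~(c) of a special element (the estimate $\|b_n^2 - c_n\|<\varepsilon$) comes in: it governs the rate at which the truncated sums $b_n=\sum_{g\in G_n} g(z\overline{a}z^*)$ stabilise as $n\to\infty$, and together with property~(b) it allows the approximation error to be controlled uniformly in $g$ so that the uniform bound passes through the limit. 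The detailed implementation of this approximation step is carried out in~\cite{ivankov:qnc}.
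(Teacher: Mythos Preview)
The paper does not contain a proof of this lemma; it merely cites \cite{ivankov:qnc}. So there is no ``paper's own proof'' to compare against, and your proposal likewise ends by deferring the crucial approximation step to the same reference. In that sense both the paper and your write-up treat the result as a black box.

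Regarding what you do write: the overall strategy (Cauchy--Schwarz reduction to the diagonal case, then a uniform bound via the monotone-net criterion of Lemma~\ref{increasing_convergent_w}) is the natural one and is sound in outline. One small slip: with $C,D\colon \overline{\H}_\pi^{|F|}\to\overline{\H}_\pi$ as \emph{row} operators, the product $C^*D$ lands in $B(\overline{\H}_\pi^{|F|})$, not $B(\overline{\H}_\pi)$. You want $C,D$ to be \emph{column} operators $\overline{\H}_\pi\to\overline{\H}_\pi^{|F|}$ with entries $g(\widetilde a)$ and $g(\widetilde b)$; then $T_F = C^*D$ and the displayed inequality follows from $\|T_F\xi\|\le\|C\|\,\|D\xi\|$ together with $\|C\|^2=\|C^*C\|$ and $\|D\xi\|^2=\langle\xi,D^*D\,\xi\rangle$.

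More substantively, the paragraph where you pass from $y\in A_n$ to general $y\in\widehat{A}$ is not a proof but a pointer: you name condition~(c) of Definition~\ref{special_el_defn} as the relevant ingredient and then cite \cite{ivankov:qnc} for the ``detailed implementation''. That is honest, but it means the proposal, like the paper itself, does not actually establish the lemma. If the intent is to give a self-contained argument, that limiting step is precisely where the work lies and would need to be written out.
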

\begin{defn}\label{ss_defn}
	Element $\widetilde{a} \in \widetilde{A}_\pi$ is said to be \textit{square-summable} if the series 
	\be\label{ss_eqn}
	\sum_{g \in G_\pi} g\left(\widetilde{a}^*\widetilde{a} \right)
	\ee
	is strongly convergent to a bounded operator. Denote by $L^2\left(\widetilde{A}_\pi \right)$ (or 
$L^2\left(\widetilde{A}\right)$) the $\C$-space of square-summable operators.
\end{defn}
\begin{rem}
	If $\widetilde{b} \in \widehat{A}$, and $\widetilde{a}\in L^2\left(\widetilde{A}\right)$ then
	$$
	\left\|	\sum_{g \in G_\pi}g\left(\widetilde{b}\widetilde{a} \right)^* \left(\widetilde{b}\widetilde{a}\right) \right\| \le \left\|\widetilde{b} \right\|^2\left\| 	\sum_{g \in G_\pi} g\left(\widetilde{a}^*\widetilde{a} \right)\right\| ,~~
	\left\|	\sum_{g \in G_\pi}g\left(\widetilde{a}\widetilde{b} \right)^* \left(\widetilde{a}\widetilde{b}\right) \right\| \le \left\|\widetilde{b} \right\|^2\left\| 	\sum_{g \in G_\pi} g\left(\widetilde{a}^*\widetilde{a} \right)\right\| 
	$$
	it turns out
	\be\label{act_on_l2_eqn}
	\widehat{A}L^2\left(\widetilde{A}_\pi\right) \subset L^2\left(\widetilde{A}_\pi\right),~~
		L^2\left(\widetilde{A}_\pi\right)\widehat{A} \subset L^2\left(\widetilde{A}_\pi\right),
	\ee
	i.e. there is the left and right action of $\widehat{A}$ on $L^2\left(\widetilde{A}\right)$.
\end{rem}
\begin{lem}
	If $\widetilde{a}, \widetilde{b} \in L^2\left(\widetilde{A}_\pi \right)$ are square-summable then the series	$\sum_{g \in G_\pi} g\left(\widetilde{a}^*\widetilde{b} \right)$ is weakly convergent. Moreover if
	$a =\sum_{g \in G_\pi} g\left(\widetilde{a}^*\widetilde{b} \right)$  is the sum of the series in the weak topology then $a \in A''$.
\end{lem}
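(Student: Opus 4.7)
The plan is to handle the two assertions separately: weak convergence first, then membership in $A''$.

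For weak convergence, I would rely on the standard operator polarization identity
\[
4\widetilde{a}^*\widetilde{b} = \sum_{k=0}^{3} i^{-k}\bigl(\widetilde{a}+i^k\widetilde{b}\bigr)^*\bigl(\widetilde{a}+i^k\widetilde{b}\bigr).
\]
For this to help, I first need $L^2(\widetilde{A}_\pi)$ to be a $\mathbb{C}$-vector space, which follows from the estimate $\|(\widetilde{a}+\lambda\widetilde{b})^*(\widetilde{a}+\lambda\widetilde{b})\|\le 2\|\widetilde{a}^*\widetilde{a}\|+2|\lambda|^2\|\widetilde{b}^*\widetilde{b}\|$ applied partial-sum by partial-sum, combined with the fact that an increasing net of positive operators bounded in norm converges strongly (Lemma \ref{increasing_convergent_w}). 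Once this is established, each term $\sum_{g\in G_\pi} g\bigl((\widetilde{a}+i^k\widetilde{b})^*(\widetilde{a}+i^k\widetilde{b})\bigr)$ is a strongly convergent sum of positive operators, and the series $\sum_{g\in G_\pi} g(\widetilde{a}^*\widetilde{b})$ is a finite $\mathbb{C}$-linear combination of these, hence strongly (in particular weakly) convergent.

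As an alternative/cleaner route to weak convergence I would use Cauchy--Schwarz on matrix coefficients: for any $\xi,\eta\in\overline{\H}_\pi$ and finite $F\subset G_\pi$,
\[
\Bigl|\sum_{g\in F}\langle g(\widetilde{a}^*\widetilde{b})\xi,\eta\rangle\Bigr|
= \Bigl|\sum_{g\in F}\langle g(\widetilde{b})\xi,\,g(\widetilde{a})\eta\rangle\Bigr|
\le \Bigl(\sum_{g\in F}\|g(\widetilde{b})\xi\|^2\Bigr)^{1/2}\Bigl(\sum_{g\in F}\|g(\widetilde{a})\eta\|^2\Bigr)^{1/2}.
\]
Each of the two factors on the right equals $\langle(\sum_{g\in F}g(\widetilde{b}^*\widetilde{b}))\xi,\xi\rangle^{1/2}$ (resp.\ for $\widetilde{a}$), which is bounded above by the norm of the strongly convergent sum given by square-summability; taking $F$ to be the complement of a large enough finite set shows the partial sums form a Cauchy net of complex numbers, and hence the series converges in the weak topology.

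For the second assertion I would argue as follows. Each partial sum $s_F=\sum_{g\in F}g(\widetilde{a}^*\widetilde{b})$ lies in $\widetilde{A}_\pi$ because $\widetilde{A}_\pi$ is $G_\pi$-invariant, so the weak limit $a$ lies in the weak closure $\widetilde{A}_\pi''\subset B(\overline{\H}_\pi)$. The crucial observation is that $a$ is $G_\pi$-invariant: for any $h\in G_\pi$, the unitary implementation of $h$ on $\overline{\H}_\pi$ satisfies $h\cdot a=\sum_{g\in G_\pi}(hg)(\widetilde{a}^*\widetilde{b})=a$ by reindexing. Thus $a$ commutes with every $U_h$, $h\in G_\pi$, and lies in $(\widetilde{A}_\pi'')^{G_\pi}$. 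Finally, since the infinite noncommutative covering identifies $A$ as the $G_\pi$-fixed points of $\widetilde{A}_\pi$ (in the spirit of Definition \ref{main_defn} and the corresponding finite-covering identification $A=\widetilde{A}^G$), passing to the weak closure via the bicommutant theorem gives $(\widetilde{A}_\pi'')^{G_\pi}=A''$ inside $B(\overline{\H}_\pi)$, placing $a$ in $A''$.

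The main obstacle is the last step: pinning down $a\in A''$ rather than merely $a\in\widetilde{A}_\pi''$. The identification of fixed-point subalgebras with $A''$ under passage to the weak closure is the essential ingredient, and my plan leans on the paper's covering framework (good representation, invariant subgroup $G_\pi$) to justify it; verifying this carefully --- by checking that every normal $G_\pi$-invariant functional on $\widetilde{A}_\pi$ restricts meaningfully to a normal functional on $A$ --- is where the work goes. Weak (as opposed to strong) convergence is emphasized in the statement precisely because $\widetilde{a}^*\widetilde{b}$ need not be positive, so the monotone-net argument does not apply directly and one must either polarize or use Cauchy--Schwarz on matrix coefficients.
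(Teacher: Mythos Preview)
Your Cauchy--Schwarz route is exactly what the paper does: it bounds $|(\xi,g(\widetilde a^*\widetilde b)\eta)|\le\|(g\widetilde a)\xi\|\,\|(g\widetilde b)\eta\|$, applies Cauchy--Schwarz in $\ell^2(G_\pi)$ to the two sequences $\|(g\widetilde a)\xi\|$ and $\|(g\widetilde b)\eta\|$, and uses square-summability of $\widetilde a,\widetilde b$ to conclude absolute (hence weak) convergence of the scalar series. Your polarization argument is a genuine alternative the paper does not use; it has the pleasant side effect of upgrading the conclusion to \emph{strong} convergence, since each of the four positive pieces converges strongly by Lemma~\ref{increasing_convergent_w}.

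For membership in $A''$, the paper's entire argument is the single sentence ``Element $a$ is $G_\pi$-invariant, so $a\in A''$.'' In other words, the paper treats as immediate precisely the step you flag as the main obstacle. Your attempt to justify it via $(\widetilde A_\pi'')^{G_\pi}=A''$ is more scrupulous than the paper's own presentation; the paper simply takes for granted that in the good-representation setting $G_\pi$-invariance forces the limit into $A''$, without spelling out the bicommutant/fixed-point identification you describe.
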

\begin{proof}
	If $\xi, \eta \in \widehat{\H}$ then following series
	\bean
	\sum_{g \in G_\pi}\left(\xi, g\left( \widetilde{a}^*\widetilde{a} \right) \xi\right)_{\overline{\H}_\pi} =  	\sum_{g \in G_\pi}\left(\left( g\widetilde{a}\right) \xi,\left( g\widetilde{a}\right) \xi\right)_{\overline{\H}_\pi}=\sum_{g \in G_\pi}\left\|\left( g\widetilde{a}\right) \xi \right\|^2,\\
		\sum_{g \in G_\pi}\left(\eta, g\left( \widetilde{b}^*\widetilde{b} \right) \eta\right)_{\overline{\H}_\pi} =  	\sum_{g \in G_\pi}\left(\left( g\widetilde{b}\right) \eta,\left( g\widetilde{b}\right) \eta\right)_{\overline{\H}_\pi}=\sum_{g \in G_\pi}\left\|\left( g\widetilde{b}\right) \eta \right\|^2
	\eean
\end{proof}
are convergent. From the Cauchy–Schwarz inequality it turns out that the series
	\bean
\sum_{g \in G_\pi}\left\|\left( g\widetilde{a}\right) \xi \right\|\left\|\left( g\widetilde{b}\right) \eta \right\| \le \sqrt{\sum_{g \in G_\pi}\left\|\left( g\widetilde{a}\right) \xi \right\|^2}\sqrt{\sum_{g \in G_\pi}\left\|\left( g\widetilde{b}\right) \eta \right\|^2}< \infty
\eean
is convergent, and taking into account $$\left|\left(\xi, \left( g\left( \widetilde{a}^*\widetilde{b}\right) \right) \eta\right)_{\overline{\H}_\pi}  \right|=\left|\left(\left( g\widetilde{a}\right) \xi,\left( g\widetilde{b}\right)  \eta\right)_{\overline{\H}_\pi}  \right|\le\left\| \left( g\widetilde{a}\right) \xi \right\|\left\|\left( g\widetilde{b}\right) \eta \right\|$$ we conclude that the series
$$
\sum_{g \in G_\pi}  \left(\xi, \left( g\left( \widetilde{a}\widetilde{b}\right) \right) \eta\right)_{\overline{\H}_\pi}  
$$
is absolutely convergent, or, equivalently the series $\sum_{g \in G_\pi} g\left(\widetilde{a}^*\widetilde{b} \right)$ is weakly convergent. Element  $a$ is $G_\pi$-invariant, so $a \in A''$.

\begin{empt}\label{inf_repr_constr}
	Let $A \to B\left(\H \right)$ be a  representation.  Denote by $\widetilde{\H}$ a Hilbert completion of a pre-Hilbert space
	\begin{equation}\label{inf_ind_prod_eqn}
	\begin{split}
	L^2\left( \widetilde{A}_\pi \right) \otimes_A \H,\\
	\text{with a scalar product} 	\left(\widetilde{a} \otimes \xi, \widetilde{b} \otimes \eta \right)_{\widetilde{\H}} = \left( \xi, \left( \sum_{g \in G_\pi } g \left( \widetilde{a}^*\widetilde{b}\right) \right) \eta \right)_{\H}.
	\end{split}
	\end{equation}  
	There is the left action of $\widehat{A}$ on $L^2\left(\widetilde{A}_\pi\right) \otimes_{A} \H$ given by
\be\label{inf_ind_act_eqn}
	\widetilde{b}\left(\widetilde{a} \otimes \xi \right) = \widetilde{b}\widetilde{a} \otimes \xi
\ee
	where $\widetilde{a} \in 	L^2\left( \widetilde{A}_\pi \right) $, $\widetilde{b} \in \widehat{A}$, $\xi \in \H$.	
	The left action of $\widehat{A}$ on $L^2\left( \widetilde{A}_\pi\right)  \otimes_A \H$  induces following  representations
	\begin{equation*}
	\begin{split}
	\widehat{\rho}:\widehat{A} \to B\left( \widetilde{\H}\right),\\	
	\widetilde{\rho}:\widetilde{A}_\pi \to B\left( \widetilde{\H}\right).	
	\end{split}
	\end{equation*}

\end{empt}
\begin{defn}\label{inf_ind_defn}
	The constructed in \ref{inf_repr_constr} representation  $\widetilde{\rho}:\widetilde{A}_\pi \to  B\left( \widetilde{\H}\right)$ is said to be \textit{induced} by  $\left( \rho, \mathfrak{S}, \pi\right)  $. We also say that  $\widetilde{\rho}$ is  \textit{induced} by $\left( \rho, \left( A, \widetilde{A}_\pi, G\left(\widetilde{A}_\pi~|~A \right) \right), \pi\right) $. If $\pi$ is an universal representation we say that  $\widetilde{\rho}$ is  \textit{induced} by  $\left( \rho, \mathfrak{S}\right)$ and/or $\left( \rho, \left( A, \widetilde{A}, G\left(\widetilde{A}~|~A \right) \right)\right) $.  
\end{defn}
\begin{rem}
	If $\rho$ is faithful, then  $\widetilde{\rho}$ is faithful.
\end{rem}
\begin{rem}\label{a_act_hilb_rem} 
	There is an action of $G_\pi$ on $\widetilde{\H}$ induced by the natural action of $G_\pi$ on the $\widetilde{A}_\pi$-bimodule $L^2\left( \widetilde{A}_\pi\right) $. If the representation $\widetilde A_\pi \to 	B\left( \widetilde{\H} \right)$ is faithful then an action of $ G_\pi$ on $\widetilde A_\pi$ is given by
	$$ 
	\left( g  \widetilde a\right) \xi =   g \left(  \widetilde a  \left(  g^{-1}\widetilde\xi\right) \right); ~ \forall  g  \in {G}, ~ \forall\widetilde a  \in \widetilde{A}_\pi, ~\forall\widetilde \xi \in \widetilde{\H}.
	$$
\end{rem}
\begin{empt}
If $\mathfrak{S}$ allows inner product with respect to $\pi$ then for any representation $A \to B\left( \H\right)$ an algebraic tensor product $_{\widetilde{A}_\pi}X_A \otimes_A \H$ is a pre-Hilbert space with the product given by
\begin{equation*}
\left(a \otimes \xi, b \otimes \eta \right) = \left(\xi, \left\langle a, b \right\rangle\eta \right) 
\end{equation*} 
(cf. Definitions \ref{inf_hilb_mod_defn} and \ref{inf_hilb_prod_defn})
\end{empt}

\begin{lem}\cite{ivankov:qnc}
Suppose $\mathfrak{S}$ allows inner product with respect to $\pi$ and any $\widetilde{a} \in K\left( \widetilde{A}_\pi\right)$ is weakly special. If $\widetilde{\H}$ (resp. $\widetilde{\H}'$) is a Hilbert norm completion of 	$W_\pi \otimes_{A} \H$ (resp. $_{\widetilde{A}_\pi}X_A \otimes_A \H$) then there is the natural isomorphism $\widetilde{\H} \cong \widetilde{\H}'$.
\end{lem}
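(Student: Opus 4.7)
The plan is to exhibit both Hilbert completions as extensions of a common isometric pre-Hilbert subspace. The starting observation is that the hypothesis that every element of $K(\widetilde{A}_\pi)$ is weakly special gives the inclusion $K(\widetilde{A}_\pi) \subset \widetilde{W}_\pi$, and on both sides the $A$-valued inner product is given by the same formula
\[
\langle \widetilde{a}, \widetilde{b} \rangle = \sum_{g \in G_\pi} g(\widetilde{a}^{*} \widetilde{b}),
\]
which converges strongly on $K(\widetilde{A}_\pi)$ by Definition \ref{inf_hilb_prod_defn} and, at least weakly, on $\widetilde{W}_\pi$ by Lemma \ref{w_conv_lem}. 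Hence the $\H$-valued pairing \eqref{inf_ind_prod_eqn} coincides on the common subspace, and the natural inclusion $K(\widetilde{A}_\pi) \otimes_A \H \hookrightarrow \widetilde{W}_\pi \otimes_A \H$ is isometric, so it extends to an isometric embedding $\widetilde{\H}' \hookrightarrow \widetilde{\H}$ of completions.

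To promote this embedding to a unitary isomorphism, the main task is to show that $K(\widetilde{A}_\pi) \otimes_A \H$ is dense in $\widetilde{W}_\pi \otimes_A \H$ for the Hilbert norm. Given a weakly special element $\widetilde{a} = x \overline{a} y$ with $x, y \in \widehat{A}$ and $\overline{a}$ special, I would pick an approximate unit $\{u_n\} \subset K(\widetilde{A}_\pi)$ for $\widetilde{A}_\pi$ (which exists since the Pedersen ideal is dense in $\widetilde{A}_\pi$ in the $C^{*}$-norm) and consider $\widetilde{a}_n = u_n \widetilde{a} u_n \in K(\widetilde{A}_\pi)$, which again lies in the Pedersen ideal since it is two-sided. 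Expanding
\[
\langle \widetilde{a} - \widetilde{a}_n, \widetilde{a} - \widetilde{a}_n \rangle = \sum_{g \in G_\pi} g\bigl((\widetilde{a} - u_n \widetilde{a} u_n)^{*}(\widetilde{a} - u_n \widetilde{a} u_n)\bigr)
\]
and combining the strong convergence of the defining series of the special element $\overline{a}$ with the uniform bounds $\|u_n\| \le 1$ should yield $\|\widetilde{a} - \widetilde{a}_n\| \to 0$ in the Hilbert-module norm, essentially by a dominated-convergence argument in the weak operator topology on the enveloping von Neumann algebra $\widehat{A}''$.

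The main obstacle I expect is precisely this density step: the approximation must be carried out at the level of the Hilbert-module $L^{2}$-norm rather than merely the $C^{*}$-norm on $\widetilde{A}_\pi$, and the interplay between the strong convergence of the Pedersen-ideal truncation and the group-sum defining the inner product requires careful uniform control of the square-summable tails, e.g. by splitting the sum over $G_\pi$ into a finite piece (handled by $C^{*}$-approximation) and an infinite tail (controlled by the square-summability of $\widetilde{a}$ itself together with the estimate \eqref{act_on_l2_eqn}). Once density is in place, extending the isometric embedding of pre-Hilbert spaces to a unitary isomorphism $\widetilde{\H}' \xrightarrow{\cong} \widetilde{\H}$ is automatic, and its naturality follows because the construction on both sides intertwines the left action of $\widehat{A}$ with the right action of $A$ used in the tensor product.
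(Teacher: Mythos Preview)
The paper does not prove this lemma; it is simply quoted from the external reference \cite{ivankov:qnc} with no accompanying argument. There is therefore no in-paper proof to compare your proposal against.

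That said, your outline is the natural one and the first half is solid. Since condition~(a) of Definition~\ref{inf_hilb_prod_defn} already forces $K(\widetilde{A}_\pi)\subset\widetilde{W}_\pi$ (so the lemma's extra hypothesis is in fact redundant), and since both $A$-valued pairings are given by the same group sum $\sum_{g\in G_\pi} g(\widetilde a^{\,*}\widetilde b)$, the inclusion of algebraic tensor products is isometric and passes to an isometry $\widetilde{\H}'\hookrightarrow\widetilde{\H}$ between completions. Noting that $_{\widetilde A_\pi}X_A$ is by construction the Hilbert-module completion of $K(\widetilde A_\pi)$, the completion of $X_A\otimes_A\H$ coincides with that of $K(\widetilde A_\pi)\otimes_A\H$, so this direction is fine.

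You correctly isolate density as the only substantive step, and you are right to flag it as delicate. The approximate-unit idea is natural, but the ``dominated convergence'' you invoke is not automatic: an approximate unit $\{u_n\}\subset K(\widetilde A_\pi)$ converges in the $C^*$-norm of $\widetilde A_\pi$, whereas the Hilbert-module norm on $\widetilde W_\pi$ is governed by the infinite sum $\sum_{g\in G_\pi} g\bigl((\widetilde a-u_n\widetilde a u_n)^*(\widetilde a-u_n\widetilde a u_n)\bigr)$, and nothing in the definitions guarantees that this sum is continuous for the $C^*$-norm. A cleaner route than attacking a general weakly special element is to use its factorisation $\widetilde a = x\,\overline a\,y$ with $x,y\in\widehat A$ and $\overline a$ special: by \eqref{act_on_l2_eqn} left multiplication by $x$ is bounded for the $L^2$-norm, so it suffices to approximate $\overline a\,y$, and for the special element $\overline a$ one has the monotone structure of Lemma~\ref{stong_conv_inf_lem} and the truncations built into Definition~\ref{special_el_defn}, which supply exactly the uniform tail control your splitting argument needs.
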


\begin{empt}\label{h_n_to_h_constr}
	Let $\H_n$ be a Hilbert completion of $A_n \otimes_A \H$ which is constructed in the section \ref{induced_repr_fin_sec}. Clearly
	\begin{equation}\label{tensor_n_equ}
L^2\left(\widetilde{A}_\pi\right)\otimes_{A_n} \H_n = 	L^2\left(\widetilde{A}_\pi\right) \otimes_{A_n} \left( A_n \otimes_A \H\right) = L^2\left(\widetilde{A}_\pi\right) \otimes_A \H.
	\end{equation}
	
\end{empt}

 \subsection{Coverings of spectral triples}
 \paragraph*{}
 \begin{defn}\label{cov_sec_triple_defn}
 	Let  $\left(\sA, \sH, D\right)$ be a spectral triple, and let $A$ be the $C^*$-norm completion of $\A$ with the natural representation $A \to B\left( \H\right)$. Let

 	\begin{equation}\label{cov_sec_triple_eqn}
 	\mathfrak{S} =\left\{ A =A_0 \xrightarrow{\pi_1} A_1 \xrightarrow{\pi_2} ... \xrightarrow{\pi_n} A_n \xrightarrow{\pi^{n+1}} ...\right\} \in \mathfrak{FinAlg}
 	\end{equation}
 	be a good algebraical  finite covering sequence. Suppose that for any $n > 0$ there is a spectral triple $\left(\sA_n, \sH_n, D_n\right)$, such that
 	\begin{itemize}
 		\item $A_n$ is the $C^*$-norm completion of $\A_n$,
 		\item There is a good representation $\pi:\varinjlim A_n \to B\left(\H_\pi\right)$, 
 		\item For any $k > l \ge 0$  the spectral triple $\left(\sA_k, \sH_k, D_k\right)$ is a $\left(A_l, A_k, G\left(A_k~|A_l \right)  \right)$-lift of $\left(\sA_l, \sH_l, D_l\right)$.
 		
 	\end{itemize}
 	We say that
 	\begin{equation}\label{spectral_triple_sec_eqn}
 	\begin{split}
 	\mathfrak{S}_{\left(\sA, \sH, D\right)}= \{\left(\sA, \sH, D\right)= \left(\sA_0, \sH_0, D_0\right), \left(\sA_1, \sH_1, D_1 \right), ...,\\  \left(\sA_n, \sH_n, D_n \right)\}
 	\end{split}
 	\end{equation}
 	is a \textit{coherent sequence of spectral triples}. We write $\mathfrak{S}_{\left(\sA, \sH, D\right)} \in \mathfrak{CohTriple}$.
 \end{defn}
 Let us consider a coherent sequence of spectral triples.  Let $\left(A, \widetilde{A}_\pi, G_\pi\right)$  be an   infinite noncommutative covering  (with respect to $\pi$) of $\mathfrak{S}$. Denote by $L^2\left(\widetilde{A}_\pi\right) \subset \widetilde{A}_\pi$ the space of square-summable elements, and denote by $J_n = \ker\left( G_\pi \to G\left( A_n~|A\right) \right)$.
 Let us consider a square-summable element $\widetilde{a}  \in L^2\left( \widetilde{A}_\pi\right) $ and denote by
 $$
 a_n = \sum_{g \in J_n} g \widetilde{a} \in A_n. 
 $$
 Let $\widetilde{\rho}:\widetilde{A}_\pi \to \widetilde{\H}$ be induced by $\left( \rho, \left( A, \widetilde{A}_\pi, G\left(\widetilde{A}_\pi~|~A \right) \right), \pi\right)$.
 \begin{defn}\label{smooth_el_defn}
 	Let us consider the above situation 
 	Let $\Om^1_D$ is the {module of differential forms associated} with the spectral triple  $\left( \A, \H, D\right)$ (cf. Definition \ref{ass_cycle_defn}).
 	A weakly special element $\widetilde{a}  \in \widetilde{A}_\pi$ is said to be $\mathfrak{S}_{\left(\sA, \sH, D\right)}$-\textit{smooth with respect to $\pi$} (or \textit{$\mathfrak{S}_{\left(\sA, \sH, D\right)}$-smooth} if $\pi$ is the universal representation) if following conditions hold:
 	\begin{enumerate}
 		\item[(a)] $a_n \in \A_n$ for any $n \in \N$.
 		\item[(b)] For any $s \in \N$ the sequence $\left\lbrace 1_{M\left(\widetilde{A} \right) } \otimes \pi^s_n\left(a_n \right)\in B\left(\widetilde{\H}^{2^s} \right) \right\rbrace_{n \in \N}$ is strongly convergent. (The representation $\pi^s_n: \A_n \to B\left( \H^{2^s}_n \right)$ is given by \eqref{s_diff_repr_equ}).
 		\item[(c)] The sequence $\left\{1_{M\left(\widetilde{A}\right) } \otimes\left[D_n, a_n\right] \in B\left(\widetilde{\H} \right) \right\}_{n \in \N}$ is strongly convergent and 
 		$$
 		\lim_{n \to \infty} 1_{M\left(\widetilde{A}\right) } \otimes\left[D_n, a_n\right] \in L^2\left(\widetilde{A}_\pi\right) \otimes_{\A} \Om^1_D \subset B\left(\widetilde{\H}  \right), \text{ (cf. Remark \ref{dn_rem})}. 
 		$$
 		\item[(d)] The element $\widetilde{a}$ lies in the Pedersen ideal of $\widetilde{A}_\pi$, i.e. $\widetilde{a}  \in K\left( \widetilde{A}_\pi\right) $.
 	\end{enumerate}
 	Denote by $\widetilde{a}^s= \lim_{n \to \infty} 1_{M\left(\widetilde{A} \right) } \otimes \pi^s_n\left(a_n \right)$ in sense the strong convergence, and denote by $\widetilde{W}^\infty_\pi$ the space of smooth elements. If $\pi$ is the universal representation then we write $\widetilde{W}^\infty$ instead $\widetilde{W}^\infty_\pi$.
 \end{defn}
\begin{rem}\label{dn_rem}
	From \eqref{tensor_n_equ} it follows that  $1_{M\left(\widetilde{A} \right) } \otimes \pi^s_n\left(a_n \right)$ (resp. $1_{M\left(\widetilde{A}\right)} \otimes_{A_n}\left[D_n, a_n\right]$) can be regarded as an operator in  $B\left(\widetilde{\H}^{2^s}\right) $ (resp. $B\left(\widetilde{\H}\right)$).
\end{rem}
 \begin{empt}
 	There is a subalgebra $\widetilde{A}_{\text{smooth}} \subset \widetilde{A}_\pi$ generated by smooth elements. For any $s > 0$ there is a seminorm $\left\| \cdot \right\|_s$  on $\widetilde{A}_{\text{smooth}}$ given by
 	\begin{equation}\label{smooth_seminorms_eqn}
 	\left\| \widetilde{a}\right\|_s = \left\|\widetilde{a}^s \right\|=\left\|\lim_{n \to \infty}1_{M\left(\widetilde{A} \right) } \otimes \pi^s_n\left(a_n \right) \right\|.
 	\end{equation}
 \end{empt}
 \begin{defn}\label{smooth_alg_defn}
 	The completion of $\widetilde{A}_{\text{smooth}}$ in the topology induced by seminorms $\left\| \cdot \right\|_s$ is said to be a \textit{smooth algebra} of the coherent sequence \eqref{spectral_triple_sec_eqn} of spectral triples \textit{(with respect to $\pi$)}. This algebra is denoted by $\widetilde{\A}_\pi$. 
 We say that the sequence of spectral triple is \textit{good (with respect to $\pi$)} if $\widetilde{\A}_\pi$ is dense in $\widetilde{A}_\pi$. If $\pi$ is an universal representation then "with respect to $\pi$" is dropped and we write $\widetilde{\A}$ instead of $\widetilde{\A}_\pi$.
 \end{defn}
 \begin{empt}\label{dirac_inf_constr}
 	
 	For any $\widetilde{a}  \in \widetilde{W}^\infty_\pi $ we denote by
 \begin{equation}\label{a_d_eqn}
 	\widetilde{a}_D = \lim_{n \to \infty}1_{M\left(\widetilde{A}\right) } \otimes_{}\left[D_n, \sum_{g \in J_n}\widetilde{a}\right] = \sum_{j = 1}^k \widetilde{a}_D^j\otimes \om_j \in L^2\left(\widetilde{A}_\pi\right)\otimes_{\A} \Om^1_D.
  \end{equation}
 
 If $\H^\infty = \bigcap_{n = 0}^\infty \Dom D^n$ then for any $\widetilde{a} \otimes \xi \in \widetilde{W}^\infty_\pi \otimes_{\A} \H^\infty$ we denote by
 	\begin{equation}\label{inf_lift_D_eqn}
 	\widetilde{D}\left(\widetilde{a} \otimes \xi \right) \stackrel{\mathrm{def}}{=} \sum_{j = 1}^k \widetilde{a}_D^j\otimes \om_j \left(\xi \right)  +\widetilde{a} \otimes D\xi \in L^2\left(\widetilde{A}_\pi\right) \otimes_A \H,
 	\end{equation}
 	
 	i.e. $\widetilde{D}$ is a $\C$-linear map from $W^\infty_\pi \otimes_{\A} \H^\infty$ to $L^2\left(\widetilde{A}_\pi\right) \otimes_A \H$.
 	The space $\widetilde{W}^\infty_\pi \otimes_{\A} \H^\infty$ is dense in $\widetilde{\H}$, hence the operator $\widetilde{D}$ can be regarded as an unbounded operator on $\widetilde{\H}$.
 \end{empt}
 
 \begin{defn}\label{reg_triple_defn}
 		Let $\mathfrak{S} \in \mathfrak{FinAlg}$ is given by \eqref{cov_sec_triple_eqn}. Let $\pi: \varinjlim A_n \to B\left(\H_\pi \right)$ be a good representation.    Let $\left(A, \widetilde{A}_\pi, G_\pi\right)$  be the  infinite noncommutative covering  (with respect to $\pi$) of $\mathfrak{S}$.
 	Let \eqref{spectral_triple_sec_eqn} be a good coherent sequence of spectral triples (with respect to $\pi$). 
  Let $\widetilde{ D}$ be given by \eqref{inf_lift_D_eqn}.
 	We say that $\left( \widetilde{\A}, \widetilde{\H}, \widetilde{D}\right)$ is a $\left(A, \widetilde{A}_\pi, G_\pi\right)$-\textit{lift} of $\left(\A, \H, D\right)$.  
 \end{defn}

\section{Coverings of commutative spectral triples}
\paragraph*{} The Spin-manifold is a Riemannian manifold $M$ with a linear Spin-bundle $S$ described in \cite{hajac:toknotes,varilly:noncom}. The bundle $S$ is Hermitian. Taking into account than any Riemannian manifold has a natural measure $\mu$, there is a Hilbert space  $L^2\left(M,S \right)=L^2\left(M,S, \mu \right)$ described in \ref{top_herm_bundle_constr}.
If $\Ga^\infty\left( M, S\right)$ is a $\Coo\left( M\right)$-module of smooth sections then there is a first order differential operator $\slashed D$ on  $\Ga^\infty\left( M, S\right)$. Locally $\slashed D$ is given by
\be\label{comm_dirac_eqn}
\slashed D \xi = \sum_{j = 1}^n \ga_j\left( x\right)  \frac{\partial}{\partial x_j}
\ee
where $x_j$ ($j =1,...,n$) are local coordinates on $M$, $\ga_j \in \End_{\Coo\left( M\right)}\left(\Ga^\infty\left( M, S\right) \right)$ are described in \cite{hajac:toknotes,varilly:noncom}. Since $\Ga^\infty\left( M, S\right)$ is a dense $\C$-subspace of $L^2\left(M,S \right)$ operator $\slashed D$ can be regarded as an unbounded operator $L^2\left(M,S \right)$. From \eqref{comm_dirac_eqn} it turns out that $\slashed D$ is a local operator (cf. Definition \ref{local_op_defn}). It is shown in \cite{hajac:toknotes,varilly:noncom} that  $\left(C^{\infty}(M), L^2(M, S),\slashed D\right)$ is a spectral triple. 	For any $s \in \N^0$ there is a representation of $\pi^s:\Coo\left({M} \right) \to B\left(  L^2\left(M,S \right)^{2^s}\right) $ given by  \eqref{s_diff_repr_equ}. 
If $\mathcal U \subset M$ is an open subset such that $S$ is trivial over $\mathcal U$, i.e. $S|_{\mathcal U} \cong \mathcal U \times\C^m$  and $a \in \Coo\left( M\right)$ such that $\supp~a \subset \mathcal U$ then that $\left[ {\slashed D}, {a}\right]$ corresponds to a continuous function $f: \mathcal U\to \mathbb{M}_{m} \left(\C \right)$. If $b \in C_b\left( M\right) $  corresponds to a continuous function $g:M\to \C$ then the product $\left[ {\slashed D}, {a}\right]b =fg$ is induced by point-wise product of complex matrix by complex number. The left multiplication of matrix by number coincides with the right one, so one has $fg=gf$, or equivalently 
\begin{equation}\label{comm_matr_x_eqn}
\begin{split}
\left[ {\slashed D}, {a}\right]b= b\left[ {\slashed D}, {a}\right].
\end{split}
\end{equation}
The equation \eqref{comm_matr_x_eqn} can be extended up to any $a \in \Coo\left( M\right)$.

\subsection{Finite-fold coverings}
\paragraph*{} This section contains an algebraic version of the Proposition \ref{comm_cov_mani} in case of finite-fold coverings.
\subsubsection{Coverings of $C^*$-algebras}
\paragraph*{} Following two theorems state equivalence between a topological notion of a covering and an algebraical one.
\begin{thm}\label{pavlov_troisky_thm}\cite{pavlov_troisky:cov}
	Suppose $\mathcal X$ and $\mathcal Y$ are compact Hausdorff connected spaces and $p :\mathcal  Y \to \mathcal X$
	is a continuous surjection. If $C(\mathcal Y )$ is a projective finitely generated Hilbert module over
	$C(\mathcal X)$ with respect to the action
	\begin{equation*}
	(f\xi)(y) = f(y)\xi(p(y)), ~ f \in  C(\mathcal Y ), ~ \xi \in  C(\mathcal X),
	\end{equation*}
	then $p$ is a finite-fold  covering.
\end{thm}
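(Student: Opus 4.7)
The plan is to exploit the Serre--Swan picture. Since $C(\mathcal Y)$ is a projective finitely generated Hilbert $C(\mathcal X)$-module, there is a projection $P \in \mathbb{M}_n(C(\mathcal X))$ with $C(\mathcal Y) \cong P\, C(\mathcal X)^n$. The rank of $P(x)$ is locally constant on $\mathcal X$, hence equals some fixed integer $k$ since $\mathcal X$ is connected, so $C(\mathcal Y)$ is realized as the continuous sections of a complex rank-$k$ vector bundle $E$ over $\mathcal X$. The goal is to prove that every fibre of $p$ has exactly $k$ points and that $p$ is locally trivial with those discrete fibres.

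The first main step is to identify the fibre cardinalities. For $x \in \mathcal X$, write $\mathfrak{m}_x \subset C(\mathcal X)$ for the maximal ideal of functions vanishing at $x$. The quotient $C(\mathcal Y)/\overline{\mathfrak{m}_x C(\mathcal Y)}$ is on one side $E_x \cong \mathbb{C}^k$, and I would show that on the other side it is isomorphic to $C(p^{-1}(x))$. Surjectivity of the evaluation map $C(\mathcal Y) \to C(p^{-1}(x))$ follows from the Tietze extension theorem (since $\mathcal Y$ is compact Hausdorff and $p^{-1}(x)$ is closed). For injectivity I would argue that any $f \in C(\mathcal Y)$ vanishing on $p^{-1}(x)$ lies in $\overline{\mathfrak{m}_x C(\mathcal Y)}$: given $\varepsilon>0$, the set $K_\varepsilon = \{y : |f(y)| \geq \varepsilon\}$ is compact and disjoint from $p^{-1}(x)$, so by Urysohn there is $g \in C(\mathcal X)$ with $g(x)=0$ and $g \equiv 1$ on $p(K_\varepsilon)$; then $(g\circ p)\cdot f \in \mathfrak{m}_x C(\mathcal Y)$ approximates $f$ uniformly to within $2\varepsilon$. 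Matching the two descriptions of the quotient forces $|p^{-1}(x)| = k$ for every $x \in \mathcal X$.

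The second main step, which I expect to be the main obstacle, is to promote constant fibre size to local triviality. Around a given $x_0 \in \mathcal X$ one can shrink to a neighbourhood $U$ over which $E$ is trivial, so $C(p^{-1}(U))$ is free of rank $k$ as a $C(U)$-module. Using the fibre-by-fibre identification from the previous step, one would then convert a local frame into $k$ mutually orthogonal idempotents $e_1,\dots,e_k \in C(p^{-1}(U))$ summing to $1$, each of rank one on every fibre. A polynomial functional calculus together with a Gram--Schmidt type adjustment, valid after further shrinking $U$ so that the frame vectors remain linearly independent over every fibre, turns the given generators into the required projections. Their clopen supports $\widetilde U_i = \{y : e_i(y) = 1\}$ then partition $p^{-1}(U)$ into $k$ disjoint pieces, each mapped continuously and bijectively onto $U$; since $p$ is closed (as a continuous map between compact Hausdorff spaces), each restriction is a homeomorphism, and $U$ is evenly covered. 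The delicate point is precisely the construction of these orthogonal continuous idempotents from the abstract module basis, balancing the algebraic rank information against the topology of $p^{-1}(U)$.
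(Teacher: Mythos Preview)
The paper does not prove this theorem at all: it is quoted verbatim from Pavlov--Troitsky \cite{pavlov_troisky:cov} and used as a black box, so there is no in-paper argument to compare against. Your proposal is therefore not a reconstruction of anything in this paper but an independent attempt at the cited result.

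On the substance of your sketch: the first step is solid. The identification
\[
C(\mathcal Y)\big/\overline{\mathfrak m_x\,C(\mathcal Y)} \;\cong\; C\bigl(p^{-1}(x)\bigr)
\]
is correctly argued via Tietze for surjectivity and the Urysohn approximation for injectivity, and matching it with the fibre $E_x\cong\C^k$ of the Serre--Swan bundle gives $|p^{-1}(x)|=k$ for every $x$.

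The second step is where the real content lies, and your sketch is honest about this. Constant finite fibre cardinality together with closedness of $p$ does \emph{not} by itself force local triviality; one genuinely needs the module/algebra structure. The cleanest way to close the gap you identify is to use that $C(\mathcal Y)$ is a commutative $C^*$-algebra, not merely a module: the $k$ minimal idempotents of $C(p^{-1}(x_0))\cong\C^k$ lift to self-adjoint elements $\tilde e_1,\dots,\tilde e_k\in C(\mathcal Y)$ with $\tilde e_i(y_j)=\delta_{ij}$, and since the spectrum of each $\tilde e_i$ restricted to a small $p^{-1}(U)$ is contained in two disjoint discs around $0$ and $1$, holomorphic functional calculus (not Gram--Schmidt, which is the wrong tool here) produces genuine orthogonal projections $e_i\in C(p^{-1}(U))$ summing to $1$. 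Their supports give the required clopen decomposition. So your outline is correct, but the phrase ``Gram--Schmidt type adjustment'' should be replaced by spectral/functional-calculus idempotent lifting; that is the mechanism that actually works.
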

\begin{theorem}\label{comm_fin_thm}\cite{ivankov:qnc}
	If $\mathcal X$, $\widetilde{\mathcal X}$  are  locally compact Hausdorff connected spaces, and  $\pi: \widetilde{\mathcal X}\to \mathcal X$ is a surjective continuous map, then following conditions are equivalent:
	\begin{enumerate}
		\item [(i)] The map $\pi: \widetilde{\mathcal X}\to \mathcal X$ is a finite-fold regular covering,
		\item[(ii)] There is the natural  noncommutative finite-fold covering $\left(C_0\left(\mathcal  X \right), C_0\left(\widetilde{\mathcal X} \right), G    \right)$.
	\end{enumerate}
\end{theorem}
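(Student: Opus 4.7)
The plan is to prove both implications by localizing the problem to compact pieces and then invoking the commutative Pavlov--Troitsky theorem (Theorem~\ref{pavlov_troisky_thm}), using the fact that a regular covering presents the base as the orbit space under the deck transformation group.

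For $(\mathrm{i})\Rightarrow(\mathrm{ii})$, I would first equip $\widetilde{A}=C_0(\widetilde{\mathcal X})$ with the natural $G$-action $(gf)(\widetilde x)=f(g^{-1}\widetilde x)$ where $G=G(\widetilde{\mathcal X}\,|\,\mathcal X)$; this action is involutive, and it is non-degenerate because, $\widetilde{\mathcal X}$ being Hausdorff, any nontrivial $g$ moves some point $\widetilde x$ and we can separate $\widetilde x$ from $g\widetilde x$ by a function in $C_0(\widetilde{\mathcal X})$. Regularity of the covering (Proposition cited from \cite{spanier:at}) gives $\mathcal X\cong\widetilde{\mathcal X}/G$, whence $C_0(\widetilde{\mathcal X})^G=\pi^{*}C_0(\mathcal X)\cong C_0(\mathcal X)$, which verifies axiom~(a) of Definition~\ref{fin_def}. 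For axiom~(b), I would take as index set $\Lambda$ the collection of relatively compact open subsets $\mathcal U\subset\mathcal X$ and set $\widetilde I_{\mathcal U}=C_0(\pi^{-1}(\mathcal U))$. Each such ideal is $G$-invariant because $G$ permutes the sheets of $\pi$; the union $\bigcup\widetilde I_{\mathcal U}$ contains $C_c(\widetilde{\mathcal X})$ (any compactly supported function lives over a compact piece of $\mathcal X$ by properness of $\pi$ on compacts) and is thus dense, and likewise for $\bigcup(A\cap\widetilde I_{\mathcal U})$. To produce the required compactification for each $\mathcal U$, set $K=\overline{\mathcal U}$, $\widetilde K=\pi^{-1}(K)$, $B_{\mathcal U}=C(K)$, $\widetilde B_{\mathcal U}=C(\widetilde K)$; then $A\cap\widetilde I_{\mathcal U}=C_0(\mathcal U)$ and $\widetilde I_{\mathcal U}$ sit as essential ideals, and the key point is that $\widetilde K\to K$ is a finite-fold covering of compact Hausdorff spaces, so $C(\widetilde K)$ is a finitely generated projective Hilbert $C(K)$-module (choose a finite open cover of $K$ by evenly covered neighborhoods and use a subordinate partition of unity to explicitly write a finite dual basis). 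Hence $(B_{\mathcal U},\widetilde B_{\mathcal U},G)$ is a unital noncommutative finite-fold covering, completing axiom~(b).

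For $(\mathrm{ii})\Rightarrow(\mathrm{i})$, given $(C_0(\mathcal X),C_0(\widetilde{\mathcal X}),G)$ and a family of $G$-invariant ideals $\{\widetilde I_\lambda\}$ with compactifications $(B_\lambda,\widetilde B_\lambda,G)$, each triple is a unital noncommutative finite-fold covering. By Theorem~\ref{pavlov_troisky_thm} the Gelfand spectrum map $\mathrm{Spec}(\widetilde B_\lambda)\to\mathrm{Spec}(B_\lambda)$ is a finite-fold covering of compact Hausdorff spaces. Since $A\cap\widetilde I_\lambda$ is an essential ideal of $B_\lambda$ and $\widetilde I_\lambda$ of $\widetilde B_\lambda$, their spectra are open dense subsets, which Gelfand duality identifies with open subsets $\mathcal U_\lambda\subset\mathcal X$ and $\pi^{-1}(\mathcal U_\lambda)\subset\widetilde{\mathcal X}$; the restricted map is then a finite-fold covering. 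Density of $\bigcup(A\cap\widetilde I_\lambda)$ in $C_0(\mathcal X)$ implies $\bigcup_\lambda\mathcal U_\lambda=\mathcal X$, so $\pi$ is a finite-fold covering globally with constant sheet number $|G|$. Regularity comes from observing that $G$ acts on $\widetilde{\mathcal X}$ by homeomorphisms covering the identity of $\mathcal X$, preserves each fiber, and acts transitively on each fiber: indeed, the invariance condition $A=\widetilde{A}^G$ forces the orbit map $\widetilde{\mathcal X}\to\widetilde{\mathcal X}/G$ to factor through $\mathcal X$ with the quotient map a homeomorphism (as two points in the same fiber define the same character of $C_0(\mathcal X)$ and hence must be identified by the $G$-orbit structure). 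Corollary~\ref{top_cov_from_pi1_cor} then yields regularity.

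The main obstacle, in both directions, is the bookkeeping between the non-unital algebras of interest and the unital compactifications demanded by Definition~\ref{fin_comp_def}: in $(\mathrm{i})\Rightarrow(\mathrm{ii})$ one must check that the choice $B_{\mathcal U}=C(\overline{\mathcal U})$ really gives a projective module structure on $\widetilde B_{\mathcal U}$ even when $\overline{\mathcal U}$ fails to be evenly covered (handled by a finite partition of unity on the compact base), and in $(\mathrm{ii})\Rightarrow(\mathrm{i})$ one must verify that the local Spec-level coverings obtained from distinct $\lambda$ agree on overlaps and assemble to the given continuous map $\widetilde{\mathcal X}\to\mathcal X$, which uses the essentiality of the ideals to pin down the compactifications canonically up to the data already carried by $C_0(\mathcal X),C_0(\widetilde{\mathcal X})$. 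The transitivity-on-fibers step, needed for regularity, also requires combining the algebraic $G$-invariance with the Gelfand picture rather than quoting it directly.
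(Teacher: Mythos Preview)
The paper does not contain a proof of this theorem: it is stated with the citation \cite{ivankov:qnc} and no proof environment follows, so there is no argument in the present paper to compare your proposal against. What you have written is a reasonable reconstruction of how such a result is typically proved, and the overall architecture---localize via $G$-invariant ideals supported over relatively compact pieces of $\mathcal X$, compactify, and reduce to the unital/compact case governed by Theorem~\ref{pavlov_troisky_thm}---is the natural one and almost certainly matches the strategy in \cite{ivankov:qnc}.

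One point deserves more care than you give it. In the direction $(\mathrm{ii})\Rightarrow(\mathrm{i})$ you invoke Theorem~\ref{pavlov_troisky_thm} on the pair $(\widetilde B_\lambda,B_\lambda)$, but as stated that theorem requires both spectra to be \emph{connected}, and nothing in Definition~\ref{fin_comp_def} guarantees this for the abstract compactifications. You do get commutativity of $B_\lambda$ and $\widetilde B_\lambda$ for free (an essential ideal in a $C^*$-algebra embeds the ambient algebra into its multiplier algebra, and $M(C_0(\mathcal U))=C_b(\mathcal U)$ is commutative), but connectedness of $\mathrm{Spec}(B_\lambda)$ is not automatic. The fix is either to pass to connected components of $\mathrm{Spec}(B_\lambda)$ and note that finite generation and projectivity of $C(\widetilde K)$ over $C(K)$ are inherited by restriction to each component, or to observe that the conclusion you actually need---that $\widetilde K\to K$ is a finite-sheeted covering---can be extracted directly from the projective-module hypothesis without connectedness, since a finite dual basis already furnishes local trivializations. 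Once you patch this, the remainder of your outline (gluing the local coverings over the $\mathcal U_\lambda$, and deducing regularity from the identification $\mathcal X\cong\widetilde{\mathcal X}/G$ forced by $C_0(\widetilde{\mathcal X})^G=C_0(\mathcal X)$) goes through.
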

\

   \subsubsection{Topological coverings of spectral triples}
\paragraph*{} Let $\left(C^{\infty}\left( M\right) , L^2\left( M, S\right) ,\slashed D\right)$ be a commutative spectral triple, and let let $\left(C\left( M\right)  , \widetilde{A}, G\right) $ an unital noncommutative finite-fold  covering such that $\widetilde{A}$ is commutative $C^*$-algebra. From the Theorems \ref{gelfand-naimark} and \ref{pavlov_troisky_thm} it turns out that $\widetilde{A} = C\left(\widetilde{M} \right)$ and there is the natural finite-fold covering  $\pi:\widetilde{M} \to M$. From the Proposition \ref{comm_cov_mani} it follows that $\widetilde{M}$ has natural structure of the Riemannian manifold.  Denote by $\widetilde{S} = \pi^*S$ the inverse image of the Spin-bundle $S$ (cf. \ref{top_vb_sub_sub}). Similarly we can define the  inverse image  $\widetilde{\slashed D}= \pi^*\slashed D$ (cf. Definition \ref{inv_image_defn}), and $\left(C^{\infty}\left( \widetilde{M}\right) , L^2\left( \widetilde{M}, \widetilde{S}\right) ,\widetilde{\slashed D}\right)$ is a spectral triple. We would like to proof that $$\left(C^{\infty}\left( \widetilde{M}\right) , L^2\left( \widetilde{M}, \widetilde{S}\right) ,\widetilde{\slashed D}\right)$$ is the $\left(C\left(M \right) , C\left( \widetilde{M}\right) , G\left( \widetilde{M}~|~M\right)  \right)$-lift of $\left(C^{\infty}\left( M\right) , L^2\left( M, S\right) ,\slashed D\right)$.
\subsubsection{Induced representation}\label{induced_comm_finite}
\paragraph*{} Let us consider a family of open connected subsets $\left\lbrace \mathcal{U}_\iota \subset M\right\rbrace_{\iota \in I} $ such that
\begin{itemize}
	\item $\mathcal{U}_\iota$ is evenly covered by $\pi$,
	\item The bundle $S$ is trivial on  $\mathcal{U}_\iota$.
\end{itemize}
The space $M$ is compact, so there is a finite subfamily $\left\lbrace \mathcal{U}_\iota \subset M\right\rbrace_{\iota \in I}$ such that $M = \bigcup_{\iota \in I} \mathcal{U}_\iota$.

\begin{prop}\label{smooth_part_unity_prop}\cite{brickell_clark:diff_m}
	A differential manifold $M$ admits a (smooth) partition of unity if and only if it is paracompact. 
\end{prop}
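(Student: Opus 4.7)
The plan is to prove both directions separately, relying on the standard interplay between paracompactness, local compactness, and the existence of smooth bump functions on $\R^n$.

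For the ``if'' direction (paracompactness implies the existence of a smooth partition of unity), I would proceed as follows. First, recall the workhorse lemma: on $\R^n$ there exist smooth functions that are identically $1$ on a prescribed closed ball and vanish outside a slightly larger open ball. This is obtained from the classical $e^{-1/x}$-type construction. Transferred through a coordinate chart, this gives, for any $p \in M$ and any chart neighborhood $\mathcal V$ of $p$, a nonnegative $\psi \in \Coo(M)$ with $\supp \psi \subset \mathcal V$ and $\psi > 0$ on a neighborhood of $p$.

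Next, given any open cover $\{\mathcal U_\alpha\}_{\alpha \in J}$ of $M$, I would refine it to an open cover $\{\mathcal V_i\}_{i \in I}$ by relatively compact coordinate charts, each of which lies inside some $\mathcal U_{\alpha(i)}$. Using paracompactness, this refined cover admits a locally finite open refinement $\{\mathcal W_i\}_{i \in I}$ with $\overline{\mathcal W_i} \subset \mathcal V_i$ (here I would invoke the shrinking lemma, which in a paracompact Hausdorff space produces a locally finite open cover whose closures refine the original one). For each $i$, produce a smooth bump $\psi_i \geq 0$ with $\psi_i > 0$ on $\overline{\mathcal W_i}$ and $\supp \psi_i \subset \mathcal V_i$. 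Then $\Psi = \sum_i \psi_i$ is a well-defined smooth function on $M$ (since the sum is locally finite) and $\Psi(x) > 0$ everywhere because the $\mathcal W_i$ cover $M$. Setting $\phi_i = \psi_i/\Psi$ yields a smooth partition of unity with $\supp \phi_i \subset \mathcal V_i \subset \mathcal U_{\alpha(i)}$; grouping over $\alpha$ gives one subordinate to the original cover.

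For the ``only if'' direction, I would argue that the existence of partitions of unity subordinate to arbitrary open covers directly implies paracompactness: given $\{\mathcal U_\alpha\}$, take a subordinate partition of unity $\{\phi_\alpha\}$; then by the very definition of partition of unity, $\{\supp \phi_\alpha\}$ is a locally finite family refining $\{\mathcal U_\alpha\}$, which is the paracompactness condition. (One should also remark that a manifold is automatically Hausdorff, so this is enough.)

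The main obstacle is the ``if'' direction, and within it the combinatorial step of producing a locally finite refinement by relatively compact charts whose closures still cover $M$; the bump-function construction and the normalization are routine once this is in hand. A minor subtlety is the passage from a locally finite refinement indexed by $I$ back to a partition of unity indexed by the original $J$, which is handled by summing $\phi_i$ over the fibres of $i \mapsto \alpha(i)$ and checking that local finiteness is preserved.
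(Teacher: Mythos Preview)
Your argument is the standard textbook proof and is essentially correct. Note, however, that the paper does not actually prove this proposition: it is quoted from \cite{brickell_clark:diff_m} and used as a black box, so there is no ``paper's own proof'' to compare against. Your write-up would serve as a perfectly good substitute for that citation.
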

From the Proposition \ref{smooth_part_unity_prop} it follows that there is a  finite family $\left\lbrace a_\iota \in \Coo\left(  M\right) \right\rbrace_{\iota \in I}$ such that
\begin{equation*}
\begin{split}
\supp ~a_\iota \subset  \mathcal{U}_\iota ,\\
1_{C\left( M\right) }= \sum_{\iota \in I}a_\iota.
\end{split}
\end{equation*}
For any $\iota \in I$ denote by $e_\iota = \sqrt{a_\iota}\in \Coo\left(  M\right)$  If $\xi \in \Ga^\infty\left(M,S \right)$ is a smooth section then
\begin{equation*}
\begin{split}
\xi = \sum_{\iota \in I} \xi_\iota, \text{ where } \xi_\iota = a_\iota \xi = e^2_\iota.
\end{split}
\end{equation*} 
For any $\iota \in I$ we can select an open connected subset $\widetilde{\mathcal{U}}_\iota \subset \widetilde{M}$ such that $\widetilde{ \mathcal{U}}_\iota$ is homeomorphically mapped onto $\mathcal{U}_\iota$. 
Let $\widetilde{a}_\iota = \mathfrak{lift}_{\widetilde{ \mathcal{U}}_\iota} \left(a_\iota \right) \in \Coo\left(\widetilde{M} \right)$, and let $\widetilde{e}_\iota = \mathfrak{lift}_{\widetilde{ \mathcal{U}}_\iota} \left(e_\iota \right)= \sqrt{\widetilde{a}_\iota}\in \Coo\left(\widetilde{M} \right)$ (cf. Definition \ref{lift_defn} ).
If $\widetilde{I} = G \times I$, $~\widetilde{e}_{\left(g, \iota \right) }= g \widetilde{e}_\iota$, for any $\left(g, \iota \right) \in \widetilde{I}$ then from the above construction it turns out
\be\label{comm_sum_1}
\begin{split}
1_{C\left(\widetilde{ M}\right) }= \sum_{g \in G}\sum_{\iota \in I}g\widetilde{e}^2_\iota= \sum_{\widetilde{\iota} \in \widetilde{I}}\widetilde{e}_{\widetilde{\iota}}^2~~,\\
\left( g\widetilde{e}_{\widetilde{\iota}}\right)  \widetilde{e}_{\widetilde{\iota}}= 0; \text{ for any nontrivial } g \in G.\\1_{C\left(\widetilde{ M}\right) } \sum_{g \in G}\sum_{\iota \in I}\widetilde{e}_{\widetilde{\iota}}\left\rangle \right\langle \widetilde{e}_{\widetilde{\iota}},\\
\left\langle \widetilde{e}_{\widetilde{\iota}'}, \widetilde{e}_{\widetilde{\iota}''} \right\rangle 	\in \Coo\left( M\right) 	
\end{split}
\ee
The set $\left\{\widetilde{e}_{\widetilde{\iota}}\right\}_{\widetilde{\iota}\in \widetilde{I}}$ is $G$-invariant, i.e.
\be\label{comm_a_g_inv}
G\left\{\widetilde{e}_{\widetilde{\iota}}\right\}_{\widetilde{\iota}\in \widetilde{I}}=\left\{\widetilde{e}_{\widetilde{\iota}}\right\}_{\widetilde{\iota}\in \widetilde{I}}
\ee

Similarly to the above construction any element in $\widetilde{\xi}\in \Ga^\infty\left(\widetilde{M},\widetilde{S} \right)$ can be represented as
\begin{equation}\label{cov_xi_repr}
\widetilde{\xi} = \sum_{g \in G} \sum_{\iota \in I} \left( g \widetilde{e}^2_\iota\right)  \widetilde{\xi}.
\end{equation}
From $\left(g \widetilde{e}_\iota \right)^2= g\widetilde{e}_\iota e_\iota$ and $$\supp ~\left( g\widetilde{e}_\iota\right)  \widetilde{\xi}\subset \supp~g\widetilde{e}_\iota \subset g \widetilde{\mathcal U}_\iota$$    we can establish a $\C$-linear isomorphism
\begin{equation}\label{comm_smooth_iso_eqn}
\begin{split}
\varphi:\Ga^\infty\left(\widetilde{M},\widetilde{S} \right) \xrightarrow{\approx} \Coo\left(\widetilde{M} \right)  \otimes_{\Coo\left( M\right)}  \Ga^\infty\left(M,S \right), \\
\widetilde{\xi}=\sum_{g \in G} \sum_{\iota \in I} \left( g \widetilde{e}_\iota\right)  \widetilde{e}_\iota \widetilde{\xi}  \mapsto \sum_{g \in G} \sum_{\iota \in I} g \widetilde{e}_\iota \otimes \mathfrak{desc}\left(\left( g\widetilde{e}_\iota\right)  \widetilde{\xi}\right) 
\end{split}
\end{equation}
where $\mathfrak{desc}$ means $\pi$-descent (cf. Definition \ref{lift_defn}). Since $\Coo\left(\widetilde{M} \right) $ is dense in $C\left(\widetilde{M} \right) $, and $\Ga^\infty\left(M,S \right)$, (resp. $\Ga^\infty\left(\widetilde{M},\widetilde{S} \right)$) is dense in $L^2\left(M,S \right)$, (resp. $L^2\left(\widetilde{M},\widetilde{S} \right)$) the isomorphism \eqref{comm_smooth_iso_eqn} can be uniquely extended up to $\C$-isomorphism
\be\label{comm_tensor_iso_eqn}
\varphi:L^2\left(\widetilde{M},\widetilde{S} \right) \xrightarrow{\approx} C\left(\widetilde{M} \right)  \otimes_{C\left( M\right)}  L^2\left(M,S \right).
\ee
Above formula coincides with construction \ref{induced_repr_constr} of induced representation. If $\widetilde{a} \otimes \xi, \widetilde{b} \otimes \eta \in C\left(\widetilde{M} \right) \otimes_{C\left(M \right) } \Ga\left( M, S\right) \subset L^2\left(\widetilde{M},\widetilde{S} \right)$, $\mu$ (resp. $\widetilde{\mu}$) is the Riemannian measure (cf.  \cite{do_carmo:rg}) on $M$, (resp. $\widetilde{M}$) then
\begin{equation*}
\begin{split}
\int_{M}\left(\xi,\left\langle \widetilde{a}, \widetilde{b}\right\rangle_{C\left(\widetilde{M} \right) } \eta \right)\left( x\right)d\mu =\left(\xi, \left\langle \widetilde{a}, \widetilde{b}\right\rangle_{C\left(\widetilde{M} \right) } \eta \right)_{L^2\left({M},{S} \right)} = \sum_{\widetilde{\iota}\in \widetilde{I}}\left(\xi, \left\langle \widetilde{a}_{\widetilde{\iota}}\widetilde{a}, \widetilde{b}\right\rangle_{C\left(\widetilde{M} \right) } \eta \right)_{L^2\left({M},{S} \right)}=\\=
\sum_{\widetilde{\iota}\in \widetilde{I}}\left(\xi, \mathfrak{desc}\left( \widetilde{a}_{\widetilde{\iota}} \widetilde{a}, \widetilde{b}\right)  \eta \right)_{L^2\left({M},{S} \right)}= \sum_{\widetilde{\iota}\in \widetilde{I}}\left(1 \otimes \xi, \mathfrak{lift}_{\widetilde{\mathcal{U}}_{\widetilde{\iota}}}\mathfrak{desc}\left(\widetilde{a}_{\widetilde{\iota}}  \widetilde{a} \widetilde{b}\right)\left(  1 \otimes \eta\right)  \right)_{L^2\left(\widetilde{M},\widetilde{S} \right)}= 
\\=\sum_{\widetilde{\iota}\in \widetilde{I}}\left(\widetilde{a}_{\widetilde{\iota}}  \widetilde{a}\left( 1 \otimes \xi\right) ,  \widetilde{b}\left(  1 \otimes \eta\right)  \right)_{L^2\left(\widetilde{M},\widetilde{S} \right)}=
\left( \widetilde{a}\left( 1 \otimes \xi\right) ,  \widetilde{b}\left(  1 \otimes \eta\right) \right)_{L^2\left(\widetilde{M},\widetilde{S} \right)}=\\= \int_{\widetilde{M}}\left( \widetilde{a}\left( 1 \otimes \xi\right) ,  \widetilde{b}\left(  1 \otimes \eta\right) \right)\left( \widetilde{x}\right)  d\widetilde{\mu}.
\end{split} 
\end{equation*}
Above equation is a version of  \eqref{induced_prod_equ}. If $\varphi$ is extension of given by \eqref{comm_smooth_iso_eqn} map and  $\widetilde{a} \in C\left( \widetilde{M}\right)$ then following condition holds
\be\label{comm_fin_act_eqn}
\varphi\left( \widetilde{a}  \widetilde{\xi}\right) =\sum_{g \in G} \sum_{\iota \in I} \widetilde{a} \left( g  \widetilde{e}_\iota\right)   \otimes \mathfrak{desc}\left(\left( g\widetilde{e}_\iota\right)  \widetilde{\xi}\right). 
\ee
The left part of \eqref{comm_fin_act_eqn} is relevant to the natural action of $C\left(\widetilde{M}\right)$ on $ L^2\left(\widetilde{M},\widetilde{S} \right)$,  the right part  is consistent with \eqref{ind_act_form}. So one has the following lemma.
\begin{lemma}\label{comm_ind_lem}
If the representation  $\widetilde{\rho}: C\left(\widetilde{M}\right) \to  B\left( \widetilde{   \H}\right)  $ is induced by the pair $$\left(C\left(M \right)\to B\left(  L^2\left(M,S \right)\right) ,\left(C\left(M \right) , C\left( \widetilde{M}\right) , G\left(\widetilde{M}~|~M \right) \right)  \right)$$ (cf. Definition \ref{induced_repr_defn}) then following conditions holds
\begin{enumerate}
	\item[(a)] There is the homomorphism of Hilbert spaces $\widetilde{   \H}\cong L^2\left(\widetilde{M},\widetilde{S} \right)$,
	\item[(b)] The representation $\widetilde{\rho}$ is given by the natural action of $C\left(\widetilde{M}\right)$ on $ L^2\left(\widetilde{M},\widetilde{S} \right)$.
\end{enumerate}
\end{lemma}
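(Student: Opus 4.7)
The plan is to exploit the partition of unity constructed in equations \eqref{comm_sum_1}--\eqref{cov_xi_repr} together with the candidate isomorphism $\varphi$ of \eqref{comm_smooth_iso_eqn}, and simply verify that $\varphi$ is an isometry intertwining the two actions of $C(\widetilde{M})$. Throughout, I use that $\Ga^\infty(M,S)$ (resp.\ $\Ga^\infty(\widetilde M,\widetilde S)$) is a dense $\C$-subspace of $L^2(M,S)$ (resp.\ $L^2(\widetilde M,\widetilde S)$), and that $C(\widetilde M)\otimes_{C(M)} \Ga^\infty(M,S)$ sits densely in $\widetilde{\H}=C(\widetilde M)\otimes_{C(M)} L^2(M,S)$.

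First I would justify that the formula \eqref{comm_smooth_iso_eqn} really defines a $\C$-linear map on smooth sections. The decomposition $\widetilde\xi=\sum_{\widetilde\iota\in\widetilde I}\widetilde e_{\widetilde\iota}^{\,2}\widetilde\xi$ is finite because $\widetilde I=G\times I$ is finite, and each summand $(g\widetilde e_{\iota})\widetilde\xi$ has support inside the special open set $g\widetilde{\mathcal U}_\iota$, so the descent $\mathfrak{desc}\bigl((g\widetilde e_\iota)\widetilde\xi\bigr)\in\Ga^\infty(M,S)$ is well defined by \ref{lift_constr}. Linearity is then automatic. Conversely, from $\sum_{\widetilde\iota}\widetilde e_{\widetilde\iota}\otimes \mathfrak{desc}(\widetilde e_{\widetilde\iota}\widetilde\xi)$ one reconstructs $\widetilde\xi$ using \eqref{supp_lift_desc_eqn} applied to $(g\widetilde e_\iota)\widetilde\xi$, showing $\varphi$ is a bijection on the smooth level.

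Next I would verify that $\varphi$ is isometric. The calculation displayed just after \eqref{comm_tensor_iso_eqn} does exactly this on elementary tensors $\widetilde a\otimes\xi,\widetilde b\otimes\eta\in C(\widetilde M)\otimes_{C(M)}\Ga^\infty(M,S)$: the integral of $(\xi,\langle\widetilde a,\widetilde b\rangle_{C(\widetilde M)}\eta)$ against the Riemannian measure of $M$ equals the integral of $(\widetilde a(1\otimes\xi),\widetilde b(1\otimes\eta))$ against the covering metric on $\widetilde M$. This is the precise comparison between the Hilbert-module inner product \eqref{fin_form_a}--\eqref{induced_prod_equ} and the $L^2$-inner product induced by the covering measure. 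By polarization and continuity $\varphi$ extends uniquely to a unitary
\begin{equation*}
\varphi\colon L^2(\widetilde M,\widetilde S)\xrightarrow{\ \cong\ } C(\widetilde M)\otimes_{C(M)}L^2(M,S)=\widetilde\H,
\end{equation*}
proving (a).

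Finally for (b) I would check the intertwining relation on the dense subspace $\Ga^\infty(\widetilde M,\widetilde S)$. For $\widetilde a\in C(\widetilde M)$ the natural (left multiplication) action $\widetilde a\cdot\widetilde\xi$ expands via the partition of unity as in \eqref{comm_fin_act_eqn}, and the right-hand side of \eqref{comm_fin_act_eqn} is exactly $\widetilde\rho(\widetilde a)\varphi(\widetilde\xi)$ in view of the induced action formula \eqref{ind_act_form}. Hence $\varphi\circ M_{\widetilde a}=\widetilde\rho(\widetilde a)\circ\varphi$ on the dense subspace, and by boundedness this extends to all of $L^2(\widetilde M,\widetilde S)$. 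The main (and essentially only) obstacle is the bookkeeping involved in matching the two inner products via the partition $\{\widetilde e_{\widetilde\iota}\}$; once one observes that $(g\widetilde e_{\widetilde\iota})\widetilde e_{\widetilde\iota}=0$ for nontrivial $g\in G$ (so only the diagonal $g=e$ contributes to $\langle\widetilde e_{\widetilde\iota}\widetilde a,\widetilde b\rangle_{C(\widetilde M)}$), both statements (a) and (b) fall out of straightforward computation.
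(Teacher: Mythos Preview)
Your proposal is correct and follows exactly the paper's approach: the paper's own proof simply cites \eqref{comm_tensor_iso_eqn} for part (a) and \eqref{comm_fin_act_eqn} for part (b), and all the surrounding computations you invoke (the isometry calculation after \eqref{comm_tensor_iso_eqn}, the partition-of-unity identities \eqref{comm_sum_1}, and the smooth-level isomorphism \eqref{comm_smooth_iso_eqn}) are precisely what the paper establishes in the discussion preceding the lemma. Your write-up is simply a more explicit unpacking of that same argument.
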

\begin{proof}
	(a) Follows from \eqref{comm_tensor_iso_eqn},\\
	(b) Follows from \eqref{comm_fin_act_eqn}.
\end{proof}

For any $\widetilde{a} \in \Coo\left(\widetilde{M} \right)$ following condition holds 
$$
\widetilde{e}_{\widetilde{\iota}'}\widetilde{a}\widetilde{e}_{\widetilde{\iota}''}  \in \Coo\left(\widetilde{M} \right),
$$
hence one has
 $$
 \left\langle \widetilde{e}_{\widetilde{\iota}'}~,~~\widetilde{a}\widetilde{e}_{\widetilde{\iota}''}\right\rangle_{C\left(\widetilde{M} \right)}  = \mathfrak{desc}\left(\widetilde{e}_{\widetilde{\iota}'}\widetilde{a}\widetilde{e}_{\widetilde{\iota}''} \right) \in \Coo\left(M \right).
 $$
 Otherwise form $\widetilde{a} \notin \Coo\left(\widetilde{M} \right)$ it turns out that $\exists \widetilde{\iota} \in \widetilde{I}~~ \widetilde{e}^2_{\widetilde{\iota}} \widetilde{a} \notin \Coo\left(\widetilde{M} \right)$, hence
 $$
 \left\langle \widetilde{e}_{\widetilde{\iota}}~,~~\widetilde{a}\widetilde{e}_{\widetilde{\iota}}\right\rangle_{C\left(\widetilde{M} \right)}  = \mathfrak{desc}\left(\widetilde{e}^2_{\widetilde{\iota}}\widetilde{a} \right) \notin \Coo\left({M} \right).
 $$
 Summarize above equations one concludes
 $$
\Coo\left(\widetilde{M} \right)= \left\{\widetilde{a}\in C\left(\widetilde{M} \right)~|~  \left\langle \widetilde{e}_{\widetilde{\iota}'}~,~~\widetilde{a}\widetilde{e}_{\widetilde{\iota}''}\right\rangle_{C\left(\widetilde{M} \right)} \in \Coo\left( M\right); ~ \forall \widetilde{\iota}', \widetilde{\iota}'' \in \widetilde{I} \right\}.
 $$
 From the Lemma \ref{smooth_matr_lem} it turns out that
 \begin{itemize}
 	\item The unital noncommutative finite-fold covering $\left(C\left({M} \right), C\left(\widetilde{M} \right), G\left( \widetilde{M}~|~M \right)\right) $ is {smoothly invariant} (cf. Definition \ref{smooth_defn}),
 \item 
 \be\label{comm_smooth_matr_eqn}
\Coo\left(\widetilde{M} \right) = C\left(\widetilde{M} \right)\bigcap 
 \mathbb{M}_{\left|\widetilde{I} \right| }\left( \Coo\left( M\right) \right).
 \ee 	
 \end{itemize}

\subsubsection{Lift of the Dirac operator}\label{comm_d_sec}
\paragraph*{}
For any $\widetilde{a} \in \Coo\left(\widetilde{M} \right)$ following condition holds
\bean
\widetilde{a} = \sum_{\widetilde{\iota}\in \widetilde{I}} \widetilde{\ga}_{\widetilde{\iota}}= \sum_{\widetilde{\iota}\in \widetilde{I}} \widetilde{\al}_{\widetilde{\iota}}\widetilde{\bt}_{\widetilde{\iota}}; \text{ where } \widetilde{\al}_{\widetilde{\iota}}= \widetilde{e}_{\widetilde{\iota}},~ \widetilde{\bt}_{\widetilde{\iota}}= \widetilde{a}\widetilde{e}_{\widetilde{\iota}},~ \widetilde{\ga}_{\widetilde{\iota}}=\widetilde{a}_{\widetilde{\iota}}\widetilde{e}^2_{\widetilde{\iota}}.
\eean
Denote by $\Om^1_{\slashed D}$  the {module of differential forms associated} with the spectral triple  $$\left(\Coo\left({M} \right), L^2\left(M,S \right), \slashed D\right)$$ (cf. Definition \ref{ass_cycle_defn}). Denote by ${\al}_{\widetilde{\iota}} = \mathfrak{desc} \left(\widetilde{\al}_{\widetilde{\iota}} \right)$, ${\bt}_{\widetilde{\iota}} = \mathfrak{desc} \left(\widetilde{\bt}_{\widetilde{\iota}} \right)$,  ${\ga}_{\widetilde{\iota}} = \mathfrak{desc} \left(\widetilde{\ga}_{\widetilde{\iota}} \right)\in \Coo\left({M} \right)$. Let us define a $\C$-linear map
\bean
\nabla: \Coo\left(\widetilde{M} \right) \to \Coo\left(\widetilde{M} \right) \otimes_{\Coo\left({M} \right)}\Om^1_{\slashed D},\\
\widetilde{a} \mapsto \sum_{\widetilde{\iota}\in \widetilde{I}}\left(  \widetilde{\al}_{\widetilde{\iota}} \otimes \left[\slashed D, {\bt}_{\widetilde{\iota}}  \right] +\widetilde{\bt}_{\widetilde{\iota}} \otimes \left[\slashed D, {\al}_{\widetilde{\iota}}   \right] \right) 
\eean
For any $a \in \Coo\left({M} \right)$ following condition holds
 \bean
 \nabla\left(\widetilde{a}a \right)=  \sum_{\widetilde{\iota}\in \widetilde{I}}\left(  \widetilde{\al}_{\widetilde{\iota}} \otimes \left[\slashed D, {\bt}_{\widetilde{\iota}} a  \right] +\widetilde{\bt}_{\widetilde{\iota}}a \otimes \left[\slashed D, {\al}_{\widetilde{\iota}}   \right] \right) =\\= \sum_{\widetilde{\iota}\in \widetilde{I}} \widetilde{\al}_{\widetilde{\iota}} \otimes \left[\slashed D, {\bt}_{\widetilde{\iota}}  \right]a+\widetilde{\al}_{\widetilde{\iota}} \otimes \widetilde{\bt}_{\widetilde{\iota}}\left[\slashed D,  a  \right] +\widetilde{\bt}_{\widetilde{\iota}} \otimes \left[\slashed D, {\al}_{\widetilde{\iota}}   \right]a=\\= \sum_{\widetilde{\iota}\in \widetilde{I}}\left(  \widetilde{\al}_{\widetilde{\iota}} \otimes \left[\slashed D, {\bt}_{\widetilde{\iota}}   \right] +\widetilde{\bt}_{\widetilde{\iota}} \otimes \left[\slashed D, {\al}_{\widetilde{\iota}}  \right] \right) a+
 \sum_{\widetilde{\iota}\in \widetilde{I}}\widetilde{\al}_{\widetilde{\iota}}\widetilde{\bt}_{\widetilde{\iota}}\left[ \slashed D,a\right]=
 \nabla\left( \widetilde{a}\right) a + \widetilde{a}\otimes  \left[ \slashed D,a\right]. 
 \eean
The comparison of the above equation and \eqref{conn_triple_eqn} states that $\nabla$ is a connection. If $g \in  G\left(\widetilde{M}~|~M \right)$ then from $\mathfrak{desc}\left(g\widetilde{\al}_{\widetilde{\iota}}\right) = {\al}_{\widetilde{\iota}} $ and $\mathfrak{desc}\left(g\widetilde{\bt}_{\widetilde{\iota}}\right) = {\bt}_{\widetilde{\iota}} $ it follows that
\bean
\nabla\left(g\widetilde{a} \right)=  \nabla\left(\sum_{\widetilde{\iota}\in \widetilde{I}} \left( g\widetilde{\al}_{\widetilde{\iota}}\right) \left( g\widetilde{\al}_{\widetilde{\iota}}\right)\right)=\\=\sum_{\widetilde{\iota}\in \widetilde{I}}\left( g \widetilde{\al}_{\widetilde{\iota}} \otimes \left[\slashed D, {\bt}_{\widetilde{\iota}}   \right] +g\widetilde{\bt}_{\widetilde{\iota}} \otimes \left[\slashed D, {\al}_{\widetilde{\iota}}   \right] \right)= g \left( \nabla\left(\widetilde{a} \right)\right),  
\eean
i.e. $\nabla$ is  $G$-{equivariant} (cf. \eqref{equiv_conn_eqn}). 
If $\xi \in \Dom~\slashed D$ then following conditions hold
\bean
\supp~\widetilde{\al}_{\widetilde{\iota}} \otimes \left[\slashed D, {\bt}_{\widetilde{\iota}}   \right]\xi \in \widetilde{\mathcal{U}}_{\widetilde\iota},\\
\supp~\widetilde{\bt}_{\widetilde{\iota}} \otimes \left[\slashed D, {\al}_{\widetilde{\iota}}   \right]\xi \in \widetilde{\mathcal{U}}_{\widetilde\iota},
\eean
hence one has
\bean
\widetilde{\al}_{\widetilde{\iota}} \otimes \left[\slashed D, {\bt}_{\widetilde{\iota}}   \right]\xi = \mathfrak{lift}_{\widetilde{\mathcal{U}}_{\widetilde\iota}} \left({\al}_{\widetilde{\iota}}  \left[\slashed D, {\bt}_{\widetilde{\iota}}   \right]\xi \right),\\
\widetilde{\bt}_{\widetilde{\iota}} \otimes \left[\slashed D, {\al}_{\widetilde{\iota}}   \right]\xi = \mathfrak{lift}_{\widetilde{\mathcal{U}}_{\widetilde\iota}} \left({\bt}_{\widetilde{\iota}}  \left[\slashed D, {\al}_{\widetilde{\iota}}   \right]\xi \right).
\eean
From \eqref{comm_matr_x_eqn} it turns out $\widetilde{\bt}_{\widetilde{\iota}}  \left[\slashed D, {\al}_{\widetilde{\iota}}   \right] =   \left[\slashed D, {\al}_{\widetilde{\iota}}   \right]\widetilde{\bt}_{\widetilde{\iota}}$, hence
\bean
\widetilde{\al}_{\widetilde{\iota}} \otimes \left[\slashed D, {\bt}_{\widetilde{\iota}}   \right]\xi + \widetilde{\bt}_{\widetilde{\iota}} \otimes \left[\slashed D, {\al}_{\widetilde{\iota}}   \right]\xi = \mathfrak{lift}_{\widetilde{\mathcal{U}}_{\widetilde\iota}}\left(\left( \left[\slashed D, {\al}_{\widetilde{\iota}}   \right]{\bt}_{\widetilde{\iota}}+{\al}_{\widetilde{\iota}}  \left[\slashed D, {\bt}_{\widetilde{\iota}}   \right] \right)\xi \right)= \mathfrak{lift}_{\widetilde{\mathcal{U}}_{\widetilde\iota}}\left(\left[\slashed D, {\ga}_{\widetilde{\iota}}   \right] \xi\right) 
\eean
From $\mathfrak{lift}_{\widetilde{\mathcal{U}}_{\widetilde\iota}}\left( {\ga}_{\widetilde{\iota}}\right) = \widetilde{\ga}_{\widetilde{\iota}}$ and $\sum_{\widetilde{\iota}\in \widetilde{I}} \widetilde{\ga}_{\widetilde{\iota}} = \widetilde{a}$ it turns out
\bean
\sum_{\widetilde{\iota}\in \widetilde{I}}\left(  \widetilde{\al}_{\widetilde{\iota}} \otimes \left[\slashed D, {\bt}_{\widetilde{\iota}}  \right] +\widetilde{\bt}_{\widetilde{\iota}} \otimes \left[\slashed D, {\al}_{\widetilde{\iota}}   \right] \right)\xi=\\=\sum_{\widetilde{\iota}\in \widetilde{I}}    
\mathfrak{lift}_{\widetilde{\mathcal U}_{\widetilde{   \iota}}}\left[{\slashed D}, {\ga}_{\widetilde{\iota}}  \right]\left(1 \otimes \xi \right)=\sum_{\widetilde{\iota}\in \widetilde{I}}    \left[\widetilde{\slashed D}, \widetilde{\ga}_{\widetilde{\iota}}  \right]\left(1 \otimes \xi \right)  = \left[\widetilde{\slashed D}, {\widetilde{a}}   \right]\left(1 \otimes \xi \right) 
\eean
where $\widetilde{\slashed D}$ is the $\pi$-lift of ${\slashed D}$. From the above equation it follows that
$$
\nabla\left(\widetilde{a} \right)\xi + \widetilde{a}\otimes  \slashed D \xi = \left[\widetilde{\slashed D}, {\widetilde{a}}   \right]\left(1 \otimes \xi \right) + \widetilde{a} \otimes \slashed D \xi = \widetilde{\slashed D}\left(\widetilde{a} \otimes \xi \right)  
$$
The comparison of the above equation with \eqref{d_defn} states that $\widetilde{\slashed D}$ is  $\left(C\left( M\right) , C\left( \widetilde{M}\right) , G\left(\widetilde{M}~|~M \right)  \right)$-{lift} of $\left( \Coo\left( M\right), L^2\left( M, S\right) , \slashed D \right)$ (cf. Definition \ref{triple_conn_lift_defn}).

\subsubsection{Coverings of spectral triples}
 \paragraph*{}
From \eqref{comm_sum_1} it turns that 
hence $C\left(M \right)$-module $C\left(\widetilde{M} \right)$ is generated by  a finite set $\left\{\widetilde{e}_{\widetilde{\iota}}\right\}_{\widetilde{\iota}\in \widetilde{I}}$, i.e.
$$
C\left(\widetilde{M} \right) = \sum_{\widetilde{\iota}\in \widetilde{I}}  \widetilde{e}_{\widetilde{\iota}}~C(M).
$$ 
Also from \eqref{comm_sum_1} it turns out $\left\langle \widetilde{e}_{\widetilde{\iota'}}, \widetilde{e}_{\widetilde{\iota''}}\right\rangle_{C\left(\widetilde{M} \right) } \in \Coo\left(M \right)$, i.e.  the set $\left\{\widetilde{e}_{\widetilde{\iota}}\right\}_{\widetilde{\iota}\in \widetilde{I}}$, satisfies to the condition (a) of the Lemma \ref{smooth_matr_lem}. From \eqref{comm_a_g_inv} it turns out that 
$\left\{\widetilde{e}_{\widetilde{\iota}}\right\}_{\widetilde{\iota}\in \widetilde{I}}$ is $G$-invariant, i.e.
\be\nonumber
G\left\{\widetilde{e}_{\widetilde{\iota}}\right\}_{\widetilde{\iota}\in \widetilde{I}}=\left\{\widetilde{e}_{\widetilde{\iota}}\right\}_{\widetilde{\iota}\in \widetilde{I}}~,
\ee
 hence the set $\left\{\widetilde{e}_{\widetilde{\iota}}\right\}_{\widetilde{\iota}\in \widetilde{I}}$, satisfies to the condition (b) of the Lemma \ref{smooth_matr_lem}. From the Lemma \ref{smooth_matr_lem} it turns out that  the unital noncommutative finite-fold covering $\left(C\left(M \right) , C\left( \widetilde{M}\right) , G \right)$ is {smoothly invariant}. It is proven in \ref{comm_d_sec} that if $\widetilde{\slashed D}$ is the  $\pi$-lift of $\slashed D$ then $\widetilde{\slashed D}$ is the  $\left(C\left( M\right) , C\left( \widetilde{M}\right) , G\left(\widetilde{M}~|~M \right)  \right)$-{lift} of $\left( \Coo\left( M\right), L^2\left( M, S\right) , \slashed D \right)$.
 So one has the following theorem. 
\begin{thm}\label{fin_sp_tr_thm} A spectral triple $\left(C^{\infty}\left( \widetilde{M}\right) , L^2\left( \widetilde{M}, \widetilde{S}\right) ,\widetilde{\slashed D}\right)$ is the $\left(C\left(M \right) , C\left( \widetilde{M}\right) , G\left( \widetilde{M}~|~M\right)  \right)$-lift of $\left(C^{\infty}\left( M\right) , L^2\left( M, S\right) ,\slashed D\right)$.
\end{thm}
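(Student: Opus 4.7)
The plan is to collect the pieces already developed in the present section and finish by invoking uniqueness of the $G$-equivariant connection from Lemma \ref{conn_exist_lem}. First, I verify that the unital noncommutative finite-fold covering $\left(C(M), C(\widetilde M), G(\widetilde M~|~M)\right)$ is smoothly invariant in the sense of Definition \ref{smooth_defn}. Applying Proposition \ref{smooth_part_unity_prop}, choose a smooth partition of unity $\{a_\iota\}_{\iota\in I}$ subordinate to a finite open cover $\{\mathcal U_\iota\}$ of $M$ whose members are simultaneously evenly covered by $\pi$ and trivialize $S$, and lift each $e_\iota=\sqrt{a_\iota}$ along a section $\widetilde{\mathcal U}_\iota\to\mathcal U_\iota$ to obtain the family $\{\widetilde e_{\widetilde\iota}\}_{\widetilde\iota\in\widetilde I}$ with $\widetilde I=G\times I$. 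By \eqref{comm_sum_1} this family generates $C(\widetilde M)$ as a right $C(M)$-module with smooth inner products $\langle \widetilde e_{\widetilde\iota'},\widetilde e_{\widetilde\iota''}\rangle\in C^\infty(M)$, and by \eqref{comm_a_g_inv} it is $G$-invariant; Lemma \ref{smooth_matr_lem} then yields smooth invariance together with the crucial characterization \eqref{comm_smooth_matr_eqn}.

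Next, by Lemma \ref{comm_ind_lem} the representation of $C(\widetilde M)$ induced from $C(M)\to B(L^2(M,S))$ and the covering $\left(C(M),C(\widetilde M),G\right)$ is canonically the natural action of $C(\widetilde M)$ on $L^2(\widetilde M,\widetilde S)$, so the induced Hilbert space in Definition \ref{triple_conn_lift_defn} matches the geometric one.

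The substantive step is matching the geometric $\pi$-lift $\widetilde{\slashed D}$ with the algebraic prescription \eqref{d_defn}. Writing $\widetilde a=\sum_{\widetilde\iota}\widetilde\alpha_{\widetilde\iota}\widetilde\beta_{\widetilde\iota}$ with $\widetilde\alpha_{\widetilde\iota}=\widetilde e_{\widetilde\iota}$ and $\widetilde\beta_{\widetilde\iota}=\widetilde a\widetilde e_{\widetilde\iota}$, and putting $\alpha_{\widetilde\iota}=\mathfrak{desc}(\widetilde\alpha_{\widetilde\iota})$, $\beta_{\widetilde\iota}=\mathfrak{desc}(\widetilde\beta_{\widetilde\iota})$, define
$$
\nabla(\widetilde a)=\sum_{\widetilde\iota\in\widetilde I}\left(\widetilde\alpha_{\widetilde\iota}\otimes[\slashed D,\beta_{\widetilde\iota}]+\widetilde\beta_{\widetilde\iota}\otimes[\slashed D,\alpha_{\widetilde\iota}]\right).
$$
A Leibniz computation gives $\nabla(\widetilde a\,a)=\nabla(\widetilde a)a+\widetilde a\otimes[\slashed D,a]$, so $\nabla$ is a connection in the sense of \eqref{conn_triple_eqn}, and $G$-equivariance follows from $\mathfrak{desc}(g\widetilde\alpha_{\widetilde\iota})=\alpha_{\widetilde\iota}$ together with $\mathfrak{desc}(g\widetilde\beta_{\widetilde\iota})=\beta_{\widetilde\iota}$. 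A local $\widetilde{\mathcal U}_{\widetilde\iota}$-calculation, using the pointwise commutativity \eqref{comm_matr_x_eqn} to push $[\slashed D,\alpha_{\widetilde\iota}]$ past $\beta_{\widetilde\iota}$, collapses the two summands for each index into a single commutator $[\widetilde{\slashed D},\widetilde\gamma_{\widetilde\iota}]$ with $\widetilde\gamma_{\widetilde\iota}=\widetilde\alpha_{\widetilde\iota}\widetilde\beta_{\widetilde\iota}$; summing over $\widetilde\iota$ and adding $\widetilde a\otimes\slashed D\xi$ reproduces $\widetilde{\slashed D}(\widetilde a\otimes\xi)$ for $\xi\in\Dom\slashed D$, matching \eqref{d_defn}.

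Finally, Lemma \ref{conn_exist_lem} says that there is exactly one $G$-equivariant connection extending $a\mapsto[\slashed D,a]$, so $\nabla$ coincides with the connection used to build the abstract lift, and therefore $\widetilde{\slashed D}$ is the $\left(C(M),C(\widetilde M),G(\widetilde M~|~M)\right)$-lift of $\slashed D$. The main obstacle is precisely the third step: the identification of the summed commutator terms with a single Dirac commutator rests on the fact that $[\slashed D,a]$ acts locally as a matrix-valued multiplication operator and so commutes with scalar-function multiplication; this is what glues the two connection terms into $[\widetilde{\slashed D},\widetilde\gamma_{\widetilde\iota}]$ and reconciles the geometric and algebraic constructions.
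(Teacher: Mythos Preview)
Your proposal is correct and follows essentially the same route as the paper: smooth invariance via Lemma~\ref{smooth_matr_lem} applied to the $G$-orbit of lifted square roots of a smooth partition of unity, identification of the induced Hilbert space via Lemma~\ref{comm_ind_lem}, construction of the explicit $G$-equivariant connection $\nabla$ from the decomposition $\widetilde a=\sum\widetilde\alpha_{\widetilde\iota}\widetilde\beta_{\widetilde\iota}$, and the local collapse of the two summands into $[\widetilde{\slashed D},\widetilde\gamma_{\widetilde\iota}]$ using \eqref{comm_matr_x_eqn}. Your explicit appeal to the uniqueness statement of Lemma~\ref{conn_exist_lem} makes overt a step the paper leaves implicit in the definition of the lift, but the argument is otherwise the same.
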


\subsection{Infinite coverings}

\paragraph*{} This section contains an algebraic version of the Proposition \ref{comm_cov_mani} in case of infinite coverings.
\subsubsection{Coverings of $C^*$-algebras}
\paragraph*{} Following theorem states an equivalence between a topological notion of an infinite covering and an algebraical one.
\begin{thm}\label{comm_main_thm}\cite{ivankov:qnc}
	If $\mathfrak{S}_{\mathcal X} = \left\{\mathcal{X} = \mathcal{X}_0 \xleftarrow{}... \xleftarrow{} \mathcal{X}_n \xleftarrow{} ...\right\} \in \mathfrak{FinTop}$ and
	$$\mathfrak{S}_{C_0\left(\mathcal{X}\right)}=
	\left\{C_0(\mathcal{X})=C_0(\mathcal{X}_0)\to ... \to C_0(\mathcal{X}_n) \to ...\right\} \in \mathfrak{FinAlg}$$ is an algebraical  finite covering sequence then following conditions hold:
	\begin{enumerate}
		\item [(i)] $\mathfrak{S}_{C_0\left(\mathcal{X}\right)}$ is good,
		\item[(ii)] There are  isomorphisms:

		\begin{itemize}
			\item $\varprojlim \downarrow \mathfrak{S}_{C_0\left(\mathcal{X}\right)} \approx C_0\left(\varprojlim \downarrow \mathfrak{S}_{\mathcal X}\right)$;
			\item $G\left(\varprojlim \downarrow \mathfrak{S}_{C_0\left(\mathcal{X}\right)}~|~ C_0\left(\mathcal X\right)\right) \approx G\left(\varprojlim \downarrow \mathfrak{S}_{\mathcal{X}}~|~ \mathcal X\right)$.
		\end{itemize}
	\end{enumerate}
(cf. Definitions \ref{top_topological_inv_lim_defn} and \ref{main_defn} for notation).
	
\end{thm}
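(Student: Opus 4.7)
The plan is to realize $\widetilde{\mathcal{X}} := \varprojlim \downarrow \mathfrak{S}_{\mathcal{X}}$, which exists by Lemma \ref{top_universal_covering_lem}, as the Gelfand spectrum of the connected component $\widetilde{A}_\pi$, and then to verify the conditions of Definition \ref{good_seq_defn} directly. First I would fix a strictly positive Radon measure $\mu$ on $\mathcal{X}$ and lift it to a $\widehat{G}$-invariant Radon measure $\widetilde{\mu}$ on $\widetilde{\mathcal{X}}$ via the projections $\pi_n^{\widetilde{\mathcal{X}}}$; on any special set $\mathcal{V}$ (Definition \ref{top_spec_defn}) the lift is simply $(\pi_0^{\widetilde{\mathcal{X}}}|_\mathcal{V})^*\mu$, and the special sets generate the topology by the remark after that definition, so the lifts patch consistently. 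This yields a faithful representation $C_0(\widetilde{\mathcal{X}}) \to B(L^2(\widetilde{\mathcal{X}},\widetilde{\mu}))$ whose restriction to $\widehat{A} = \varinjlim C_0(\mathcal{X}_n)$ is equivariant in the sense of Definition \ref{equiv_act_defn}. Since this representation is a direct summand of the universal one, every weakly special element identified here remains weakly special universally.

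Next I would produce the special elements explicitly. Take $\widetilde{f} \in C_c(\widetilde{\mathcal{X}})_+$ with $\supp\widetilde{f}$ contained in a special set $\mathcal{V}$, and set $G_n := \ker(\widehat{G} \to G(\mathcal{X}_n\,|\,\mathcal{X}))$. Because $\mathcal{V}$ maps homeomorphically onto $\pi_n^{\widetilde{\mathcal{X}}}(\mathcal{V})$ and the latter is evenly covered by $\mathcal{X}_n \to \mathcal{X}_0$, the translates $\{g\widetilde{f} : g \in G_n\}$ have pairwise disjoint supports. Hence $\sum_{g \in G_n} g\widetilde{f}$ converges strongly to $\mathfrak{lift}\bigl(\mathfrak{desc}(\widetilde{f})\bigr) \in C_0(\mathcal{X}_n)$. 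The same disjointness of supports together with \eqref{lift_product_eqn} and elementary functional calculus gives the analogous statements for $g(z\widetilde{f}z^*)$, $g(z\widetilde{f}z^*)^2$ and $gf_\eps(z\widetilde{f}z^*)$, $z \in \widehat{A}$; in particular the square condition \eqref{square_condition_equ} holds as an exact equality because cross terms vanish identically. Thus every such $\widetilde{f}$ is special, and by Corollary \ref{special_cor} the $C^*$-algebra generated by weakly special elements contains $C_0(\widetilde{\mathcal{X}})$.

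The third step is to identify $C_0(\widetilde{\mathcal{X}})$ as a connected component and check goodness. Since $\widetilde{\mathcal{X}}$ is connected and $\widehat{G}$-invariant, $C_0(\widetilde{\mathcal{X}})$ is a candidate connected component and its $\widetilde{A}_\pi$-invariant subgroup of $\widehat{G}$ is all of $\widehat{G}$. Goodness of $\mathfrak{S}_{C_0(\mathcal{X})}$ then reduces to three verifications: (a) injectivity of $\varinjlim C_0(\mathcal{X}_n) \hookrightarrow M(C_0(\widetilde{\mathcal{X}})) = C_b(\widetilde{\mathcal{X}})$, which is immediate from surjectivity of each $\pi_n^{\widetilde{\mathcal{X}}}$; (b) density of $\bigoplus_{g \in J} g\,C_0(\widetilde{\mathcal{X}})$ in $\overline{A}$, which follows from the explicit description of weakly special elements above once one knows the disconnected limit is exhausted by $\widehat{G}$-translates of $C_0(\widetilde{\mathcal{X}})$; (c) surjectivity of $\widehat{G} \to G(\mathcal{X}_n\,|\,\mathcal{X})$, built into the definition of $\widehat{G}$ as a projective limit combined with the exact sequence in Definition \ref{comp_defn}(c). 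The identification $G\bigl(\varprojlim \downarrow \mathfrak{S}_{C_0(\mathcal{X})}\,|\,C_0(\mathcal{X})\bigr) \cong G(\widetilde{\mathcal{X}}\,|\,\mathcal{X})$ then follows from Gelfand--Naimark together with the fact that the $\widehat{G}$-action on $C_0(\widetilde{\mathcal{X}})$ is precisely transported from the topological covering action on $\widetilde{\mathcal{X}}$.

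I expect the main obstacle to be ruling out ``spurious'' weakly special elements — namely, showing that any positive $\overline{a} \in \widehat{A}''$ whose $G_n$-averages all land in $C_0(\mathcal{X}_n)$ must come from a compactly supported function on $\widetilde{\mathcal{X}}$ (up to a $\widehat{G}$-translate). The strong-convergence requirement (a) of Definition \ref{special_el_defn} is restrictive but not obviously tight enough by itself; the argument needs a rigidity step using Lemma \ref{stong_conv_inf_lem} together with the cofinality of the quotients $\mathcal{X}_n$ and the fact that $\overline{a} = \inf_n a_n$ in the strong-operator sense forces the support of $\overline{a}$ to be compact modulo the $\widehat{G}$-action. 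Once this is established, the orbit decomposition needed for goodness condition (b) follows cleanly, and the two isomorphisms of (ii) drop out.
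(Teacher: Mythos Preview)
The paper does not prove this theorem; it is cited from \cite{ivankov:qnc} and stated here without argument, so there is no in-paper proof to compare your proposal against.

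That said, a brief remark on your outline. The overall strategy---build an equivariant representation on $L^2(\widetilde{\mathcal X},\widetilde\mu)$, exhibit $C_c(\widetilde{\mathcal X})_+$ functions supported in special sets as special elements via disjointness of $G_n$-translates, then argue that nothing else is weakly special---is the natural one and matches the spirit of the framework. However, you conflate $\widehat G=\varprojlim G(\mathcal X_n\,|\,\mathcal X)$ with $G(\widetilde{\mathcal X}\,|\,\mathcal X)$. The group $\widehat G$ acts on the full topological inverse limit $\widehat{\mathcal X}=\varprojlim\mathcal X_n$, whereas $\widetilde{\mathcal X}=\varprojlim\downarrow\mathfrak S_{\mathcal X}$ is by Definition~\ref{top_topological_inv_lim_defn} the final object among \emph{connected} coverings, hence in general only a connected component of $\widehat{\mathcal X}$. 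Its stabilizer is $G(\widetilde{\mathcal X}\,|\,\mathcal X)$, which can be a proper subgroup of $\widehat G$; this is exactly what condition~(b) of Definition~\ref{good_seq_defn} records (the decomposition is over cosets $\widehat G/G_\pi$, not over the trivial coset space). Your verification of condition~(c) must likewise be carried out for $G_\pi\cong G(\widetilde{\mathcal X}\,|\,\mathcal X)$, not for $\widehat G$, and surjectivity of $G(\widetilde{\mathcal X}\,|\,\mathcal X)\to G(\mathcal X_n\,|\,\mathcal X)$ is a genuine statement about the connected covering, not an automatic consequence of the projective-limit definition of $\widehat G$. The ``spurious special elements'' issue you flag is indeed where the real work sits, and your instinct that Lemma~\ref{stong_conv_inf_lem} plus a support-compactness argument is the right tool is sound.
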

\subsubsection{The sequence of spectral triples}
\paragraph*{} Let $\left(C^{\infty}\left( M\right) , L^2\left( M, S\right) ,\slashed D\right)$ be a commutative spectral triple, and let $\pi:\widetilde{M} \to M$ be an infinite regular covering. From the Proposition \ref{comm_cov_mani} it follows that $\widetilde{M}$ has natural structure of the Riemannian manifold.  Denote by $\widetilde{S} = \pi^*S$ the inverse image of the Spin-bundle (cf. \ref{top_vb_sub_sub}). Similarly we can define an inverse image $\widetilde{\slashed D}= \pi^*\slashed D$ (cf. Definition \ref{inv_image_defn}). 
Let $G = G\left(\widetilde{M}~|M \right)$ be a  group of covering transformations of $\pi$. Suppose that there is a commutative diagram of group epimorphisms

\begin{tikzpicture}
\matrix (m) [matrix of math nodes,row sep=3em,column sep=4em,minimum width=2em]
{
	G   \\
	G_1    &   \cdots & G_n &   \cdots &  \ \\};
\path[-stealth]
(m-1-1) edge (m-2-1)
(m-1-1) edge  (m-2-3)
(m-2-2) edge (m-2-1)
(m-2-3) edge  (m-2-2)
(m-2-4) edge  (m-2-3);
\end{tikzpicture}

such that
\begin{itemize}
	\item A group $G_n$ is finite for any $n \in \N$,
	\item $\bigcap_{n \in \N} \ker\left(G \to G_n \right)$ is a trivial group. 
\end{itemize}	
If $J_n = \ker\left(G \to G_n \right)$ then there is the following commutative diagram of coverings

\begin{tikzpicture}
\matrix (m) [matrix of math nodes,row sep=3em,column sep=4em,minimum width=2em]
{
	\widetilde{M}   \\
	M & M_1 = \widetilde{M}/J_1 & \cdots   &  M_n=\widetilde{M}/J_n  &  \cdots &  \ \\};
\path[-stealth]
(m-1-1) edge node [left] {$ \pi $} (m-2-1)
(m-1-1) edge node [left] {$ \pi_1~~~~ $} (m-2-2)
(m-1-1) edge node [left]  {$ \pi_n~~~~~~~~$} (m-2-4)
(m-2-2) edge node [right] {$ \ $} (m-2-1)
(m-2-3) edge node [right] {$ \ $} (m-2-2)
(m-2-4) edge node [right] {$ \ $} (m-2-3)
(m-2-5) edge node [right] {$ \ $} (m-2-4);
\end{tikzpicture}

Clearly 
$\mathfrak{S}_{M} = \left\{M = M_0 \xleftarrow{}... \xleftarrow{} M_n \xleftarrow{} ...\right\} \in \mathfrak{FinTop}$ is a topological  finite covering sequence. From the Theorem \ref{comm_main_thm} it turns out that $$\mathfrak{S}_{C\left(M\right)}=
\left\{C(M)=C(M_0)\to ... \to C(M_n) \to ...\right\} $$ is a good algebraical  finite covering sequence, and  the triple $$\left(C\left( M\right) , C_0\left( \widetilde{M}\right) , G=G\left( \widetilde{M}~|~M\right)\right)$$ is an  infinite noncommutative covering   of $\mathfrak{S}_{C\left(M\right)}$. 
Otherwise from the Theorem \ref{fin_sp_tr_thm} it follows that
\begin{equation}\label{comm_triple_seq_eqn}
\begin{split}
\mathfrak{S}_{\left(C^{\infty}\left( M\right) , L^2\left( M, S\right) ,\slashed D\right) } = \{ \left(C^{\infty}\left( M\right) , L^2\left( M, S\right) ,\slashed D\right)=\left(C^{\infty}\left( M_0\right) , L^2\left( M_0, S_0\right) ,\slashed D_0\right), \dots,\\
\left(C^{\infty}\left( M_n\right) , L^2\left( M_n, S_n\right) ,\slashed D_n\right), \dots
\}\in \mathfrak{CohTriple}
\end{split}
\end{equation}
is a coherent sequence of spectral triples. We would like to proof that $\mathfrak{S}_{\left(C^{\infty}\left( M\right) , L^2\left( M, S\right) ,\slashed D\right)}$ is regular and  to find  a $\left(C\left(M \right) , C_0\left( \widetilde{M}\right) , G  \right)$-lift of $\left(C^{\infty}\left( M\right) , L^2\left( M, S\right) ,\slashed D\right)$. Denote by $\widetilde{S} = \pi^*S$ the inverse image of the Spin-bundle $S$. 

\subsubsection{Induced representation}
\paragraph*{} Similarly to \ref{induced_comm_finite}  consider  a finite family $\left\lbrace a_\iota \in \Coo\left(  M\right) \right\rbrace_{\iota \in I}$ of positive elements such that
\begin{equation*}
\begin{split}
a_\iota\left(M \backslash \mathcal{U}_\iota \right) = \{0\},\\
1_{C\left( M\right) }= \sum_{\iota \in I}a_\iota.
\end{split}
\end{equation*} 
Similarly to $\ref{induced_comm_finite}$ for any $\iota \in I$ we choose an open subset $\widetilde{ \mathcal U}_\iota$ which is mapped homeomorphically onto $\mathcal U_\iota$ and define  $\widetilde{a}_\iota= \mathfrak{lift}_{\widetilde{ \mathcal U}_\iota}\left(a_\iota \right)  \in \Coo\left(\widetilde{M} \right)$. Denote by $e_\iota = \sqrt{a_\iota}$, $\widetilde{e}_\iota= \sqrt{\widetilde{a}_\iota}$. If $\Ga^\infty_c\left( \widetilde{M}, \widetilde{S}\right)$ is the space of compactly supported smooth sections then  similarly to \ref{comm_smooth_iso_eqn} there is the $\C$-linear isomorphism
\be\label{comm_smooth_iso_c_eqn}
\begin{split}
\varphi:\Ga_c^\infty\left(\widetilde{M},\widetilde{S} \right) \xrightarrow{\approx} \Coo_c\left(\widetilde{M} \right)  \otimes_{\Coo\left( M\right)}  \Ga^\infty\left(M,S \right), \\
 \sum_{\left(g, \iota\right)  \in I_{\widetilde{\xi}}} \left( g \widetilde{e}_\iota\right)  \left( g \widetilde{e}_\iota\right) \widetilde{\xi}  \mapsto  \sum_{\left(g, \iota\right)  \in I_{\widetilde{\xi}}} g \widetilde{e}_\iota \otimes \mathfrak{desc}\left(\left( g\widetilde{e}_\iota\right)  \widetilde{\xi}\right) 
\end{split}
\ee
where $I_{\widetilde{\xi}}\subset G \times I$ is a finite subset such that
$$
 \sum_{\left(g, \iota\right)  \in I_{\widetilde{\xi}}} \left( g \widetilde{e}_\iota\right)  \left( g \widetilde{e}_\iota\right) \widetilde{\xi} =\widetilde{\xi}. 
$$

Since $\Coo_c\left(\widetilde{M} \right) $ is dense in $L^2\left( C_0\left(\widetilde{M} \right) \right) $, and $\Ga^\infty\left(M,S \right)$, (resp. $\Ga_c^\infty\left(\widetilde{M},\widetilde{S} \right)$) is dense in $L^2\left(M,S \right)$, (resp. $L^2\left(\widetilde{M},\widetilde{S} \right)$) the isomorphism \eqref{comm_smooth_iso_eqn} can be uniquely extended up to $\C$-isomorphism
\be\label{comm_tensor_inf_iso_eqn}
\varphi:L^2\left(\widetilde{M},\widetilde{S} \right) \xrightarrow{\approx} L^2\left( C\left(\widetilde{M} \right)\right)   \otimes_{C\left( M\right)}  L^2\left(M,S \right).
\ee
Denote by $G = G\left(\widetilde{M}~|~M \right)$. 
If $\widetilde{a} \otimes \xi,~ \widetilde{b} \otimes \eta \in L^2\left( C_0\left(\widetilde{M} \right)\right)  \otimes_{C\left(M \right) } \Ga\left( M, S\right) \subset L^2\left(\widetilde{M},\widetilde{S} \right)$, $\mu$ (resp. $\widetilde{\mu}$) is the Riemannian measure (cf.  \cite{do_carmo:rg}) on $M$, (resp. $\widetilde{M}$) then 
\begin{equation*}
\begin{split}
\int_{M}\left(\xi, \left( \sum_{g \in G}  g\left( \widetilde{a}^* \widetilde{b}\right) \right) \eta \right)\left( x\right)d\mu =\left(\xi, \left( \sum_{g \in G\left(\widetilde{M}~|~M \right)}  g\left( \widetilde{a}^* \widetilde{b}\right) \right) \eta  \right)_{L^2\left({M},{S} \right)} = \\= \left(\xi, \left( \sum_{g \in G}  g\left( \sum_{\left(g', \iota\right) \in G \times I} g'\widetilde{a}_\iota \right)\widetilde{a}^*  \widetilde{b} \right) \eta  \right)_{L^2\left({M},{S} \right)}=\\=
\left(\xi,  \left(\sum_{\left(g', \iota\right) \in G \times I} \mathfrak{desc}\left( g'\widetilde{a}_\iota \widetilde{a}^*  \widetilde{b}\right)\right)  \eta  \right)_{L^2\left({M},{S} \right)}
=\\=
\left(1 \otimes \xi,  \left(\sum_{\left(g', \iota\right) \in G \times I} \mathfrak{lift}_{g \widetilde{  \mathcal U}_\iota}\mathfrak{desc}\left( g'\widetilde{a}_\iota \widetilde{a}^*  \widetilde{b}\right)\right)\left(  1 \otimes \eta \right) \right)_{L^2\left(\widetilde{M},\widetilde{S} \right)}
=\\=
\left(1 \otimes \xi,   \widetilde{a}^*  \widetilde{b}\left(  1 \otimes \eta \right) \right)_{L^2\left(\widetilde{M},\widetilde{S} \right)}=\left(\widetilde{a}\left( 1 \otimes \xi\right) ,    \widetilde{b}\left(  1 \otimes \eta \right) \right)_{L^2\left(\widetilde{M},\widetilde{S} \right)}
=\\=
\int_{\widetilde{M}}\left( \widetilde{a}\left( 1 \otimes \xi\right) ,  \widetilde{b}\left(  1 \otimes \eta\right) \right)\left( \widetilde{x}\right)  d\widetilde{\mu},
\end{split} 
\end{equation*}
i.e. scalar product on the Hilbert space of the induced representation coincides with the natural scalar product on $L^2\left(\widetilde{M},\widetilde{S} \right)$.
If $\widetilde{a} \in C_c\left( \widetilde{M}\right)$ then following condition holds
\be\label{comm_inf_act_eqn}
\varphi\left( \widetilde{a}  \xi\right)= \sum_{g \in G} \sum_{\iota \in I} \widetilde{a} g \widetilde{e}_\iota \otimes \mathfrak{desc}\left(\left( g\widetilde{e}_\iota\right)  \widetilde{\xi}\right). 
\ee
The left part of \eqref{comm_inf_act_eqn} is given by the natural action of $C_c\left(\widetilde{M}\right)$ on $ L^2\left(\widetilde{M},\widetilde{S} \right)$,  the right part  is consistent with \eqref{inf_ind_act_eqn}. Since $C_c\left(\widetilde{M}\right)$ is a dense subalgebra of $C_0\left(\widetilde{M}\right)$ the equation \eqref{comm_inf_act_eqn} is true for any $\widetilde{a} \in C_0\left( \widetilde{M}\right)$.
Summarize above equations one has the following lemma.
\begin{lemma}\label{comm_ind1_lem}
	If the representation  $\widetilde{\rho}: C_0\left(\widetilde{M}\right) \to  B\left( \widetilde{   \H}\right)  $ is induced by the pair $$\left(C\left(M \right)\to B\left(  L^2\left(M,S \right)\right) ,\left(C\left(M \right) , C_0\left( \widetilde{M}\right) , G\left(\widetilde{M}~|~M \right) \right)  \right)$$ (cf. Definition \ref{inf_ind_defn}) then following conditions holds
	\begin{enumerate}
		\item[(a)] There is the homomorphism of Hilbert spaces $\widetilde{   \H}\cong L^2\left(\widetilde{M},\widetilde{S} \right)$,
		\item[(b)] The representation $\widetilde{\rho}$ is given by the natural action of $C_0\left(\widetilde{M}\right)$ on $ L^2\left(\widetilde{M},\widetilde{S} \right)$.
	\end{enumerate}
\end{lemma}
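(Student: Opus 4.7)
The plan is to imitate the proof of Lemma \ref{comm_ind_lem} from the finite-fold case, using the isomorphism $\varphi$ constructed in \eqref{comm_smooth_iso_c_eqn} and its extension \eqref{comm_tensor_inf_iso_eqn}, and then to verify that this isomorphism intertwines the two actions of $C_0(\widetilde M)$.

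First, for part (a), I would start from the partition of unity $\{a_\iota\}_{\iota\in I}$ on $M$ with $a_\iota$ supported in an open set $\mathcal U_\iota$ that is evenly covered by $\pi$ and on which $S$ is trivial, pick lifts $\widetilde{\mathcal U}_\iota$ and set $\widetilde e_\iota=\sqrt{\widetilde a_\iota}$. The family $\{g\widetilde e_\iota\}_{(g,\iota)\in G\times I}$ gives a decomposition of any compactly supported smooth section as in \eqref{comm_smooth_iso_c_eqn}. This yields the $\mathbb C$-linear isomorphism from $\Gamma^\infty_c(\widetilde M,\widetilde S)$ onto $C^\infty_c(\widetilde M)\otimes_{C^\infty(M)}\Gamma^\infty(M,S)$, which sits inside $L^2(C_0(\widetilde M))\otimes_{C(M)}L^2(M,S)=\widetilde{\mathcal H}$. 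The key computation, already carried out in the displayed equation preceding the lemma, shows that the scalar product on $\widetilde{\mathcal H}$ induced by \eqref{inf_ind_prod_eqn} applied to $\widetilde a\otimes\xi,\widetilde b\otimes\eta$ with $\widetilde a,\widetilde b\in C_c(\widetilde M)$ and $\xi,\eta\in\Gamma(M,S)$ coincides with the $L^2$-scalar product $\int_{\widetilde M}(\widetilde a(1\otimes\xi),\widetilde b(1\otimes\eta))\,d\widetilde\mu$. Since $\Gamma^\infty_c(\widetilde M,\widetilde S)$ is dense in $L^2(\widetilde M,\widetilde S)$ and the image in the algebraic tensor product is dense in $\widetilde{\mathcal H}$, the map $\varphi$ extends uniquely to the Hilbert space isomorphism $\widetilde{\mathcal H}\cong L^2(\widetilde M,\widetilde S)$ of \eqref{comm_tensor_inf_iso_eqn}.

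For part (b), I would verify that under this isomorphism $\varphi$ the induced representation $\widetilde\rho$ coincides with the natural multiplication action of $C_0(\widetilde M)$ on $L^2(\widetilde M,\widetilde S)$. For $\widetilde a\in C_c(\widetilde M)$ the two descriptions of $\widetilde a\cdot\widetilde\xi$ agree by \eqref{comm_inf_act_eqn}: the left-hand side is the pointwise product, while the right-hand side rewrites it in the form $\sum_{g,\iota}\widetilde a\,g\widetilde e_\iota\otimes\mathfrak{desc}((g\widetilde e_\iota)\widetilde\xi)$, which is exactly the formula \eqref{inf_ind_act_eqn} defining $\widetilde\rho(\widetilde a)$ on an element of $L^2(\widetilde A_\pi)\otimes_A\mathcal H$. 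Finally, since $C_c(\widetilde M)$ is norm-dense in $C_0(\widetilde M)$ and both representations are $\ast$-homomorphisms into $B(L^2(\widetilde M,\widetilde S))$, they agree on all of $C_0(\widetilde M)$.

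The only non-routine issue is bookkeeping: one has to check that the sums in \eqref{comm_smooth_iso_c_eqn} and \eqref{comm_inf_act_eqn} are finite on compactly supported sections (so that all expressions make sense before completion) and that the weakly/strongly convergent series $\sum_{g\in G}g(\widetilde a^*\widetilde b)$ used in the inner product \eqref{inf_ind_prod_eqn} indeed reduces, via $\mathfrak{desc}$ and \eqref{comm_sum_1}, to a single integral over $M$ as in the displayed computation. Once this is pinned down, the finite-fold argument of Lemma \ref{comm_ind_lem} transfers verbatim, and the main obstacle is really just ensuring the support bookkeeping so that the local trivializations $\widetilde{\mathcal U}_\iota$, $g\widetilde{\mathcal U}_\iota$ cover $\widetilde M$ disjointly in each fiber.
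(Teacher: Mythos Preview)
Your proposal is correct and follows essentially the same approach as the paper: part (a) is deduced from the isomorphism \eqref{comm_tensor_inf_iso_eqn} (extended from \eqref{comm_smooth_iso_c_eqn} via the scalar-product computation you cite), and part (b) from \eqref{comm_inf_act_eqn} together with density of $C_c(\widetilde M)$ in $C_0(\widetilde M)$. The paper's own proof simply refers to these two equations, so your write-up is a faithful and more detailed version of the same argument.
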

\begin{proof}
	(a) Follows from \eqref{comm_tensor_inf_iso_eqn}.\\
	(b) Follows from \eqref{comm_inf_act_eqn}.
\end{proof}

\subsubsection{Smooth elements}\label{comm_smooth_sec}

\paragraph*{}
Consider the coherent sequence
\begin{equation}\label{comm_coh_eqn}
\begin{split}
\mathfrak{S}_{\left(C^{\infty}\left( M\right) , L^2\left( M, S\right) ,\slashed D\right) } = \{ \left(C^{\infty}\left( M\right) , L^2\left( M, S\right) ,\slashed D\right)=\left(C^{\infty}\left( M_0\right) , L^2\left( M_0, S_0\right) ,\slashed D_0\right), \dots,\\
\left(C^{\infty}\left( M_n\right) , L^2\left( M_n, S_n\right) ,\slashed D_n\right), \dots
\}\in \mathfrak{CohTriple}
\end{split}
\end{equation}
of spectral triples given by \eqref{comm_triple_seq_eqn}. 
 For any $n \in \N$ denote by $\widetilde{\pi}_n:\widetilde{M} \to M_n$ the natural covering.
\begin{lemma}\label{comm_c_cp_lem}\cite{ivankov:qnc}
	Following conditions hold:
	\begin{enumerate}
		\item [(i)]	If $\widetilde{  \mathcal U} \subset \widetilde{ M}$ is a compact set then there is $N \in \N$ such that for any $n \ge N$ the restriction $\widetilde{\pi}_n|_{\widetilde{  \mathcal U}}:\widetilde{  \mathcal U} \xrightarrow{\approx} \widetilde{\pi}_n\left( {\widetilde{  \mathcal U}}\right)$ is a homeomorphism,
		
		\item[(ii)]  If $\widetilde{a} \in C_c\left(\widetilde{  M }\right)_+ $ is a positive element then there there is $N \in \N$ such that for any $n \ge \N$ following condition holds
		\begin{equation}\label{comm_a_eqn}
		a_n\left(\widetilde{   \pi }_n \left( \widetilde{x}\right)\right) =\left\{
		\begin{array}{c l}
		\widetilde{   a}\left( \widetilde{x}\right) &\widetilde{x} \in   \supp~ \widetilde{a} ~\&~ \widetilde{   \pi }_n \left( \widetilde{x}\right) \in \supp~ a_n \\
		0 &\widetilde{   \pi }_n \left( \widetilde{x}\right) \notin \supp~ a_n 
		\end{array}\right.
		\end{equation}	
		where 	$$
		a_n  = \sum_{g \in \ker\left( \widehat{G} \to  G_n\right)}g\widetilde{a}.
		$$
	\end{enumerate}
\end{lemma}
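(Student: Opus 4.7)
The plan is to reduce both parts to the properly discontinuous character of the action of $G = G(\widetilde M \,|\, M)$ together with the hypothesis $\bigcap_n J_n = \{e\}$, where $J_n = \ker(G \to G_n)$. Recall that for a covering of manifolds the deck group acts properly discontinuously, so for any compact $\widetilde{\mathcal U}\subset\widetilde M$ the set
\[
F(\widetilde{\mathcal U}) \;\eqdef\; \{\, g\in G \,:\, g\widetilde{\mathcal U}\cap\widetilde{\mathcal U}\ne\emptyset\,\}
\]
is \emph{finite}. Also, the subgroups $J_n$ are decreasing in $n$, and $\bigcap_n J_n = \{e\}$.

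For part (i), I would argue as follows. Given a compact $\widetilde{\mathcal U}\subset\widetilde M$, pick $F=F(\widetilde{\mathcal U})$ as above. For each $g\in F\setminus\{e\}$, the relation $\bigcap_n J_n=\{e\}$ gives some index $n_g$ with $g\notin J_{n_g}$; taking $N=\max_{g\in F\setminus\{e\}} n_g$, the monotonicity $J_n\subseteq J_N$ for $n\ge N$ forces $J_n\cap F=\{e\}$ for all $n\ge N$. Because $\widetilde\pi_n$ is the quotient $\widetilde M\to\widetilde M/J_n$, two points $\widetilde x,\widetilde y\in\widetilde{\mathcal U}$ satisfy $\widetilde\pi_n(\widetilde x)=\widetilde\pi_n(\widetilde y)$ iff $\widetilde y=g\widetilde x$ for some $g\in J_n$, which by the choice of $N$ forces $g=e$, i.e.\ $\widetilde x=\widetilde y$. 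So $\widetilde\pi_n|_{\widetilde{\mathcal U}}$ is a continuous bijection from a compact space onto its image inside the Hausdorff space $M_n$, hence a homeomorphism.

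For part (ii), I would put $\widetilde{\mathcal U}=\supp\widetilde a$, which is compact since $\widetilde a\in C_c(\widetilde M)_+$, and apply (i) to produce $N$. For $n\ge N$ and any $g\in J_n\setminus\{e\}$ we then have $g\widetilde{\mathcal U}\cap\widetilde{\mathcal U}=\emptyset$, i.e.\ $\supp(g\widetilde a)\cap\supp\widetilde a=\emptyset$, so in the sum
\[
(a_n\circ\widetilde\pi_n)(\widetilde x)\;=\;\sum_{g\in J_n} (g\widetilde a)(\widetilde x)\;=\;\sum_{g\in J_n}\widetilde a(g^{-1}\widetilde x)
\]
at a point $\widetilde x\in\supp\widetilde a$ only the term $g=e$ can contribute, giving $(a_n\circ\widetilde\pi_n)(\widetilde x)=\widetilde a(\widetilde x)$. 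The second clause is automatic: if $\widetilde\pi_n(\widetilde x)\notin\supp a_n$, then by definition of the support $a_n(\widetilde\pi_n(\widetilde x))=0$.

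The only real point to nail down is the properly discontinuous step, i.e.\ the finiteness of $F(\widetilde{\mathcal U})$; once that is in hand the rest is bookkeeping. I would justify it by combining the local-trivialization property of the covering $\widetilde M\to M$ with the compactness of $\widetilde{\mathcal U}$: covering $\widetilde{\mathcal U}$ by finitely many open sets $\widetilde{\mathcal V}_1,\dots,\widetilde{\mathcal V}_k$ each mapping homeomorphically onto an evenly-covered $\mathcal V_j\subset M$, any $g\in F(\widetilde{\mathcal U})$ must carry some $\widetilde{\mathcal V}_i$ to some sheet over $\mathcal V_j$; the sheets in $\widetilde\pi^{-1}(\mathcal V_j)$ are indexed by $G$, and only finitely many of them meet $\widetilde{\mathcal U}$, yielding the finiteness of $F(\widetilde{\mathcal U})$. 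This is the main obstacle because it relies on the topological structure rather than on the algebraic covering-sequence data, but it is standard for coverings of locally compact Hausdorff spaces.
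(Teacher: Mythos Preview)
Your proof is correct. The paper does not supply its own proof of this lemma; it merely cites the result from the companion reference \cite{ivankov:qnc}, so there is nothing to compare against here. Your argument via proper discontinuity of the deck-group action, combined with the standing hypothesis $\bigcap_n J_n=\{e\}$ and the monotonicity of the $J_n$, is exactly the natural route and all the ingredients you use are available in the paper's setup (the assumption $\bigcap_n \ker(G\to G_n)=\{e\}$ is stated explicitly just before the sequence $\mathfrak{S}_M$ is introduced). The compact-to-Hausdorff bijection argument for the homeomorphism in (i) and the disjoint-support argument for (ii) are both clean; your final paragraph sketching the finiteness of $F(\widetilde{\mathcal U})$ is adequate for this setting since $\widetilde M\to M$ is a genuine covering of manifolds.
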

\begin{lem}
If $\widetilde{W}^\infty$ is the space of 	$\mathfrak{S}_{\left(C^{\infty}\left( M\right) , L^2\left( M, S\right) ,\slashed D\right) }$-smooth elements (cf. Definition \ref{smooth_el_defn}) then
 $$\widetilde{W}^\infty \subset \Coo_c\left(\widetilde{M} \right)\stackrel{\mathrm{def}}{=} \Coo\left(\widetilde{M} \right) \bigcap C_c\left(\widetilde{M} \right).$$
\end{lem}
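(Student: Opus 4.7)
I would split the inclusion into two parts: compact support (from (d)) and smoothness (from (a) together with Lemma \ref{comm_c_cp_lem}); conditions (b) and (c) of Definition \ref{smooth_el_defn} are not needed here. The first step is to recall that the Pedersen ideal of an abelian $C^*$-algebra $C_0(X)$ with $X$ locally compact Hausdorff coincides with $C_c(X)$. Applied to $\widetilde{A}_\pi = C_0(\widetilde{M})$, property (d) immediately yields $\widetilde{a} \in C_c(\widetilde{M})$, so that $K := \supp\, \widetilde{a}$ is a compact subset of $\widetilde{M}$.

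For the smoothness half I would pick a compact neighborhood $\widetilde{V}$ of $K$ and invoke Lemma \ref{comm_c_cp_lem}(i) to obtain $N \in \mathbb{N}$ such that for every $n \geq N$ the restriction $\widetilde{\pi}_n|_{\widetilde{V}}$ is a homeomorphism onto its image. Fix such an $n$; by (a) the function $a_n = \sum_{g \in J_n} g\widetilde{a}$ lies in $C^\infty(M_n)$. Set $\widetilde{U} := \mathrm{int}(\widetilde{V})$. The crucial observation is that for any $\widetilde{x} \in \widetilde{U}$ and any nontrivial $g \in J_n$, injectivity of $\widetilde{\pi}_n$ on $\widetilde{V}$ combined with freeness of the covering action forces $g\widetilde{x} \notin \widetilde{V}$, and hence $g\widetilde{x} \notin K$, so that $(g\widetilde{a})(\widetilde{x}) = \widetilde{a}(g^{-1}\widetilde{x}) = 0$. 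The $J_n$-sum therefore collapses pointwise on $\widetilde{U}$ to
$$
\widetilde{\pi}_n^{*}(a_n)\bigl|_{\widetilde{U}} \;=\; \widetilde{a}\bigl|_{\widetilde{U}}.
$$

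Since $\widetilde{\pi}_n|_{\widetilde{U}}$ is a local diffeomorphism of Riemannian manifolds and $a_n$ is smooth, the pullback is smooth on $\widetilde{U}$, and hence so is $\widetilde{a}|_{\widetilde{U}}$. On the complementary open set $\widetilde{M} \setminus K$, $\widetilde{a}$ vanishes identically and is trivially smooth. Because $K \subset \widetilde{U}$, these two open sets cover $\widetilde{M}$, so $\widetilde{a} \in C^\infty(\widetilde{M}) \cap C_c(\widetilde{M}) = C^\infty_c(\widetilde{M})$. The only mildly delicate step is the disjoint-translates argument, but it is immediate once the compactness of $\supp \widetilde{a}$ provided by (d) lets us apply Lemma \ref{comm_c_cp_lem}(i); the real work of the identification has already been done in the preparation of the Pedersen-ideal description and of that lemma.
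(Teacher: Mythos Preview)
Your proof is correct and follows essentially the same route as the paper: condition (d) together with the identification $K\!\left(C_0(\widetilde M)\right)=C_c(\widetilde M)$ gives compact support, and condition (a) combined with Lemma \ref{comm_c_cp_lem} shows that locally $\widetilde a$ coincides with the pullback of the smooth function $a_n$, hence is smooth. The only cosmetic difference is that the paper invokes part~(ii) of Lemma~\ref{comm_c_cp_lem} (the pointwise formula \eqref{comm_a_eqn}) directly, whereas you re-derive that identification from part~(i) via the disjoint-translates argument; your version is in fact a bit more explicit and avoids the positivity hypothesis in the statement of part~(ii).
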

\begin{proof}
	It is shown in \cite{pedersen:mea_c} that Pedersen ideal of $C_0\left(\widetilde{M} \right)$ coincides with $C_c\left(\widetilde{M} \right)$, and taking into account the condition (d) of the Definition \ref{smooth_el_defn} one has
	\be\nonumber
	\widetilde{W}^\infty \in C_c\left(\widetilde{M} \right).
	\ee  
	
If $\widetilde{a} \in \widetilde{W}^\infty$ then from the condition (a) of the Definition \ref{smooth_el_defn} it follows that $$a_n= \sum_{g \in \ker\left( \widehat{G} \to  G_n\right)}g\widetilde{a} \in \Coo\left(M_n \right).$$

 From $\widetilde{a} \in\widetilde{W}^\infty \subset C_c\left(\widetilde{M} \right)$  and \eqref{comm_a_eqn} it follows that $\widetilde{a} \in  \Coo\left(\widetilde{M} \right)$, hence  $\widetilde{a} \in  \Coo\left(\widetilde{M} \right)\bigcap C_c\left(\widetilde{M} \right)$.
\end{proof}
\begin{lem}
If $\widetilde{W}^\infty$ is the space of 	$\mathfrak{S}_{\left(C^{\infty}\left( M\right) , L^2\left( M, S\right) ,\slashed D\right) }$-smooth elements (cf. Definition \ref{smooth_el_defn}) then $$\Coo_c\left(\widetilde{M} \right)\subset \widetilde{W}^\infty.$$
\end{lem}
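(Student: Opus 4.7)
The plan is to take an arbitrary $\widetilde{a}\in\Coo_c(\widetilde{M})$ and verify the four conditions (a)--(d) of Definition \ref{smooth_el_defn}. The prerequisite that $\widetilde{a}$ be weakly special, as well as condition (d), both follow immediately from the identification $K(\widetilde{A}_\pi)=K(C_0(\widetilde{M}))=C_c(\widetilde{M})$ together with condition (a) of Definition \ref{inf_hilb_prod_defn} applied to the commutative sequence $\mathfrak{S}_{C_0(M)}$. For condition (a), the series $a_n=\sum_{g\in J_n}g\widetilde{a}$ is a locally finite sum of smooth, compactly supported functions on $\widetilde{M}$, hence smooth and $J_n$-invariant, and descends to an element of $\Coo_c(M_n)\subset\A_n$.

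The strong convergence in (b) and (c) both hinge on Lemma \ref{comm_c_cp_lem}: since $\supp\widetilde{a}$ is compact, there is some $N$ such that for $n\ge N$ the translates $\{h\cdot\supp\widetilde{a}\}_{h\in J_n}$ are pairwise disjoint. Because $\widetilde{\slashed D}=\pi^*\slashed D$ is $G$-equivariant, every iterated commutator of $D_n$ with $a_n$ lifts to $\widetilde{M}$ as $\sum_{h\in J_n}h\cdot[\widetilde{\slashed D},[\widetilde{\slashed D},\dots,[\widetilde{\slashed D},\widetilde{a}]\dots]]$; via the isomorphism $\widetilde{\H}\cong L^2(\widetilde{M},\widetilde{S})$ of Lemma \ref{comm_ind1_lem} (and \eqref{comm_matr_x_eqn}, which makes each such commutator a $C_0(M_n)$-linear bundle endomorphism), the operator $1\otimes\pi^s_n(a_n)$ is the $2^s\times 2^s$ matrix of multiplications by those lifted commutators and differs from the candidate limit built from $\widetilde{a}$ alone by the tail $\sum_{h\in J_n\setminus\{e\}}h(\cdot)$.

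To pass to the strong limit I fix $\widetilde{\xi}\in\widetilde{\H}^{2^s}$ and bound each tail summand in $L^2$-norm by the $L^\infty$-norm of the fixed bundle-endomorphism matrix times $\int_{h\cdot\supp\widetilde{a}}|\widetilde{\xi}|^2$. Since the $G$-action on $\widetilde{M}$ is properly discontinuous, the translates $g\cdot\supp\widetilde{a}$ overlap only finitely many at a time, so $\sum_{g\in G}\int_{g\cdot\supp\widetilde{a}}|\widetilde{\xi}|^2<\infty$; combining $\bigcap_n J_n=\{e\}$ with the nesting of the $J_n$ shows that these tails vanish as $n\to\infty$, which yields (b) and the strong-convergence part of (c).

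It remains, for (c), to identify the limit $[\widetilde{\slashed D},\widetilde{a}]$ with an element of $L^2(\widetilde{A}_\pi)\otimes_\A\Om^1_{\slashed D}$. For this I reuse the partition-of-unity construction of Section \ref{comm_d_sec}: pick a locally finite family $\{\widetilde{e}_{\widetilde{\iota}}\}$ with $\sum\widetilde{e}_{\widetilde{\iota}}^2=1$, each $\widetilde{e}_{\widetilde{\iota}}$ supported in a set homeomorphically mapped to its image in $M$. Only finitely many $\widetilde{\iota}$ satisfy $\widetilde{a}\widetilde{e}_{\widetilde{\iota}}\ne 0$, and setting $\widetilde{\al}_{\widetilde{\iota}}=\widetilde{e}_{\widetilde{\iota}}$, $\widetilde{\bt}_{\widetilde{\iota}}=\widetilde{a}\widetilde{e}_{\widetilde{\iota}}$ with $\pi$-descents $\al_{\widetilde{\iota}},\bt_{\widetilde{\iota}}\in\Coo(M)$, the Leibniz computation already carried out in Section \ref{comm_d_sec} yields
\[
[\widetilde{\slashed D},\widetilde{a}]=\sum_{\widetilde{\iota}}\bigl(\widetilde{\al}_{\widetilde{\iota}}\otimes[\slashed D,\bt_{\widetilde{\iota}}]+\widetilde{\bt}_{\widetilde{\iota}}\otimes[\slashed D,\al_{\widetilde{\iota}}]\bigr),
\]
with $\widetilde{\al}_{\widetilde{\iota}},\widetilde{\bt}_{\widetilde{\iota}}\in C_c(\widetilde{M})\subset L^2(\widetilde{A}_\pi)$. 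The main obstacle is precisely this last identification: matching the strongly convergent operator limit with the above tensor element under the canonical embedding $L^2(\widetilde{A}_\pi)\otimes_\A\Om^1_{\slashed D}\hookrightarrow B(\widetilde{\H})$. This is forced by the locality of $\widetilde{\slashed D}$ together with the finite-covering identifications of Section \ref{induced_comm_finite}, but must be spelled out with care, since the tensor expression genuinely mixes objects living on $\widetilde{M}$ with objects living on $M$.
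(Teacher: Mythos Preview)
Your argument is correct and parallels the paper's, but the ordering in condition (c) is reversed. You first identify the strong limit of $1\otimes[\slashed D_n,a_n]$ as the operator $[\widetilde{\slashed D},\widetilde a]$ on $L^2(\widetilde M,\widetilde S)$ and only afterwards decompose via the partition of unity. The paper instead introduces the partition of unity \emph{at each finite level}: it writes $[\slashed D_n,a_n]=\sum_{(g,\iota)}[\slashed D_n,a^n_{(g,\iota)}]$, uses the disjointness of the $G(M_n|M)$-translates to replace $[\slashed D_n,\al^n_{(g,\iota)}]$ by $[\slashed D_n,\al_{(g,\iota)}]$ with $\al_{(g,\iota)}\in\Coo(M)$, and thereby produces a genuine tensor $\bt^n_{(g,\iota)}\otimes[\slashed D,\al_{(g,\iota)}]\in\Coo(M_n)\otimes_{\Coo(M)}\Om^1_{\slashed D}$ (equation \eqref{comm_nt_o_eqn}) \emph{before} letting $n\to\infty$. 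Since this tensor already \emph{is} the operator $1\otimes(\bt^n_{(g,\iota)}[\slashed D_n,\al^n_{(g,\iota)}])$ by construction, passing to the strong limit in the first leg alone yields $\widetilde\bt_{(g,\iota)}\otimes[\slashed D,\al_{(g,\iota)}]$ directly, and the embedding $L^2(\widetilde A_\pi)\otimes_\A\Om^1_{\slashed D}\hookrightarrow B(\widetilde{\H})$ never has to be invoked explicitly. The step you flag as the main obstacle --- matching the operator $[\widetilde{\slashed D},\widetilde a]$ with the image of your tensor under that embedding --- is thus the one step the paper's ordering is designed to avoid. Conversely, your tail-estimate for strong convergence via proper discontinuity of the $G$-action is more self-contained than the paper's bare citation of \eqref{comm_a_eqn}.
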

\begin{proof}
	Let $\widetilde{a} \in \Coo_c\left(\widetilde{M} \right)$. One need check that $\widetilde{a}$ satisfies to (a)-(d) of the Definition \ref{smooth_el_defn}.\\
	(a) From $\widetilde{a} \in \Coo_c\left(\widetilde{M} \right)$ it follows that for any $n \in \N^0$ following condition holds
	$$
	\sum_{g \in \ker\left(G\left(\widetilde{ M}~|M \right) \to G\left(\widetilde{ M}~|M_n \right) \right) } g\widetilde{a} \in \Coo\left(M_n \right). 
	$$
	\\
	(b)
	Let  $\pi^s_n:\Coo\left(M_n \right)  \to B\left( \H^{2^s}_n \right)$ be a representation given by \eqref{s_diff_repr_equ}. From \eqref{comm_a_eqn} it turns out
\begin{equation}\label{comm_strong_conv_eqn}
\lim_{n \to \infty} 1_{C_b\left(\widetilde{M} \right)  } \otimes \pi^s_n\left(a_n \right) = \pi_s\left(\widetilde{a} \right)
\end{equation}
in sense of strong convergence.  \\
(c) The equation \eqref{comm_strong_conv_eqn} means that any $\widetilde{a} \in \Coo_c\left(\widetilde{M} \right)$ satisfies to the condition (b) the Definition \eqref{smooth_el_defn}. The manifold $M$ is compact, so there is a finite set $\left\{\mathcal U_\iota\right\}_{\iota \in I}$ of open sets such that any $\mathcal U_\iota$ is evenly covered by $\widetilde{\pi}:\widetilde{M} \to M$ and $M = \bigcup_{\iota \in I} \mathcal U_\iota$.  There is a partition of unity 
\begin{equation}\label{comm_part_eqn}
1_{C\left( M\right) } = \sum_{\iota \in I} a_\iota; \text{ where } a_\iota \in \Coo\left( M\right).
\end{equation}
For any $\iota \in I$ we select $\widetilde{\mathcal U}_\iota$ such that the natural map $\widetilde{\mathcal U}_\iota \xrightarrow{\approx} {\mathcal U}_\iota$ is a homeomorphism. 

Denote by $e_\iota = \sqrt{a_\iota}$ and let $\widetilde{e}_\iota \in \Coo_c\left(\widetilde{M} \right)$ is given by
\begin{equation*}\label{comm_smooth_cov_eqn11}
\widetilde{e}_\iota\left( \widetilde{x}\right)= \left\{
\begin{array}{c l}
e_\iota \left(\widetilde{\pi} \left( \widetilde{x}\right) \right)  & \widetilde{x} \in \widetilde{\mathcal U}_\iota\\
0 & \widetilde{x} \notin \widetilde{\mathcal U}_\iota
\end{array}\right.
\end{equation*}

From \eqref{comm_part_eqn} it follows that
\begin{equation*}
\widetilde{a}= \sum_{\left(g, \iota \right) \in G\left(\widetilde{M}~|~ M\right) \times I} \left(g \widetilde{e}_\iota \right)   \left(g \widetilde{e}_\iota \right) \widetilde{a},
\end{equation*}
and taking into account that $\supp  \widetilde{a}$ is compact there is a finite subset $\widetilde{I} \subset G\left(\widetilde{M}~|~ M\right) \times I$ such that
\be\label{comm_ta_decomp_eqn}
\begin{split}
\widetilde{a}= \sum_{\left(g, \iota \right) \in \widetilde{I}} \left(g \widetilde{e}_\iota \right)   \left(g \widetilde{e}_\iota \right) \widetilde{a} = \sum_{\left(g, \iota \right) \in \widetilde{I}} \widetilde{a}_{\left(g, \iota \right)}=\sum_{\left(g, \iota \right) \in \widetilde{I}} \widetilde{\al}_{\left(g, \iota \right)}\widetilde{\bt}_{\left(g, \iota \right)}, \\ \text{ where } 
\widetilde{\al}_{\left(g, \iota \right)} = g \widetilde{e}_\iota; ~ \widetilde{\bt}_{\left(g, \iota \right)} = g \widetilde{e}_\iota \widetilde{a};~ \widetilde{a}_{\left(g, \iota \right)} = \widetilde{\al}_{\left(g, \iota \right)}\widetilde{\bt}_{\left(g, \iota \right)}= g \widetilde{e}^2_{\left(g, \iota \right)} \widetilde{a},~
\end{split}
\ee

If $a_n,\al_{\left(g, \iota \right)}^n, \bt_{\left(g, \iota \right)}^n \in \Coo\left(M_n \right)$ be given by 
\begin{equation*}\label{c1omm_d_m_eqn}
\begin{split}
a_n = \sum_{g \in \ker\left(G\left(\widetilde{M}~|~M \right)\to G\left(\widetilde{M}~|~M_n \right)  \right)}g \widetilde{a},
\\a_{\left(g, \iota \right)}^n = \sum_{g \in \ker\left(G\left(\widetilde{M}~|~M \right)\to G\left(\widetilde{M}~|~M_n \right)  \right)}g \widetilde{a}_{\left(g, \iota \right)},
\\\\\al_{\left(g, \iota \right)}^n = \sum_{g \in \ker\left(G\left(\widetilde{M}~|~M \right)\to G\left(\widetilde{M}~|~M_n \right)  \right)}g \widetilde{\al}_{\left(g, \iota \right)},
\\
\bt_{\left(g, \iota \right)}^n = \sum_{g \in \ker\left(G\left(\widetilde{M}~|~M \right)\to G\left(\widetilde{M}~|~M_n \right)  \right)}g \widetilde{\bt}_{\left(g, \iota \right)},\\
\end{split}
\end{equation*}
then 
\begin{equation*}
\begin{split}
\left[\slashed D_n, a_n\right] =\sum_{\left(g, \iota \right) \in \widetilde{I}}   \left[ {\slashed D}_n, {a}^n_{\left(g, \iota \right)}\right]=\\=\sum_{\left(g, \iota \right) \in \widetilde{I}} \left(  \left[ {\slashed D}_n, {\al}^n_{\left(g, \iota \right)}\right]{\bt^n}_{\left(g, \iota \right)}   + \al^n_{\left(g, \iota \right)}  \left[ {\slashed D}_n, {\bt}^n_{\left(g, \iota \right)}\right]\right). \end{split}
\end{equation*}
From \eqref{comm_matr_x_eqn} it turns out $$\left[ {\slashed D}_n, {\al}^n_{\left(g, \iota \right)}\right]{\bt^n}_{\left(g, \iota \right)}= {\bt^n}_{\left(g, \iota \right)} \left[ {\slashed D}_n, {\al}^n_{\left(g, \iota \right)}\right],$$
hence one has
\begin{equation*}
\begin{split}
\left[\slashed D_n, a_n\right]=\sum_{\left(g, \iota \right) \in \widetilde{I}} \left( {\bt^n}_{\left(g, \iota \right)} \left[ {\slashed D}_n, {\al}^n_{\left(g, \iota \right)}\right]  + \al^n_{\left(g, \iota \right)}  \left[ {\slashed D}_n, {\bt}^n_{\left(g, \iota \right)}\right]\right).
\end{split}
\end{equation*}
From \eqref{comm_ta_decomp_eqn} it turns out 
	\be\label{comm_al_bt_supn_eqn}
\supp ~{\al}^n_{\left(g, \iota \right)} \subset \widetilde{\pi}_{n}\left( g \widetilde{\mathcal U}_\iota\right) ; ~~~\supp ~{\bt}^n_{\left(g, \iota \right)}\subset \widetilde{\pi}_{n}\left(  g\widetilde{\mathcal U}_\iota\right) .
\ee
From the definition of $\widetilde{\mathcal U}_\iota$ it turns out that for any nontrivial $g' \in G\left( M_n~|~M\right)$ following condition holds
$$
\widetilde{\pi}_{n}\left(  g\widetilde{\mathcal U}_\iota\right) \bigcap g' \widetilde{\pi}_{n}\left(  g\widetilde{\mathcal U}_\iota\right)= \emptyset.
$$ 
and taking into account \eqref{comm_al_bt_supn_eqn} one has
\bean
{\al}^n_{\left(g, \iota \right)}\left(g' {\bt}^n_{\left(g, \iota \right)} \right) = 0,\\
 {\bt}^n_{\left(g, \iota \right)} \left[ {\slashed D}_n, g'{\al}^n_{\left(g, \iota \right)}\right] = 0,\\ \al^n_{\left(g, \iota \right)}  \left[ {\slashed D}_n, g'{\bt}^n_{\left(g, \iota \right)}\right]=0.
\eean
If $e \in G\left(M_n ~|~ M \right)$ is the neutral element of $G\left(M_n ~|~ M \right)$ then from the above equations it turns out
\bean
 {\bt}^n_{\left(g, \iota \right)} \left[ {\slashed D}_n, \sum_{g' \in G\left(M_n ~|~ M \right)} g'{\al}^n_{\left(g, \iota \right)}\right]=\sum_{g' \in G\left(M_n ~|~ M \right)}  {\bt}^n_{\left(g, \iota \right)} \left[ {\slashed D}_n, g'{\al}^n_{\left(g, \iota \right)}\right] = \\ ={\bt}^n_{\left(g, \iota \right)} \left[ {\slashed D}_n, {\al}^n_{\left(g, \iota \right)}\right]+\sum_{g' \in G\left(M_n ~|~ M \right)\backslash\{e\}} {\bt}^n_{\left(g, \iota \right)} \left[ {\slashed D}_n, g'{\al}^n_{\left(g, \iota \right)}\right]=\\=
 {\bt}^n_{\left(g, \iota \right)} \left[ {\slashed D}_n, {\al}^n_{\left(g, \iota \right)}\right] + 0 = {\bt}^n_{\left(g, \iota \right)} \left[ {\slashed D}_n, {\al}^n_{\left(g, \iota \right)}\right].
\eean 
Similarly one has
$$
 {\al}^n_{\left(g, \iota \right)} \left[ {\slashed D}_n, \sum_{g' \in G\left(M_n ~|~ M \right)} g'{\bt}^n_{\left(g, \iota \right)}\right] ={\al}^n_{\left(g, \iota \right)} \left[ {\slashed D}_n, {\bt}^n_{\left(g, \iota \right)}\right],
$$
or if ${a}_{\left(g, \iota \right)}, {\al}_{\left(g, \iota \right)}, {\bt}_{\left(g, \iota \right)}$ are given by
\be\label{comm_al_bt_sum_eqn}
\begin{split}
{a}_{\left(g, \iota \right)} = \sum_{g' \in G\left(M_n ~|~ M \right)} g'{a}^n_{\left(g, \iota \right)}~,\\
{\al}_{\left(g, \iota \right)} = \sum_{g' \in G\left(M_n ~|~ M \right)} g'{\al}^n_{\left(g, \iota \right)}~,\\
{\bt}_{\left(g, \iota \right)} = \sum_{g' \in G\left(M_n ~|~ M \right)} g'{\bt}^n_{\left(g, \iota \right)}\\
\end{split}
\ee
then
\bean
{\bt}^n_{\left(g, \iota \right)} \left[ {\slashed D}_n, {\al}^n_{\left(g, \iota \right)}\right]=\sum_{g' \in G\left(M_n ~|~ M \right)}  {\bt}^n_{\left(g, \iota \right)} \left[ {\slashed D}_n, g'{\al}^n_{\left(g, \iota \right)}\right]={\bt}^n_{\left(g, \iota \right)} \left[ {\slashed D}_n, {\al}_{\left(g, \iota \right)}\right],\\
{\al}^n_{\left(g, \iota \right)} \left[ {\slashed D}_n, {\bt}^n_{\left(g, \iota \right)}\right]=\sum_{g' \in G\left(M_n ~|~ M \right)}  {\al}^n_{\left(g, \iota \right)} \left[ {\slashed D}_n, g'{\bt}^n_{\left(g, \iota \right)}\right]={\al}^n_{\left(g, \iota \right)} \left[ {\slashed D}_n, {\bt}_{\left(g, \iota \right)}\right].\\
\eean
Elements ${\al}_{\left(g, \iota \right)}, ~{\bt}_{\left(g, \iota \right)}$ are invariant with respect to  $G\left(M_n ~|~ M \right)$, one has ${\al}_{\left(g, \iota \right)}, ~{\bt}_{\left(g, \iota \right)} \in \Coo\left( M\right)$. Since ${\slashed D}_n$ is the $\widetilde{   \pi}_n$-lift of $\slashed D$ following conditions hold:

 \be\label{comm_nt_o_eqn}
  {\bt}^n_{\left(g, \iota \right)} \left[ {\slashed D}_n, {\al}^n_{\left(g, \iota \right)}\right] = {\bt}^n_{\left(g, \iota \right)} \left[ {\slashed D}_n, {\al}_{\left(g, \iota \right)}\right] = {\bt}^n_{\left(g, \iota \right)} \otimes \left[ {\slashed D}, {\al}_{\left(g, \iota \right)}\right] \in \Coo\left(M_n \right)\otimes_{\Coo\left(M \right) } \Om^1_{\slashed D}
\ee
where $\Om^1_{\slashed D}$ is the {module of differential forms associated} with the spectral triple $$\left(C^{\infty}\left( M\right) , L^2\left( M, S\right) ,\slashed D\right)$$ (cf. Definition \ref{ass_cycle_sec}).  From the strong limit $\lim_{n \to \infty} {\bt}^n_{\left(g, \iota \right)}=\widetilde{\bt}_{\left(g, \iota \right)}$ it turns out
\bean
\lim_{n \to \infty} 1_{C_b\left(\widetilde{M} \right) }\otimes  {\bt}^n_{\left(g, \iota \right)}  \left[ {\slashed D}, {\al}_{\left(g, \iota \right)}\right]= \widetilde{\bt}_{\left(g, \iota \right)} \otimes \left[ {\slashed D}, {\al}_{\left(g, \iota \right)}\right]
\eean
and taking into account $\widetilde{\bt}_{\left(g, \iota \right)} \in C_c\left( \widetilde{M}\right)\subset L^2\left( C_0\left(\widetilde{M} \right) \right)$ one has
$$
\lim_{n \to \infty} 1_{C_b\left(\widetilde{M} \right) } \otimes {\bt}^n_{\left(g, \iota \right)}  \left[ {\slashed D}, {\al}_{\left(g, \iota \right)}\right]=\widetilde{\bt}_{\left(g, \iota \right)} \otimes \left[ {\slashed D}, {\al}_{\left(g, \iota \right)}\right]\in L^2\left( C_0\left(\widetilde{M} \right) \right) \otimes_{\Coo\left( M\right)}\Om^1_D.
$$
Similarly one can prove
$$
\lim_{n \to \infty} 1_{C_b\left(\widetilde{M} \right) } \otimes {\al}^n_{\left(g, \iota \right)}  \left[ {\slashed D}, {\bt}_{\left(g, \iota \right)}\right]=\widetilde{\al}_{\left(g, \iota \right)} \otimes \left[ {\slashed D}, {\bt}_{\left(g, \iota \right)}\right]\in L^2\left( C_0\left(\widetilde{M} \right) \right) \otimes_{\Coo\left( M\right)}\Om^1_{\slashed D}.
$$
Summarize above equation one has
\be\label{comm_ad_eqn}
\begin{split}
\widetilde{a}^{\slashed D}_{\left(g, \iota \right)}=\lim_{n \to \infty} 1_{C_b\left(\widetilde{M} \right) } \otimes \left[ {\slashed D}_n, a^n_{\left(g, \iota \right)}\right]=\\= \lim_{n \to \infty}  \left( {\bt^n}_{\left(g, \iota \right)} \otimes \left[ {\slashed D}_n, {\al}^n_{\left(g, \iota \right)}\right]  + \al^n_{\left(g, \iota \right)} \otimes  \left[ {\slashed D}_n, {\bt}^n_{\left(g, \iota \right)}\right]\right)=\\=
\sum_{\left(g, \iota \right) \in \widetilde{I}} \left( \widetilde{\bt}_{\left(g, \iota \right)} \otimes \left[ {\slashed D}, {\al}_{\left(g, \iota \right)}\right]  + \widetilde{\al}_{\left(g, \iota \right)} \otimes  \left[ {\slashed D}, {\bt}_{\left(g, \iota \right)}\right]\right).
\end{split}
\ee
From $\widetilde{\al}_{\left(g, \iota \right)}, ~\widetilde{\bt}_{\left(g, \iota \right)}\in L^2\left( C_0\left(\widetilde{M} \right) \right)$, $~\left[ {\slashed D}, {\al}_{\left(g, \iota \right)}\right],\left[ {\slashed D}, {\bt}_{\left(g, \iota \right)}\right]\in \Om^1_{\slashed D}$ it follows that $\widetilde{a}^{\slashed D}_{\left(g, \iota \right)} \in L^2\left( C_0\left(\widetilde{M} \right) \right) \otimes_{\Coo\left( M\right)}\Om^1_{\slashed D}$.
Taking into account that $\widetilde{I}$ is a finite set one concludes that
\be
\widetilde{a}_{\slashed D}=\lim_{n \to \infty} 1_{C_b\left(\widetilde{M} \right) } \otimes \left[ {\slashed D}_n, a_n\right]= \sum_{\left( g, \iota\right)\in \widetilde{I}} \widetilde{a}^{\slashed D}_{\left(g, \iota \right)} \in L^2\left( C_0\left(\widetilde{M} \right) \right) \otimes_{\Coo\left( M\right)}\Om^1_{\slashed D}. 
\ee
(d) It is known that $C_c\left( \widetilde{M}\right)$ the  Pedersen ideal of $C_0\left( \widetilde{M}\right)$ (cf. \cite{pedersen:mea_c}). From $\widetilde{a}\in C_c\left( \widetilde{M}\right)$ it turns out that $\widetilde{a}$ lies in the Pedersen ideal. 
\end{proof}
\subsubsection{Lift of Dirac operator}
\paragraph{}
For any $\xi \in \Ga^\infty\left(M, S \right)$ one has
$$
\supp \widetilde{\bt}_{\left(g, \iota \right)} \otimes \left[ {\slashed D}, {\al}_{\left(g, \iota \right)}\right]\xi \subset g   \widetilde{\mathcal U}_\iota,
$$
so from \eqref{supp_lift_desc_eqn} it turns out
\bean
 \widetilde{\bt}_{\left(g, \iota \right)} \otimes \left[ {\slashed D}, {\al}_{\left(g, \iota \right)}\right]\xi = \mathfrak{lift}_{g\widetilde{\mathcal{U}}}\left( \mathfrak{desc} \left( \widetilde{\bt}_{\left(g, \iota \right)} \otimes \left[ {\slashed D}, {\al}_{\left(g, \iota \right)}\right]\xi \right)\right) =\\= \mathfrak{lift}_{g\widetilde{\mathcal{U}}}\left({\bt}_{\left(g, \iota \right)}  \left[ {\slashed D}, {\al}_{\left(g, \iota \right)}\right]\xi \right)=\mathfrak{lift}_{g\widetilde{\mathcal{U}}}  \left( \left[ {\slashed D}, {\al}_{\left(g, \iota \right)}\right]{\bt}_{\left(g, \iota \right)}\xi \right).
\eean
Similarly one has
\bean
\widetilde{\al}_{\left(g, \iota \right)} \otimes \left[ {\slashed D}, {\bt}_{\left(g, \iota \right)}\right]\xi= \mathfrak{lift}_{g\widetilde{\mathcal{U}}}\left({\bt}_{\left(g, \iota \right)}  \left[ {\slashed D}, {\al}_{\left(g, \iota \right)}\right]\xi\right),\\
 \widetilde{a}_{\left(g, \iota \right)}\otimes  {\slashed D}\xi= \mathfrak{lift}_{g\widetilde{\mathcal{U}}}\left(a_{\left(g, \iota \right)}{\slashed D}\xi\right) 
\eean
From \eqref{comm_ad_eqn} it turns out
\bean
\widetilde{a}^{\slashed D}_{\left(g, \iota \right)}\left(\xi \right)  = \mathfrak{lift}_{g\widetilde{\mathcal{U}}}  \left(\left(  \left[ {\slashed D}, {\al}_{\left(g, \iota \right)}\right]{\bt}_{\left(g, \iota \right)}+{\al}_{\left(g, \iota \right)}\left[ {\slashed D}, {\bt}_{\left(g, \iota \right)}\right]\right) \xi\right)=\\= \mathfrak{lift}_{g\widetilde{\mathcal{U}}}  \left(\left[ {\slashed D}, {\al}_{\left(g, \iota \right)}{\bt}_{\left(g, \iota \right)}\right]\xi\right) =  \mathfrak{lift}_{g\widetilde{\mathcal{U}}}  \left(\left[ {\slashed D}, {a}_{\left(g, \iota \right)}\right]\xi\right) ,\\
\eean
hence one has
\be\label{comm_pre_d_eqn}
\begin{split}
\widetilde{a}^{\slashed D}_{\left(g, \iota \right)}\left(\xi \right) + \widetilde{a}_{\left(g, \iota \right)} \otimes \slashed D \xi = \mathfrak{lift}_{g\widetilde{\mathcal{U}}}  \left(\left( \left[ {\slashed D}, {a}_{\left(g, \iota \right)}\right]+ {a}_{\left(g, \iota \right)}{\slashed D}\right) \xi\right) = \\ = \mathfrak{lift}_{g\widetilde{\mathcal{U}}}  \left({\slashed D} {a}_{\left(g, \iota \right)}\xi\right) = \widetilde{\slashed D}\left( \widetilde{a}_{\left(g, \iota \right)}\otimes\xi\right) 
\end{split}
\ee
Denote by $\slashed D'$ the operator given by \eqref{inf_lift_D_eqn}. From $\widetilde{a}  = \sum_{\left(g, \iota \right) \in \widetilde{I}} \widetilde{a}_{\left(g, \iota \right)}$ it turns out
\bean
\slashed D'\left( \widetilde{a}\otimes \xi\right) = \sum_{\left(g, \iota \right) \in \widetilde{I}} \slashed D'\left( \widetilde{a}_{\left(g, \iota \right)} \otimes \xi\right). 
\eean
From \eqref{inf_lift_D_eqn} it follows that
turns out
\bean
\slashed D'\left( \widetilde{a}\otimes \xi\right) = \sum_{\left(g, \iota \right) \in \widetilde{I}} \slashed D'\left( \widetilde{a}_{\left(g, \iota \right)} \otimes \xi\right)=\sum_{\left(g, \iota \right) \in \widetilde{I}}\left( \widetilde{a}^{\slashed D}_{\left(g, \iota \right)}\left(\xi \right) + \widetilde{a}_{\left(g, \iota \right)} \otimes \slashed D \xi\right) , 
\eean
and taking into account \eqref{comm_pre_d_eqn} one has
\bean
\slashed D'\left( \widetilde{a}\otimes \xi\right) =\sum_{\left(g, \iota \right) \in \widetilde{I}}\left( \widetilde{a}^{\slashed D}_{\left(g, \iota \right)}\left(\xi \right) + \widetilde{a}_{\left(g, \iota \right)} \otimes \slashed D \xi\right)= \sum_{\left(g, \iota \right) \in \widetilde{I}}\widetilde{\slashed D}\left( \widetilde{a}_{\left(g, \iota \right)}\otimes\xi\right) = \widetilde{\slashed D} \left( \widetilde{a}\otimes \xi\right).
\eean
From the above equation it turns out $\slashed D'= \widetilde{\slashed D}$, i.e. in the  specific case of commutative spectral triples the Dirac operator given by \eqref{inf_lift_D_eqn} and the Definition \ref{reg_triple_defn} is the lift of Dirac operator given by the Definition \ref{inv_image_defn}.

\subsubsection{Coverings of spectral triples} 
 Since the space $\Coo\left(\widetilde{M} \right) \bigcap C_c\left(\widetilde{M} \right)$ is dense in $C_0\left(\widetilde{M} \right)$,
 we have the following theorem
 \begin{theorem}
 	Following conditions hold:
 	\begin{itemize}
 		\item		The  sequence of spectral triples
 		\begin{equation*}
 		\begin{split}
 		\mathfrak{S}_{\left(C^{\infty}\left( M\right) , L^2\left( M, S\right) ,\slashed D\right) } = \{ \left(C^{\infty}\left( M\right) , L^2\left( M, S\right) ,\slashed D\right)=\left(C^{\infty}\left( M_0\right) , L^2\left( M_0, S_0\right) ,\slashed D_0\right), \dots,\\
 		\left(C^{\infty}\left( M_n\right) , L^2\left( M_n, S_n\right) ,\slashed D_n\right), \dots
 		\}\in \mathfrak{CohTriple}
 		\end{split}
 		\end{equation*}
 		given by \eqref{comm_triple_seq_eqn} is good (cf. Definition \ref{smooth_alg_defn}),
 		\item The triple $\left( \Coo_0\left(\widetilde{ M}\right) , L^2\left( \widetilde{ M}, \widetilde{\SS}\right),    \widetilde{\slashed D}\right) $ is the $\left(C\left(M \right), C_0\left( \widetilde{ M}\right), G\left(\widetilde{ M}~|~M \right)    \right)$-lift of $$\left( \Coo\left(M\right) , L^2\left( M, {\SS}\right), \slashed D\right)$$ (cf. Definition \ref{reg_triple_defn}). 
 	\end{itemize}
 	
 \end{theorem}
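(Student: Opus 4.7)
The plan is to assemble the ingredients that have already been proved in the preceding subsections and verify the two bullet points of the theorem.

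First I would handle the \emph{goodness} of $\mathfrak{S}_{(C^\infty(M),L^2(M,S),\slashed D)}$ in the sense of Definition \ref{smooth_alg_defn}. By the two lemmas in \ref{comm_smooth_sec} I already have the equality $\widetilde{W}^\infty = \Coo\left(\widetilde{M}\right)\cap C_c\left(\widetilde{M}\right)$. Since the smooth algebra $\widetilde{\A}_\pi$ contains the algebra generated by $\widetilde{W}^\infty$, it contains $\Coo\left(\widetilde{M}\right)\cap C_c\left(\widetilde{M}\right)$; it is a classical fact that this subalgebra is norm-dense in $C_0\left(\widetilde{M}\right)$ (use a smooth partition of unity subordinate to a locally finite open cover and the Stone--Weierstrass property on compact charts). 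This density directly yields the goodness condition $\widetilde{\A}=\widetilde{\A}_\pi$ dense in $\widetilde{A}_\pi = C_0(\widetilde M)$, where the equality $\widetilde{A}_\pi = C_0(\widetilde{M})$ is provided by Theorem \ref{comm_main_thm}.

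Next I would verify that $\left( \Coo\left(\widetilde{M}\right)\cap C_c\left(\widetilde{M}\right),\, L^2\left(\widetilde{M},\widetilde{S}\right),\, \widetilde{\slashed D}\right)$ is the $(C(M), C_0(\widetilde M), G(\widetilde M\mid M))$-lift of $(\Coo(M), L^2(M,S), \slashed D)$ in the sense of Definition \ref{reg_triple_defn}. This has three components. The identification of the Hilbert space of the induced representation with $L^2\left(\widetilde{M},\widetilde{S}\right)$, and the identification of the induced action of $C_0(\widetilde M)$ with the natural multiplication action, are exactly Lemma \ref{comm_ind1_lem}. The identification of the smooth subalgebra $\widetilde{\A}$ with $\Coo\left(\widetilde{M}\right)\cap C_c\left(\widetilde{M}\right)$ comes from the two lemmas of \ref{comm_smooth_sec} together with the density argument above. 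The identification of the operator $\slashed D'$ given by \eqref{inf_lift_D_eqn} with the $\pi$-lift $\widetilde{\slashed D}$ of the classical Dirac operator is precisely the chain of equalities concluding the preceding subsection, which uses the local formula \eqref{comm_pre_d_eqn} and the decomposition \eqref{comm_ta_decomp_eqn} of a smooth compactly supported $\widetilde{a}$ via a $G$-invariant partition of unity.

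At this point it only remains to observe that $\widetilde{\slashed D}$ is essentially self-adjoint on $L^2\left(\widetilde{M},\widetilde{S}\right)$ (which it is, because the covering metric makes $\widetilde{M}$ into a Riemannian Spin-manifold and $\widetilde{\slashed D}$ is its geometric Dirac operator), so that the triple qualifies as a spectral triple. Putting all of this together gives both the goodness of the coherent sequence \eqref{comm_triple_seq_eqn} and the fact that $\left( \Coo_0\left(\widetilde{M}\right),\, L^2\left(\widetilde{M},\widetilde{S}\right),\, \widetilde{\slashed D}\right)$ coincides with the $(C(M), C_0(\widetilde M), G(\widetilde M\mid M))$-lift of $(\Coo(M), L^2(M,S), \slashed D)$.

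The main obstacle, which is actually already handled in the body of \ref{comm_smooth_sec}, is the matching of the two Dirac operators: one has to control the strong-operator convergence of $1_{C_b(\widetilde M)}\otimes [\slashed D_n, a_n]$ as $n\to\infty$, and then unravel it into a $\widetilde{\mathcal U}$-lift of the classical $\slashed D$ on $M$. This is delicate because it mixes the module-theoretic definition \eqref{inf_lift_D_eqn} with the geometric definition via Definition \ref{inv_image_defn}; the key computational tool is the vanishing $\mathfrak{desc}(\widetilde{\bt}_{(g,\iota)}\cdot g'\widetilde{\al}_{(g,\iota)})=0$ for nontrivial $g'$, which collapses the sum over $G(M_n\mid M)$ to a single term and makes the local formula \eqref{comm_pre_d_eqn} applicable.
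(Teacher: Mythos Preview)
Your proposal is correct and takes essentially the same approach as the paper: the paper's own argument is even terser, consisting only of the observation that $\Coo\left(\widetilde{M}\right)\cap C_c\left(\widetilde{M}\right)$ is dense in $C_0\left(\widetilde{M}\right)$ and then stating the theorem as a direct consequence of the preceding subsections (the induced representation lemma, the two smooth-element lemmas, and the Dirac-lift computation). Your write-up simply makes explicit the assembly of those ingredients that the paper leaves implicit.
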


\section{Coverings of  noncommutative tori}\label{nt_sec}  
   \subsection{Fourier transformation}
\paragraph*{}
There is a norm on $\mathbb{Z}^n$ given by
\begin{equation*}
\left\|\left(k_1, ..., k_n\right)\right\|= \sqrt{k_1^2 + ... + k^2_n}.
\end{equation*}
The space of complex-valued Schwartz  functions on $\Z^n$ is given by 
\begin{equation*}
\sS\left(\mathbb{Z}^n\right)= \left\{a = \left\{a_k\right\}_{k \in \mathbb{Z}^n} \in \mathbb{C}^{\mathbb{Z}^n}~|~ \mathrm{sup}_{k \in \mathbb{Z}^n}\left(1 + \|k\|\right)^s \left|a_k\right| < \infty, ~ \forall s \in \mathbb{N} \right\}.
\end{equation*}

Let $\mathbb{T}^n$ be an ordinary $n$-torus. We will often use real coordinates for $\mathbb{T}^n$, that is, view $\mathbb{T}^n$ as $\mathbb{R}^n / \mathbb{Z}^n$. Let $\Coo\left(\mathbb{T}^n\right)$ be an algebra of infinitely differentiable complex-valued functions on $\mathbb{T}^n$. 
There is the bijective Fourier transformations  $\mathcal{F}_\T:\Coo\left(\mathbb{T}^n\right)\xrightarrow{\approx}\sS\left(\mathbb{Z}^n\right)$;  $f \mapsto \widehat{f}$ given by
\begin{equation}\label{nt_fourier}
\widehat{f}\left(p\right)= \mathcal F_\T (f) (p)= \int_{\mathbb{T}^n}e^{- 2\pi i x \cdot p}f\left(x\right)dx
\end{equation}
where $dx$ is induced by the Lebesgue measure on $\mathbb{R}^n$ and   $\cdot$ is the  scalar
product on the Euclidean space $\R^n$.
The Fourier transformation carries multiplication to convolution, i.e.
\begin{equation*}
\widehat{fg}\left(p\right) = \sum_{r +s = p}\widehat{f}\left(r\right)\widehat{g}\left(s\right).
\end{equation*}
The inverse Fourier transformation $\mathcal{F}^{-1}_\T:\sS\left(\mathbb{Z}^n\right)\xrightarrow{\approx} \Coo\left(\mathbb{T}^n\right)$;  $ \widehat{f}\mapsto f$ is given by
$$
f\left(x \right) =\mathcal{F}^{-1}_\T \widehat f\left( x\right)  = \sum_{p \in \Z^n} \widehat f\left( p\right)   e^{ 2\pi i x \cdot p}.
$$
There  is the $\C$-valued scalar product  on $\Coo\left( \T^n\right)$ given by
$$
\left(f, g \right) = \int_{\T^n}fg dx =\sum_{p \in \Z^n}\widehat{f}\left( -p\right) \widehat{g}\left(p \right).  
$$ 
Denote by $\SS\left( \R^{n}\right) $ be the space of
complex Schwartz (smooth, rapidly decreasing) functions on $\R^{n}$. 
\be\label{mp_sr_eqn}
\begin{split}
	\SS\left(\mathbb {R} ^{n}\right)=\left\{f\in C^{\infty }(\mathbb {R} ^{n}):\|f\|_{\alpha  ,\beta )}<\infty \quad \forall \alpha =\left( \al_1,...,\al_n\right) ,\beta =\left( \bt_1,...,\bt_n\right)\in \mathbb {Z} _{+}^{n}\right\},\\
	\|f\|_{{\alpha ,\beta }}=\sup_{{x\in {\mathbb  {R}}^{n}}}\left|x^{\alpha }D^{\beta }f(x)\right|
\end{split}
\ee
where 
\bean
x^\al = x_1^{\al_1}\cdot...\cdot x_n^{\al_n},\\
D^{\beta} = \frac{\partial}{\partial x_1^{\bt_1}}~...~\frac{\partial}{\partial x_n^{\bt_n}}.
\eean
The topology on $\SS\left(\mathbb {R} ^{n}\right)$ is given by seminorms $\|\cdot\|_{{\alpha ,\beta }}$.
\begin{defn}\label{nt_*w_defn}
	Denote by $\SS'\left( \R^{n}\right) $ the vector space dual to $\SS\left( \R^{n}\right) $, i.e. the space of continuous functionals on $\SS\left( \R^{n}\right)$. Denote by $\left\langle\cdot, \cdot \right\rangle:\SS'\left( \R^{n}\right)\times	\SS\left( \R^{n}\right)\to\C$ the natural pairing. We say that $\left\{a_n \in \SS'\left(\mathbb {R} ^{n}\right)\right\}_{n \in \N}$ is \textit{weakly-* convergent} to $a \in \SS'\left(\mathbb {R} ^{n}\right)$ if for any $b \in  \SS\left(\mathbb {R} ^{n}\right)$ following condition holds
	$$
	\lim_{n \to \infty}\left\langle a_n, b \right\rangle = \left\langle a, b \right\rangle.	
	$$
	We say that
	$$
	a = \lim_{n\to \infty}a_n
	$$
	in the \textit{sense of weak-* convergence}.
\end{defn}

Let $\mathcal F$ and $\mathcal F^{-1}$ be the ordinary and inverse Fourier transformations given by
\begin{equation}\label{intro_fourier}
\begin{split}
\left(\mathcal{F}f\right)(u) = \int_{\mathbb{R}^{2N}} f(t)e^{-2\pi it\cdot u}dt,~\left(\mathcal F^{-1}f\right)(u)=\int_{\mathbb{R}^{2N}} f(t)e^{2\pi it\cdot u}dt 
\end{split}
\end{equation}
which satisfy  following conditions
$$
\mathcal{F}\circ\mathcal{F}^{-1}|_{\SS\left( \R^{n}\right)} = \mathcal{F}^{-1}\circ\mathcal{F}|_{\SS\left( \R^{n}\right)} = \Id_{\SS\left( \R^{n}\right)}.
$$
There is the $\C$-valued scalar product  on $\SS\left( \R^n\right)$ given by
\begin{equation}\label{fourier_scalar_product_eqn}
\left(f, g \right) = \int_{\R^n}fg dx =\int_{\R^n}\mathcal{F}f\mathcal{F}g dx. 
\end{equation}
which if $\mathcal{F}$-invariant, i.e.
$$
\left(f, g \right) = \left(\mathcal{F}f, \mathcal{F}g \right).
$$

\subsection{Noncommutative torus $\mathbb{T}^n_{\Theta}$}\label{nt_descr_subsec}

\paragraph*{}

Let $\Theta$ be a real skew-symmetric $n \times n$ matrix, we will define a new noncommutative product $\star_{\Theta}$ on $\sS\left(\mathbb{Z}^n\right)$ given by
\begin{equation}\label{nt_product_defn_eqn}
\left(\widehat{f}\star_{\Theta}\widehat{g}\right)\left(p\right)= \sum_{r + s = p} \widehat{f}\left(r\right)\widehat{g}\left(s\right) e^{-\pi ir ~\cdot~ \Theta s}.
\end{equation}
and an involution
\begin{equation*}
\widehat{f}^*\left(p\right)=\overline{\widehat{f}}\left(-p\right)
.
\end{equation*}In result there is an involutive algebra $\Coo\left(\mathbb{T}^n_{\Theta}\right) =\left(\sS\left(\mathbb{Z}^n\right), + , \star_{\Theta}~, ^* \right)$. 
There is a tracial  state on $\Coo\left(\mathbb{T}^n_{\Theta}\right)$ given by
\begin{equation}\label{nt_state_eqn}
\tau\left(f\right)= \widehat{f}\left(0\right).
\end{equation}
From  $\Coo\left(\mathbb{T}^n_{\Theta} \right) \approx \SS\left( \Z^n\right)$ it follows  that there is a $\C$-linear isomorphism 
\begin{equation}\label{nt_varphi_inf_eqn}
\varphi_\infty: \Coo\left(\mathbb{T}^n_{\Theta} \right) \xrightarrow{\approx}  \Coo\left(\mathbb{T}^n \right).
\end{equation} 
such that following condition holds
\begin{equation}\label{nt_state_integ_eqn}
\tau\left(f \right)=  \frac{1}{\left( 2\pi\right)^n }\int_{\mathbb{T}^n} \varphi_\infty\left( f\right) ~dx.
\end{equation}

Similarly to \ref{comm_gns_constr} there is the Hilbert space $L^2\left(\Coo\left(\mathbb{T}^n_{\Theta}\right), \tau\right)$ and the natural representation $\Coo\left(\mathbb{T}^n_{\Theta}\right) \to B\left(L^2\left(\Coo\left(\mathbb{T}^n_{\Theta}\right), \tau\right)\right)$ which induces the $C^*$-norm. The $C^*$-norm completion  $C\left(\mathbb{T}^n_{\Theta}\right)$ of $\Coo\left(\mathbb{T}^n_{\Theta}\right)$ is a $C^*$-algebra and there is a faithful representation
\begin{equation}\label{nt_repr_eqn}
C\left(\mathbb{T}^n_{\Theta}\right) \to B\left( L^2\left(\Coo\left(\mathbb{T}^n_{\Theta}\right), \tau\right)\right) .
\end{equation}
We will write $L^2\left(C\left(\mathbb{T}^n_{\Theta}\right), \tau\right)$ instead of $L^2\left(\Coo\left(\mathbb{T}^n_{\Theta}\right), \tau\right)$. There is the natural $\C$-linear map  $\Coo\left(\mathbb{T}^n_{\Theta}\right) \to L^2\left(C\left(\mathbb{T}^n_{\Theta}\right), \tau\right)$  and since $\Coo\left(\mathbb{T}^n_{\Theta}\right) \approx \sS\left( \mathbb{Z}^n \right)$ there is a linear map $\Psi_\Th:\sS\left( \mathbb{Z}^n \right) \to L^2\left(C\left(\mathbb{T}^n_{\Theta}\right), \tau\right) $. If $k \in \mathbb{Z}^n$ and $U_k \in  \sS\left( \mathbb{Z}^n \right)=\Coo\left(\mathbb{T}^n_{\Theta}\right)$ is such that 
\begin{equation}\label{unitaty_nt_eqn}
U_k\left( p\right)= \delta_{kp}: ~ \forall p \in \mathbb{Z}^n
\end{equation}
then
\begin{equation}\label{nt_unitary_product}
U_kU_p = e^{-\pi ik ~\cdot~ \Theta p} U_{k + p}; ~~~   U_kU_p = e^{-2\pi ik ~\cdot~ \Theta p}U_pU_k.
\end{equation}

If $\xi_k = \Psi_\Th\left(U_k \right)$ then from \eqref{nt_product_defn_eqn}, \eqref{nt_state_eqn} it turns out
\begin{equation}\label{nt_h_product}
\tau\left(U^*_k \star_\Th U_l \right) = \left(\xi_k, \xi_l \right)  = \delta_{kl},   
\end{equation} 
i.e. the subset $\left\{\xi_k\right\}_{k \in \mathbb{Z}^n}\subset L^2\left(C\left(\mathbb{T}^n_{\Theta}\right), \tau\right)$ is an orthogonal basis of  $L^2\left(C\left(\mathbb{T}^n_{\Theta}\right), \tau\right)$.
Hence the Hilbert space  $L^2\left(C\left(\mathbb{T}^n_{\Theta}\right), \tau\right)$ is naturally isomorphic to the Hilbert space $\ell^2\left(\mathbb{Z}^n\right)$ given by
\begin{equation*}
\ell^2\left(\mathbb{Z}^n\right) = \left\{\xi = \left\{\xi_k \in \mathbb{C}\right\}_{k\in \mathbb{Z}^n} \in \mathbb{C}^{\mathbb{Z}^n}~|~ \sum_{k\in \mathbb{Z}^n} \left|\xi_k\right|^2 < \infty\right\}
\end{equation*}
and the $\C$-valued scalar product on $\ell^2\left(\mathbb{Z}^n\right)$ is given by
\begin{equation*}
\left(\xi,\eta\right)_{ \ell^2\left(\mathbb{Z}^n\right)}= \sum_{k\in \mathbb{Z}^n}    \overline{\xi}_k\eta_k.
\end{equation*}
The map $\Psi_\Th:\sS\left( \mathbb{Z}^n \right) \hookto L^2\left(C\left(\mathbb{T}^n_{\Theta}\right), \tau\right) $ is equivalent to the map
\begin{equation}\label{nt_to_hilbert_eqn}
\Psi_\Th:\Coo\left(\mathbb{T}^n_{\Theta}\right) \hookto L^2\left(C\left(\mathbb{T}^n_{\Theta}\right), \tau\right).
\end{equation}
From \eqref{nt_state_integ_eqn} it follows that for any $a, b \in \Coo\left(\mathbb{T}^n_{\Theta}\right)$  the scalar product on $L^2\left(C\left(\mathbb{T}^n_{\Theta}\right), \tau\right)$ is given by
\begin{equation}\label{nt_int_sc_pr_eqn}
\left(a, b \right)= \int_{\T^n} a_{\text{comm}}^*b_{\text{comm}}dx 
\end{equation}
where $a_{\text{comm}}\in \Coo\left(\T^n \right)$ (resp. $b_{\text{comm}})$ is a commutative function which corresponds to $a$ (resp. $b$).
An alternative description of $\C\left(\mathbb{T}^n_{\Theta}\right)$ is such that if
\begin{equation}\label{nt_th_eqn}
\Th = \begin{pmatrix}
0& \th_{12} &\ldots & \th_{1n}\\
\th_{21}& 0 &\ldots & \th_{2n}\\
\vdots& \vdots &\ddots & \vdots\\
\th_{n1}& \th_{n2} &\ldots & 0
\end{pmatrix}
\end{equation}
then $C\left(\mathbb{T}^n_{\Theta}\right)$ is the universal $C^*$-algebra generated by unitary elements   $u_1,..., u_n \in U\left( C\left(\mathbb{T}^n_{\Theta}\right)\right) $ such that following condition holds
\begin{equation}\label{nt_com_eqn}
u_ju_k = e^{-2\pi i \theta_{jk} }u_ku_j.
\end{equation}
Unitary  operators $u_1,..., u_n$ correspond to the standard basis of $\mathbb{Z}^n$.
\begin{defn}\label{nt_uni_defn}
	Unitary elements 
	$u_1,..., u_n \in U\left(C\left(\mathbb{T}^n_{\theta}\right)\right)$ which satisfy the relation \eqref{nt_com_eqn}
	are said to be \textit{generators} of $C\left(\mathbb{T}^n_{\Theta}\right)$. The set $\left\{U_l\right\}_{l \in \Z^n}$ is said to be the \textit{basis} of $C\left(\mathbb{T}^n_{\Theta}\right)$.
\end{defn}
\begin{defn}\label{nt_symplectic_defn}
	If  $\Theta$ is non-degenerated, that is to say,
	$\sigma(s,t) \stackrel{\mathrm{def}}{=} s\.\Theta t$ to be \textit{symplectic}. This implies even
	dimension, $n = 2N$. One then selects
	\begin{equation}\label{nt_simpectic_theta_eqn}
	\Theta = \theta J
	\stackrel{\mathrm{def}}{=} \th \begin{pmatrix} 0 & 1_N \\ -1_N & 0 \end{pmatrix}
	\end{equation}
	where  $\th > 0$ is defined by $\th^{2N} \stackrel{\mathrm{def}}{=} \det\Theta$.
	Denote by $\Coo\left(\mathbb{T}^{2N}_\th\right)\stackrel{\mathrm{def}}{=}\Coo\left(\mathbb{T}^{2N}_\Th\right)$ and $C\left(\mathbb{T}^{2N}_\th\right)\stackrel{\mathrm{def}}{=}C\left(\mathbb{T}^{2N}_\Th\right)$.
\end{defn}
\subsection{Geometry of noncommutative tori}\label{nt_geom_sec}

\paragraph*{}
Denote by $\delta_{\mu}$ ($\mu = 1,\dots, n$) the analogues of the partial derivatives $ \frac{1}{i}\frac{\partial}{\partial x^{\mu}}$ on $C^{\infty}(\mathbb{T}^n)$ which are derivations on the algebra $C^{\infty}(\mathbb{T}^n_{\Theta})$ given by
$$\delta_{\mu}(U_k)=k_{\mu} U_k.$$
These derivations have the following property
$$\delta_{\mu}(a^*)=-(\delta_{\mu}a)^*,$$
and also satisfy  the integration by parts formula
$$\tau(a\delta_{\mu}b)=-\tau((\delta_{\mu}a)b),\quad a,b\in C^{\infty}(\mathbb{T}^n_{\Theta}).$$

The spectral triple describing the noncommutative geometry of noncommutative $n$-torus consists of the algebra $C^{\infty}(\mathbb{T}^n_{\Theta})$, the Hilbert space $\mathcal{H}=L^2\left(C\left(\mathbb{T}^n_{\Theta}\right), \tau\right)\otimes\mathbb{C}^{m}$, where $m=2^{[n/2]}$ with  the representation  $\pi\otimes 1_{B\left(\C^m \right) }:C^{\infty}(\mathbb{T}^n_{\Theta})\to B\left( \H\right) $  where $\pi: C^{\infty}(\mathbb{T}^n_{\Theta})\to B\left( L^2\left(C\left(\mathbb{T}^n_{\Theta}\right), \tau\right)\right)$ is given by \eqref{nt_repr_eqn}.
The Dirac operator is given by
\begin{equation}\label{nt_dirac_eqn}
D=\slashed{\partial}\stackrel{\mathrm{def}}{=}\sum_{\mu =1}^{n}\partial_{\mu}\otimes\gamma^{\mu}\cong\sum_{\mu =1}^{n}\delta_{\mu} \otimes\gamma^{\mu},
\end{equation}
where $\partial_{\mu}=\delta_{\mu}$, seen as an unbounded self-adjoint operator on $L^2\left(C\left(\mathbb{T}^n_{\Theta}\right), \tau\right) $ and $\gamma^{\mu}$s are Clifford (Gamma) matrices in $\mathbb{M}_n(\mathbb{C})$ satisfying the relation
\be\label{nt_gam_eqn}\gamma^i\gamma^j+\gamma^j\gamma^i=2\delta^{ij} I_N.\ee
There is a spectral triple 
\begin{equation}\label{nt_sp_tr_eqn}
\left( C^{\infty}(\mathbb{T}^n_{\Theta}),L^2\left(C\left(\mathbb{T}^n_{\Theta}\right), \tau\right)\otimes\mathbb{C}^{m},D\right).
\end{equation}

There is an alternative description of $D$. The space  $\Coo\left(\T^n \right)$ (resp. $\Coo\left(\T^n_\Th \right)$ ) is dense in $L^2\left( \T^n\right)$ (resp. $L^2\left( \Coo\left(\T^n_\Th \right), \tau\right)$ ), hence from the $\C$-linear isomorphism $\varphi_\infty:\Coo\left(\mathbb{T}^n_{\Theta} \right) \xrightarrow{\approx}  \Coo\left(\mathbb{T}^n \right)$ given by \eqref{nt_varphi_inf_eqn} it follows isomorphism of Hilbert spaces 
\begin{equation*}
\varphi: L^2\left(C\left(\mathbb{T}^n_{\Theta}\right), \tau\right) \xrightarrow{\approx} L^2\left(\T^n \right). 
\end{equation*}
Otherwise $\T^n$ admits a Spin-bundle $S$ such that $L^2\left(\T^2,S \right) \approx L^2\left(\T^n \right) \otimes \mathbb{C}^{m}$. It turns out an isomorphism of Hilbert spaces
\begin{equation*}
\varPhi: L^2\left(C\left(\mathbb{T}^n_{\Theta}\right), \tau\right)\otimes \mathbb{C}^{m} \xrightarrow{\approx} L^2\left(\T^n,S \right). 
\end{equation*}
There is a commutative spectral triple 
\begin{equation}\label{nt_ct_tr_eqn}
\left( \Coo\left(\mathbb{T}^n \right),  L^2\left(\T^n,S \right), \slashed D\right)
\end{equation} 
such that $D$ is given by
\begin{equation}\label{nt_comm_dirac_eqn}
D = \Phi^{-1} \circ \slashed D \circ \Phi.
\end{equation}
Noncommutative geometry replaces differentials with commutators such that the differential $df$ corresponds to $\frac{1}{i}\left[ \slashed D, f\right]$ and the well known equation
\begin{equation*}
d f = \sum_{\mu = 1}^n \frac{\partial f}{\partial x_\mu} dx_\mu
\end{equation*}
is replaced with 
\begin{equation}\label{nt_comm_diff_eqn}
\left[\slashed D, f \right]  = \sum_{\mu = 1}^n \frac{\partial f}{\partial x_\mu} \left[\slashed D, x_\mu \right]
\end{equation}
 In case of commutative torus we on has
\begin{equation*}
dx_\mu = iu^*_\mu du_\mu
\end{equation*}
where $u_\mu = e^{-ix_\mu}$, so what equation \eqref{nt_comm_diff_eqn} can be written by the following way
\begin{equation}\label{nt_comm_diff_mod_eqn}
\left[\slashed D, f \right]  = \sum_{\mu = 1}^n \frac{\partial f}{\partial x_\mu} u^*_\mu \left[\slashed D, u_\mu \right]
\end{equation}
We  would like to prove a noncommutative analog of \eqref{nt_comm_diff_mod_eqn}, i.e. for any $a \in \Coo\left(\mathbb{T}^n_\Th \right)$ following condition holds
\begin{equation}\label{nt_diff_mod_eqn}
\left[D, a \right]  = \sum_{\mu = 1}^n \frac{\partial a}{\partial x_\mu} u^*_\mu \left[ D, u_\mu \right]
\end{equation}
From \eqref{nt_dirac_eqn} it follows that \eqref{nt_diff_mod_eqn} is true if and only if 
\begin{equation}\label{nt_mu_mod_eqn}
\left[\delta_{\mu}, a \right]  = \sum_{\mu = 1}^n \frac{\partial a}{\partial x_\mu} u^*_\mu \left[ \delta_{\mu}, u_\mu \right]; ~ \mu =1,\dots n.
\end{equation}
In the above equation $\frac{\partial a}{\partial x_\mu}$ means that one considers $a$ as element of $\Coo\left(\mathbb{T}^n\right)$, takes $\frac{\partial}{\partial x_\mu}$ of it and then the result of derivation considers as element of $\Coo\left(\mathbb{T}^n_\Th \right)$. Since the linear span of elements $U_k$ is dense in both $\Coo\left(\mathbb{T}^n_\Th \right)$ and $L^2\left(C\left(\mathbb{T}^n_{\Theta}\right), \tau\right)$ the equation \eqref{nt_mu_mod_eqn} is true if  for any $k, l \in \Z^n$ following condition holds
\begin{equation*}\label{nt_mu_mod_eqn1}
\left[\delta_{\mu}, U_k \right]U_l  =  \frac{\partial U_k}{\partial x_\mu} u^*_\mu \left[ \delta_{\mu}, u_\mu \right]U_l.
\end{equation*}
The above equation is a consequence of the following calculations: 
\begin{equation*}
\begin{split}
\left[\delta_{\mu}, U_k \right]U_l = \delta_{\mu} U_k U_l - U_k \delta_k U_l = (k + l) U_k U_l - l U_k U_l = k U_kU_l, \\
\frac{\partial U_k}{\partial x_\mu} u^*_\mu \left[ \delta_{\mu}, u_\mu \right]U_l = kU_k u^*\left(\delta_{\mu} u_\mu U_l - u_\mu \delta U_l \right)= kU_k u^*\left(\left(l + 1 \right)  u_\mu U_l - l u_\mu \delta U_l \right) =\\=
k U_ku^*_\mu u_\mu U_l = k U_kU_l.
\end{split}
\end{equation*}
For any $k \in \Z^n$ following condition holds
$$
u^*_\mu \left[ \delta_{\mu}, u_\mu \right]U_k = u^*_\mu \delta_{\mu} u_\mu U_k - u^*_\mu \ u_\mu \delta_{\mu} U_k = u^*_\mu \left(\left(k + 1 \right)  u_\mu U_k - k  u_\mu U_k\right) = U_k
$$
it turns out
\begin{equation}\label{nt_diff_1_eqn}
u^*_\mu \left[ \delta_{\mu}, u_\mu \right]= 1_{C\left(\mathbb{T}^n_{\Theta}\right)}; ~~ u^*_\mu \left[ D, u_\mu \right] = \ga^\mu.
\end{equation}

From \eqref{nt_dirac_eqn}, \eqref{nt_diff_mod_eqn} and \eqref{nt_diff_1_eqn} it turns out

\begin{equation}\label{nt_comm_diff_mod_gamma_eqn}
\left[D, f \right]  = \sum_{\mu = 1}^n \frac{\partial f}{\partial x_\mu} \ga^\mu.
\end{equation}
\begin{equation}\label{nt_comm_diff_gamma_in_eqn}
\ga^\mu \in \Om^1_D
\end{equation}
where $\Om^1_D$ is the {module of differential forms associated} with the spectral triple  \eqref{nt_sp_tr_eqn} (cf. Definition \ref{ass_cycle_defn}).

\subsection{Finite-fold coverings}\label{nt_fin_cov}
\subsubsection{Basic construction}
\paragraph{}  In this section we write $ab$ instead $a\star_\Th b$.
Let $\Th$ be given by \eqref{nt_th_eqn}, and let $C\left(\mathbb{T}^n_\Theta\right)$ be a noncommutative torus. If  $\overline{k} = \left(k_1, ..., k_n\right) \in \mathbb{N}^n$ and
$$
\widetilde{\Theta} = \begin{pmatrix}
0& \widetilde{\theta}_{12} &\ldots & \widetilde{\theta}_{1n}\\
\widetilde{\theta}_{21}& 0 &\ldots & \widetilde{\theta}_{2n}\\
\vdots& \vdots &\ddots & \vdots\\
\widetilde{\theta}_{n1}& \widetilde{\theta}_{n2} &\ldots & 0
\end{pmatrix}
$$
is a skew-symmetric matrix such that
\begin{equation*}
e^{-2\pi i \theta_{rs}}= e^{-2\pi i \widetilde{\theta}_{rs}k_rk_s}
\end{equation*}
then there is a *-homomorphism $C\left(\mathbb{T}^n_\Th\right)\to C\left(\mathbb{T}^n_{\widetilde{\Th}}\right)$ given by
\begin{equation}\label{nt_cov_eqn}
u_j \mapsto v~^{k_j}_j; ~ j = 1,...,n
\end{equation}
where $u_1,..., u_n \in C\left(\mathbb{T}^n_{\Th}\right)$ (resp. $v_1,..., v_n \in C\left(\mathbb{T}^n_{\widetilde{\Th}}\right)$) are unitary generators of $C\left(\mathbb{T}^n_{\Th}\right)$ (resp. $C\left(\mathbb{T}^n_{\widetilde{\Th}}\right)$).	
There is an involutive action of $G=\mathbb{Z}_{k_1}\times...\times\mathbb{Z}_{k_n}$ on $C\left(\mathbb{T}^n_{\widetilde{\Th}}\right)$ given by
\begin{equation*}
\left(\overline{p}_1,..., \overline{p}_n\right)v_j = e^{\frac{2\pi i p_j}{k_j}}v_j,
\end{equation*}
and a following condition holds $C\left(\mathbb{T}^n_{\Th}\right)=C\left(\mathbb{T}^n_{\widetilde{\Th}}\right)^G$.
Otherwise there is a following $C\left(\mathbb{T}^n_{\Th}\right)$ - module isomorphism
\be\label{nt_gen_eqn}
C\left(\mathbb{T}^n_{\widetilde{\Th}}\right) = \bigoplus_{\left(\overline{p}_1, ... \overline{p}_n \right)\in\mathbb{Z}_{k_1}\times...\times\mathbb{Z}_{k_n} } v_1^{p_1} \cdot ... \cdot v_n^{p_n} C\left(\mathbb{T}^n_{\Th}\right) \approx C\left(\mathbb{T}^n_{\Th}\right)^{k_1\cdot ... \cdot k_n}
\ee
i.e. $C\left(\mathbb{T}^n_{\widetilde{\Th}}\right)$ is a finitely generated projective Hilbert $C\left(\mathbb{T}^n_{\Th}\right)$-module.
It turns out the following theorem. 
\begin{thm}\label{nt_fin_cov_thm}\cite{ivankov:qnc}												
	The triple $\left(C\left(\mathbb{T}^n_{\Th}\right), C\left(\mathbb{T}^n_{\widetilde{\Th}}\right),~\mathbb{Z}_{k_1}\times...\times\mathbb{Z}_{k_n}\right)$  is an unital noncommutative finite-fold  covering.
\end{thm}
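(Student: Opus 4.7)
The plan is to verify in turn each of the four conditions that make up Definition \ref{fin_def_uni}: (i) the $*$-homomorphism \eqref{nt_cov_eqn} is injective, (ii) the action of $G = \mathbb{Z}_{k_1}\times\cdots\times\mathbb{Z}_{k_n}$ is involutive and non-degenerate, (iii) $C(\mathbb{T}^n_\Th) = C(\mathbb{T}^n_{\widetilde\Th})^G$, and (iv) $C(\mathbb{T}^n_{\widetilde\Th})$ is a finitely generated projective Hilbert $C(\mathbb{T}^n_\Th)$-module. Conditions (i), (ii) and (iv) are essentially formal, so the one step that carries real content is (iii), which is where I would focus.

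For injectivity I would use the universal property of $C(\mathbb{T}^n_\Th)$ sketched in \ref{nt_descr_subsec}: the elements $v_j^{k_j} \in C(\mathbb{T}^n_{\widetilde\Th})$ are unitaries satisfying $v_j^{k_j} v_l^{k_l} = e^{-2\pi i \widetilde\theta_{jl} k_j k_l} v_l^{k_l} v_j^{k_j} = e^{-2\pi i \theta_{jl}} v_l^{k_l} v_j^{k_j}$, hence the assignment \eqref{nt_cov_eqn} extends to a well-defined $*$-homomorphism whose image generates the fixed-point algebra; injectivity then follows by comparing tracial states, since the canonical trace on $C(\mathbb{T}^n_{\widetilde\Th})$ pulls back to the canonical trace on $C(\mathbb{T}^n_\Th)$ (see \eqref{nt_state_eqn}) and the latter is faithful. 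For (ii), the formula $(\bar p_1,\dots,\bar p_n)v_j = e^{2\pi i p_j/k_j}v_j$ immediately preserves adjoints because $e^{2\pi i p_j/k_j}$ has unit modulus, and non-degeneracy is visible from the fact that for any nontrivial $g = (\bar p_1,\dots,\bar p_n)$ some coordinate satisfies $p_j \not\equiv 0 \pmod{k_j}$, so $g v_j \neq v_j$.

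The main step is (iii). My approach is Fourier-theoretic: every $\widetilde a \in C(\mathbb{T}^n_{\widetilde\Th})$ admits a Fourier expansion $\widetilde a = \sum_{l \in \mathbb{Z}^n} \widehat a(l)\,V_l$ with $V_l$ the basis elements from Definition \ref{nt_uni_defn}, the series converging (at worst) in the $L^2$-sense of \eqref{nt_to_hilbert_eqn}. Since $g V_l = \bigl(\prod_{j}e^{2\pi i p_j l_j/k_j}\bigr)V_l$ for $g = (\bar p_1,\dots,\bar p_n) \in G$, orthogonality of the $V_l$ in $L^2(C(\mathbb{T}^n_{\widetilde\Th}),\tau)$ forces $\widehat a(l) = 0$ unless $k_j \mid l_j$ for every $j$; substituting $l_j = k_j m_j$ recovers exactly the image of $C(\mathbb{T}^n_\Th)$ under \eqref{nt_cov_eqn}. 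The subtle point here is that the Fourier series need not converge in $C^*$-norm for general $\widetilde a$, so I would pass through the faithful GNS representation of \eqref{nt_repr_eqn}, argue $G$-invariance of the bounded operator $\pi(\widetilde a)$ there, and transfer the conclusion back to $C(\mathbb{T}^n_{\widetilde\Th})$ by faithfulness.

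Finally, the decomposition \eqref{nt_gen_eqn} exhibits $C(\mathbb{T}^n_{\widetilde\Th})$ as a free right $C(\mathbb{T}^n_\Th)$-module of rank $k_1\cdots k_n$ with basis $\{v_1^{p_1}\cdots v_n^{p_n}\}_{0 \le p_j < k_j}$; freeness of this decomposition would follow directly from (iii) applied character-by-character to the isotypical components of the $G$-action. This immediately gives finite generation and projectivity. Compatibility with the Hilbert-module structure of Definition \ref{fin_def_uni} then reduces to checking that the $C(\mathbb{T}^n_\Th)$-valued inner product \eqref{finite_hilb_mod_prod_eqn} is the orthogonal projection onto the trivial isotypical component, which is immediate from $\sum_{g \in G}\chi(g) = |G|\,\delta_{\chi,1}$ applied to the characters $\chi_l(g) = \prod e^{2\pi i p_j l_j/k_j}$. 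The hardest part, as noted, will be upgrading the Fourier argument in (iii) from the dense $*$-subalgebra $\Coo(\mathbb{T}^n_{\widetilde\Th})$ to the full $C^*$-completion.
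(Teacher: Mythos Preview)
Your proposal is correct and follows essentially the same route as the paper. The paper does not give a self-contained proof here (the theorem is cited from \cite{ivankov:qnc}); instead it records precisely the ingredients you verify: the $*$-homomorphism $u_j\mapsto v_j^{k_j}$, the involutive $G$-action, the fixed-point identification $C(\mathbb{T}^n_{\Th})=C(\mathbb{T}^n_{\widetilde\Th})^G$, and the free-module decomposition \eqref{nt_gen_eqn}, from which the Hilbert-module projectivity is immediate. Your Fourier-theoretic justification of the fixed-point statement and your care about passing from $\Coo$ to the $C^*$-completion simply flesh out what the paper asserts without argument.
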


\subsubsection{Induced representation}\label{nt_i_sec}
\paragraph*{}
Denote by  $\left\{U_l\in C\left(\mathbb{T}^n_{{\Theta}}\right)\right\}_{l \in \Z^n}$ (resp. $\left\{\widetilde{U}_l\in C\left(\mathbb{T}^n_{\widetilde{\Theta}}\right)\right\}_{l \in \Z^n}$) the basis of  
$C\left(\mathbb{T}^n_{{\Theta}}\right)$ (resp. $ C\left(\mathbb{T}^n_{\widetilde{\Theta}}\right)$) (cf. Definition \ref{nt_uni_defn}).
One has
\be\label{nt_hm_prod_eqn}
\begin{split}
\left\langle \widetilde{U}_{l'= \left( l'_1, \dots, l'_n\right) }, \widetilde{U}_{l''= \left( l''_1, \dots, l''_n\right)} \right\rangle_{C\left(\mathbb{T}^n_{\widetilde{\Th}}\right)} =\\= \sum_{\left( \overline{p}_1, \dots \overline{p}_n\right)\in \mathbb{Z}_{k_1}\times...\times\mathbb{Z}_{k_n} }  e^{2\pi i \frac{p_1\left(l'_1-l''_1 \right) }{k_1}}, \dots, e^{2\pi i \frac{p_1\left(l'_1-l''_1 \right) }{k_1}}\widetilde{U}_{l''}\widetilde{U}_{l'}=\\
= \left\{\begin{array}{c l}
	\left|\mathbb{Z}_{k_1}\times...\times\mathbb{Z}_{k_n} \right| 1_{C\left(\mathbb{T}^n_{{\Th}}\right)}  & l'=l'' \\
	0 & l'\neq l''
\end{array}\right..
\end{split}
\ee 

There is the natural dense inclusion $\Psi_\Th:C\left(\mathbb{T}^n_{{\Theta}}\right) \to L^2\left(C\left(\mathbb{T}^n_{{\Theta}}\right), \tau \right)$ given by \eqref{from_a_to_l2_eqn}. 	
Similarly to  \eqref{induced_prod_equ} we consider following pre-Hilbert space
$$
C\left(\mathbb{T}^n_{\widetilde{\Theta}}\right)\otimes_{C\left(\mathbb{T}^n_{\Theta}\right)} L^2\left(C\left(\mathbb{T}^n_{\Theta}\right), \tau\right)
$$
and denote by $\widetilde{\H}$ its Hilbert completion.
There are dense subspaces 
 $\Coo\left(\mathbb{T}^n_{\widetilde{\Theta}}\right) \subset C\left(\mathbb{T}^n_{\widetilde{\Theta}}\right)$, $\Coo\left(\mathbb{T}^n_{{\Theta}}\right) \subset L^2\left(C\left(\mathbb{T}^n_{\Theta}\right), \tau\right)$, hence the composition
$$
\Coo\left(\mathbb{T}^n_{\widetilde{\Theta}}\right) \otimes_{\Coo\left(\mathbb{T}^n_{{\Theta}}\right)} \Coo\left(\mathbb{T}^n_{{\Theta}}\right) \subset C\left(\mathbb{T}^n_{\widetilde{\Theta}}\right)\otimes_{C\left(\mathbb{T}^n_{\Theta}\right)} L^2\left(C\left(\mathbb{T}^n_{\Theta}\right), \tau\right) \subset \widetilde{   \H}
$$
is the dense inclusion. From $ \Coo\left(\mathbb{T}^n_{\widetilde{\Theta}}\right) \otimes_{\Coo\left(\mathbb{T}^n_{{\Theta}}\right)} \Coo\left(\mathbb{T}^n_{{\Theta}}\right) \cong \Coo\left(\mathbb{T}^n_{\widetilde{\Theta}}\right)$ it follows that there is the dense inclusion 
\bean
\Psi_{\widetilde \Th}:\Coo\left(\mathbb{T}^n_{\widetilde{\Theta}}\right) \hookto \widetilde{\H},\\
\widetilde{U}_l \mapsto \widetilde{U}_l \otimes  \Psi_\Th\left( 1_{C\left(\mathbb{T}^n_{{\Theta}}\right)}\right) \in  C\left(\mathbb{T}^n_{\widetilde{\Theta}}\right)\otimes_{C\left(\mathbb{T}^n_{\Theta}\right)} L^2\left(C\left(\mathbb{T}^n_{\Theta}\right), \tau\right).
\eean
If $\widetilde{\xi}_l = \Psi_{\widetilde{ \Th}}\left( \widetilde{U}_l\right) $ then from \eqref{induced_prod_equ} and \eqref{nt_hm_prod_eqn} it turns out
\be
\begin{split}
\left(\widetilde{\xi}_{l'}, \widetilde{\xi}_{l''} \right)_{\widetilde{\H}}= \left(\Psi_\Th\left( 1_{C\left(\mathbb{T}^n_{{\Theta}}\right)}\right), \left\langle \widetilde{U}_{l' }, \widetilde{U}_{l''} \right\rangle_{C\left(\mathbb{T}^n_{\widetilde{\Th}}\right)}\Psi_\Th\left( 1_{C\left(\mathbb{T}^n_{{\Theta}}\right)}\right) \right)_{\H}=\\=\left\{\begin{array}{c l}
	\left|\mathbb{Z}_{k_1}\times...\times\mathbb{Z}_{k_n} \right|  & l'=l'' \\
	0 & l'\neq l''
\end{array}\right.,
\end{split}
\ee
i.e. the set $\left\{\widetilde{\xi}_{l}\right\}_{l \in \Z^n}$ is an orthogonal basis $\widetilde{\H}$ such that $\left\|\widetilde{\xi}_{l} \right\|= \sqrt{\left|\mathbb{Z}_{k_1}\times...\times\mathbb{Z}_{k_n} \right|}$. From the product formula \eqref{nt_unitary_product} it turns out that action of $C\left(\mathbb{T}^n_{\widetilde{\Theta}}\right)$ on $\widetilde{\H}$ is given bu
\be\label{nt_t_act}
\widetilde{U}_k\widetilde{\xi}_p = e^{-\pi ik ~\cdot~ \Theta p}\widetilde{\xi}_{k + p}.
\ee
It is not difficult to prove that there is the natural isomorphism of $C\left(\mathbb{T}^n_{\widetilde{\Theta}}\right)$ modules $\widetilde{\H} \to L^2\left( C\left(\mathbb{T}^n_{\widetilde{\Theta}}\right), \widetilde{ \tau}\right)$ however this isomorphism is not isometric, i.e. it is not an isomorphism of Hilbert spaces.
From \eqref{nt_gen_eqn} it turns out that  $C\left(\mathbb{T}^n_{\widetilde{\Theta}}\right)_{C\left(\mathbb{T}^n_{{\Theta}}\right)}$ is a right $C\left(\mathbb{T}^n_{{\Theta}}\right)$-module generated by a finite set
$$
\Xi=\left\{ \widetilde{U}_{j=\left(j_1, \dots, j_n \right) }\right\}_{0 \le j_1 < k_1,\dots, 0 \le j_n < k_n}.
$$
From \eqref{nt_hm_prod_eqn} it turns out
$$
\left\langle  \widetilde{U}_{j'=\left(j'_1, \dots, j'_n \right)}~,~  \widetilde{U}_{j''=\left(j''_1, \dots, j'''_n \right)} \right\rangle_{C\left(\mathbb{T}^n_{\widetilde{\Theta}}\right)} = \delta_{j'_1j''_1} \cdot ... \cdot \delta_{j'_nj''_n} \cdot 1_{C\left(\mathbb{T}^n_{{\Theta}}\right)} \in \Coo\left(\mathbb{T}^n_{{\Theta}}\right).
$$
If $g = \left(\overline{p}_1, \dots \overline{p}_n\right) \in \mathbb{Z}_{k_1}\times...\times\mathbb{Z}_{k_n}$ then
\be\label{nt_act_eqn}
g\left(  \widetilde{U}_{j=\left(j_1, \dots, j_n \right)}\right)  = e^{\frac{2\pi i j_1 p_1}{k_1}}\cdot ... \cdot e^{\frac{2\pi i j_np_n}{k_n}} \cdot \widetilde{U}_{j=\left(j_1, \dots, j_n \right)} ,
\ee
hence one has
\be\label{nt_fin_eqn}
\begin{split}
	\widetilde{\Xi} = \left( \mathbb{Z}_{k_1}\times...\times\mathbb{Z}_{k_n}\right)  \Xi== \\=\left\{e^{\frac{2\pi i j_1 p_1}{k_1}}\cdot ... \cdot e^{\frac{2\pi i j_np_n}{k_n}} \widetilde{U}_{j=\left(j_1, \dots, j_n \right) }\right\}_{\left(\overline{p}_1, \dots \overline{p}_n\right)\in \mathbb{Z}_{k_1}\times...\times\mathbb{Z}_{k_n}}
\end{split}
\ee

Similarly from $g' = \left(\overline{p}'_1, \dots \overline{p}'_n\right), g'' = \left(\overline{p}''_1, \dots \overline{p}''_n\right) \in \mathbb{Z}_{k_1}\times...\times\mathbb{Z}_{k_n}$ it turns out

\bean
\left\langle g'\left(  \widetilde{U}_{j'=\left(j'_1, \dots, j'_n \right)}\right) ~,~ g''\left( \widetilde{U}_{j''=\left(j''_1, \dots, j''_n \right)}\right)  \right\rangle_{C\left(\mathbb{T}^n_{\widetilde{\Theta}}\right)} =\\=\left| \mathbb{Z}_{k_1}\times...\times\mathbb{Z}_{k_n} \right|  \delta_{j'_1j''_1} \cdot ... \cdot \delta_{j'_nj''_n} \cdot e^{\frac{2\pi i\left(  j''_1p''_1- j'_1 p'_1\right) }{k_1}}\cdot ... \cdot e^{\frac{2\pi i \left(  j''_np''_n- j'_n p'_n\right)}{k_n}}\cdot 1_{C\left(\mathbb{T}^n_{{\Theta}}\right)} \in \Coo\left(\mathbb{T}^n_{{\Theta}}\right).
\eean
From the above equation it follows that the set $\widetilde{\Xi}$
satisfies to condition (a) of the Definition \ref{smooth_matr_lem}. From \eqref{nt_act_eqn}, \eqref{nt_fin_eqn} it turns out that
$$
\left(\mathbb{Z}_{k_1}\times...\times\mathbb{Z}_{k_n} \right) \widetilde{\Xi} = \widetilde{\Xi},
$$
i.e. $\widetilde{\Xi}$ satisfies to the condition (b) of the Lemma \ref{smooth_matr_lem}. From (ii) of the Lemma \ref{smooth_matr_lem} it turns out that  the unital noncommutative finite-fold covering $$\left(C\left(\mathbb{T}^n_{\Th}\right), C\left(\mathbb{T}^n_{\widetilde{\Th}}\right),\mathbb{Z}_{k_1}\times...\times\mathbb{Z}_{k_n}\right)$$ is smoothly invariant. If $\widetilde{a} \in \Coo\left(\mathbb{T}^n_{\widetilde{\Th}}\right)$ then
\be
\widetilde{a} = \sum_{\mu \in \Z^n} a_\mu \widetilde{U}_\mu
\ee
where $\left\{a_\mu \in \C\right\}_{\mu \in \Z^n}$ has rapid decay.
 If $j'=\left( j'_1, \dots, j'_n\right)$ is such that $0 \le j'_1 < k_1,~\dots,~ 0 \le j'_n < k_n$ and  $j''=\left( j''_1, \dots, j''_n\right)$ is such that $0 \le j''_1 < k_1,~\dots,~ 0 \le j''_n < k_n$ then
 \be\label{nt_rd}
 \begin{split}
 \left\langle \widetilde{U}_{j'},\widetilde{a}  \widetilde{U}_{j''} \right\rangle_{C\left(\mathbb{T}^n_{\widetilde{\Th}}\right)} =\\= \left| \mathbb{Z}_{k_1}\times...\times\mathbb{Z}_{k_n} \right|\sum_{\mu=\left(\mu_1,..., \mu_n \right)  \in \Z^n}  e^{\pi i \varphi_{ j'j''\mu }} a_{\left( \mu_1k_1 - j'_1 + j''_1,~ ...~, \mu_nk_n-j'_1 + j''_1\right) }  U_\mu
\end{split}
  \ee
 where 
 $$
  \varphi_{j'j''\mu} =\left(  \begin{pmatrix} j''_1  \\ 
  \dots\\
    j''_n\end{pmatrix}\widetilde{\Th} \begin{pmatrix} \mu_1k_1 - j'_1 + j''_1  \\ 
    \dots\\
    \mu_nk_n - j'_n + j''_n\end{pmatrix}\right) \cdot\left( 
    \begin{pmatrix} \mu_1k_1 - j'_1   \\ 
    \dots\\
    \mu_nk_n - j'_n \end{pmatrix}  \widetilde{\Th} \begin{pmatrix} j'_1  \\ 
    \dots\\
    j'_n\end{pmatrix}\right) 
 $$
 From $\left| e^{\pi i \varphi_{ j'j''\mu }} \right|=1$ it follows an implication
\bean
 \left\{a_\mu \in \C\right\}_{\mu \in \Z^n} \text{ has rapid decay } \Rightarrow \\ \Rightarrow \left\{e^{\pi i \varphi_{\left( j',j'',\mu\right) }} a_{\left( \mu_1k_1 - j'_1 + j''_1,~ ...~, \mu_nk_n-j'_1 + j''_1\right) } \in \C\right\}_{\mu \in \Z^n} \text{ has rapid decay },
\eean
so from \eqref{nt_rd} it turns out  $\left\langle \widetilde{U}_{j'},\widetilde{a}  \widetilde{U}_{j''} \right\rangle_{\left(\mathbb{T}^n_{\widetilde{\Th}}\right)} \in \Coo\left( \T_{\Th}\right)$.
Otherwise if right part of \eqref{nt_rd} has rapid decay for any $j'=\left( j'_1, \dots, j'_n\right)$  such that $0 \le j'_1 < k_1,~\dots,~ 0 \le j'_n < k_n$ and  $j''=\left( j''_1, \dots, j''_n\right)$  such that $0 \le j''_1 < k_1,~\dots,~ 0 \le j''_n < k_n$ then the sequence  $\left\{a_\mu \in \C\right\}_{\mu \in \Z^n}$ has rapid decay, it follows that $\widetilde{a} = \sum_{\mu \in \Z^n} a_\mu \widetilde{U}_\mu \in \Coo\left( \T^n_{\widetilde{\Th}}\right)$. From (i) of the Lemma \ref{smooth_matr_lem} it follows that
$$
\Coo\left(\mathbb{T}^n_{\widetilde{\Th}}\right) = C\left(\mathbb{T}^n_{ \widetilde{\Th}}\right)\bigcap \mathbb{M}_{k_1\cdot ... \cdot k_n}\left(\Coo\left(\mathbb{T}^n_{{\Th}}\right) \right). 
$$

\subsubsection{Lift of Dirac operator} 
\paragraph*{}
The Hilbert space of relevant to spectral triple is $\widetilde{\H}^m= \widetilde{\H}\otimes \C^m$ where $m=2^{[n/2]}$ and $\widetilde{\H}$ is described in \ref{nt_i_sec} and the action of $C\left(\mathbb{T}^n_{\widetilde{\Th}} \right)$ on $\widetilde{\H}$ is given by \eqref{nt_t_act}. The action of $C\left(\mathbb{T}^n_{\widetilde{\Th}} \right)$ on  $\widetilde{\H}^m$ is diagonal. Let $\Om^1_D$ is the {module of differential forms associated} with the spectral triple  $\left( \A, \H, D\right)$ (cf. Definition \ref{ass_cycle_defn}) Let us consider a map
\bean
\nabla : \Coo\left(\mathbb{T}^n_{\widetilde{\Th}} \right) \to \Coo\left(\mathbb{T}^n_{\widetilde{\Th}} \right) \otimes_{\Coo\left(\mathbb{T}^n_{{\Th}} \right)} \Om^1_D,\\
\widetilde{a} \mapsto \sum_{\mu = 1}^n \frac{\partial a}{\partial x_\mu} \otimes u^*_\mu \left[ D, u_\mu \right]= \sum_{\mu = 1}^n \frac{\partial a}{\partial x_\mu} \otimes \ga^\mu
\eean
where $\ga^\mu$ Clifford (Gamma) matrices in $\mathbb{M}_n(\mathbb{C})$ satisfying the relation
\eqref{nt_gam_eqn}

The map satisfies  to the following equation
\be\label{nt_nabla_u_eqn}
\nabla \widetilde{ U}_{\widetilde{l} = \left(\widetilde{l}_1, \dots, \widetilde{l}_n \right) } = \sum_{\mu = 1}^n\frac{\widetilde{l}_\mu}{k_\mu} \widetilde{ \mathcal U}_{\widetilde{l}} \otimes \ga^\mu.
\ee
If  $U_{l = \left(l_1,\dots,l_n \right) }\in \Coo\left(\mathbb{T}^n_{{\Th}} \right)$, $\widetilde{l}'=\left( l_1k_1,\dots, l_nk_n\right)$, $\widetilde{l}'' = \widetilde{l} + \widetilde{l}'$ then  from \eqref{nt_unitary_product} and \eqref{nt_cov_eqn} it turns out
\bean
\widetilde{  U}_{\widetilde{l}} U_l e^{-\pi i\widetilde{l} ~\cdot~ \widetilde{\Theta} \widetilde{l}'} U_{\widetilde{l}''=\widetilde{l} + \widetilde{l}'}
\eean
and taking into account \eqref{nt_nabla_u_eqn} one has
\bean
\nabla \left( \widetilde{  U}_{\widetilde{l}} U_l\right) = e^{-\pi i\widetilde{l} ~\cdot~ \widetilde{\Theta} \widetilde{l}'}\sum_{\mu = 1}^n\frac{l_\mu k_\mu+\widetilde{l}_\mu}{k_\mu} \widetilde{ \mathcal U}_{\widetilde{l}''} \otimes \ga^\mu= \\=e^{-\pi i\widetilde{l} ~\cdot~ \widetilde{\Theta} \widetilde{l}'}\sum_{\mu = 1}^n\frac{\widetilde{l}_\mu}{k_\mu} \widetilde{ \mathcal U}_{\widetilde{l}''} \otimes \ga^\mu + e^{-\pi i\widetilde{l} ~\cdot~ \widetilde{\Theta} \widetilde{l}'}\sum_{\mu = 1}^nl_\mu \widetilde{ \mathcal U}_{\widetilde{l}''} \otimes \ga^\mu=\\= \sum_{\mu = 1}^n\frac{\widetilde{l}_\mu}{k_\mu} \widetilde{  U}_{\widetilde{l}} U_l \otimes \ga^\mu + \sum_{\mu = 1}^nl_\mu \widetilde{  U}_{\widetilde{l}} U_l \otimes \ga^\mu =\\= \sum_{\mu = 1}^n\frac{\widetilde{l}_\mu}{k_\mu}\left(  \widetilde{  U}_{\widetilde{l}} \otimes \ga^\mu\right)  U_l +  \widetilde{  U}_{\widetilde{l}} \otimes \sum_{\mu = 1}^nl_\mu U_l \otimes \ga^\mu =\\=
\left(\nabla\widetilde{  U}_{\widetilde{l}} \right) U_l + \widetilde{  U}_{\widetilde{l}} \otimes \left[D, U_l \right] = \left(\nabla\widetilde{  U}_{\widetilde{l}} \right) U_l + \widetilde{  U}_{\widetilde{l}} \otimes d U_l.
\eean
where $d U_l$ is given by  \eqref{diff_map}. Right part of the above equation is a specific case of \eqref{conn_defn}, i.e. elements $\widetilde{  U}_{\widetilde{l}}$ satisfy condition $\eqref{conn_defn}$. Since $\C$-linear span of $\left\{\widetilde{  U}_{\widetilde{l}}\right\} $ (resp. $\left\{{  U}_{{l}}\right\}$ ) is dense in $\Coo\left(\mathbb{T}^n_{\widetilde{\Th}} \right)$ (resp. $\Coo\left(\mathbb{T}^n_{{\Th}} \right)$) the map $\nabla$ is a connection. If $\left( \overline{p}_1, \dots,\overline{p}_n\right) \in \Z_{k_1}\times \dots \times \Z_{k_1}$ then from
$$
\left( \overline{p}_1, \dots,\overline{p}_n\right) \widetilde{ U}_{\widetilde{l}} = e^{2\pi i \frac{p_1l_1}{k_1} }\cdot \dots\cdot  e^{2\pi i \frac{p_nl_n}{k_n} } \widetilde{ U}_{\widetilde{l}}
$$
it turns out 
\bean
\nabla\left( \left( \overline{p}_1, \dots,\overline{p}_n\right) \widetilde{ U}_{\widetilde{l}}\right)  = e^{2\pi i \frac{p_1l_1}{k_1} }\cdot ...\cdot  e^{2\pi i \frac{p_nl_n}{k_n} } \sum_{\mu = 1}^n\frac{\widetilde{l}_\mu}{k_\mu} \widetilde{ \mathcal U}_{\widetilde{l}} \otimes \ga^\mu=\left( \overline{p}_1, \dots,\overline{p}_n\right)\left( \nabla \widetilde{ U}_{\widetilde{l}}\right), 
\eean
i.e. $\nabla$ is $\Z_{k_1}\times \dots \times \Z_{k_1}$-equivariant, i.e it satisfies to \eqref{equiv_conn_eqn}. 
If $\widetilde{\xi}_l = \Psi_{\widetilde{ \Th}}\left( \widetilde{U}_{\widetilde{l}}\right)=  \widetilde{U}_l \otimes 1_{C\left(\mathbb{T}^n_{\Th}\right)}$ then from $\left[D,  1_{C\left(\mathbb{T}^n_{\Th}\right)} \right] = 0$ it turns out that given by \eqref{d_defn} Dirac operator satisfy to following condition
\bean
\widetilde{D} \left( \widetilde{\xi}_{\widetilde{l}} \otimes x\right) = \nabla\left(\widetilde{U}_{\widetilde{l}} \right) \left( 1_{C\left(\mathbb{T}^n_{\Th}\right)} \otimes x\right) + \widetilde{U}_{\widetilde{l}} D\left(   1_{C\left(\mathbb{T}^n_{\Th}\right)} \otimes x\right)= \\= \sum_{\mu = 1}^n\frac{\widetilde{l}_\mu}{k_\mu} \widetilde{ \xi}_{\widetilde{l}} \otimes \ga^\mu x_\mu ; ~~\forall x =\begin{pmatrix} x_1 \\\dots\\ x_m \end{pmatrix} \in \C^m. 
\eean

\subsubsection{Coverings of spectral triples}
\paragraph*{} Let us consider following objects
\begin{itemize}
	\item The spectral triple $\left( C^{\infty}(\mathbb{T}^n_{\Theta}),L^2\left(C\left(\mathbb{T}^n_{\Theta}\right), \tau\right)\otimes\mathbb{C}^{m},D\right) $ given by \eqref{nt_sp_tr_eqn},
	\item An unital noncommutative finite-fold  covering $\left(C\left(\mathbb{T}^n_{\Th}\right), C\left(\mathbb{T}^n_{\widetilde{\Th}}\right),\mathbb{Z}_{k_1}\times...\times\mathbb{Z}_{k_n}\right)$ given by the Theorem \ref{nt_fin_cov_thm}.
\end{itemize}

Summarize above constructions one has the following theorem.

\begin{thm}\label{nt_st_fin_cov_thm}
The triple $\left(\Coo\left(\mathbb{T}^n_{\widetilde{\Theta}}\right),  L^2\left( C\left(\mathbb{T}^n_{\widetilde{\Theta}}\right)\right) , \widetilde{ D} \right)$ is the $\left(C\left(\mathbb{T}^n_{\Th}\right), C\left(\mathbb{T}^n_{\widetilde{\Th}}\right),\mathbb{Z}_{k_1}\times...\times\mathbb{Z}_{k_n}\right)$-lift of $\left( C^{\infty}(\mathbb{T}^n_{\Theta}),L^2\left(C\left(\mathbb{T}^n_{\Theta}\right), \tau\right)\otimes\mathbb{C}^{m},D\right) $.
\end{thm}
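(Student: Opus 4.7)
The plan is to verify the conditions of Definition \ref{triple_conn_lift_defn} by assembling the ingredients already established in Sections \ref{nt_fin_cov} and \ref{nt_i_sec}. First, Theorem \ref{nt_fin_cov_thm} supplies the unital noncommutative finite-fold covering $\left(C\left(\mathbb{T}^n_{\Th}\right), C\left(\mathbb{T}^n_{\widetilde{\Th}}\right), \mathbb{Z}_{k_1}\times\dots\times\mathbb{Z}_{k_n}\right)$, and smooth invariance follows from Lemma \ref{smooth_matr_lem} applied to the $G$-invariant generating set $\widetilde{\Xi}$ from \eqref{nt_fin_eqn}: condition (a) of that lemma (inner products in $\Coo(\T^n_\Th)$) is explicit from \eqref{nt_hm_prod_eqn}, and condition (b) ($G$-invariance of the set) is immediate from \eqref{nt_act_eqn}. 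Part (i) of the same lemma then yields $\Coo(\T^n_{\widetilde{\Th}}) = C(\T^n_{\widetilde{\Th}}) \cap \mathbb{M}_{k_1\cdots k_n}(\Coo(\T^n_{\Th}))$, so $\widetilde{\A} := \Coo(\T^n_{\widetilde{\Th}})$ is the correct smoothly invariant pre-$C^*$-algebra carrying the lifted structure.

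Second, I would observe that the map $\nabla\colon \Coo(\T^n_{\widetilde{\Th}}) \to \Coo(\T^n_{\widetilde{\Th}}) \otimes_{\Coo(\T^n_\Th)} \Om^1_D$ defined on the basis by
\[
\nabla \widetilde{U}_{\widetilde{l}} \;=\; \sum_{\mu = 1}^{n} \frac{\widetilde{l}_\mu}{k_\mu}\, \widetilde{U}_{\widetilde{l}} \otimes \gamma^\mu
\]
is the unique $G$-equivariant connection on $\widetilde{\A}$. The Leibniz rule was verified on the generating set $\{\widetilde{U}_{\widetilde{l}} U_l\}$, and $G$-equivariance follows from the phase action $(\overline{p}_1,\dots,\overline{p}_n)\widetilde{U}_{\widetilde{l}} = e^{2\pi i\sum_\mu p_\mu l_\mu/k_\mu}\widetilde{U}_{\widetilde{l}}$, which leaves the coefficient $\widetilde{l}_\mu/k_\mu$ untouched on both sides of \eqref{equiv_conn_eqn}. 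By Lemma \ref{conn_exist_lem} this $\nabla$ coincides with the $\widetilde{\nabla}$ defined by the averaging formula \eqref{equiv_eqn}, and so the operator $\widetilde{D}$ built from it via \eqref{d_defn} is precisely the $(A,\widetilde{A},G)$-lift of $D$ prescribed by Definition \ref{triple_conn_lift_defn}.

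The final step is to match $\widetilde{D}$ on the induced Hilbert space with the Dirac operator on $L^2(C(\T^n_{\widetilde{\Th}}))\otimes \C^m$. On the basis vectors $\widetilde{\xi}_{\widetilde{l}} = \widetilde{U}_{\widetilde{l}} \otimes \Psi_\Th(1)$ of $\widetilde{\H}$, the recipe \eqref{d_defn} together with $D(1_{C(\T^n_\Th)} \otimes x) = 0$ gives
\[
\widetilde{D}\bigl(\widetilde{\xi}_{\widetilde{l}} \otimes x\bigr) \;=\; \sum_{\mu=1}^{n} \frac{\widetilde{l}_\mu}{k_\mu}\, \widetilde{\xi}_{\widetilde{l}} \otimes \gamma^\mu x, \qquad x \in \C^m,
\]
which is exactly the action of the commutator Dirac operator $\sum_\mu \widetilde{\delta}_\mu \otimes \gamma^\mu$ on $\T^n_{\widetilde{\Th}}$ via \eqref{nt_dirac_eqn} at level $\widetilde{\Th}$, since $\widetilde{\delta}_\mu \widetilde{U}_{\widetilde{l}} = (\widetilde{l}_\mu/k_\mu)\widetilde{U}_{\widetilde{l}}$ under the rescaling $u_j \mapsto v_j^{k_j}$ of \eqref{nt_cov_eqn}. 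The main obstacle here is the fact explicitly noted in Section \ref{nt_i_sec} that the canonical $C(\T^n_{\widetilde{\Th}})$-module map $\widetilde{\H} \cong L^2(C(\T^n_{\widetilde{\Th}}),\widetilde{\tau})\otimes \C^m$ is \emph{not} isometric: the basis $\{\widetilde{\xi}_{\widetilde{l}}\}$ has norms $\sqrt{k_1\cdots k_n}$, whereas $\{\widetilde{U}_{\widetilde{l}} \otimes e_j\}$ is orthonormal. One must therefore check that this uniform rescaling does not affect the spectrum or self-adjointness of $\widetilde{D}$, so that transporting $\widetilde{D}$ across the isomorphism yields a genuine spectral triple $(\widetilde{\A}, L^2(C(\T^n_{\widetilde{\Th}})), \widetilde{D})$ in the sense required by the theorem.
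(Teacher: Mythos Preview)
Your proposal is correct and follows exactly the paper's approach: the paper offers no separate proof of this theorem, writing only ``Summarize above constructions one has the following theorem,'' and the constructions you assemble (Theorem~\ref{nt_fin_cov_thm} for the covering, Lemma~\ref{smooth_matr_lem} with the set $\widetilde{\Xi}$ for smooth invariance, the connection \eqref{nt_nabla_u_eqn} with its Leibniz and equivariance checks, and the explicit formula for $\widetilde{D}$ on the basis $\widetilde{\xi}_{\widetilde l}$) are precisely those of Sections~\ref{nt_fin_cov}--\ref{nt_i_sec} and the subsection on the lift of the Dirac operator. Your final concern about the non-isometric identification $\widetilde{\H}\cong L^2\bigl(C(\T^n_{\widetilde{\Th}}),\widetilde{\tau}\bigr)\otimes\C^m$ is not a genuine obstacle---the discrepancy is a single global scalar $\sqrt{|G|}$, hence a unitary, and the paper tacitly makes this identification without comment.
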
 

\subsection{Moyal plane and a representation of the noncommutative torus}\label{nt_ind_repr_subsubsec}
\begin{defn}
	Denote the \textit{Moyal plane} product $\star_\th$ on $\SS\left(\R^{2N} \right)$ given by
\begin{equation}\label{mp_prod_eqn}
	\left(f \star_\th h \right)\left(u \right)= \int_{y \in \R^{2N} } f\left(u - \frac{1}{2}\Th y\right) g\left(u + v \right)e^{2\pi i y \cdot v }  dydv
\end{equation}

	where $\Th$ is given by \eqref{nt_simpectic_theta_eqn}.
\end{defn}
\begin{defn}\label{mp_mult_defn}\cite{varilly_bondia:phobos}
	\label{df:Moyal-alg}	Denote by $\SS'\left( \R^{n}\right) $ the vector space dual to $\SS\left( \R^{n}\right) $, i.e. the space of continuous functionals on $\SS\left( \R^{n}\right)$.
	The Moyal product can be defined, by duality, on larger sets than
	$\SS\left(\R^{2N}\right)$. For $T \in \SS'\left(\R^{2N}\right)$, write the evaluation on $g \in \SS\left(\R^{2N}\right)$ as
	$\<T, g> \in \C$; then, for $f \in \SS$ we may define $T \star_{\theta} f$ and
	$f \star_{\theta} T$ as elements of~$\SS'\left(\R^{2N}\right)$ by
	\begin{equation}\label{mp_star_ext_eqn}
	\begin{split}
\<T \star_{\theta} f, g> \stackrel{\mathrm{def}}{=} \<T, f \star_{\theta} g>\\
\<f \star_{\theta} T, g> \stackrel{\mathrm{def}}{=} \<T, g \star_{\theta} f>	\end{split}
	\end{equation}  using the continuity of the
	star product on~$\SS\left(\R^{2N}\right)$. Also, the involution is extended to  by
	$\<T^*,g> \stackrel{\mathrm{def}}{=} \overline{\<T,g^*>}$.	
	Consider the left and right multiplier algebras:
 \begin{equation}\label{mp_multi_eqn}
\begin{split}
\M_L^\th
&\stackrel{\mathrm{def}}{=} \set{T \in \SS'(\R^{2N}) : T \star_{\theta} h \in \SS(\R^{2N})
	\text{ for all } h \in \SS(\R^{2N})},
\\
\M_R^\th
&\stackrel{\mathrm{def}}{=} \set{T \in \SS'(\R^{2N}) : h \star_{\theta} T \in \SS(\R^{2N})
	\text{ for all } h \in \SS(\R^{2N})},\\
\M^\th &\stackrel{\mathrm{def}}{=} \M_L^\th \cap \M_R^\th.\\
\end{split}
\end{equation} 
\end{defn}
In \cite{varilly_bondia:phobos} it is proven that
\begin{equation}\label{mp_mult_distr}
\M_R^\th \star_{\theta} \SS'\left(\R^{2N}\right) = \SS'\left(\R^{2N}\right) \text{ and }
\SS'\left(\R^{2N}\right) \star_{\theta} \M_L^\th = \SS'\left(\R^{2N}\right).
\end{equation}

	It is known \cite{moyal_spectral} that the domain of  the Moyal plane product can be extended up to $L^2\left(\R^{2N} \right)$. 

\begin{lem}\label{nt_l_2_est_lem}\cite{moyal_spectral}
	If $f,g \in L^2 \left(\R^{2N} \right)$, then $f\star_\th g \in L^2 \left(\R^{2N} \right)$ and $\left\|f\right\|_{\mathrm{op}} < \left(2\pi\th \right)^{-\frac{N}{2}} \left\|f\right\|_2$.
	where	$\left\|\cdot\right\|_{2}$ be the $L^2$-norm given by
	\begin{equation}\label{nt_l2_norm_eqn}
	\left\|f\right\|_{2} \stackrel{\mathrm{def}}{=} \left|\int_{\R^{2N}} \left|f\right|^2 dx \right|^{\frac{1}{2}}.
	\end{equation}
	and the operator norm
	\be\label{mp_op_eqn}
	\|T\|_{\mathrm{op}} \stackrel{\mathrm{def}}{=}\sup\set{\|T \star g\|_2/\|g\|_2 : 0 \neq g \in L^2\left( \R^{2N})\right) }
	\ee 
\end{lem}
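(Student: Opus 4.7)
The plan is to invoke the Weyl correspondence between the Moyal product and composition of operators on $L^2(\R^N)$. The key object is the Weyl quantization map $W_\th\colon\SS(\R^{2N})\to B(L^2(\R^N))$, given by an oscillatory-integral kernel built from the symplectic form that defines $\star_\th$. Two facts drive the argument: (i) $W_\th$ extends to a bijection from $L^2(\R^{2N})$ onto the space $\mathcal{L}^2(L^2(\R^N))$ of Hilbert--Schmidt operators, which is an isometry up to the normalization $(2\pi\th)^{N/2}$, in the sense that $\|W_\th(h)\|_{\mathrm{HS}}=(2\pi\th)^{-N/2}\|h\|_2$; and (ii) $W_\th$ intertwines Moyal multiplication with operator composition, $W_\th(f\star_\th g)=W_\th(f)\,W_\th(g)$.

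Granting (i) and (ii), the estimate follows in a couple of lines. Applying submultiplicativity of the Hilbert--Schmidt norm under left multiplication by a bounded operator, together with the elementary comparison $\|A\|_{\mathrm{op}}\le\|A\|_{\mathrm{HS}}$, gives
$$
\|W_\th(f\star_\th g)\|_{\mathrm{HS}}=\|W_\th(f)\,W_\th(g)\|_{\mathrm{HS}}\le\|W_\th(f)\|_{\mathrm{op}}\|W_\th(g)\|_{\mathrm{HS}}\le\|W_\th(f)\|_{\mathrm{HS}}\|W_\th(g)\|_{\mathrm{HS}}.
$$
Passing through the isometry (i) on both sides produces $\|f\star_\th g\|_2\le(2\pi\th)^{-N/2}\|f\|_2\|g\|_2$; dividing by $\|g\|_2$ and taking the supremum over nonzero $g\in L^2(\R^{2N})$ yields the stated bound on $\|f\|_{\mathrm{op}}$. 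The strict inequality in the statement, as opposed to $\le$, is a minor artefact: it amounts to the observation that $\|W_\th(f)\|_{\mathrm{op}}<\|W_\th(f)\|_{\mathrm{HS}}$ strictly unless $W_\th(f)$ has rank one, a degenerate situation excluded for generic $f$.

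The substantive content is therefore in establishing (i) and (ii), which a priori make sense only on $\SS(\R^{2N})$. I would verify both on $\SS\times\SS$ directly by evaluating Gaussian oscillatory integrals, the factor $(2\pi\th)^{N/2}$ arising from the explicit computation of a $2N$-dimensional Gaussian. The isometry in (i) then extends to all of $L^2(\R^{2N})$ by the standard density argument used for the Plancherel theorem. To promote (ii) to the $L^2$ level, I would first apply the submultiplicativity computation above on $\SS\times\SS$; this already gives $\|f\star_\th g\|_2\le (2\pi\th)^{-N/2}\|f\|_2\|g\|_2$ for Schwartz data, so by density $\star_\th$ extends as a jointly continuous bilinear map $L^2(\R^{2N})\times L^2(\R^{2N})\to L^2(\R^{2N})$, and the intertwining identity extends by continuity from $\SS\times\SS$ to $L^2\times L^2$. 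The main technical obstacle is the careful bookkeeping of the constant $(2\pi\th)^{N/2}$ through these oscillatory integrals, which must be tracked exactly in order to recover the sharp bound rather than a weaker version of it.
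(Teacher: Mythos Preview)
The paper does not supply its own proof of this lemma: it is stated with a citation to \cite{moyal_spectral} and used as a black box. Your approach via the Weyl correspondence is exactly the one employed in that reference (and in the closely related \cite{varilly_bondia:phobos}), so there is nothing to compare. Your handling of the strict inequality is also correct: the inequality is really $\le$, with equality attained precisely on rank-one Weyl symbols such as the $f_{mn}$ of Proposition~\ref{mp_fmn}, so the ``$<$'' in the statement is a typographical slip inherited from the source.
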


\begin{defn}\label{mp_star_alg_defn}
	Denote by $\SS\left(\R^{2N}_\th \right)$  (resp. $L^2\left(\R^{2N}_\th \right)$ ) the operator algebra  which is $\C$-linearly isomorphic to $\SS\left(\R^{2N} \right)$  (resp. $L^2\left(\R^{2N} \right)$ ) and product coincides with  $\star_\th$. Both  $\SS\left(\R^{2N}_\th \right)$ and $L^2\left(\R^{2N}_\th \right)$ act on the Hilbert space $L^2\left(\R^{2N} \right)$. Denote by
	\begin{equation}\label{mp_psi_th_eqn}
	\Psi_\th:  \SS\left(\R^{2N} \right)\xrightarrow{\approx}\SS\left(\R^{2N}_\th \right)
	\end{equation}
	the natural $\C$-linear isomorphism.  
\end{defn}
\begin{empt}
	There is the tracial property \cite{moyal_spectral} of the Moyal product
	\begin{equation}\label{nt_tracial_prop}
	\int_{\R^{2N}} \left( f\star_\th g\right) \left(x \right)dx =  \int_{\R^{2N}}  f\left(x \right) g\left(x \right)dx.
	\end{equation}
	The Fourier transformation of the star product satisfies to the following condition.
	\begin{equation}\label{mp_fourier_eqn}
	\mathcal{F}\left(f \star_\th g\right) \left(x \right) =    	\int_{\R^{2N}}\mathcal{F}{f}\left(x-y \right) \mathcal{F}{g}\left(y\right)e^{ \pi i  y \cdot \Th x }~dy.
	\end{equation}
	
\end{empt} 
\begin{defn}\label{r_2_N_repr}\cite{moyal_spectral} Let $\SS'\left(\R^{2N} \right)$ be a vector space dual to $\SS\left(\R^{2N} \right)$. Denote by $C_b\left(\R^{2N}_\th\right)\stackrel{\mathrm{def}}{=} \set{T \in \SS'\left(\R^{2N}\right) : T \star_\th g \in L^2\left(\R^{2N}\right) \text{ for all } g \in L^2(\R^{2N})}$, provided with the operator norm \begin{equation}\label{mp_op_norm_eqn}
\|T\|_{\mathrm{op}} \stackrel{\mathrm{def}}{=}\sup\set{\|T \star_\th g\|_2/\|g\|_2 : 0 \neq g \in L^2(\R^{2N})}.
\end{equation}
Denote by $C_0\left(\R^{2N}_\th \right)$ the operator norm completion of $\SS\left(\R^{2N}_\th \right).$  
\end{defn}
\begin{rem}
	Obviously $\SS\left(\R^{2N}_\th\right)  \hookto C_b\left(\R^{2N}_\th\right)$. But $\SS\left(\R^{2N}_\th\right)$ is not dense in $C_b\left(\R^{2N}_\th\right)$, i.e. $C_0\left(\R^{2N}_\th\right) \subsetneq C_b\left(\R^{2N}_\th\right)$  (cf. \cite{moyal_spectral}).
\end{rem}

\begin{rem}
	$L^2\left(\R^{2N}_\th\right)$ is the $\|\cdot\|_2$ norm completion of $\SS\left(\R^{2N}_\th\right)$ hence 
	from the Lemma \ref{nt_l_2_est_lem} it follows that 
	\begin{equation}\label{mp_2_op_eqn}
	L^2\left(\R^{2N}_\th\right) \subset C_0\left(\R^{2N}_\th\right).
	\end{equation} 
\end{rem}
\begin{rem}
	Notation of the Definition \ref{r_2_N_repr} differs from \cite{moyal_spectral}. Here symbols $A_\th, \A_\th, A^0_\th$ are replaced with $C_b\left(\R^{2N}_\th\right), \SS\left(\R^{2N}_\th\right), C_0\left(\R^{2N}_\th\right)$ respectively.
\end{rem}
\begin{rem}
	The $\C$-linear space $C_0\left(\R^{2N}_\th \right)$ is not isomorphic to $C_0\left(\R^{2N}\right)$. 
\end{rem}

There are elements $\left\{f_{nm}\in\SS\left(\R^{2} \right)\right\}_{m,n \in \N^0}$, described in \cite{varilly_bondia:phobos}, which satisfy to the following proposition.

\begin{prop}\label{mp_fmn}\cite{moyal_spectral,varilly_bondia:phobos}
	Let $N = 1$. Then $\SS\left(\R^{2N}_\th\right)=\SS\left(\R^{2}_\th\right) $ has a Fr\'echet algebra isomorphism with
	the matrix algebra of rapidly decreasing double sequences
	$c = (c_{mn})$ of complex numbers such that, for each $k \in \N$,
			\begin{equation}\label{mp_matr_norm}
	r_k(c) \stackrel{\mathrm{def}}{=} \biggl( \sum_{m,n=0}^\infty
\th^{2k}  \left( m+\half\right)^k \left( n+\half\right)^k |c_{mn}|^2 \biggr)^{1/2}
	\end{equation}

	is finite, topologized by all the seminorms $(r_k)$; via the
	decomposition $f = \sum_{m,n=0}^\infty c_{mn} f_{mn}$ of~$\SS(\R^2)$ in
	the $\{f_{mn}\}$ basis.
	 The twisted product $f \star_\th g$ is
	the matrix product $ab$, where
	\begin{equation}\label{mp_mult_eqn}
	\left( ab\right)_{mn} \stackrel{\mathrm{def}}{=} \sum_{k= 0}^{\infty} a_{mk}b_{kn}.
	\end{equation}
	
	For $N > 1$, $\Coo\left(\R^{2N}_\th\right)$ is isomorphic to the (projective) tensor product
	of $N$ matrix algebras of this kind, i.e.
		\begin{equation}\label{mp_tensor_prod}
	\SS\left(\R^{2N}_\th\right) \cong \underbrace{\SS\left(\R^{2}_\th\right)\otimes\dots\otimes\SS\left(\R^{2}_\th\right)}_{N-\mathrm{times}}
	\end{equation}
	with the projective topology induced by seminorms $r_k$ given by \eqref{mp_matr_norm}.	
\end{prop}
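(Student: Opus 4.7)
The plan is to construct an explicit orthonormal-like family $\{f_{mn}\}$ in $\SS(\R^2)$ that behaves as matrix units for the twisted product $\star_\th$, then transfer the Fréchet structure through the expansion in that family.

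First I would set $N=1$ and introduce the Moyal creation/annihilation coordinates $a = (x_1 + i x_2)/\sqrt 2$, $\bar a = (x_1 - i x_2)/\sqrt 2$, and define the ground state $f_{00}(x) \stackrel{\text{def}}{=} 2\,e^{-|x|^2/\th} \in \SS(\R^2)$. A direct calculation with \eqref{mp_prod_eqn} gives $a \star_\th f_{00} = f_{00} \star_\th \bar a = 0$ and $f_{00}\star_\th f_{00} = f_{00}$. I would then set
\begin{equation*}
f_{mn} \stackrel{\text{def}}{=} \frac{1}{\sqrt{\th^{m+n}\,m!\,n!}}\; \bar a^{\star_\th m}\star_\th f_{00} \star_\th a^{\star_\th n},
\end{equation*}
and verify, by the Moyal-analog of the canonical commutation relation $[a,\bar a]_{\star_\th} = \th$ (immediate from \eqref{mp_prod_eqn}), the matrix-unit identity
\begin{equation*}
f_{mn}\star_\th f_{k\ell} = \delta_{nk}\, f_{m\ell},
\end{equation*}
and in particular $f_{mn}\in\SS(\R^2)$ (because one checks inductively that each $f_{mn}$ is a polynomial times $e^{-|x|^2/\th}$, hence Schwartz). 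This immediately yields \eqref{mp_mult_eqn}: if $f=\sum c_{mn}f_{mn}$ and $g=\sum d_{mn}f_{mn}$ have such expansions, then $f\star_\th g = \sum_{m,n}(\sum_k c_{mk} d_{kn}) f_{mn}$, i.e.\ the product is the matrix product $ab$.

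Next I would establish that every $f \in \SS(\R^2)$ admits a unique expansion $f = \sum_{m,n} c_{mn} f_{mn}$ with coefficients $c_{mn} \in \C$ determined by $c_{mn} = \th^{-1}\int f\star_\th f_{nm}\,dx$ (using \eqref{nt_tracial_prop} and the matrix-unit property). The crucial step is to relate the Schwartz seminorms $\|\cdot\|_{\alpha,\beta}$ to the seminorms $r_k$ in \eqref{mp_matr_norm}. Observe that the harmonic-oscillator Hamiltonian $H = \tfrac12(x_1^2+x_2^2)$ acts (via $\star_\th$-multiplication by $\tfrac12(a\star_\th \bar a + \bar a\star_\th a)$) as $H\star_\th f_{mn} = \th(m+\tfrac12)f_{mn}$, and likewise for right multiplication with eigenvalue $\th(n+\tfrac12)$. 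Consequently $r_k(c)^2$ is, up to constants, the $L^2$-norm of $(1+H)^k \star_\th f \star_\th (1+H)^k$. Since the harmonic-oscillator semigroup has the standard property that powers of $(1+H)$ applied in $\star_\th$ on both sides dominate any polynomial in $x$ and $\partial$, one obtains a two-sided comparison
\begin{equation*}
C_k^{-1}\!\!\sum_{|\alpha|+|\beta|\le N_k}\!\!\|f\|_{\alpha,\beta}^{\,2}
\;\le\; r_k(c)^2 \;\le\; C_k\!\!\sum_{|\alpha|+|\beta|\le N_k'}\!\!\|f\|_{\alpha,\beta}^{\,2},
\end{equation*}
proving the topological isomorphism of Fréchet algebras.

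For $N>1$ I would write $\R^{2N}=\R^2\oplus\cdots\oplus\R^2$ and note that with the symplectic $\Th=\th J$ the product factorises as a twisted tensor product of $N$ copies of the one-dimensional Moyal product; then $\SS(\R^{2N}_\th)$ is the projective tensor product $\SS(\R^2_\th)^{\otimes N}$ by the classical Schwartz-kernel theorem, and \eqref{mp_tensor_prod} follows. The main obstacle is the topological comparison in the one-dimensional case: verifying that the harmonic-oscillator-based seminorms $r_k$ are equivalent to Schwartz seminorms requires careful bookkeeping of how $x$ and $\partial$ translate, under $\mathcal F$ and the Moyal quantisation, into $\star_\th$-multiplication by $a,\bar a$ from the left and right. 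Once this equivalence is in hand, bijectivity of the coefficient map is automatic from the density of the span of $\{f_{mn}\}$, which is itself a consequence of the fact that Hermite-type functions span $\SS(\R^2)$.
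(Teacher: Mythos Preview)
The paper does not give its own proof of this proposition: it is quoted verbatim with citations to \cite{moyal_spectral,varilly_bondia:phobos}, and the elements $f_{mn}$ are introduced only by reference to those sources. Your sketch is precisely the argument carried out in those references (the oscillator basis of Gracia-Bond\'{\i}a and V\'arilly): build the Gaussian idempotent $f_{00}$, generate $f_{mn}$ by left/right $\star_\th$-multiplication with $\bar a$ and $a$, verify the matrix-unit relations, and compare the Schwartz topology with the $r_k$ seminorms via the two-sided harmonic-oscillator action $H\star_\th f_{mn}=\th(m+\tfrac12)f_{mn}$, $f_{mn}\star_\th H=\th(n+\tfrac12)f_{mn}$. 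So your approach is correct and coincides with the standard proof the paper is citing; there is nothing further to compare.
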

\begin{rem}
	If $A$ is  $C^*$-norm completion of the matrix algebra with the norm \eqref{mp_matr_norm} then $A \approx \mathcal K$, i.e.
	\begin{equation}\label{mp_2_eqn}
C_0\left(\R^{2}_\th\right) \approx \mathcal K.
	\end{equation}
	Form \eqref{mp_tensor_prod} and \eqref{mp_2_eqn} it follows that
	\begin{equation}\label{mp_2N_eqn}
C_0\left(\R^{2N}_\th\right) \cong \underbrace{C_0\left(\R^{2}_\th\right)\otimes\dots\otimes C_0\left(\R^{2}_\th\right)}_{N-\mathrm{times}} \approx \underbrace{\mathcal K\otimes\dots\otimes\mathcal K}_{N-\mathrm{times}} \approx \mathcal K
\end{equation}
where $\otimes$ means minimal or maximal tensor product ($\mathcal{K}$ is nuclear hence both products coincide).

\end{rem}

\begin{empt}\cite{moyal_spectral}
	By plane waves we understand all functions of the form
	$$
	x \mapsto \exp(ik\cdot x) 
	$$
	for $k\in \R^{2N}$.  One obtains for the Moyal
	product of plane waves:
	\begin{equation}\label{mp_wave_prod_eqn}
	\begin{split}
	\exp\left(ik\cdot\right) \star_{\Theta}\exp\left(ik\cdot\right)=\exp\left(ik\cdot\right) \star_{\theta}\exp\left(ik\cdot\right)= \exp\left(i\left( k+l\right) \cdot\right) e^{-\pi i k \cdot \Th l}.
	\end{split}
	\end{equation}
It is proven in \cite{moyal_spectral} that plane waves lie in $C_b\left(\R^{2N}_\th \right)$. 	
\end{empt}

\begin{empt}
	The equation \eqref{mp_wave_prod_eqn} is similar to  the equation \eqref{nt_unitary_product} which defines $C\left(\mathbb{T}^n_{\Theta}\right)$. This fact enables us to construct a representation $\pi: C\left(\mathbb{T}^n_{\Theta}\right) \to B\left(L^2\left( \R^{2N}\right) \right)$
	\begin{equation}\label{nt_l2r_eqn}
	\begin{split}
	\pi: C\left(\mathbb{T}^n_{\Theta}\right) \to B\left(L^2\left( \R^{2N}\right) \right), \\
	U_k \mapsto \exp\left(2\pi ik\cdot\right)
	\end{split}
	\end{equation}
	
	where $U_k\in C\left(\mathbb{T}^n_{\Theta}\right)$ is given by the Definition \ref{nt_uni_defn}.
\end{empt} 
\begin{empt}\label{mp_scaling_constr}

Let us consider the unitary dilation operators $E_a$ given
by
$$
E_af(x) \stackrel{\mathrm{def}}{=} a^{N/2} f(a^{1/2}x),
$$
It is proven in \cite{moyal_spectral} that
\begin{equation}\label{eq:starscale}
f {\star_{\theta}} g =
(\th/2)^{-N/2} E_{2/\th}(E_{\th/2}f \star_2 E_{\th/2}g).
\end{equation}
We can simplify our construction by setting $\th = 2$. Thanks to
the scaling relation~\eqref{eq:starscale} any qualitative result can is true if it is true in case of 
$\th = 2$. We use the following notation
\begin{equation}\label{mp_times_eqn}
f {\times} g\stackrel{\mathrm{def}}{=}f {\star_{2}} g
\end{equation}
\end{empt}

\begin{defn}
	\label{df:Gst}\cite{moyal_spectral}
	We may as well introduce more Hilbert spaces $\G_{st}$ (for
	$s,t \in \R$) of those 
	$$
	f \in \SS'(\R^2) = \sum_{m,n = 0}^\infty c_{mn} f_{mn}
	$$ for which the following sum
	is finite:
	$$
	\|f\|_{st}^2 \stackrel{\mathrm{def}}{=} \sum_{m,n=0}^\infty
	 (m+\half)^s (n+\half)^t |c_{mn}|^2.
	$$
	for~$\G_{st}$.
\end{defn}
	
\begin{rem}\label{mp_l2_rem}
	It is proven in \cite{varilly_bondia:phobos} $f, g \in L^2\left( \R^2\right)$, then $f\times g \in L^2\left( \R^2\right)$ and $\left\|f\times g  \right\|\le\left\|f \right\|\left\| g  \right\|$.
	Moreover, $f \times g$ lies in $C_0\left( \R^2\right)$ : the continuity follows by adapting the analogous argument for (ordinary) convolution. 
\end{rem}
\begin{rem}\label{mp_ss_rem}
It is shown in \cite{varilly_bondia:phobos} that 
\begin{equation}\label{mp_ssin_eqn}
\SS\left( \R^2\right) = \bigcap_{s, t \in \R} \G_{st}.
\end{equation}
\end{rem}

\begin{empt}\label{mp_coord_constr}
	This part contains a useful equations proven in \cite{varilly_bondia:phobos}.
 There are coordinate functions $p,q$ on $\R^2$ such that for any $f \in \SS\left( \R^2\right)$ following conditions hold
 \begin{equation}\label{mp_pq_mult_eqn}
 \begin{split}
q \times f = \left(q + i \frac{\partial}{\partial p} \right)  f; ~~p \times f = \left(p - i \frac{\partial}{\partial q} \right)  f;\\
f \times q = \left(q - i \frac{\partial}{\partial p} \right)  f; ~~f \times p = \left(p + i \frac{\partial}{\partial q} \right)  f.
 \end{split}
 \end{equation}
 From $q \times f, f \times q,p  \times f, f \times p \in \SS\left(\R^{2N} \right)$ it follows that $p, q \in \M^2$ (cf. \eqref{mp_multi_eqn}). From 
  \eqref{mp_mult_distr} it follows that
  \begin{equation}\label{mp_pq_mult}
 \begin{split}
 q \times \SS'\left(\R^{2N} \right) \subset \SS'\left(\R^{2N} \right); ~~p \times \SS'\left(\R^{2N} \right) \subset \SS'\left(\R^{2N} \right);\\
\SS'\left(\R^{2N} \right) \times q \subset \SS'\left(\R^{2N} \right); ~~\SS'\left(\R^{2N} \right) \times p \subset \SS'\left(\R^{2N} \right).
 \end{split}
  \end{equation}
  If $f \in \SS'\left( \R^2\right)$ then from \eqref{mp_pq_mult_eqn} it follows that
  \begin{equation}\label{mp_partial_eqn}
  \frac{\partial}{\partial p} f = -iq \times f + i f \times q, ~~  \frac{\partial}{\partial q} f = ip \times f - i f \times p
  \end{equation}

If 
 \begin{equation}\label{mp_ham_eqn}
 \begin{split}
a \stackrel{\mathrm{def}}{=} \frac{q + ip}{\sqrt{2}}, ~~\overline{a} \stackrel{\mathrm{def}}{=} \frac{q - ip}{\sqrt{2}},\\
\frac{\partial}{\partial a}\stackrel{\mathrm{def}}{=} \frac{\partial_q + i\partial_p}{\sqrt{2}}, ~~ \frac{\partial}{\partial \overline{a}}\stackrel{\mathrm{def}}{=} \frac{\partial_q - i\partial_p}{\sqrt{2}},\\
H \stackrel{\mathrm{def}}{=}a \overline{a}= \frac{1}{2}\left(p^2 + q^2 \right) , \\
\overline{a}\times a = H - 1, ~~ a \times \overline{a} = H + 1
\end{split}
\end{equation}
then
 \begin{equation}\label{mp_ap_eqn}
\begin{split}
a \times f = af + \frac{\partial f}{\partial \overline{a}}, ~~ f \times a = af - \frac{\partial f}{\partial \overline{a}}, \\
\overline{a} \times f = \overline{a}f - \frac{\partial f}{\partial a}, ~~ f \times a = \overline{a}f + \frac{\partial f}{\partial a}, \\
\end{split}
\end{equation} \begin{equation}\label{mp_ham_act_eqn}
\begin{split}
H\times f_{mn} = (2m + 1) f_{mn};~~f_{mn} \times H = 2(n + 1)f_{mn}
\end{split}
\end{equation} 
 \begin{equation}\label{mp_hamd_act_eqn}
\begin{split}
a \times f_{mn} = \sqrt{2m}f_{m-1,n};~~ f_{mn}\times a = \sqrt{2n + 2} f_{m, n+1};\\
\overline{a} \times f_{mn} = \sqrt{2m+2}f_{m+1,n};~~ f_{m+1,n}\times \overline{a} = \sqrt{2n} f_{m, n-1}.
\end{split}
\end{equation}

 It is proven in \cite{varilly_bondia:phobos} that
 \begin{equation}\label{mp_part_prod_eqn}
\partial_j\left(f \times g \right)  = \partial_j f \times g + f \times \partial_jg;
 \end{equation}
 where $\partial_j = \frac{\partial}{ \partial x_{j}}$ is the partial derivation in $\SS\left(\R^{2N} \right)$. 
\end{empt}

\subsection{Infinite coverings}
\paragraph{} Let us consider a sequence
\begin{equation}\label{nt_long_seq_eqn}
\mathfrak{S}_{C\left( \T^n_\Th\right) } =\left\{ C\left( \T^n_\Th\right)  =C\left( \T^n_{\Th_0}\right) \xrightarrow{\pi^1}  ... \xrightarrow{\pi^j} C\left( \T^n_{\Th_j}\right) \xrightarrow{\pi^{j+1}} ...\right\}.
\end{equation}
of finite coverings of noncommutative tori.
The sequence \eqref{nt_long_seq_eqn} satisfies to the Definition  \ref{comp_defn}, i.e. $\mathfrak{S}_{C\left( \T^n_\Th\right) } \in \mathfrak{FinAlg}$.

\begin{empt}\label{nt_mp_prel_lim}
	Let $\Th = J \th$ where $\th \in \R \backslash \Q$ and
	$$
	J = \begin{pmatrix} 0 & 1_N \\ -1_N & 0 \end{pmatrix}.
	$$
	Denote by $C\left( \T^{2N}_\th\right) \stackrel{\text{def}}{=} C\left( \T^{2N}_\Th\right)$.
	Let $\{p_k \in \mathbb{N}\}_{k \in \mathbb{N}}$ be an infinite sequence of natural numbers such that $p_k > 1$ for any $k$, and let $m_j = \Pi_{k = 1}^{j} p_k$. From the \ref{nt_fin_cov} it follows that there is a  sequence of *-homomorphisms
	\begin{equation}\label{nt_long_seq_spec_eqn}
	\mathfrak{S}_\th= \left\{	C\left(\mathbb{T}^{2N}_\th\right) \to	C\left(\mathbb{T}^{2N}_{\th/m_1^{2}}\right) \to C\left(\mathbb{T}^{2N}_{\th/m_2^{2}}\right) \to... \to C\left(\mathbb{T}^{2N}_{\th/m_j^{2}}\right)\to ...\right\}
	\end{equation}
	such that
	\begin{enumerate}
		\item[(a)] 	For any $j \in \mathbb{N}$ there are generators $u_{j-1,1},..., u_{j-1,2N}\in U\left(C\left(\mathbb{T}^{2N}_{\th/m_{j-1} ^{2}}\right)\right)$ and generators $u_{j,1},..., u_{j,2N}\in U\left(C\left(\mathbb{T}^{2N}_{\th/m_j^{2}}\right)\right)$ such that the *-homomorphism $ C\left(\mathbb{T}^{2N}_{\th/m_{j-1}^{2}}\right)\to C\left(\mathbb{T}^{2N}_{\th/m_j^{2}}\right)$ is given by
		$$
		u_{j-1,k} \mapsto u^{p_j}_{j,k}; ~~ \forall k =1,..., 2N.
		$$
		There are generators $u_1,...,u_{2N} \in U\left( C\left(\mathbb{T}^{2N}_\th\right)\right) $ such that *-homomorphism $C\left(\mathbb{T}^{2N}_\th\right) \to C\left(\mathbb{T}^{2N}_{\th/m_1^{2}}\right)$ is given by
		$$
		u_{j} \mapsto u^{p_1}_{1,j}; ~~ \forall j =1,..., 2N,
		$$
		
		\item[(b)] For any $j \in \N$ the triple $\left(C\left(\mathbb{T}^{2N}_{\th/m_{j - 1}^{2}}, C\left(\mathbb{T}^{2N}_{\th/m_j^{2}}\right), \Z_{p_j}\right) \right)$ is a noncommutative finite-fold covering,
		\item[(c)] There is the sequence of groups and epimorphisms
		\begin{equation*}
		\mathbb{Z}^{2N}_{m_1} \leftarrow\mathbb{Z}^{2N}_{m_2} \leftarrow ...
		\end{equation*}
		which is equivalent to the sequence
		\begin{equation*}
		\begin{split}
		G\left(C\left(\mathbb{T}^{2N}_{\th/m_1^{2}}\right)~|~ C\left(\mathbb{T}^{2N}_{\th}\right)\right)\leftarrow G\left(C\left(\mathbb{T}^{2N}_{\th/m_2^{2}}\right)~|~ C\left(\mathbb{T}^{2N}_{\th}\right)\right)\leftarrow... \\ \leftarrow G\left(C\left(\mathbb{T}^{2N}_{\th/m_j^{2}}\right)~|~ C\left(\mathbb{T}^{2N}_{\th}\right)\right)\leftarrow...\ .
		\end{split}
		\end{equation*}
	\end{enumerate}
	The sequence \eqref{nt_long_seq_spec_eqn}, is a specialization of \eqref{nt_long_seq_eqn}, hence $\mathfrak{S}_\th \in \mathfrak{FinAlg}$. Denote by $\widehat{C\left(\mathbb{T}^{2N}_{\th}\right) } \stackrel{\text{def}}{=} \varinjlim C\left(\mathbb{T}^{2N}_{\th/m_j^{2}}\right)$, $\widehat{G} \stackrel{\text{def}}{=} \varprojlim G\left(C\left(\mathbb{T}^{2N}_{\th/m_j^{2}}\right)~|~ C\left(\mathbb{T}^{2N}_{\th}\right)\right)$ . The group $\widehat{G}$ is Abelian because it is the inverse limit of Ablelian groups. Denote by $0_{\widehat{G}}$ (resp. "+") the neutral element of $\widehat{G}$ (resp. the product operation of $\widehat{G}$).
\end{empt}

\subsubsection{Inverse noncommutative limit}\label{nt_inv_lim_sec}
\paragraph{}

 There are the equivariant representation
\begin{equation}\label{mp_equ_eqn}
\widehat{\pi}^\oplus: \widehat{C\left(\mathbb{T}^{2N}_{\th}\right) } \to \bigoplus_{g \in J} g L^2\left(\R^{2N}_\th \right) 
\end{equation}
and an inclusion $\Z^{2N} \to \widehat{G}$ described in  \cite{ivankov:qnc}.

\begin{thm}\label{nt_inf_cov_thm}\cite{ivankov:qnc} Following conditions hold:
	\begin{enumerate}
		\item[(i)] The representation $\widehat{\pi}^\oplus$ is good,
		\item[(ii)] 
		\begin{equation*}	\begin{split}
		\varprojlim_{\widehat{\pi}^\oplus} \downarrow \mathfrak{S}_\th = C_0\left(\R^{2N}_\th\right), \\
		G\left(\varprojlim_{\widehat{\pi}^\oplus} \downarrow \mathfrak{S}_\th~|~ C\left(\mathbb{T}^{2N}_\th \right)\right)  = \Z^{2N}.
		\end{split}
		\end{equation*}
		\item[(iii)] The triple $\left(C\left(\mathbb{T}^{2N}_\th \right), C_0\left(\R^{2N}_\th\right), \Z^{2N} \right)$ is an  infinite noncommutative covering of $\mathfrak{S}_\th$ with respect to $\widehat{\pi}^\oplus$.
	\end{enumerate}

\end{thm}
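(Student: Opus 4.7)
The strategy is to construct $C_0(\R^{2N}_\th)$ directly as the $C^*$-algebra of weakly special elements inside the $L^2(\R^{2N}_\th)$-summand of $\widehat{\pi}^\oplus$ that is indexed by the trivial coset in $\widehat{G}/\Z^{2N}$, and then to verify each clause of Definitions \ref{good_seq_defn} and \ref{main_defn}. Recall from \ref{nt_mp_prel_lim} that $\widehat{G}$ is the profinite completion of $\Z^{2N}$, so the decomposition $\bigoplus_{g\in J} g L^2(\R^{2N}_\th)$ of \eqref{mp_equ_eqn} is indexed by representatives $J$ of $\widehat{G}/\Z^{2N}$; by construction $\widehat G/\Z^{2N}$ permutes summands while $\Z^{2N}$ acts within each summand through the translation action on $\R^{2N}_\th$ inherited from the plane-wave representation $U_k \mapsto \exp(2\pi i k \,\cdot\,)$ of \eqref{nt_l2r_eqn}.

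First I would check that if $f \in \SS(\R^{2N})_+$ and $\overline{a} \in B(\overline{\H}_{\widehat\pi^\oplus})$ denotes $\Psi_\th(f)$ acting on the base summand and zero on the others, then $\overline{a}$ is special in the sense of Definition \ref{special_el_defn}. The series $a_n = \sum_{g \in \ker(\widehat G \to \Z^{2N}_{m_n^2})} g\overline{a}$, restricted to the base summand, is the periodization of a Schwartz function along the sub-lattice $m_n\Z^{2N}$, which converges absolutely to a smooth $m_n\Z^{2N}$-periodic function, identifiable via the plane-wave representation with an element of $\Coo(\T^{2N}_{\th/m_n^2}) \subset C(\T^{2N}_{\th/m_n^2})$; the $\widehat G$-action then distributes this over the remaining summands, realizing $a_n$ as the image of a single element of $C(\T^{2N}_{\th/m_n^2})$ under $\widehat\pi^\oplus$. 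The same argument applies to $z\overline{a}z^*$ and $(z\overline{a}z^*)^2$ for $z \in \widehat{C(\T^{2N}_\th)}$, because conjugation by $z$ is multiplication (left and right) by a plane wave, and \eqref{mp_wave_prod_eqn} combined with $\SS(\R^{2N}_\th)$ being a $\M^\th$-bimodule preserves Schwartz decay. The estimate \eqref{square_condition_equ} is the crux: $b_n^2 - c_n$ is the sum of off-diagonal cross-terms $g_1(z\overline{a}z^*)\star_\th g_2(z\overline{a}z^*)$ with $g_1 \neq g_2$ in $\ker(\widehat G \to \Z^{2N}_{m_n^2})$, and because the Moyal product of two Schwartz functions whose essential supports are separated by a distance $\sim m_n$ has $L^2$-norm (hence by Lemma \ref{nt_l_2_est_lem} operator norm) of rapid decay in $m_n$, the full sum of such cross-terms tends to zero as $n \to \infty$.

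Next I would identify the closure. Every $g \in \SS(\R^{2N}_\th)$ is a $\C$-linear combination of positive Schwartz elements, so $\SS(\R^{2N}_\th)$ lies in the $*$-algebra generated by the special elements just produced; since $\SS(\R^{2N}_\th)$ is dense in $C_0(\R^{2N}_\th)$ by \eqref{mp_matr_norm}--\eqref{mp_2N_eqn}, the $C^*$-closure restricted to the base summand is $C_0(\R^{2N}_\th)$. Acting by the permutation part of $\widehat G$ produces $\overline{A}_{\widehat\pi^\oplus} = \bigoplus_{g\in J} g C_0(\R^{2N}_\th)$ as an internal $C^*$-direct sum of ideals, and the maximal irreducible subalgebra (the connected component) is therefore the single summand $C_0(\R^{2N}_\th)$. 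Its stabilizer inside $\widehat G$ is exactly $\Z^{2N}$, because $g \in \widehat G$ fixes the base summand iff $g$ fixes the trivial coset in $\widehat G/\Z^{2N}$, which yields $G_{\widehat\pi^\oplus} = \Z^{2N}$. This establishes (ii) and (iii) modulo the goodness of $\widehat\pi^\oplus$.

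Finally I would verify the three clauses of Definition \ref{good_seq_defn}. Clause (b) is precisely the direct-sum decomposition obtained above. Clause (c), the surjectivity of $h_n|_{\Z^{2N}} \colon \Z^{2N} \to \Z^{2N}_{m_n^2}$, is the definition of the profinite completion. Clause (a), injectivity of $\widehat{C(\T^{2N}_\th)} \to M(C_0(\R^{2N}_\th))$, follows because each $C(\T^{2N}_{\th/m_j^2})$ acts faithfully by plane-wave multipliers on $L^2(\R^{2N}_\th) \subset M(C_0(\R^{2N}_\th))$ (no plane wave acts as zero on Schwartz functions), and faithfulness passes to the inductive limit. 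The main obstacle is the Moyal-plane tail estimate needed for \eqref{square_condition_equ}: every other step reduces to periodization or a profinite-group identity, but the vanishing of cross-terms in $b_n^2 - c_n$ genuinely requires a quantitative bound on $\|f \star_\th (T_v f)\|_{\mathrm{op}}$ for $T_v$ a large-translation, which must be extracted from the Schwartz character of $f$ via \eqref{mp_fourier_eqn} and Lemma \ref{nt_l_2_est_lem}.
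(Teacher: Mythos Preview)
The paper does not prove this theorem: it is stated with a citation to \cite{ivankov:qnc} and no argument is given here. There is therefore nothing in the present paper to compare your proposal against; all the actual work (the construction of $\widehat\pi^\oplus$, the inclusion $\Z^{2N}\hookrightarrow\widehat G$, and the verification of specialness) is deferred to the cited reference.

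Evaluated on its own, your outline is the natural one and the key technical point you isolate---the cross-term estimate for \eqref{square_condition_equ} via rapid decay of $\|f\star_\th(T_vf)\|_{\mathrm{op}}$ for large translations $T_v$---is indeed the heart of the matter. Two remarks. First, a minor index slip: the covering transformation groups in \ref{nt_mp_prel_lim}(c) are $\Z^{2N}_{m_n}$, not $\Z^{2N}_{m_n^2}$ (the square appears in the deformation parameter $\th/m_n^2$ because each $\th_{rs}$ is divided by $k_rk_s=m_n\cdot m_n$, but the group itself is $\prod\Z_{k_i}=\Z^{2N}_{m_n}$). Second, and more substantively, your identification $\overline{A}_{\widehat\pi^\oplus}=\bigoplus_{g\in J} gC_0(\R^{2N}_\th)$ is only argued in one direction: you exhibit enough special elements to generate this algebra, but you do not argue that \emph{every} special element lands in a single summand (equivalently, that nothing special mixes summands). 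You need to use that $a_n\in A_n=C(\T^{2N}_{\th/m_n^2})$ acts diagonally on $\bigoplus_g gL^2(\R^{2N}_\th)$, together with Lemma~\ref{stong_conv_inf_lem} ($\overline a=\inf_n a_n$), to force the strong limit $\overline a$ to respect the direct-sum decomposition. Without this upper bound the identification of the connected component, and hence of $G_{\widehat\pi^\oplus}$, is incomplete.
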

\subsubsection{Induced representation}\label{nt_inf_ind_repr_constr_sec}
\paragraph*{}
Denote by $L^2\left(C_0\left(\R^{2N}_\th \right)  \right) \subset C_0\left(\R^{2N}_\th \right) $ the space of square-summable elements (cf. Definition \ref{ss_defn}). Clearly $\SS\left(\R^{2N}_\th \right) \subset L^2\left(C_0\left(\R^{2N}_\th \right)  \right)$ and since $\SS\left(\R^{2N}_\th \right)$ is dense in $L^2\left(\R^{2N}_\th\right)$ in the topology of the Hilbert space, $ L^2\left(C_0\left(\R^{2N}_\th \right)  \right)$ is also dense in  $L^2\left(\R^{2N}_\th\right)$.	
Similarly to \eqref{inf_ind_prod_eqn} we consider following pre-Hilbert space
$$
L^2\left(C_0\left(\R^{2N}_\th \right)  \right) \otimes_{C\left(\mathbb{T}^{2N}_{\theta}\right)} L^2\left(C\left(\mathbb{T}^{2N}_{\theta}\right), \tau\right)
$$
and denote by $\widetilde{\H}$ its Hilbert completion. From the dense inclusions $\SS\left(\R^{2N}_\th \right)\subset L^2\left(C_0\left(\R^{2N}_\th \right)  \right)$, $\Coo\left(\mathbb{T}^{2N}_{\theta}\right) \subset L^2\left(C\left(\mathbb{T}^{2N}_{\theta}\right), \tau\right)$ it follows that  the composition
$$
\SS\left(\R^{2N}_\th \right)\otimes_{\Coo\left(\mathbb{T}^{2N}_{\theta}\right)}\Coo\left(\mathbb{T}^{2N}_{\theta}\right) \subset  L^2\left(C_0\left(\R^{2N}_\th \right)  \right) \otimes_{C\left(\mathbb{T}^{2N}_{\theta}\right)} L^2\left(C\left(\mathbb{T}^{2N}_{\theta}\right)\right)  \subset \widetilde{   \H}
$$
is the dense inclusion. Otherwise $\SS\left(\R^{2N}_\th \right)\otimes_{\Coo\left(\mathbb{T}^{2N}_{\theta}\right)}\Coo\left(\mathbb{T}^{2N}_{\theta}\right) \cong \SS\left(\R^{2N}_\th \right)$ it follows that there is the dense (with respect to the topology of the Hilbert space) inclusion
$$
\SS\left(\R^{2N}_\th \right) \subset \widetilde{   \H}.
$$
 $\widetilde{ a}, \widetilde{b} \in \SS\left(\R^{2N}_\th \right)$ then from  \eqref{inf_ind_prod_eqn} it turns out  it turns out
\begin{equation*}
\begin{split}
\left( \widetilde{ a} \otimes 1_{C\left(\mathbb{T}^{2N}_{\theta}\right)} , \widetilde{ b} \otimes \Psi_\th\left(1_{C\left(\mathbb{T}^{2N}_{\theta}\right)} \right)\right) _{\widetilde{ \H}} =\\  =\left( 1_{C\left(\mathbb{T}^{2N}_{\theta}\right)} , \sum_{g \in \Z^{2N}}g\left(\widetilde{ a}^*\widetilde{ b} \right)  \Psi_\th\left(1_{C\left(\mathbb{T}^{2N}_{\theta}\right)} \right)\right)_{L^2\left(C\left(\mathbb{T}^{2N}_{\theta}\right), \tau\right)} =\\=
\int_{\T^{2N}}\left(\sum_{g \in \mathbb{Z}^{2N}}  g\left(\widetilde{a}_{\text{comm}}^*~\widetilde{b}_{\text{comm}} \right)\right) \left(x \right) ~dx  =
\int_{\R^{2N}}\widetilde{a}_{\text{comm}}^*~\widetilde{b}_{\text{comm}}\left( \widetilde{x}\right) ~ d\widetilde{x}
\end{split}
\end{equation*}
where $\widetilde{a}_{\text{comm}}\in \SS\left(\R^{2N} \right)$ (resp. $\widetilde{b}_{\text{comm}}\in \SS\left(\R^{2N} \right)$) is a commutative function which corresponds to $\widetilde{a}$ (resp. $\widetilde{b}$). Above equation coincides with \eqref{nt_tracial_prop}. Taking into account that $\SS\left(\R^{2N}_\th \right)$ is dense in $L^2\left(\R^{2N}_\th \right)$ one has an isomorphism
$$
\widetilde{ \H}\approx L^2\left( \mathbb{R}^{2N}_{\widetilde{\theta}}\right)
$$
of Hilbert spaces. Thus if $\rho: C\left(\mathbb{T}^{2N}_{\theta}\right) \to L^2\left(C\left(\mathbb{T}^{2N}_{\theta}\right), \tau\right)$ then both
\begin{equation*}
\begin{split}
\widehat{\rho}: \widehat{ C\left(\mathbb{T}^{2N}_{\theta}\right)} \to B\left( L^2\left( \mathbb{R}^{2N}_{\widetilde{\theta}}\right)\right),\\
\widetilde{\rho}: C_0\left( \R^{2N}_\th\right)  \to B\left( L^2\left( \mathbb{R}^{2N}_{\widetilde{\theta}}\right)\right) 
\end{split}
\end{equation*}
are induced by 
$\left( \rho, \mathfrak{S}_\th, \widehat{\pi}^\oplus \right).  $

\subsubsection{The sequence of spectral triples}

\paragraph*{}

Let us consider following objects
\begin{itemize}
\item A spectral triple of a noncommutative torus $\left(C^{\infty}(\mathbb{T}^{2N}_{\theta}),\mathcal{H}=L^2\left(C\left(\mathbb{T}^{2N}_{\theta}\right), \tau\right)\otimes\mathbb{C}^{2^N},D\right)$,
\item  A good algebraical  finite covering sequence given by
	\begin{equation*}
\mathfrak{S}_\th= \left\{	C\left(\mathbb{T}^{2N}_\th\right) \to	C\left(\mathbb{T}^{2N}_{\th/m_1^{2}}\right) \to C\left(\mathbb{T}^{2N}_{\th/m_2^{2}}\right) \to... \to C\left(\mathbb{T}^{2N}_{\th/m_j^{2}}\right)\to ...\right\}\in \mathfrak{FinAlg}.
\end{equation*}
given by \eqref{nt_long_seq_spec_eqn}.
\end{itemize}
Otherwise from the Theorem \ref{nt_st_fin_cov_thm} it follows that
\begin{equation}\label{nt_triple_seq_eqn}
\begin{split}
\mathfrak{S}_{\left(C^{\infty}\left( \T_\th\right) , L^2\left(C\left(\T^{2N}_\th \right), \tau\right) \otimes \C^{2^N} , D\right) } = \{ \left(C^{\infty}\left( \T_\th\right) , L^2\left(C\left(\T^{2N}_\th \right), \tau\right) \otimes \C^{2^N} , D\right), \dots,\\
 \left(C^{\infty}\left( \T^{2N}_{\th/m_j^2}\right) , L^2\left(C\left( \T^{2N}_{\th/m_j^2} \right), \tau_j\right) \otimes \C^{2^N} , D_j \right), \dots
\}\in \mathfrak{CohTriple}
\end{split}
\end{equation}
is a coherent sequence of spectral triples. We would like to proof that $$\mathfrak{S}_{\left(C^{\infty}\left( \T_\th\right) , L^2\left(C\left(\T^{2N}_\th \right), \tau\right) \otimes \C^{2^N} , D\right) }$$ is regular and  to find  a $\left(C\left(\T^{2N}_\th \right) , C\left(\R^{2N}_\th \right) , \Z^{2N} \right)$-lift of $\left(C^{\infty}\left( \T_\th\right) , L^2\left(C\left(\T^{2N}_\th \right), \tau\right) \otimes \C^{2^N} \right)$. If $\rho: C\left( \T_\th\right) \to  B\left( L^2\left(C\left(\T^{2N}_\th \right), \tau\right)\right)  \otimes \C^{2^N}$ is the natural representation then from the \ref{nt_inf_ind_repr_constr_sec} it turns out that
$$
\widetilde{\rho}:C\left(\R^{2N}_\th \right) \to B\left(L^2\left(\R^{2N}_\th \right)\otimes \C^{2^N} \right) 
$$  
is induced by $\left(\rho, \mathfrak{S}_\th, \widehat{ \pi}^\oplus  \right)$. Let us consider a topological covering $\varphi: \R^{2N} \to \T^{2N}$ and a commutative spectral triple $\left( \Coo\left(\mathbb{T}^{2N} \right),  L^2\left(\T^{2N},S \right), \slashed D\right)$ given by \eqref{nt_ct_tr_eqn}.  Denote by $\widetilde{S} = \varphi^*S$, $\widetilde{\slashed D} = \varphi^*\slashed D$ inverse images of the Spin-bundle $S$ and the Dirac operator $\slashed D$ (cf. \ref{top_vb_sub_sub}, \ref{inv_image_defn}). 
Otherwise there is a natural isomorphism of Hilbert spaces
$$
\widetilde{\Phi}: L^2\left(\R^{2N}_\th\right)\otimes \C^{2^N} \xrightarrow{\approx}L^2\left(\R^{2N}\right)\otimes \C^{2^N}.
$$
Denote by 
\begin{equation*}
\widetilde{D} = \widetilde{\Phi}^{-1} \circ \widetilde{\slashed D} \circ \widetilde{\Phi}.
\end{equation*}
\subsubsection{Smooth elements}

\paragraph*{}
Following lemmas will be used for the construction of the smooth algebra.
\begin{lem}\label{mp_weak_lem}\cite{ivankov:qnc}
	Following conditions hold:
	\begin{enumerate}
		\item[(i)]
		Let $\left\{a_n \in  C_b\left(\R^{2N}_\th\right)\right\}_{n \in \N}$ be a sequence such that
		\begin{itemize}
			\item $\left\{a_n \right\}$ is weakly-* convergent (cf. Definition \ref{nt_*w_defn}),
			\item If $a = \lim_{n \to \infty} a_n$ in the sense of weak-* convergence then  $a \in C_b\left(\R^{2N}_\th\right)$.
		\end{itemize}
		Then the sequence $\left\{a_n \right\}$ is convergent in sense of weak topology $\left\{a_n \right\}$ (cf. Definition \ref{weak_topology}) and $a$ is limit of $ \left\{a_n \right\}$ with respect to the weak topology. Moreover if $\left\{a_n \right\}$ is increasing or decreasing sequence of self-adjoint elements then $\left\{a_n \right\}$ is convergent in sense of strong topology  
		(cf. Definition \ref{strong_topology})  and $a$ is limit of $ \left\{a_n \right\}$ with respect to the strong topology.
	\item[(ii)] If $\left\{a_n \right\}$ is strongly and/or weakly convergent (cf. Definitions \ref{strong_topology}, \ref{weak_topology}) and  $a = \lim_{n \to \infty} a_n$ is strong and/ or weak limit then $\left\{a_n \right\}$ is  weakly-* convergent and $a$ is the limit of $\left\{a_n \right\}$ in the sense of weakly-* convergence.
		\end{enumerate}

\end{lem}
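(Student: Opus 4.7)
The plan rests on a single tracial bridge identity relating the distributional pairing of $\SS'(\R^{2N})$ to the $L^2$ operator inner product: for any $T\in C_b(\R^{2N}_\th)\subset\SS'(\R^{2N})$ and any Schwartz $\xi,\eta\in\SS(\R^{2N})$, the tracial property \eqref{nt_tracial_prop} together with the continuity of $\star_\th$ on $\SS$ yields
\[
(T\star_\th\xi,\eta)_{L^2}=\int T\,(\xi\star_\th\overline\eta)\,dx=\langle T,\xi\star_\th\overline\eta\rangle.
\]
The left-hand side controls the weak and strong operator topologies on $B(L^2(\R^{2N}))$; the right-hand side controls the weak-$*$ topology of $\SS'(\R^{2N})$. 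Moving between these topologies on sequences in $C_b(\R^{2N}_\th)$ thus reduces to controlling $L^2$ approximation by Schwartz vectors, since the map $(\xi,\eta)\mapsto \xi\star_\th\overline\eta$ sends $\SS\times\SS$ into $\SS$.

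For (i), applying the bridge identity to $a_n-a$ together with the hypothesis $\langle a_n,b\rangle\to\langle a,b\rangle$ for every $b\in\SS$ gives $(a_n\xi,\eta)_{L^2}\to(a\xi,\eta)_{L^2}$ for all $\xi,\eta\in\SS(\R^{2N})$ immediately. Upgrading this to weak operator convergence on arbitrary $\xi,\eta\in L^2$ requires a uniform operator-norm bound on $\{a_n\}$, combined with a standard $3\varepsilon$ density argument. In the monotone self-adjoint sub-case this bound is automatic: the identity gives $(a_1\xi,\xi)\le(a_n\xi,\xi)\le(a\xi,\xi)$ for $\xi\in\SS$, and since each $a_n\in C_b(\R^{2N}_\th)$ is already a bounded self-adjoint operator whose norm is the supremum of its quadratic form on the unit sphere, density of $\SS$ in $L^2$ upgrades this pointwise bound to $\|a_n\|_{\mathrm{op}}\le\max(\|a_1\|_{\mathrm{op}},\|a\|_{\mathrm{op}})$. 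Lemma \ref{increasing_convergent_w} then applies to the resulting monotone bounded net of self-adjoint operators, producing strong convergence whose limit must agree with the already-established weak limit $a$.

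For (ii), I run the bridge identity in reverse. Given $b\in\SS(\R^{2N})$, Proposition \ref{mp_fmn} identifies $b$ with a rapidly decreasing matrix $(c_{mn})$ that is trace class on $\ell^2$; polar/SVD decomposition writes $(c_{mn})$ as a product of two Hilbert--Schmidt matrices, and under the isomorphism of Proposition \ref{mp_fmn} these correspond to $\xi,\eta\in L^2(\R^{2N}_\th)\subset L^2(\R^{2N})$ with $b=\xi\star_\th\overline\eta$. The bridge identity then gives $\langle a_n,b\rangle=(a_n\xi,\eta)_{L^2}\to(a\xi,\eta)_{L^2}=\langle a,b\rangle$ whenever $a_n\to a$ in the weak or strong operator topology, which is the claimed weak-$*$ convergence.

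The main obstacle is the uniform operator-norm bound on $\{a_n\}$ in part (i) outside the monotone setting; there it does not come for free and would have to be extracted via a Banach--Steinhaus argument applied to the Fr\'echet space $\SS$, exploiting the hypothesis that the weak-$*$ limit actually lies in $C_b(\R^{2N}_\th)$. The factorization $b=\xi\star_\th\overline\eta$ with $\xi,\eta\in L^2$ in (ii) is conceptually routine once the matrix model of Proposition \ref{mp_fmn} and a Hilbert--Schmidt decomposition are in hand, but care must be taken to land in $L^2$ rather than some larger multiplier algebra $\M_L^\th$ or $\M_R^\th$.
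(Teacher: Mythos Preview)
The paper does not supply its own proof of this lemma; it simply cites \cite{ivankov:qnc}. So there is no in-paper argument to compare against, and your proposal must be judged on its own.

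Your bridge identity is correct and is the right tool: for $T\in C_b(\R^{2N}_\th)$ and $\xi,\eta\in\SS(\R^{2N})$, the tracial property \eqref{nt_tracial_prop} together with associativity of $\star_\th$ gives $\int(T\star_\th\xi)\,\overline\eta\,dx=\int T\,(\xi\star_\th\overline\eta)\,dx=\langle T,\xi\star_\th\overline\eta\rangle$, which is exactly the link between the operator seminorms and the distributional pairing. Your treatment of the monotone self-adjoint sub-case in (i) is sound: the order bounds $a_1\le a_n\le a$ (or the reverse) transfer via the bridge identity to quadratic-form bounds on the dense Schwartz vectors, density and self-adjointness upgrade this to a uniform operator-norm bound, and then Lemma~\ref{increasing_convergent_w} finishes. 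Your argument for (ii) via the matrix model of Proposition~\ref{mp_fmn} and a Hilbert--Schmidt factorisation $b=\xi\star_\th\overline\eta$ with $\xi,\eta\in L^2$ is also correct and is the standard route.

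The genuine gap you have already flagged yourself: in the general (non-monotone) case of (i), passing from weak convergence on the dense subspace $\SS\subset L^2$ to weak operator convergence on all of $L^2$ requires $\sup_n\|a_n\|_{\mathrm{op}}<\infty$, and weak-$*$ convergence in $\SS'$ does not deliver this. Banach--Steinhaus on the Fr\'echet space $\SS$ only yields equicontinuity of the $a_n$ as linear functionals on $\SS$, i.e.\ a uniform bound in some Schwartz seminorm, which is strictly weaker than a uniform bound in $\|\cdot\|_{\mathrm{op}}$. So the suggested fix does not close the gap. Either the statement in \cite{ivankov:qnc} carries an implicit uniform-boundedness hypothesis (in every application within the present paper the sequences are in fact monotone or otherwise norm-bounded, so the issue never bites), or the general clause of (i) is simply not provable as written. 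Your proof is complete for the monotone case and for (ii); for the unrestricted first clause of (i) you should either add $\sup_n\|a_n\|_{\mathrm{op}}<\infty$ as a hypothesis or note that the claim is only being used downstream in the monotone setting.
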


\begin{lem}\label{mp_strong_lem}\cite{ivankov:qnc}
	Let $\overline{G}_j = \ker\left( \Z^{2N} \to \Z^{2N}_{m_j}\right)$.
	Let $\widetilde{a} \in \SS\left( \R^{2N}_\th\right)$ and let
	\be\label{mp_aj_eqn}
	a_j = \sum_{g \in\overline{G}_j } g\widetilde{a} 
	\ee
	where the sum the series means weakly-* convergence. Following conditions hold:
	\begin{enumerate}
		\item [(i)] $a_j \in \Coo\left(\R^{2N} \right)$,
		\item[(ii)]  The series \eqref{mp_aj_eqn} is convergent with respect to the strong topology (cf. Definition \ref{strong_topology}),
		\item[(iii)] There is a following strong limit
		\be\label{mp_ta_eqn}
		\widetilde{a} = \lim_{j \to \infty} a_j.
		\ee
	\end{enumerate}
	
\end{lem}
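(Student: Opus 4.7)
The statement asserts three things about the periodization of a Schwartz function by the sublattice $\overline{G}_j = m_j \Z^{2N}$, where the $\Z^{2N}$-action on $C_0(\R^{2N}_\th)$ produced by Theorem \ref{nt_inf_cov_thm} acts (via the identification $\Psi_\th$) as the usual lattice translation on $\SS(\R^{2N})$. My plan is to dispatch (i) by elementary Schwartz estimates, obtain (ii) by upgrading weak-$\ast$ to strong convergence through Lemma \ref{mp_weak_lem}, and then prove (iii) by a tail estimate on a dense set of test vectors.

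For (i), I would use the Schwartz decay of $\widetilde{a}$ and each of its derivatives to deduce that the series $\sum_{k \in m_j\Z^{2N}} \partial^\alpha \widetilde{a}(x+k)$ converges absolutely and uniformly on compact subsets of $\R^{2N}$ for every multi-index $\alpha$. Termwise differentiation then gives $a_j \in \Coo(\R^{2N})$, and $m_j\Z^{2N}$-periodicity is immediate from reindexing; one thereby identifies $a_j$ with an element of $\Coo\left(\T^{2N}_{\th/m_j^2}\right)$, which via $\widehat{\pi}^\oplus$ sits inside $C_b(\R^{2N}_\th)$.

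For (ii), I would first observe weak-$\ast$ convergence of the partial sums to $a_j$: for any $\phi \in \SS(\R^{2N})$ the double sum $\sum_{g \in \overline{G}_j} \int \widetilde{a}(x+g)\phi(x)\,dx$ is absolutely convergent by Schwartz decay, giving $\langle s_n, \phi\rangle \to \langle a_j,\phi\rangle$. Combined with $a_j \in C_b(\R^{2N}_\th)$ from (i), Lemma \ref{mp_weak_lem}(i) upgrades this to weak convergence. To promote it further to strong convergence, one would organise the partial sums as monotone nondecreasing nets of self-adjoint operators with uniformly bounded norms and invoke the second half of Lemma \ref{mp_weak_lem}(i). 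The main obstacle is producing such a monotone decomposition inside $\SS(\R^{2N}_\th)$: writing $\widetilde{a} = b_1 + ib_2$ with $b_i$ self-adjoint Schwartz is immediate, but splitting each $b_i$ as a difference of two \emph{positive} Schwartz functions is subtle because pointwise positive and negative parts are only continuous. I would handle this by multiplying $b_i$ by a smooth partition of unity adapted to its sign, or by applying the cut-off $f_\eps$ of \eqref{f_eps_eqn} to $b_i$ and taking $\eps \to 0$ through a secondary monotone-convergence argument, keeping every building block in $\SS(\R^{2N}_\th)$.

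For (iii), the difference $a_j - \widetilde{a} = \sum_{k \in m_j\Z^{2N}\setminus\{0\}} \widetilde{a}(\cdot+k)$ involves only translates by vectors of norm $\geq m_j$. For any Schwartz $\xi$, each $(g\widetilde{a})\star_\th\xi$ is controlled in $L^2$ by Schwartz seminorms of $\widetilde{a}$ together with a decay factor $(1+|g|)^{-K}$ for arbitrary $K$, obtained from the fact that Moyal multiplication against a translated rapidly decaying function yields a function effectively concentrated where the supports overlap. Summing over $k \in m_j\Z^{2N}\setminus\{0\}$ yields $\|(a_j - \widetilde{a})\star_\th \xi\|_2 = O(m_j^{-K+2N})$, which vanishes as $j \to \infty$. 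Combined with a uniform bound on $\|a_j - \widetilde{a}\|_{\mathrm{op}}$ coming from (i) and \eqref{mp_op_eqn}, a standard density and $\eps/3$ argument extends strong convergence from the Schwartz class to all of $L^2(\R^{2N})$.
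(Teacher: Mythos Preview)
The paper does not supply its own proof of this lemma: it is merely quoted from the reference \cite{ivankov:qnc}, with only the clarifying Remark following it. So there is no argument in the present paper against which to compare your proposal. I will therefore comment on the proposal directly.

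Your treatment of (i) and (iii) is sound and standard. The issue is (ii). You correctly identify the obstacle---decomposing a self-adjoint $b_i\in\SS(\R^{2N}_\th)$ as a difference of \emph{positive Schwartz} elements---but your two proposed fixes do not close the gap. A smooth partition of unity ``adapted to the sign'' cannot be constructed in general, because the zero set of a self-adjoint Schwartz function can be arbitrary; and you appear to be conflating pointwise positivity of the underlying function with operator positivity in the Moyal representation, which are different. The cut-off $f_\eps$ of \eqref{f_eps_eqn} is not smooth, so $f_\eps(b_i)$ need not lie in $\SS(\R^{2N}_\th)$, and you give no reason why $\sum_{g}g\,f_\eps(b_i)$ should be a bounded operator---without that, Lemma \ref{mp_weak_lem}(i) does not apply.

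A cleaner route, already implicit in your argument for (iii), avoids the monotone decomposition altogether. First, the partial sums $S_F=\sum_{g\in F}g\widetilde a$ are uniformly bounded in $\|\cdot\|_{\mathrm{op}}$: since Moyal left multiplication is Weyl quantisation, a Calder\'on--Vaillancourt estimate bounds $\|S_F\|_{\mathrm{op}}$ by finitely many $\sup$-norms of derivatives of $S_F$, and for every multi-index $\alpha$ one has $\sup_x|\partial^\alpha S_F(x)|\le\sum_{g\in\Z^{2N}}|\partial^\alpha\widetilde a(x-g)|\le C_\alpha$ uniformly in $F$, by Schwartz decay. Second, on the dense subspace $\SS(\R^{2N})\subset L^2$ one shows $\|S_F\star_\th\xi-a_j\star_\th\xi\|_2\to0$ by the same tail estimate you outline for (iii). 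Uniform boundedness plus convergence on a dense set then gives strong convergence on all of $L^2$ by the $\eps/3$ argument you already invoke. This bypasses the positive-part decomposition entirely and makes (ii) and (iii) two instances of the same estimate.
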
\label{mp_strong_rem}
\begin{remark}
In (i) of the Lemma \ref{mp_strong_lem} $a_j$ is regarded as element of $\SS'\left(\R^{2N} \right)$, it follows that $a_j \in \Coo\left( \T^{2N}_\th\right)$. 
\end{remark}

\begin{lem}\label{mp_semin_lem}
	The system of seminorms $\left\| \cdot\right\|_s$ on $\SS\left( \R^{2N}_\th\right)$   given by \eqref{smooth_seminorms_eqn} is equivalent to the system of seminorms $\left\| \cdot\right\|_{\left(t_1, \dots, t_{2N} \right) }$ given by
	\begin{equation*}
\left\|\widetilde{a}\right\|_{\left(t_1, \dots, t_{2N} \right) } \stackrel{\mathrm{def}}{=} \left\| \frac{\partial^{t_1+ \dots + t_{2N}} }{\partial x^{t_1}_{1}\dots\partial x^{t_{2N}}_{2N}}\widetilde{a}\right\|_{\mathrm{op}} 
	\end{equation*}
	where 
	$\left\|\cdot\right\|_{\mathrm{op}}$ is the operator norm given by  \eqref{mp_op_norm_eqn}.  
\end{lem}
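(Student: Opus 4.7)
The plan is to reduce both seminorm systems to the same underlying quantity, namely the operator norms of iterated partial derivatives of $\widetilde a\in\SS(\R^{2N}_\th)$, and then establish equivalence by direct comparison with fixed constants depending only on $s$ and $N$.

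First I would show that the strong limit defining $\|\widetilde a\|_s$ in \eqref{smooth_seminorms_eqn} can be computed directly on $\SS(\R^{2N}_\th)$: for $\widetilde a\in\SS(\R^{2N}_\th)$, Lemma \ref{mp_strong_lem} (and the discussion of strong limits $a_n\to\widetilde a$) together with the fact that $\widetilde D$ is the $(C(\T^{2N}_\th),C_0(\R^{2N}_\th),\Z^{2N})$-lift of $D$ gives $1\otimes[D_n,a_n]\to [\widetilde D,\widetilde a]$ strongly, and inductively $1\otimes\pi^s_n(a_n)\to\widetilde\pi^s(\widetilde a)$ strongly, where $\widetilde\pi^s:C_0(\R^{2N}_\th)\to B(\widetilde\H^{2^s})$ is constructed from $\widetilde D$ exactly as in \eqref{s_diff_repr_equ}. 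Hence $\|\widetilde a\|_s=\|\widetilde\pi^s(\widetilde a)\|$.

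Next I would exploit the key identity \eqref{nt_comm_diff_mod_gamma_eqn}, which lifts to the Moyal plane as
\begin{equation*}
[\widetilde D,\widetilde a]=\sum_{\mu=1}^{2N}\frac{\partial \widetilde a}{\partial x_\mu}\otimes\gamma^\mu,\qquad \widetilde a\in\SS(\R^{2N}_\th),
\end{equation*}
because $\widetilde D=\widetilde\Phi^{-1}\circ\widetilde{\slashed D}\circ\widetilde\Phi$ acts diagonally with commutators given by partial derivatives. By induction on $s$, iterated commutators satisfy
\begin{equation*}
[\widetilde D,[\widetilde D,\dots[\widetilde D,\widetilde a]\dots]]=\sum_{\mu_1,\dots,\mu_s}\frac{\partial^s \widetilde a}{\partial x_{\mu_1}\cdots\partial x_{\mu_s}}\otimes\gamma^{\mu_1}\cdots\gamma^{\mu_s},
\end{equation*}
and $\widetilde\pi^s(\widetilde a)$ is a lower triangular block matrix whose entries are precisely such iterated commutators of orders $0,1,\dots,s$, each tensored with a fixed product of Gamma matrices. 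Because the $\gamma^\mu$ are bounded matrices of uniformly bounded norm (cf.\ \eqref{nt_gam_eqn}), the operator norm of each block entry is comparable to the operator norm of the corresponding partial derivative of $\widetilde a$.

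From this two-sided comparison the equivalence follows at once: on the one hand, by the triangle inequality applied to the block structure,
\begin{equation*}
\|\widetilde a\|_s=\|\widetilde\pi^s(\widetilde a)\|\leq C_s\max_{t_1+\cdots+t_{2N}\leq s}\|\widetilde a\|_{(t_1,\dots,t_{2N})};
\end{equation*}
on the other hand, each derivative $\partial^{t_1+\cdots+t_{2N}}\widetilde a/\partial x_1^{t_1}\cdots\partial x_{2N}^{t_{2N}}$ with $t_1+\cdots+t_{2N}\leq s$ appears as a specific block component of $\widetilde\pi^s(\widetilde a)$ (after pairing with the appropriate product of Gamma matrices, which is invertible), so compression to that block gives $\|\widetilde a\|_{(t_1,\dots,t_{2N})}\leq C'_s\|\widetilde a\|_s$. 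The main obstacle is the first step: justifying rigorously that the strong limit $\lim_{n\to\infty}1\otimes\pi^s_n(a_n)$ exists and coincides with $\widetilde\pi^s(\widetilde a)$ applied in the Moyal plane — this requires combining the strong convergence $a_n\to\widetilde a$ from Lemma \ref{mp_strong_lem} with compatibility of the finite-level Dirac operators $D_n$ with $\widetilde D$ under the induced representations and stability of iterated commutators under strong limits on the smooth space $\SS(\R^{2N}_\th)$.
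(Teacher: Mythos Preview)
Your proposal is correct and follows essentially the same route as the paper's proof. Both arguments represent $\pi^s(\cdot)$ as a block matrix in $\mathbb{M}_{2^s2^N}(B(L^2(\R^{2N})))$, use the commutator identity $[D,a]=\sum_\mu\gamma^\mu\,\partial_\mu a$ from \eqref{nt_comm_diff_mod_gamma_eqn} to identify the entries with partial derivatives of $\widetilde a$, invoke Lemma~\ref{mp_strong_lem} to pass the strong limit $a_j\to\widetilde a$ through each entry, and then read off the norm equivalence from the matrix structure; the paper carries out this entry-by-entry limit explicitly for $s=1,2$ and then indicates the general pattern, which is exactly the inductive computation you sketch.
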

\begin{proof}
  Operators $1_{C_b\left(\R^{2N}_\th \right) } \otimes \pi^s_j\left(a_j \right)\in B\left(\widetilde{\H}^{2^s} \right)=B\left(\left( L^2\left(\R^{2N}\right)\right)^{2^N2^s}\right)$ from the condition (b) of the Definition \ref{smooth_el_defn} and \eqref{s_diff_repr_equ} can be regarded as matrices in $\mathbb{M}_{2^s2^N}\left(B\left(L^2\left(\R^{2N}\right)  \right) \right)$, so we will write $$1_{C_b\left(\R^{2N}_\th \right) } \otimes \pi^s_j\left(a_j \right) = \left(m^j_{\al\bt}\right)_{\al,\bt = 1, \dots 2^s2^N}\in \mathbb{M}_{2^s2^N}\left(B\left(L^2\left(\R^{2N}\right)  \right) \right).$$
	From \eqref{nt_comm_diff_mod_gamma_eqn} it follows that
	\begin{equation}\label{nt_diff_cov_mod_eqn1}
	\left[D_j, a_j \right]  = \sum_{\mu = 1}^{2N} \frac{\partial a_j}{\partial x_\mu} u^*_\mu \left[ D, u_\mu \right] = \sum_{\mu = 1}^{2N} \gamma^\mu\frac{\partial a_j}{\partial x_\mu} u^*_\mu \left[ \delta_\mu, u_\mu \right]= \sum_{\mu = 1}^{2N} \frac{\partial a_j}{\partial x_\mu}\gamma^\mu 
	\end{equation}
	where $u_1, \dots, u_n$ are unitary generators of $\Coo\left(\mathbb{T}^{2N}_{{\th}}\right)$.
	If $s = 1$ then from \eqref{nt_dirac_eqn} \eqref{nt_diff_mod_eqn} it follows that for any $\al, \bt$ element
	$m_{\al\bt}$ is given by
	$$
	m^j_{\al\bt} = a_j
	$$
	or there is $\mu \in \left\{1,\dots, 2N\right\}$ such that
	\begin{equation}\nonumber
	m^j_{\al\bt} = \frac{\partial a_j}{\partial x_\mu} u^*_\mu \left[\delta_\mu, u_\mu \right]
	\end{equation}
	and taking into account  \eqref{nt_diff_1_eqn} one has
	\begin{equation}\label{mp_pr_d_eqn}
	m^j_{\al\bt} = \frac{\partial a_j}{\partial x_\mu}.
	\end{equation}
	
From $\widetilde{a} \in \SS\left( \R^{2N}_\th\right)$ it turns out $ \frac{\partial \widetilde{a}}{\partial x_\mu} \in \SS\left( \R^{2N}_\th\right)$. Taking into account  (ii) of the Lemma \ref{mp_strong_lem} for any $\mu =1,\dots 2N$ the sequence
	$$\left\{\frac{\partial a_j}{\partial x_\mu}\in \Coo\left( \T^{2N}_{\th/m^2_j}\right)\right\}_{j \in \N}$$ is strongly convergent and from (iii) of the Lemma \ref{mp_strong_lem} there is the following strong limit
	$$
	\lim_{j \to \infty} \frac{\partial a_j}{\partial x_\mu} =  \frac{\partial \widetilde{a}}{\partial x_\mu}.
	$$
	Hence if $	m^j_{\al\bt}$ is given by \eqref{mp_pr_d_eqn} then there is a following  strong limit
	\begin{equation}\label{mp_lim_eqn}
	\lim_{j \to \infty} m^j_{\al\bt} = \widetilde{m}_{\al\bt}= \frac{\partial \widetilde{a}}{\partial x_\mu}u_\mu^*\left[\delta_\mu, u_\mu \right] = \frac{\partial \widetilde{a}}{\partial x_\mu}
	\end{equation}
	where $\widetilde{m}_{\al\bt}$ is element of matrix which represent $\pi^1\left( \widetilde{a}\right)$. From  \eqref{nt_diff_cov_mod_eqn1} and (iii) of the Lemma \ref{mp_strong_lem} one has a  strong limit
	\begin{equation}\label{mp_b_eqn}
	\lim_{j \to \infty} 1_{C_b\left(\R^{2N}\right) } \otimes\left[D_j, a_j\right] =    \sum_{\mu = 1}^{2N} \frac{\partial \widetilde{a}}{\partial x_\mu} \gamma^\mu.
	\end{equation}
	It follows  from  \eqref{s_diff_repr_equ} and \eqref{mp_pr_d_eqn} that if $s = 2$ then  the matrix which corresponds to $\pi^2_j\left( a_j\right)$   for any $\mu = 1,\dots, 2N$ contains a submatrix
	$$
	\left[D_j, \frac{\partial a_j}{\partial x_\mu} \right].
	$$
	For any $\nu = 1, \dots, 2N$ the above submatrix contains an element given by
	\begin{equation}\label{mp_pr2_eqn}
	m^j_{\al\bt} = \frac{\partial^2 a_j}{\partial x_\nu\partial x_\mu} u^*_\nu \left[\delta_\nu, u_\nu \right] = \frac{\partial^2 a_j}{\partial x_\nu\partial x_\mu}.
	\end{equation}
	
	From $ \frac{\partial^2 \widetilde{a}}{\partial x_\nu\partial x_\mu} \in \SS\left(\R^{2N} \right)  $ and (iii) of the  Lemma \ref{mp_strong_lem}  one has a strong limit
	$$
	\lim_{j \to \infty}\frac{\partial^2 a_j}{\partial x_\nu\partial x_\mu} = \frac{\partial^2 \widetilde{a}}{\partial x_\nu\partial x_\mu},
	$$
	so the matrix $\left\{m_{\al\bt}\right\}$ contains an element $\frac{\partial^2 \widetilde{a}}{\partial x_\nu\partial x_\mu}$. 
	 Similarly for any multiindex $\left( t_1,..., t_{2N}\right)\in \left(\N^0 \right)^{2M}$ there is $s \in \N$ such that
	$1_{C_b\left(\R^{2N}_\th \right) } \otimes \pi^s_j\left(a_j \right)\in B\left(\widetilde{\H}^{2^s} \right)$ is represented by a matrix $$ \left(m^j_{\al\bt}\right)_{\al,\bt = 1, \dots 2^s2^N}\in \mathbb{M}_{2^s2^N}\left(B\left(L^2\left(\R^{2N}\right)  \right) \right)$$ such that there are $\al$, $\bt$ such that
	\begin{equation}
	m^j_{\al\bt}= \frac{\partial^{t_1+ \dots + t_{2N}} a_j}{\partial x^{t_1}_{1}\dots\partial x^{t_{2N}}_{2N}}.
	\end{equation} 
From $\frac{\partial^{t_1+ \dots + t_{2N}} \widetilde{a}}{\partial x^{t_1}_{1}\dots\partial x^{t_{2N}}_{2N}} \in \SS\left(\R^{2N} \right)$ the and (iii) of the Lemma \ref{mp_strong_lem} it follows that for any $s \in \N$ there are strong limits
	\begin{equation}\label{mp_multi_lim_eqn}
	\widetilde{m}_{\al\bt} =\lim_{j \to \infty}	m^j_{\al\bt} = \frac{\partial^{t_1+ \dots + t_{2N}} \widetilde{a}}{\partial x^{t_1}_{1}\dots\partial x^{t_{2N}}_{2N}},
	\end{equation}
so one has the strong limit $\pi^s\left( \widetilde{a}\right)= \lim_{j \to \infty}\pi^s\left( a_j\right)$ for any $s \in \N$.  The operator	$1_{C_b\left(\R^{2N}_\th \right) } \otimes \pi^s_j\left(a_j \right)\in B\left(\widetilde{\H}^{2^s} \right)$ is represented by a matrix $ \left(m^j_{\al\bt}\right)_{\al,\bt = 1, \dots 2^s2^N}\in \mathbb{M}_{2^s2^N}\left(B\left(L^2\left(\R^{2N}\right)  \right) \right)$ it follows that the norm $\left\| \pi^s\left( \widetilde{a}\right)\right\| $ of $\pi^s\left( \widetilde{a}\right)$ is equivalent to the system of operator norms of its matrix elements given by
	\begin{equation}\label{mp_multi_norm_lim_eqn}
\left\| \widetilde{m}_{\al\bt} \right\|  = \left\| \frac{\partial^{t_1+ \dots + t_{2N}} \widetilde{a}}{\partial x^{t_1}_{1}\dots\partial x^{t_{2N}}_{2N}}\right\|_{\mathrm{op}}.
\end{equation}	
	
\end{proof}

\begin{lem}\label{mp_ss_lem}
	Any $\widetilde{a} \in \SS\left(\R^{2N}_\th \right)$ satisfies to the conditions (b), (c) of the Definition \ref{smooth_el_defn}.
\end{lem}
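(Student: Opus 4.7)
The plan is to reduce both conditions to the convergence statements already established in Lemma \ref{mp_strong_lem} and the matrix-entry analysis carried out in the proof of Lemma \ref{mp_semin_lem}. Given $\widetilde{a}\in\SS(\R^{2N}_\th)$, set $\overline{G}_j=\ker(\Z^{2N}\to\Z^{2N}_{m_j})$ and $a_j=\sum_{g\in\overline{G}_j}g\widetilde{a}$. I would first observe that the Schwartz space is stable under all partial derivations $\partial^{t_1+\cdots+t_{2N}}/\partial x_1^{t_1}\cdots\partial x_{2N}^{t_{2N}}$, so every such mixed derivative of $\widetilde{a}$ still lies in $\SS(\R^{2N}_\th)$; therefore Lemma \ref{mp_strong_lem}(iii) applies to each of them individually and yields the strong limit
\begin{equation*}
\lim_{j\to\infty}\frac{\partial^{t_1+\cdots+t_{2N}}a_j}{\partial x_1^{t_1}\cdots\partial x_{2N}^{t_{2N}}}=\frac{\partial^{t_1+\cdots+t_{2N}}\widetilde{a}}{\partial x_1^{t_1}\cdots\partial x_{2N}^{t_{2N}}}.
\end{equation*}

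For condition (b), I would invoke the matrix description from the proof of Lemma \ref{mp_semin_lem}: for each $s$ the operator $1_{M(\widetilde{A})}\otimes\pi^s_j(a_j)$ is represented by a matrix in $\mathbb{M}_{2^s2^N}(B(L^2(\R^{2N})))$ whose entries are, up to the constant Clifford factors $\gamma^\mu$ and the identities $u^*_\mu[\delta_\mu,u_\mu]=1$ from \eqref{nt_diff_1_eqn}, precisely iterated partial derivatives of $a_j$. Since strong convergence of a finite matrix of operators reduces to strong convergence of each entry, the entrywise strong limits above give the required strong convergence of $\{1_{M(\widetilde{A})}\otimes\pi^s_j(a_j)\}_{j\in\N}$ for every $s\in\N$.

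For condition (c), the key computation is \eqref{nt_diff_cov_mod_eqn1}, which yields
\begin{equation*}
1_{M(\widetilde{A})}\otimes\left[D_j,a_j\right]=\sum_{\mu=1}^{2N}\frac{\partial a_j}{\partial x_\mu}\gamma^\mu.
\end{equation*}
Applying Lemma \ref{mp_strong_lem} term by term I would obtain the strong limit $\sum_{\mu=1}^{2N}\frac{\partial\widetilde{a}}{\partial x_\mu}\gamma^\mu$. Because $\frac{\partial\widetilde{a}}{\partial x_\mu}\in\SS(\R^{2N}_\th)\subset L^2(C_0(\R^{2N}_\th))$ and $\gamma^\mu=u^*_\mu[D,u_\mu]\in\Om^1_D$ by \eqref{nt_comm_diff_gamma_in_eqn}, the limit lives in $L^2(\widetilde{A}_{\widehat{\pi}^\oplus})\otimes_{\A}\Om^1_D$, giving the required ``elementary tensor in the limit'' property.

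The main obstacle, as I see it, is not the strong convergence itself (Lemma \ref{mp_strong_lem} carries that entirely) but the bookkeeping needed to ensure that the strong limit of $1_{M(\widetilde{A})}\otimes[D_j,a_j]$, formed inside $B(\widetilde{\H})$, is identified with an element of the subspace $L^2(\widetilde{A}_{\widehat{\pi}^\oplus})\otimes_{\A}\Om^1_D$ rather than just of the ambient algebra of bounded operators; this identification uses the faithful embedding of $L^2(\widetilde{A}_{\widehat{\pi}^\oplus})\otimes_A\H$ into $\widetilde{\H}$ coming from the induced representation \ref{nt_inf_ind_repr_constr_sec} together with the relation \eqref{nt_comm_diff_mod_gamma_eqn} which rewrites commutators with $D_j$ purely in terms of partial derivatives and the fixed matrices $\gamma^\mu$, so the limit is literally a finite sum of elementary tensors.
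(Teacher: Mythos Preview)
Your proposal is correct and follows essentially the same approach as the paper: condition (b) is deduced from the matrix-entry analysis underlying Lemma \ref{mp_semin_lem} together with Lemma \ref{mp_strong_lem}, and condition (c) is obtained from the identity $[D_j,a_j]=\sum_\mu (\partial a_j/\partial x_\mu)\gamma^\mu$ and the observation that $\partial\widetilde{a}/\partial x_\mu\in\SS(\R^{2N}_\th)\subset L^2(C_0(\R^{2N}_\th))$ while $\gamma^\mu=u_\mu^*[D,u_\mu]\in\Om^1_D$. Your added remarks about the identification of the strong limit inside $L^2(\widetilde{A}_{\widehat{\pi}^\oplus})\otimes_{\A}\Om^1_D$ just make explicit what the paper leaves implicit.
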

\begin{proof}(b) Follows from the Lemma \ref{mp_semin_lem}.\\
(c) From \eqref{mp_b_eqn} it turns out that
\begin{equation}\label{mp_lim_diff_eqn}
\lim_{j \to \infty} 1_{C_b\left(\R^{2N}_\th\right) } \otimes\left[D_j, a_j\right] =  \sum_{\mu = 1}^{2N} \frac{\partial \widetilde{a}}{\partial x_\mu} u^*_\mu \left[ D, u_\mu \right].
\end{equation}
If $L^2\left( C_0\left(\R^{2N}_\th\right)\right)$ is a space of square-summable elements (cf. Definition \ref{ss_defn}) then 
$\SS\left( \R^{2N}_\th\right) \subset L^2\left( C_0\left(\R^{2N}_\th\right)\right) $.  Taking into account $~~\frac{\partial \widetilde{a}}{\partial x_\mu}\in \SS\left( \R^{2N}_\th\right)$, $~~u^*_\mu \left[ D, u_\mu \right] \in \Omega^1_D$ and \eqref{mp_lim_diff_eqn} one has
 $$
 \lim_{j \to \infty} 1_{C_b\left(\R^{2n}\right) } \otimes\left[D_j, a_j\right]  \in L^2\left( C_0\left(\R^{2N}_\th\right)\right)  \otimes_{C\left(\T_{\th}^{2N} \right) } \Om^1_D.
 $$	
\end{proof}

\begin{cor}\label{mp_in_cor}
	Let $f_{mn} \in \SS\left(\R^2_\th \right)$ be given by the Proposition \ref{mp_fmn}. If $\widetilde{a} \in \SS\left( \R^{2N}_\th\right)$ is such that
	\begin{equation}
	\widetilde{a} = f_{m_1n_1} \otimes \dots \otimes f_{m_Nn_N}; ~~ (\mathrm{cf. } ~\eqref{mp_2N_eqn})	
	\end{equation} 
then $\widetilde{a}$ is a $\mathfrak{S}_{\left(C^{\infty}\left( \T_\th\right) , L^2\left(C\left(\T^{2N}_\th \right), \tau\right) \otimes \C^{2^N} , D\right) }$-smooth element with respect to $\widehat{\pi}^\oplus$.
\end{cor}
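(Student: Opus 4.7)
The plan is to verify the four defining conditions of Definition \ref{smooth_el_defn} for $\widetilde a = f_{m_1 n_1} \otimes \cdots \otimes f_{m_N n_N}$, together with the weakly special prerequisite. Two of the analytic conditions come for free from material already established, so only conditions (a), (d), and the weakly special property require any work.

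First I would observe that by Proposition \ref{mp_fmn} and Remark \ref{mp_ss_rem}, $\widetilde a \in \SS(\R^{2N}_\th)$. Lemma \ref{mp_ss_lem} then applies directly and delivers conditions (b) and (c) of Definition \ref{smooth_el_defn} simultaneously, with explicit strong limits
\[
\pi^s(\widetilde a) = \lim_{j\to\infty} 1 \otimes \pi^s_j(a_j), \qquad \sum_{\mu=1}^{2N} \frac{\partial \widetilde a}{\partial x_\mu} \otimes \ga^\mu = \lim_{j\to\infty} 1 \otimes [D_j, a_j] \in L^2(C_0(\R^{2N}_\th)) \otimes_{\A}\Om^1_D.
\]
Next I would establish condition (a). Because $\widetilde a \in \SS(\R^{2N}_\th)$, Lemma \ref{mp_strong_lem}(i) together with its subsequent Remark shows that the partial sum $a_j = \sum_{g \in \overline G_j} g\widetilde a$ lies in $\Coo(\T^{2N}_{\th/m_j^2}) = \A_j$, which is exactly what (a) requires.

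For condition (d) I would use the isomorphism \eqref{mp_2N_eqn} in tandem with Proposition \ref{mp_fmn}. Under the Fr\'echet algebra isomorphism of that proposition, each $f_{m_k n_k}$ corresponds to a matrix unit in a copy of $\K$, so
\[
\widetilde a = f_{m_1 n_1} \otimes \cdots \otimes f_{m_N n_N}
\]
corresponds to a single matrix unit, hence a rank-one operator, in $\K\otimes\cdots\otimes\K \cong \K$. The Pedersen ideal of $\K$ is precisely the algebraic ideal of finite-rank operators, so $\widetilde a \in K(C_0(\R^{2N}_\th))$, as required.

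The main obstacle, and the only point that is not essentially a direct citation, is verifying that $\widetilde a$ is weakly special in the sense of Definition \ref{special_el_defn}. My plan is to exhibit $\widetilde a$ as $x\star_\th \overline a\star_\th y$ with $\overline a$ a positive special element, using the matrix-unit relations of Proposition \ref{mp_fmn}: each $f_{m n}$ factors (up to normalization) as $f_{m0}\star_\th f_{00}\star_\th f_{0n}$, so that
\[
\widetilde a = \bigl(f_{m_1 0}\otimes\cdots\otimes f_{m_N 0}\bigr)\,\star_\th\,\bigl(f_{00}^{\otimes N}\bigr)\,\star_\th\,\bigl(f_{0 n_1}\otimes\cdots\otimes f_{0 n_N}\bigr).
\]
The tensor-power $f_{00}^{\otimes N}$ is a positive real Gaussian, and I would check the three clauses of Definition \ref{special_el_defn} for it by: (i) invoking Lemma \ref{mp_strong_lem} to get strong convergence of $\sum_{g \in \overline G_j} g\overline a$ to a smooth torus function; (ii) observing that conjugation by $z\in\widehat A$ preserves the Schwartz class so the same lemma applies to $z\overline a z^*$ and its square and its $f_\eps$-truncation; and (iii) deducing the asymptotic equality $\|b_n^2 - c_n\|\to 0$ from the disjointness of the supports of distinct $\Z^{2N}$-translates of the rapidly decaying Gaussian, which forces the cross terms in $b_n^2$ to vanish in the limit. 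The two-sided flanking Schwartz factors then promote $\overline a$ to the weakly special element $\widetilde a$, completing the verification.
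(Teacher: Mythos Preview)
Your verification of conditions (a)--(d) is essentially identical to the paper's own proof: condition (a) from Lemma~\ref{mp_strong_lem}(i), conditions (b) and (c) from Lemma~\ref{mp_ss_lem}, and condition (d) from the observation that $\widetilde a$ is a rank-one operator under the identification $C_0(\R^{2N}_\th)\cong\K$, hence lies in the Pedersen ideal.

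Where you diverge is in your treatment of the weakly special prerequisite. The paper does not verify this explicitly in the proof of the corollary; it is treated as already available from the construction of $C_0(\R^{2N}_\th)$ as the inverse noncommutative limit (Theorem~\ref{nt_inf_cov_thm}, proved in \cite{ivankov:qnc}). Your attempt to supply an independent argument is commendable but has a genuine gap: in Definition~\ref{special_el_defn}, a weakly special element must have the form $x\overline a y$ with $x,y\in\widehat A=\varinjlim C(\T^{2N}_{\th/m_j^2})$, whereas your proposed flanking factors $f_{m_1 0}\otimes\cdots\otimes f_{m_N 0}$ and $f_{0 n_1}\otimes\cdots\otimes f_{0 n_N}$ lie in $\SS(\R^{2N}_\th)\subset C_0(\R^{2N}_\th)$, which is not contained in $\widehat A$. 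A secondary issue is your appeal to ``disjointness of supports'' of $\Z^{2N}$-translates of a Gaussian: Gaussians have full support, so the supports are never disjoint; the correct statement is that the cross terms decay, which is a different (and more delicate) estimate. If you want a self-contained argument for the weakly special property, you would need either to invoke the construction in \cite{ivankov:qnc} directly, or to work with factors genuinely living in one of the torus algebras $C(\T^{2N}_{\th/m_j^2})$.
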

\begin{proof}
	We need check conditions (a)-(d) of the Definition  \ref{smooth_el_defn}.
 The condition (a) follows from (i) of the Lemma \ref{mp_strong_lem}. Conditions (b), (c) follow from the Lemma  \eqref{mp_ss_lem}
	From the Proposition \ref{mp_fmn} it turns out that	$f_{m_1n_1} , \dots , f_{m_Nn_N}$ are rank-one operators, hence $\widetilde{a} = f_{m_1n_1} \otimes \dots \otimes f_{m_Nn_N}$ is also a rank-one operator. However any finite-rank operator lies in the Pedersen ideal (it is proven in \cite{pedersen:mea_c}).	So $\widetilde{a}$ lies in the Pederesen ideal of $C_0\left(\R^{2N}_\th \right)$, i.e. $\widetilde{a}$ satisfies to the condition (d).
\end{proof}
\begin{empt}\label{mp_d_constr}
If $\widetilde{a}$ is a $\mathfrak{S}_{\left(C^{\infty}\left( \T_\th\right) , L^2\left(C\left(\T^{2N}_\th \right), \tau\right) \otimes \C^{2^N} , D\right) }$-smooth element with respect to $\widehat{\pi}^\oplus$ then from \eqref{mp_lim_diff_eqn} it follows that if $\widetilde{a}_D$ is given by  \eqref{a_d_eqn} then
\begin{equation}\label{mp_lim_diff_eqn1}
\widetilde{a}_D =\lim_{j \to \infty} 1_{C_b\left(\R^{2n}\right) } \otimes\left[D_j, a_j\right] =  \sum_{\mu = 1}^{2N} \frac{\partial \widetilde{a}}{\partial x_\mu} u^*_\mu \left[ D, u_\mu \right]
\end{equation}
For any $\xi = \left( \xi_1, \dots, \xi_{2^N}\right) \in \Coo\left(\T^{2N}_\Th \right) \otimes \C^{2^N} \subset L^2\left(\Coo\left(\T^{2N}_\Th \right), \tau \right) \otimes \C^{2^N}$  the operator $\widetilde{ D}$ given by \eqref{inf_lift_D_eqn} satisfies to the following condition
$$
\widetilde{D}\left(\widetilde{a} \otimes \xi \right) = \sum_{\mu = 1}^{2N} \frac{\partial \widetilde{a}}{\partial x_\mu} u^*_\mu \left[ D, u_\mu \right]\xi + \widetilde{a} D\xi = \sum_{\mu =1}^{2N}\frac{\partial \widetilde{a}}{\partial x_\mu} \otimes \gamma^\mu \xi + \widetilde{a} \otimes   \sum_{\mu =1}^{2N}\gamma^\mu\frac{\partial \xi}{\partial x_\mu},
$$
and taking into account \eqref{mp_part_prod_eqn} one has.
\begin{equation}\label{mp_dirac_eqn}
\widetilde{D}=   \sum_{\mu =1}^{2N}\gamma^\mu\frac{\partial }{\partial x_\mu}.
\end{equation}
The given by \eqref{smooth_seminorms_eqn} seminorms $\left\| \cdot\right\|_s$ satisfy to following equation
\begin{equation}\label{mp_s_n_eqn}
\left\| \widetilde{a}\right\|_s = \left\| \pi^s\left( \widetilde{a}\right) \right\|_{\mathrm{op}}
\end{equation}
where
$$
\pi^s\left( \widetilde{a}\right)  =  \begin{pmatrix}  \pi^{s-1}\left( \widetilde{a}\right) & 0 \\ \left[\sum_{\mu =1}^{2N}\gamma^\mu\frac{\partial }{\partial x_\mu}~,\pi^{s-1}\left( \widetilde{a}\right)\right] &  \pi^{s-1}\left( \widetilde{a}\right)\end{pmatrix}.
$$

\end{empt}
 \begin{lem}\label{mp_more_lem}
 	If $\widetilde{a}$ is a $\mathfrak{S}_{\left(C^{\infty}\left( \T_\th\right) , L^2\left(C\left(\T^{2N}_\th \right), \tau\right) \otimes \C^{2^N} , D\right) }$-smooth element with respect to the representation $\widehat{\pi}^\oplus$ given by \eqref{mp_equ_eqn} (cf. Definition \ref{smooth_el_defn}) then  
 	$
 	\widetilde{a} \in \SS\left(\R^{2N}_\th \right).
 	$
 \end{lem}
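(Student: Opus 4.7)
\textbf{Proof plan for Lemma \ref{mp_more_lem}.}

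The plan is to extract from the four conditions of Definition \ref{smooth_el_defn} two complementary pieces of information about $\widetilde{a}$: unlimited differentiability as a bounded operator, and a finite-rank (compactness) property coming from the Pedersen ideal. Together with the Moyal-plane identities from \ref{mp_coord_constr} these should force $\widetilde{a}$ to lie in $\SS(\R^{2N}_\th)$.

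First, I would use condition (b) of Definition \ref{smooth_el_defn} together with Lemma \ref{mp_semin_lem}. The strong convergence of $\{1\otimes\pi^s_j(a_j)\}_{j\in\N}$ in $B(\widetilde{\H}^{2^s})$ gives that $\|\widetilde{a}\|_s <\infty$ for every $s\in\N^0$, and Lemma \ref{mp_semin_lem} identifies this system of seminorms with the operator-norm seminorms
\[
\left\|\frac{\partial^{t_1+\dots+t_{2N}}\widetilde{a}}{\partial x_1^{t_1}\dots\partial x_{2N}^{t_{2N}}}\right\|_{\mathrm{op}}, \qquad (t_1,\dots,t_{2N})\in(\N^0)^{2N}.
\]
Hence every mixed partial derivative of $\widetilde{a}$ (in the Moyal sense of $\SS'(\R^{2N})$) is represented by a bounded operator on $L^2(\R^{2N})$, with finite operator norm.

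Second, I would invoke condition (d): $\widetilde{a}\in K(C_0(\R^{2N}_\th))$. By \eqref{mp_2N_eqn} we have $C_0(\R^{2N}_\th)\cong\K$, and the Pedersen ideal of $\K$ is exactly the ideal of finite-rank operators. Thus $\widetilde{a}$ is a finite-rank operator on the Fock space. This will be the input providing the ``rapid-decay'' side of the Schwartz condition.

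Third, I would translate multiplication by coordinate functions into operator-theoretic data using the identities \eqref{mp_pq_mult_eqn}--\eqref{mp_partial_eqn}: in the rescaled product $\times$, commutators with $q$ and $p$ coincide (up to factors of $i$) with $\partial_p$ and $\partial_q$, and conversely the operators $x_\mu \times$ and $\times x_\mu$ combine a coordinate multiplication with a partial derivative. Iterating these, together with the bounds on all $\partial^\beta\widetilde{a}$ obtained in the first step, will give operator-norm bounds on all $x^\alpha\star\widetilde{a}$ and $\widetilde{a}\star x^\alpha$. Equivalently, passing to the matrix realization of Proposition \ref{mp_fmn}, all the seminorms $r_k(\widetilde{a})$ from \eqref{mp_matr_norm} become finite, because the Hamiltonian $H=\tfrac{1}{2}(p^2+q^2)$ acts on $f_{mn}$ as scalar multiplication by $(2m+1)$ or $2(n+1)$ via \eqref{mp_ham_act_eqn}, so rapid growth of these scalars against $|c_{mn}|^2$ is controlled precisely by the operator-norm bounds on iterated $\star$-multiplications by $p,q$. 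Using the characterization $\SS(\R^{2N}_\th)\cong \underbrace{\SS(\R^2_\th)\otimes\dots\otimes\SS(\R^2_\th)}_{N\text{ times}}$ and \eqref{mp_ssin_eqn}, this will place $\widetilde{a}$ in $\SS(\R^{2N}_\th)$.

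The main obstacle I foresee is the passage from operator-norm bounds on $\partial^\beta \widetilde{a}$ to genuine rapid decay of $\widetilde{a}$ as a function (or, equivalently, of its matrix coefficients in the $f_{mn}$ basis). Operator-norm differentiability is not by itself the same as Schwartz class, so this step must genuinely use the Pedersen-ideal input from condition (d): a finite-rank operator all of whose $p$- and $q$-commutators are uniformly bounded in operator norm must have its spectral data (the vectors spanning its range and cokernel) lie in the Schwartz domain of the harmonic oscillator, which in the $f_{mn}$ basis is exactly the rapid-decay condition defining $\SS(\R^{2N}_\th)$ via Proposition \ref{mp_fmn}.
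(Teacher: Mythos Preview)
Your plan matches the paper's proof closely: both use condition~(d) to place $\widetilde{a}$ in the Pedersen ideal of $C_0(\R^{2N}_\th)\cong\K$, hence make it finite-rank, and both use condition~(b) together with Lemma~\ref{mp_semin_lem} to obtain operator-norm bounds on all mixed partials $\partial^\beta\widetilde{a}$, then combine these via the Moyal identities \eqref{mp_pq_mult_eqn}--\eqref{mp_ham_act_eqn} to force rapid decay in the $f_{mn}$-basis.

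The ``obstacle'' you flag is real, and the paper resolves it by a concrete linear-independence argument you should incorporate. First, the finite-rank element is written as $\widetilde{a}=\sum_{j=1}^M \widetilde{a}^j_1\otimes\cdots\otimes\widetilde{a}^j_N$ with $M$ \emph{minimal}; minimality forces the partial tensor products $P_j=\widetilde{a}^j_2\otimes\cdots\otimes\widetilde{a}^j_N$ to be linearly independent, so boundedness of $\partial_{p_1}\widetilde{a}$ forces boundedness of each $\partial_{p_1}\widetilde{a}^j_1$ separately. Second, each finite-rank $\widetilde{a}^j_1$ is decomposed as $\sum_k \alpha_k\times\beta_k$ where $\alpha_k$ is a single column and $\beta_k$ a single row in the $f_{mn}$-basis (via the singular-value form \eqref{mp_fact_eqn}); this makes the terms $q_1\times\alpha_k\times\beta_k$ and $\alpha_k\times\beta_k\times q_1$ in the expansion of $\partial_{p_1}\widetilde{a}^j_1$ linearly independent, so each $q_1\times\alpha_k$ and $\beta_k\times q_1$ is individually bounded. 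Iterating with $H_1=\overline{a}_1\times a_1-1$ then gives $H_1^{\times l}\times\alpha_k$ and $\beta_k\times H_1^{\times l}$ bounded for all $l$, which by \eqref{mp_ham_act_eqn} is exactly rapid decay of the coefficients, i.e.\ $\alpha_k,\beta_k\in\bigcap_{s,t}\G_{st}=\SS(\R^2)$. Without this two-stage linear-independence step your third paragraph does not go through: bounds on $\partial^\beta\widetilde{a}$ alone do not control $x^\alpha\star\widetilde{a}$, since $q\times f=(q+i\partial_p)f$ mixes the coordinate multiplication with the derivative.
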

\begin{proof}
Let $\widetilde{a}$ is a $\mathfrak{S}_{\left(C^{\infty}\left( \T_\th\right) , L^2\left(C\left(\T^{2N}_\th \right), \tau\right) \otimes \C^{2^N} , D\right) }$-smooth element with respect to  $\widehat{\pi}^\oplus$ given by \eqref{mp_equ_eqn}. From  the condition (d) of the Definition  \ref{smooth_el_defn} and the Theorem \ref{nt_inf_cov_thm} it follows that $\widetilde{a} \in K\left(C_0\left(\R^{2N}_\th \right)  \right) $. Otherwise from $C_0\left(\R^{2N}_\th\right) \approx \mathcal K$ (cf. \eqref{mp_2N_eqn} )  and taking into account that any $b \in K\left(\mathcal K \right)$ is a finite-rank operator (in \cite{pedersen:mea_c} is proven that any operator in $K\left(\mathcal K \right)$ is a finite-rank operator), one concludes that $\widetilde{a}$ is a finite-rank operator . From this fact and \eqref{mp_2N_eqn} it turns out that
\begin{equation}\label{mp_fin_eqn}
\begin{split}
\widetilde{a} = \sum_{j = 1}^M \widetilde{a}^j, \text{ where } \widetilde{a}^j = \widetilde{a}^j_1 \otimes\dots \otimes \widetilde{a}^j_N \in \underbrace{C\left(\R^{2}_\th\right)\otimes\dots\otimes C\left(\R^{2}_\th\right)}_{N-\mathrm{times}} , 
\end{split}
\end{equation}
where $ \widetilde{a}^j_1, \dots  \widetilde{a}^j_N \in C\left(\R^{2}_\th\right)$ are finite-rank operators. Let us select the  representation \eqref{mp_fin_eqn} such that $M$ is minimal.
If $a \in C_0\left(\R^{2}_\th\right)$ is a finite-rank operator then it can be represented by the following matrix
\begin{equation}\label{mp_fr_eqn}
\begin{split}
a = u \begin{pmatrix}
\la_1& 0 &\ldots & 0 & 0&\ldots \\
0& \la_2 &\ldots & 0 &  0 &\ldots \\
\vdots& \vdots &\ddots  & \vdots& \vdots  &\ldots\\
0& 0 &\ldots & \la_r& 0  &\ldots  \\
0& 0 &\ldots & 0& 0  &\ldots  \\
\vdots& \vdots &\ldots & \vdots & \vdots &\ddots  \\
\end{pmatrix} v 
\end{split}
\end{equation}
where $u, v$ are finite-rank partial isometries. Above operator can be represented by following way
\begin{equation}\label{mp_fact_eqn}
\begin{split}
a = \sum_{k = 1}^r \al_k \bt_k, \text{ where } \al_k, \bt_k \text{ are given by}\\ \\
\al_k = u \begin{pmatrix}
0& \ldots & 0 & 0 & \dots \\
\vdots& \ddots & \vdots &\vdots & \ldots\\
0& \ldots &\left|\la_k\right| & 0&  \ldots \\
0& \ldots &0 & 0 & \ldots  \\
\vdots& \vdots &\ldots & \vdots & \ddots \\
\end{pmatrix}, \\
\bt_k =  \begin{pmatrix}
0& \ldots & 0 & 0 & \dots \\
\vdots& \ddots & \vdots &\vdots & \ldots\\
0& \ldots &\frac{\la_k}{\left|\la_k\right|} & 0&  \ldots \\
0& \ldots &0 & 0 & \ldots  \\
\vdots& \vdots &\ldots & \vdots & \ddots \\
\end{pmatrix}v. \\
\end{split}
\end{equation}
Above equation is equivalent to
\begin{equation}\label{mp_row_col_eqn}
\begin{split}
\al_k = \sum_{m = 0}^\infty \al_{mk} f_{mk},  \\
\bt_k = \sum_{m = 0}^\infty \bt_{km} f_{km}  \\
\end{split}
\end{equation}
where $\left\lbrace f_{mk} \in \SS\left( \R^{2}_\th\right) \right\rbrace_{m,k \in \N^0} $ are given by the Proposition \ref{mp_fmn}.
From the above equation it follows that $\al_k \bt_k$ is a bounded operator if and only if
\begin{equation*}
\begin{split}
\sum_{m = 0}^\infty \left| \al_{mk}\right|^2 < \infty,  \\
\sum_{m = 0}^\infty \left| \bt_{km}\right|^2 < \infty.  \\
\end{split}
\end{equation*}
From the Remark \ref{mp_l2_rem} it turns out  $\al_k \bt_k \in C_0\left( \R^2\right)$. From this fact and taking to account equation \eqref{mp_fact_eqn} one concludes that any term of the finite sum \eqref{mp_fin_eqn} lies in $C_0\left(\R^2 \right)$. It follows that $\widetilde{a} \in C_0\left(\R^{2N} \right)$.
Denote by 
$$
a_j = \sum_{g \in \ker\left(\Z^{2N}\to\Z^{2N}_{m_j} \right) } \widetilde{a}.
$$
From the condition (a) of the Definition \ref{smooth_el_defn} it turns out that $a_j \in \Coo\left(\T^{2N}_{\th/m_j} \right)$. It follows that $a_j$  corresponds to a smooth function in $\Coo\left(\T^{2N}_{m_j}\right) $. Otherwise $a_j$ can be regarded as a smooth periodic function on $\R^{2N}$ and $a_j$ is a distribution, i.e. $a_j \in \SS'\left(\R^{2N} \right)$. 
From the proof of the  Lemma \ref{mp_semin_lem} one can write
$$1_{C_b\left(\R^{2N}_\th \right) } \otimes \pi^s_j\left(a_j \right) = \left(m^j_{\al\bt}\right)_{\al,\bt = 1, \dots 2^s2^N}\in \mathbb{M}_{2^s2^N}\left(B\left(L^2\left(\R^{2N}\right)  \right) \right).$$ 
From \eqref{mp_multi_lim_eqn} it follows that for any multiindex $\left( t_1,\dots t_{2N}\right)\in \left(\N^0 \right)^{2N}$ there is $s \in \N$ such that
$1_{C_b\left(\R^{2N}_\th \right) } \otimes \pi^s_j\left(a_j \right)\in B\left(\widetilde{\H}^{2^s} \right)$ is represented by a matrix $$ \left(m^j_{\al\bt}\right)_{\al,\bt = 1, \dots 2^s2^N}\in \mathbb{M}_{2^s2^N}\left(B\left(L^2\left(\R^{2N}\right)  \right) \right)$$ such that there are $\al$, $\bt$ which satisfy to the following equation
\begin{equation}
m^j_{\al\bt}= \frac{\partial^{t_1+ \dots + t_{2N}} a_j}{\partial x^{t_1}_{1}\dots\partial x^{t_{2N}}_{2N}}.
\end{equation} 

  From the condition (b) of the Definition \ref{smooth_el_defn} following conditions hold:
\begin{itemize}
\item For any  multiindex $\left( t_1, \dots, t_{2N}\right)\in \left(\N^0 \right)^{2N}$ there is the following limit
\begin{equation*}
\widetilde{m}_{\al\bt} =\lim_{j \to \infty}	m^j_{\al\bt} 
\end{equation*}
 in the strong topology (cf. Definition \ref{strong_topology}).
\item The limit corresponds to a bounded operator with respect to the operator norm \eqref{mp_op_norm_eqn}. 
\end{itemize}
From (ii) of the Lemma \ref{mp_weak_lem} it follows that the strong topology limit is $\widetilde{m}_{\al\bt} =\lim_{j \to \infty}	m^j_{\al\bt}$ is the limit in sense of the weak-* convergence, so one has
\begin{equation}\label{mp_ss_eqn}
\widetilde{m}_{\al\bt} =\lim_{j \to \infty}	m^j_{\al\bt} =\frac{\partial^{t_1+ \dots + t_{2N}} \widetilde{a}}{\partial x^{t_1}_{1}\dots\partial x^{t_{2N}}_{2N}}.
\end{equation}
and right part of \eqref{mp_ss_eqn} corresponds to a bounded operator, i.e. $\frac{\partial^{t_1+ \dots + t_{2N}} \widetilde{a}}{\partial x^{t_1}_{1}\dots\partial x^{t_{2N}}_{2N}} \in B\left(L^2\left(\R^{2N} \right)  \right)$.
If one considers a factorization \eqref{mp_fin_eqn}
\begin{equation*}
\begin{split}
\widetilde{a} = \sum_{j = 1}^M \widetilde{a}^j, \text{ where } \widetilde{a}^j = \widetilde{a}^j_1 \otimes\dots \otimes \widetilde{a}^j_N \in \underbrace{C\left(\R^{2}_\th\right)\otimes\dots\otimes C\left(\R^{2}_\th\right)}_{N-\mathrm{times}}  
\end{split}
\end{equation*}
such that $M$ is minimal then all partial tensor products
\begin{equation*}
\begin{split}
P_j =
 \widetilde{a}^j_2 \otimes\dots \otimes \widetilde{a}^j_N \in \underbrace{C\left(\R^{2}_\th\right)\otimes\dots\otimes C\left(\R^{2}_\th\right)}_{\left( N-1\right) -\mathrm{times}};~~ j = 1,\dots, M 
\end{split}
\end{equation*}
are linearly independent. Similarly to \ref{mp_coord_constr}  for $j^{\text{th}}$ term of the tensor product $$\underbrace{C\left(\R^{2}_\th\right)\otimes\dots\otimes C\left(\R^{2}_\th\right)}_{N-\mathrm{times}}$$ we denote by $p_j, q_j$ coordinates which satisfy to \eqref{mp_pq_mult_eqn} - \eqref{mp_pq_mult}. 
One has
\begin{equation*}
\frac{\partial}{\partial p_1} \widetilde{a} = \sum_{j = 1}^M \frac{\partial}{\partial p_1} \widetilde{a}^j_1 \otimes \widetilde{a}^j_2   \otimes\dots \otimes \widetilde{a}^j_N .
\end{equation*}
Elements  $P_j$ are linearly independent, it follows that if any term in the above sum is unbounded with respect to the norm  \eqref{mp_op_norm_eqn} then all sum is unbounded.  So $\frac{\partial}{\partial p_1} \widetilde{a}^j_1$ is bounded for any $j = 1,\dots M$. Otherwise $\widetilde{a}^1_1$ is a finite-rank operator it follows that  $\widetilde{a}^1_1$ can be represented by \eqref{mp_fact_eqn}, i.e.
\begin{equation}\label{mp_a11_eqn}
\begin{split}
\widetilde{a}^1_1 = \sum_{k = 1}^r \al_k \times \bt_k\\ 
\end{split}
\end{equation}
Otherwise taking into account \eqref{mp_partial_eqn} one has
\begin{equation}\label{mp_indep_sum}
\begin{split}
\frac{\partial}{\partial p_1} \widetilde{a}^j_1 = \sum_{k = 1}^r -iq_1 \times \al_k \times \bt_k + \sum_{k = 1}^r  \al_k \times \bt_k \times iq_1.
\end{split}
\end{equation}
From  \eqref{mp_fact_eqn} it turns out that all terms in \eqref{mp_indep_sum} are linearly independent, so if one or more terms are unbounded (with respect to the norm \eqref{mp_op_norm_eqn}) then the whole sum is unbounded.
Otherwise $q_1 \times \al_k \times \bt_k$ is unbounded if and only if $q_1 \times \al_k$ is unbounded. Similarly $ \al_k \times \bt_k \times q_1$ is unbounded if and only if  $\bt_k \times q_1$ is unbounded. From this fact it turns out that all operators 
\begin{equation*}
\begin{split}
q_1 \times \al_k, ~~\bt_k \times q_1
\end{split}
\end{equation*}
are bounded. Similarly one can prove that following operators
\begin{equation*}
\begin{split}
p_1 \times \al_k, ~~\bt_k \times p_1
\end{split}
\end{equation*}
are bounded. Clearly if \begin{equation*}
\begin{split}
a_1 = \frac{q_1 + ip_1}{\sqrt{2}}; ~~\overline{a}_1 = \frac{q_1 - ip_1}{\sqrt{2}}\\
\end{split}
\end{equation*}
then operators 
\begin{equation*}
\begin{split}
a_1  \times \al_k, ~~\bt_k \times a_1, ~ \overline{a}_1  \times \al_k, ~~\bt_k \times \overline{a}_1, 
\end{split}
\end{equation*}
are bounded. Similarly to \eqref{mp_ham_eqn} we define $H_1 = \overline{a}_1\times a_1 - 1$. For any $m, n \in \N$ a distribution  $\frac{\partial^m}{\partial^m p_1}\frac{\partial^n}{\partial^n q_1} \widetilde{a}^1_1$ is a bounded  (with respect to \eqref{mp_op_norm_eqn}) it follows that for any $l \in \N$ following distributions
\begin{equation*}
\begin{split}
\underbrace{H_1 \times \dots \times H_1}_{l -\mathrm{times}} \times \al_k, ~~\bt_k \times \underbrace{H_1 \times \dots \times H_1}_{l -\mathrm{times}}
\end{split}
\end{equation*}
are bounded operators. From  \eqref{mp_ham_act_eqn} and \eqref{mp_row_col_eqn} it follows that
\begin{equation*}
\begin{split}
\al_k = \sum_{m = 0}^\infty \al_{mk} f_{mk},  \\
\bt_k = \sum_{m = 0}^\infty \bt_{km} f_{km},  \\
\underbrace{H_1 \times \dots\times  H_1}_{l -\mathrm{times}} \times \al_k = \sum_{m = 0}^\infty \left(2 m + 1 \right)^l \al_{mk} f_{mk}, \\
\bt_k \times \underbrace{H_1 \times \dots \times H_1}_{l -\mathrm{times}}=  \sum_{m = 0}^\infty \left(2 m + 1 \right)^l \bt_{km} f_{km},
\end{split}
\end{equation*}
hence operators $\underbrace{H_1 \times \dots \times H_1}_{l -\mathrm{times}} \times \al_k$ and $\bt_k \times \underbrace{H_1 \times \dots \times H_1}_{l -\mathrm{times}}$ are bounded if following conditions hold:
\begin{equation*}
\begin{split}
 \sum_{m = 0}^\infty \left(2 m + 1 \right)^{2l} \left| \al_{mk}\right|^2 < \infty, \\
\sum_{m = 0}^\infty \left(2 m + 1 \right)^{2l} \left| \bt_{km}\right|^2 < \infty
\end{split}
\end{equation*}
Form the Definition \ref{df:Gst} it follows that for any $s \in \N$ following conditions hold:
\begin{equation*}
\begin{split}
\al_k \in \G_{2l,s}, ~~
\bt_k \in \G_{s,2l}
\end{split}
\end{equation*}
Since we can select arbitrary $l$ and taking into account \eqref{mp_ssin_eqn} one has
$$
\al_k \times \bt_k \in \SS\left(\R^2 \right).
$$
From \eqref{mp_a11_eqn} it turns out that
$$
\widetilde{a}^1_1 \in \SS\left(\R^2 \right)
$$
If we consider representation \eqref{mp_fin_eqn}
\begin{equation*}
\begin{split}
\widetilde{a} = \sum_{j = 1}^M \widetilde{a}^j, \text{ where } \widetilde{a}^j = \widetilde{a}^j_1 \otimes\dots \otimes \widetilde{a}^j_N \in \underbrace{C\left(\R^{2}_\th\right)\otimes\dots\otimes C\left(\R^{2}_\th\right)}_{N-\mathrm{times}}  
\end{split}
\end{equation*}
then similarly to the above construction one can prove that
$$
\widetilde{a}^j_k \in \SS\left(\R^2 \right); ~ j = 1,\dots, M, ~~ k=1, \dots N.
$$
From \eqref{mp_fin_eqn} it follows that 
$$
\widetilde{a} \in \underbrace{\SS\left(\R^{2}_\th\right)\otimes\dots\otimes \SS\left(\R^{2}_\th\right)}_{N-\mathrm{times}} \subset\SS\left(\R^{2N} \right).
$$
where one means the algebraic tensor product.
\end{proof}

\begin{lem}\label{mp_ss_in_smooth}
	Denote by $\Coo_0\left(\R^{2N}_\th \right) $  the smooth algebra of $\mathfrak{S}_{\left(C^{\infty}\left( \T_\th\right) , L^2\left(C\left(\T^{2N}_\th \right), \tau\right) \otimes \C^{2^N} , D\right)}$ with respect to $\widehat{\pi}^\oplus$. Following condition holds
	\begin{equation*}
	\SS\left( \R^{2N}\right) \subset \Coo_0\left(\R^{2N}_\th \right).
	\end{equation*}
\end{lem}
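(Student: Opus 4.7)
The plan is to show that every Schwartz function is a limit, in all of the seminorms $\|\cdot\|_s$ defined by \eqref{smooth_seminorms_eqn}, of finite linear combinations of smooth elements already produced by Corollary \ref{mp_in_cor}. Since $\Coo_0\left(\R^{2N}_\th \right)$ is by Definition \ref{smooth_alg_defn} the completion of $\widetilde{A}_{\mathrm{smooth}}$ in exactly that topology, this is what is needed.

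First I would use Proposition \ref{mp_fmn} to expand an arbitrary $\widetilde{a}\in \SS\left(\R^{2N}_\th\right)$ in the basis $\left\{f_I\right\}$, where $I=\left(m_1,n_1,\dots,m_N,n_N\right)$ and $f_I = f_{m_1n_1}\otimes\cdots\otimes f_{m_Nn_N}$, so that
\begin{equation*}
\widetilde{a} = \sum_{I} c_I f_I, \qquad \left\{c_I\right\} \text{ rapidly decreasing},
\end{equation*}
with the Fr\'echet topology given by the seminorms $r_k$ of \eqref{mp_matr_norm}. Each truncation $\widetilde{a}_M = \sum_{\left|I\right|\le M} c_I f_I$ is a finite sum of elements which by Corollary \ref{mp_in_cor} are $\mathfrak{S}_{\left(C^{\infty}\left( \T_\th\right) , L^2\left(C\left(\T^{2N}_\th \right), \tau\right) \otimes \C^{2^N} , D\right)}$-smooth, so $\widetilde{a}_M\in \widetilde{A}_{\mathrm{smooth}}$. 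It therefore suffices to prove $\widetilde{a}_M \to \widetilde{a}$ in every seminorm $\|\cdot\|_s$.

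Next I would invoke Lemma \ref{mp_semin_lem} to replace the seminorm $\|\cdot\|_s$ by the equivalent system $\|\cdot\|_{\left(t_1,\dots,t_{2N}\right)}$ built from operator norms of partial derivatives. Using \eqref{mp_partial_eqn} the partial derivatives $\partial/\partial x_\mu$ are realised by left and right Moyal multiplication by the coordinate functions $p,q$, so that for every multi-index $\left(t_1,\dots,t_{2N}\right)$ the derivative $\partial^{t_1+\dots+t_{2N}}\widetilde{a}/\partial x_1^{t_1}\cdots\partial x_{2N}^{t_{2N}}$ is again a Schwartz element whose basis coefficients are obtained from $\left\{c_I\right\}$ by an operator that preserves rapid decay (this is visible from the explicit action \eqref{mp_hamd_act_eqn} of $a,\bar a$ on the $f_{mn}$, together with the tensor factorisation \eqref{mp_tensor_prod}). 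Combined with Lemma \ref{nt_l_2_est_lem}, which gives
\begin{equation*}
\left\|\cdot\right\|_{\mathrm{op}} \le \left(2\pi\th\right)^{-N/2}\left\|\cdot\right\|_2,
\end{equation*}
the rapid decay of the coefficients of every derivative of the tail $\widetilde{a}-\widetilde{a}_M$ forces the operator-norm tails to vanish as $M\to\infty$. Hence $\widetilde{a}_M\to \widetilde{a}$ in each $\|\cdot\|_{\left(t_1,\dots,t_{2N}\right)}$, so in each $\|\cdot\|_s$, which places $\widetilde{a}$ in $\Coo_0\left(\R^{2N}_\th\right)$.

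The main obstacle is the bookkeeping in the derivative step: one has to check that the formulae \eqref{mp_pq_mult_eqn}--\eqref{mp_hamd_act_eqn} really do turn the single differentiations $\partial/\partial x_\mu$ into operations preserving the rapidly decreasing class of coefficients $\left\{c_I\right\}$, and that this survives iteration and tensor products. Once this is under control, the passage from $L^2$-tail to operator-norm tail is automatic from Lemma \ref{nt_l_2_est_lem}, and the conclusion $\SS\left(\R^{2N}\right)\subset \Coo_0\left(\R^{2N}_\th\right)$ follows by density of $\widetilde{A}_{\mathrm{smooth}}$ together with the defining completion in Definition \ref{smooth_alg_defn}.
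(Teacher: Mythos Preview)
Your plan is correct and follows the same skeleton as the paper's proof: expand in the $f_{mn}$-basis via Proposition \ref{mp_fmn}, note that the finite truncations lie in $\widetilde{A}_{\mathrm{smooth}}$ by Corollary \ref{mp_in_cor}, and use Lemma \ref{mp_semin_lem} to reduce convergence in the seminorms $\|\cdot\|_s$ to operator-norm convergence of all partial derivatives of the tail.

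The one genuine difference is in how you establish that last operator-norm convergence. The paper argues directly: using the shift relations \eqref{mp_hamd_act_eqn} and $\|f_{mn}\|_{\mathrm{op}}=1$ it produces the explicit bound
\[
\left\|\frac{\partial^{t_1+t_2}}{\partial a^{t_1}\partial\bar a^{t_2}} f_{mn}\right\|_{\mathrm{op}} \le \bigl(2m+2n+2+2|t|\bigr)^{|t|},
\]
and then sums $|c_I|$ against this polynomial weight, which the rapid-decay condition controls. You instead route through the $L^2$ norm via Lemma \ref{nt_l_2_est_lem}: since the truncations converge in the Schwartz topology (Proposition \ref{mp_fmn}), their derivatives converge in $L^2$, hence in $\|\cdot\|_{\mathrm{op}}$ by the bound $\|\cdot\|_{\mathrm{op}}\le(2\pi\th)^{-N/2}\|\cdot\|_2$. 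Your route is cleaner and avoids the explicit combinatorics of \eqref{mp_4mn_eqn}--\eqref{mp_sss_eqn}; the paper's route has the advantage of giving quantitative control of each basis element individually, which is what one would want if one later needed sharper estimates. Either way the conclusion is the same.
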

\begin{proof}
	Let $I_0 = \left(\N^0\right)^2$ and let $I = I_0^N$. For any $\nu = \left(\left(m^\nu_1, n^\nu_1\right), \dots , \left(m^\nu_N, n^\nu_N\right)  \right) \in I$ we denote 
	\begin{equation*}
	f_\nu \stackrel{\text{def}}{=} f_{m^\nu_1n^\nu_1} \otimes \dots \otimes f_{m^\nu_Nn^\nu_N} \in \underbrace{\SS\left(\R^{2}\right)\otimes\dots\otimes \SS\left(\R^{2}\right)}_{N-\mathrm{times}} \subset \SS\left( \R^{2N}\right) 
	\end{equation*}
	where we mean the algebraic tensor product. Indeed $\SS\left( \R^{2N}\right)$ is the projective completion of the above algebraic tensor product with respect to seminorms $r_{k }$ given by \eqref{mp_ss_norm_eqn}. From the seminorms $(r_k)$ given by \eqref{mp_matr_norm} it turns out that $\SS\left( \R^{2N}\right)$ is a space of $\C$-linear combinations
	\begin{equation*}
	\sum_{\nu \in I} c_\nu f_\nu; \text{ where } c_\nu = c_{\left(\left(m^\nu_1, n^\nu_1\right), \dots , \left(m^\nu_N, n^\nu_N\right)  \right)} \in \C
	\end{equation*}
	such that for any $k = \left(k_1, \dots, k_N \right) \in \left( \N^0\right)^N$ following condition holds 
	\begin{equation}\label{mp_ss_norm_eqn}
r_k	\left(\sum_{\nu \in I} 
	c_\nu f_\nu\right) = \biggl(  \th^{2\left( k_1 + \dots + k_N\right)} \sum_{\nu \in I}\left|c_\nu \right|^2 \prod_{p = 1}^N  \left( m^\nu_p+\half\right) ^{k_p} \left( n^\nu_p+\half\right) ^{k_p} \biggr)^{1/2} < \infty.
	\end{equation}
	 If $M \in \N$ and 
	and $I_M \subset I$ is a finite subset such that
	\begin{equation}\label{mp_I_M}
	I_M = \left\{\left(\left(m^\nu_1, n^\nu_1\right), \dots , \left(m^\nu_N, n^\nu_N\right)  \right) \in I~|~m^\nu_1, n^\nu_1,\dots,m^\nu_N, n^\nu_N \le M \right\}
	\end{equation}
	then \eqref{mp_ss_norm_eqn} is equivalent to
	\begin{equation*}
	\sum_{\nu \in I\backslash I_M}\left|c_\nu \right| \prod_{p = 1}^N  \left( m^\nu_p+\half\right) ^{k_p} \left( n^\nu_p+\half\right)^{k_p} < \infty.
	\end{equation*}
	From the above equation and
	$
	\left(m+n+1 \right)^k < 2^k \left(m+\half \right)^k\left(n + \half \right)^k, \forall m, n, k > 1.
	$ it follows that for any $M > 1$ and $l > 1$ following condition holds
	\begin{equation}\label{mp_l_eqn}
\sum_{\nu \in I\backslash I_M}\left|c_\nu \right| \prod_{p = 1}^N  \left( m^\nu_p+ n^\nu_p+1\right)^{l} < \infty.
	\end{equation}

	From  the Lemma \ref{mp_semin_lem} it turns out that 
the system of seminorms $\left\| \cdot\right\|_s$  given by \eqref{smooth_seminorms_eqn} is equivalent to the system of seminorms $\left\| \cdot\right\|_{\left(t_1, \dots, t_{2N} \right) }$ given by
	\begin{equation*}
	\left\|\widetilde{a}\right\|_{\left(t_1, \dots, t_{2N} \right) } \stackrel{\mathrm{def}}{=} \left\| \frac{\partial^{t_1+ \dots + t_{2N}} }{\partial x^{t_1}_{1}\dots\partial x^{t_{2N}}_{2N}}\widetilde{a}\right\|_{\mathrm{op}} 
	\end{equation*}
	This lemma is true if for any ${\left(t_1, \dots, t_{2N} \right) }\in \left( \N^0\right)^{2N} $ from
	$$
	\left\| \frac{\partial^{t_1+ \dots + t_{2N}} }{\partial x^{t_1}_{1}\dots\partial x^{t_{2N}}_{2N}}\widetilde{a}\right\|_{\mathrm{op}} < \infty
	$$
 it follows that for any $\eps > 0$ there is a finite subset $I_f \subset I$ such that
	\begin{equation}\label{mp_eps_eqn}
	\sum_{\nu \in I\backslash I_f} \left|c_\nu \right|\left\| \frac{\partial^{t_1+ \dots + t_{2N}} }{\partial x^{t_1}_{1}\dots\partial x^{t_{2N}}_{2N}}f_\nu\right\|_{\mathrm{op}} < \eps.
	\end{equation}
		Let us replace coordinates $x_1, \dots, x_{2N}$ with coordinates $p_1, q_1, \dots, p_N, q_N$ such that $p_j, q_j$ are coordinates on $j^{\text{th}}$ term of the product
	$\underbrace{\SS\left(\R^{2}\right)\otimes\dots\otimes \SS\left(\R^{2}\right)}_{N-\mathrm{times}} \subset \SS\left( \R^{2N}\right) $. From the equation
	\begin{equation*}
	\frac{\partial^{t_1+ \dots + t_{2N}} }{\partial x^{t_1}_{1}\dots\partial x^{t_{2N}}_{2N}}f_\nu = \frac{\partial^{t_1, t_{2}} }{\partial p^{t_1}_{1}\partial q^{t_2}_{1}} f_{m^\nu_1, n^\nu_1} \otimes \dots \otimes \frac{\partial^{t_{2N-1}, t_{2N}} }{\partial p^{t_1}_{N}\partial q^{t_2}_{N}} f_{m^\nu_N, n^\nu_N}
	\end{equation*}
	it follows that
	\begin{equation*}
	\left\| \frac{\partial^{t_1+ \dots + t_{2N}} }{\partial x^{t_1}_{1}\dots\partial x^{t_{2N}}_{2N}}f_\nu\right\|_{\text{op}}  =\left\|  \frac{\partial^{t_1, t_{2}} }{\partial p^{t_1}_{1}\partial q^{t_2}_{1}} f_{m^\nu_1, n^\nu_1} \right\|_{\text{op}} \cdot ... \cdot \left\| \frac{\partial^{t_{2N-1}, t_{2N}} }{\partial p^{t_1}_{N}\partial q^{t_2}_{N}} f_{m^\nu_N, n^\nu_N}\right\|_{\text{op}}.
	\end{equation*}
	
Our proof can be simplified if we use scaling construction \ref{mp_scaling_constr}, i.e. we set $\th = 2$. Indeed many quantitative results does not depend on $\th$. Similarly to \eqref{mp_times_eqn} we write $f {\times} g$ instead of $f {\star_{2}} g$. Moreover one can use given by \eqref{mp_ham_eqn} coordinates $a, \overline{a}$ instead of $p, q$. From \eqref{mp_ap_eqn} it follows that
\begin{equation*}
\frac{\partial f}{\partial a} = -\overline{a} \times f+ f \times \overline{a},~~\frac{\partial f}{\partial \overline{a}} = a \times f - f \times a,
\end{equation*}
and taking into account \eqref{mp_hamd_act_eqn} one has
\begin{equation*}\label{mp_deriv_a_eqn}
\begin{split}
\frac{\partial f_{mn}}{\partial a}=-\sqrt{2m+2}f_{m+1,n}+\sqrt{2n} f_{m, n-1},\\ \frac{\partial f_{mn}}{\partial \overline{a}}= \sqrt{2m}f_{m-1,n}-\sqrt{2n + 2} f_{m, n+1}.
\end{split}
\end{equation*}
If $t_1, t_2 \in \N^0$ and $\left|t \right|=t_1 + t_2$ then from the above equations and $\left\|f_{mn}\right\|_{\text{op}}=1$ it follows that
\begin{equation}\label{mp_4mn_eqn}
\begin{split}
\left\|\frac{\partial^{t_1, t_{2}} }{\partial a^{t_1}\partial \overline{a}^{t_2}} f_{mn}\right\|_{\text{op}} \le \\
\le\left(\sqrt{2m+2 + \left|t \right|} + \sqrt{2n + \left|t \right|} \right)^{t_1}\left(\sqrt{2m + \left|t \right|} + \sqrt{2n + 2 + \left|t \right|} \right)^{t_2} <\\<\left( 2m+2n+2+2\left|t \right|\right)^{t_1} \left( 2m+2n+2+2\left|t \right|\right)^{t_2} =
\left(2m + 2n + 2 + 2 \left|t \right|\right)^{\left|t \right|}. 
\end{split}
\end{equation}
If $m, n \ge M$ then
\be\label{mp_nM_eqn}
\left(2m + 2n + 2 + 2 \left|t \right|\right)^{\left|t \right|} \le \left( \frac{4M+2+2 \left|t \right|}{4M+2} \right)^{\left|t \right|} \left(2m + 2n + 2 \right)^{\left|t \right|}.
\ee
Denote by $\left|t \right| = t_1 + \dots + t_{2N}$.
If $M \in \N$   and $I_M$ is given by \eqref{mp_I_M} then from \eqref{mp_nM_eqn} it turns out
\be\nonumber
\begin{split}
\sum_{\nu \in I \backslash I_M} \left|c_\nu \right| \prod_{p = 1}^N  \left( 2m^\nu_p+ 2n^\nu_p+2 + 2 \left|t \right|\right)^{\left|t \right|} <\\ <\left( \frac{4M+2+2 \left|t \right|}{4M+2} \right)^{\left|t \right|}  \sum_{\nu \in I\backslash I_M}\left|c_\nu \right| \prod_{p = 1}^N  \left( 2m^\nu_p+ 2n^\nu_p+2\right)^{\left|t \right|}.
\end{split}
\ee
From \eqref{mp_l_eqn}  it follows that right part of the above equation is convergent, hence one has
\begin{equation}\label{mp_sss_eqn}
\sum_{\nu \in I} \left|c_\nu \right| \prod_{p = 1}^N  \left( 2m^\nu_p+ 2n^\nu_p+2 + 2 \left|t \right|\right)^{\left|t \right|} < \infty.
\end{equation}
If $P$ is a differential operator given by
\begin{equation}\nonumber
P =	\frac{\partial^{t_1+ \dots + t_{2N}} }{\partial a^{t_1}_{1}\partial \overline{a}^{t_2}_{1}\dots\partial a^{t_{2N-1}}_{N}\partial \overline{a}^{t_{2N}}_{N}} = P_1 \otimes \dots \otimes P_N= \frac{\partial^{t_1, t_{2}} }{\partial a^{t_1}_{1}\partial \overline{a}^{t_2}_{1}}  \otimes \dots \otimes \frac{\partial^{t_{2N-1}, t_{2N}} }{\partial a^{t_{2N-1}}_{N}\partial \overline{a}^{t_{2N}}_{N}},
\end{equation}
 then from \eqref{mp_4mn_eqn} and \eqref{mp_sss_eqn} it follows that the series 
\begin{equation*}
\sum_{\nu \in I} c_\nu P f_\nu
\end{equation*}
is $\left\|\cdot\right\|_{\text{op}}$-norm convergent (cf. \eqref{mp_op_eqn}). Otherwise from the Lemma \ref{mp_semin_lem} it follows that the family of seminorms $\sum_{\nu \in I} c_\nu  f_\nu \mapsto \left\| \sum_{\nu \in I} c_\nu P f_\nu \right\|_{\mathrm{op}}$ is equivalent to the family of  seminorms $\left\| \cdot\right\|_s$ given by 
\eqref{smooth_seminorms_eqn}. It turns out $\sum_{\nu \in I} c_\nu  f_\nu $ is convergent with respect to seminorms $\left\| \cdot\right\|_s$, i.e. $$\sum_{\nu \in I} c_\nu  f_\nu\in \Coo_0\left(\R^{2N}_\th \right).$$

\end{proof}

\subsubsection{Covering of spectral triple}
\paragraph*{}

Following theorem completely describes infinite coverings of noncommutative tori.

\begin{theorem}
	Let $\mathfrak{S}_{\left(C^{\infty}\left( \T_\th\right) , L^2\left(C\left(\T^{2N}_\th \right), \tau\right) \otimes \C^{2^N} , D\right) }\in \mathfrak{CohTriple}$ be a coherent sequence of spectral triples given by \eqref{nt_triple_seq_eqn}. Let $\widehat{\pi}^\oplus: \widehat{C\left(\mathbb{T}^{2N}_{\th}\right) } \to \bigoplus_{g \in J} g L^2\left(\R^{2N}_\th \right)$ be an equivariant representation given by \eqref{mp_equ_eqn}.
	Following condition holds:
	\begin{enumerate}
		\item [(i)] If $\Coo_0\left(\R^{2N}_\th\right)$ is the smooth algebra of $\mathfrak{S}_{\left(C^{\infty}\left( \T_\th\right) , L^2\left(C\left(\T^{2N}_\th \right), \tau\right) \otimes \C^{2^N} , D\right) }$ with respect to $\widehat{\pi}^\oplus$ then $\Coo_0\left(\R^{2N}_\th\right)$ is the completion of $\SS\left(\R^{2N}_\th\right)$ with respect to seminorms given by \eqref{mp_s_n_eqn},
		\item [(ii)] The sequence $\mathfrak{S}_{\left(C^{\infty}\left( \T_\th\right) , L^2\left(C\left(\T^{2N}_\th \right), \tau\right) \otimes \C^{2^N} , D\right)}$ is regular with respect to $\widehat{\pi}^\oplus$,
		\item[(iii)] If $\widetilde{D}$ is given by \eqref{mp_dirac_eqn} then the triple $$\left(\Coo_0\left(\R^{2N}_\th\right), L^2\left(\R^{2N}\right)\otimes \mathbb{C}^{2^N}  , \widetilde{D}\right)$$ is the $\left(C\left(\mathbb{T}^{2N}_\th \right), C_0\left(\R^{2N}_\th\right), \Z^{2N} \right)$-lift of  $\left(C\left(\mathbb{T}^{2N}_\th \right), L^2\left(\T^{2N}\right)\otimes \mathbb{C}^{2^N}, D \right)$.
	\end{enumerate}
\end{theorem}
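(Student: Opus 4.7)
The plan is to combine the three technical lemmas already established in the section (the strong convergence Lemma~\ref{mp_strong_lem}, the seminorm equivalence Lemma~\ref{mp_semin_lem}, and the converse inclusion Lemma~\ref{mp_more_lem}) with the density Lemma~\ref{mp_ss_in_smooth}. The three parts of the theorem will be verified in the order (i), (ii), (iii); parts (i) and (ii) are essentially bookkeeping once the two inclusions $\SS(\R^{2N}_\th) \subset \Coo_0(\R^{2N}_\th)$ and $\widetilde{W}^\infty_{\widehat\pi^\oplus} \subset \SS(\R^{2N}_\th)$ are in hand, and (iii) is a repackaging of the explicit formula for $\widetilde{D}$ derived in \ref{mp_d_constr}.

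For (i) I would argue as follows. By Lemma~\ref{mp_more_lem} every $\mathfrak{S}_{(\ldots)}$-smooth weakly special element lies in $\SS(\R^{2N}_\th)$, so the algebra $\widetilde{A}_{\text{smooth}}$ generated by the smooth elements is contained in $\SS(\R^{2N}_\th)$ (which is an algebra under $\star_\th$). Conversely Lemma~\ref{mp_ss_in_smooth} (whose proof, together with Corollary~\ref{mp_in_cor}, exhibits each $f_{m_1 n_1}\otimes\cdots\otimes f_{m_N n_N}$ as a smooth element) shows $\SS(\R^{2N}_\th) \subset \widetilde{A}_{\text{smooth}}$. Hence $\widetilde{A}_{\text{smooth}} = \SS(\R^{2N}_\th)$, and the defining completion of $\widetilde{A}_{\text{smooth}}$ with respect to the seminorms $\|\cdot\|_s$ of \eqref{smooth_seminorms_eqn} (equivalently \eqref{mp_s_n_eqn}) is precisely the completion of $\SS(\R^{2N}_\th)$ with respect to those seminorms.

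Part (ii) then reduces to checking that $\Coo_0(\R^{2N}_\th)$ is $C^*$-norm dense in $C_0(\R^{2N}_\th)$. Since the topology generated by the seminorms $\|\cdot\|_s$ is finer than the operator norm topology (as $\|\cdot\|_0 = \|\cdot\|_{\text{op}}$ coincides with the $C^*$-norm on $C_0(\R^{2N}_\th)$), the completion $\Coo_0(\R^{2N}_\th)$ embeds continuously into $C_0(\R^{2N}_\th)$, and contains $\SS(\R^{2N}_\th)$, which by Definition~\ref{r_2_N_repr} is $C^*$-norm dense in $C_0(\R^{2N}_\th)$. Thus $\Coo_0(\R^{2N}_\th)$ is dense in $C_0(\R^{2N}_\th)$, which by Definition~\ref{smooth_alg_defn} means that $\mathfrak{S}_{(\ldots)}$ is good (i.e.\ regular) with respect to $\widehat{\pi}^\oplus$.

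For (iii) the plan is to assemble the three ingredients required by Definition~\ref{reg_triple_defn}: that $\mathfrak{S}_\th \in \mathfrak{FinAlg}$ is good with infinite noncommutative covering $(C(\T^{2N}_\th), C_0(\R^{2N}_\th), \Z^{2N})$ (supplied by Theorem~\ref{nt_inf_cov_thm}), that the coherent sequence \eqref{nt_triple_seq_eqn} is good (just proved in (ii)), and that the unbounded operator arising from \eqref{inf_lift_D_eqn} is the one stated. The last item was in fact computed in \ref{mp_d_constr}: writing $\widetilde a_D$ via \eqref{mp_lim_diff_eqn1} and evaluating $\widetilde{D}(\widetilde a \otimes \xi)$ on the dense subspace $\SS(\R^{2N}_\th)\otimes_{\Coo(\T^{2N}_\th)}\Coo(\T^{2N}_\th)$, one obtains, using the Leibniz rule \eqref{mp_part_prod_eqn}, the closed-form expression $\widetilde{D} = \sum_{\mu=1}^{2N}\gamma^\mu \partial/\partial x_\mu$ on $L^2(\R^{2N})\otimes \C^{2^N}$. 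The Hilbert space identification $\widetilde{\H} \cong L^2(\R^{2N}_\th)\otimes \C^{2^N} \cong L^2(\R^{2N})\otimes \C^{2^N}$ was established in \ref{nt_inf_ind_repr_constr_sec}. Combining these three items gives exactly the conclusion that $(\Coo_0(\R^{2N}_\th), L^2(\R^{2N})\otimes\C^{2^N}, \widetilde{D})$ is the required lift.

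The principal obstacle is part (i), and more specifically the direction already handled by Lemma~\ref{mp_more_lem}: showing that the strong limits appearing in Definition~\ref{smooth_el_defn}(b) force the full Schwartz decay of $\widetilde a$. Once that converse inclusion is in hand — which required the matrix-unit description of $\SS(\R^{2}_\th)$, the minimal-rank factorization, and the Hermite-like estimates — the remainder of the argument is essentially formal, since the equivalence of seminorm families in Lemma~\ref{mp_semin_lem} converts the abstract definition of the smooth algebra into the concrete one in terms of partial derivatives, and the identification of $\widetilde{D}$ with the Euclidean Dirac operator is immediate from \eqref{mp_part_prod_eqn}.
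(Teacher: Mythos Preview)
Your approach matches the paper's: both parts rest on Lemma~\ref{mp_more_lem} and Lemma~\ref{mp_ss_in_smooth} for (i), density of $\SS(\R^{2N}_\th)$ in $C_0(\R^{2N}_\th)$ for (ii), and the computation in \ref{mp_d_constr} for (iii).

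One small correction in your argument for (i): the equality $\widetilde{A}_{\text{smooth}} = \SS(\R^{2N}_\th)$ as stated is not quite right. By condition~(d) of Definition~\ref{smooth_el_defn} every smooth element lies in the Pedersen ideal of $C_0(\R^{2N}_\th)\cong\mathcal K$, hence is finite-rank; thus $\widetilde{A}_{\text{smooth}}$ sits inside the finite-rank Schwartz operators, a proper subalgebra of $\SS(\R^{2N}_\th)$. What Lemma~\ref{mp_ss_in_smooth} actually gives is $\SS(\R^{2N}_\th)\subset\Coo_0(\R^{2N}_\th)$, i.e.\ an inclusion into the \emph{completion}, not into $\widetilde{A}_{\text{smooth}}$ itself. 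The double inclusion you need is therefore at the level of completions: $\widetilde{A}_{\text{smooth}}\subset\SS(\R^{2N}_\th)$ (from Lemma~\ref{mp_more_lem}) gives $\Coo_0(\R^{2N}_\th)\subset\overline{\SS(\R^{2N}_\th)}^{\,\|\cdot\|_s}$, and Lemma~\ref{mp_ss_in_smooth} gives the reverse inclusion since $\Coo_0(\R^{2N}_\th)$ is already $\|\cdot\|_s$-complete. With this adjustment your proof of (i) goes through, and (ii), (iii) are fine as written.
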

\begin{proof}(i)
		From the   Lemma $\ref{mp_more_lem}$ it turns out $\Coo_0\left(\R^{2N}_\th\right)$ is contained in  the completion of 
	$\SS\left(\R^{2N}_\th\right)$ with respect to seminorms $\left\| \cdot\right\|_s$.
	From the Lemma  \ref{mp_ss_in_smooth} it follows that  $\Coo_0\left(\R^{2N}_\th\right)$ contains the completion of $\SS\left(\R^{2N}_\th\right)$ with respect to seminorms given by \eqref{mp_s_n_eqn}.
 \\
	(ii) The algebra $\SS\left(\R^{2N}_\th\right)$ is dense in $C_0\left(\R^{2N}_\th\right)$, so $\Coo_0\left(\R^{2N}_\th\right)$ is dense in $C_0\left(\R^{2N}_\th\right)$.\\
(iii) Follows from the construction \ref{mp_d_constr}.
\end{proof}

\section{Isospectral deformations and their coverings}

\paragraph*{}A very general construction of isospectral
deformations
of noncommutative geometries is described in \cite{connes_landi:isospectral}. The construction
implies in particular that any
compact Spin-manifold $M$ whose isometry group has rank
$\geq 2$ admits a
natural one-parameter isospectral deformation to noncommutative geometries
$M_\theta$.
We let $(\Coo\left(M \right)  , \H = L^2\left(M,S \right)  , \slashed D)$ be the canonical spectral triple associated with a
compact spin-manifold $M$. We recall that $\mathcal{A} = C^\infty(M)$ is
the algebra of smooth
functions on $M$, $S$ is the spinor bundle and $\slashed D$
is the Dirac operator.
Let us assume that the group $\mathrm{Isom}(M)$ of isometries of $M$ has rank
$r\geq2$.
Then, we have an inclusion
\begin{equation*}
\mathbb{T}^2 \subset \mathrm{Isom}(M) \, ,
\end{equation*}
with $\mathbb{T}^2 = \mathbb{R}^2 / 2 \pi \mathbb{Z}^2$ the usual torus, and we let $U(s) , s \in
\mathbb{T}^2$, be
the corresponding unitary operators in $\H = L^2(M,S)$ so that by construction
\begin{equation*}
U(s) \, \slashed D = \slashed D \, U(s).
\end{equation*}
Also,
\begin{equation}\label{isospectral_sym_eqn}
U(s) \, a \, U(s)^{-1} = \alpha_s(a) \, , \, \, \, \forall \, a \in \mathcal{A} \, ,
\end{equation}
where $\alpha_s \in \mathrm{Aut}(\mathcal{A})$ is the action by isometries on the
algebra of functions on
$M$.

\noindent
We let $p = (p_1, p_2)$ be the generator of the two-parameters group $U(s)$
so that
\begin{equation*}
U(s) = \exp(i(s_1 p_1 + s_2 p_2)) \, .
\end{equation*}
The operators $p_1$ and $p_2$ commute with $D$.
Both $p_1$ and $p_2$
have integral spectrum,
\begin{equation*}
\mathrm{Spec}(p_j) \subset \mathbb{Z} \, , \, \, j = 1, 2 \, .
\end{equation*}

\noindent
One defines a bigrading of the algebra of bounded operators in $\H$ with the
operator $T$ declared to be of bidegree
$(n_1,n_2)$ when,
\begin{equation*}
\alpha_s(T) = \exp(i(s_1 n_1 + s_2 n_2)) \, T \, , \, \, \, \forall \, s \in
\mathbb{T}^2 \, ,
\end{equation*}
where $\alpha_s(T) = U(s) \, T \, U(s)^{-1}$ as in \eqref{isospectral_sym_eqn}.
\paragraph{}
Any operator $T$ of class $C^\infty$ relative to $\alpha_s$ (i. e. such that
the map $s \rightarrow \alpha_s(T) $ is of class $C^\infty$ for the
norm topology) can be uniquely
written as a doubly infinite
norm convergent sum of homogeneous elements,
\begin{equation*}
T = \sum_{n_1,n_2} \, \widehat{T}_{n_1,n_2} \, ,
\end{equation*}
with $\widehat{T}_{n_1,n_2}$ of bidegree $(n_1,n_2)$ and where the sequence
of norms $||
\widehat{T}_{n_1,n_2} ||$ is of
rapid decay in $(n_1,n_2)$.
Let $\lambda = \exp(2 \pi i \theta)$. For any operator $T$ in $\H$ of
class $C^\infty$ we define
its left twist $l(T)$ by
\begin{equation}\label{l_defn}
l(T) = \sum_{n_1,n_2} \, \widehat{T}_{n_1,n_2} \, \lambda^{n_2 p_1} \, ,
\end{equation}
and its right twist $r(T)$ by
\begin{equation*}
r(T) = \sum_{n_1,n_2} \, \widehat{T}_{n_1,n_2} \, \lambda^{n_1 p_2} \, ,
\end{equation*}
Since $|\lambda | = 1$ and $p_1$, $p_2$ are self-adjoint, both series
converge in norm. Denote by $\Coo\left(M \right)_{n_1, n_2} \subset \Coo\left(M \right) $ the $\C$-linear subspace of elements of bidegree $\left( n_1, n_2\right) $. \\
One has,
\begin{lem}\label{conn_landi_iso_lem}\cite{connes_landi:isospectral}
	\begin{itemize}
		\item[{\rm a)}] Let $x$ be a homogeneous operator of bidegree $(n_1,n_2)$
		and $y$ be
		a homogeneous operator of  bidegree $(n'_1,n'_2)$. Then,
		\begin{equation}
		l(x) \, r(y) \, - \,  r(y) \, l(x) = (x \, y \, - y \, x) \,
		\lambda^{n'_1 n_2} \lambda^{n_2 p_1 + n'_1 p_2}
		\end{equation}
		In particular, $[l(x), r(y)] = 0$ if $[x, y] = 0$.
		\item[{\rm b)}] Let $x$ and $y$ be homogeneous operators as before and
		define
		\begin{equation*}
		x * y = \lambda^{n'_1 n_2} \, x y \, ; \label{star}
		\end{equation*}
		then $l(x) l(y) = l(x * y)$.
	\end{itemize}
\end{lem}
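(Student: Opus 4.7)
My plan is to reduce both parts to a single algebraic identity, namely the twisted commutation relation between $\lambda^{k p_j}$ and a homogeneous operator. The key observation is that if $y$ has bidegree $(n'_1,n'_2)$ then, by definition of the bidegree via $U(s)=\exp(i(s_1p_1+s_2p_2))$, one has $e^{it p_1} y = e^{it n'_1} y\, e^{it p_1}$ and $e^{it p_2} y = e^{it n'_2} y\, e^{it p_2}$ for all real $t$. Specializing to $t=2\pi\theta k$ gives the working rules
\begin{equation*}
\lambda^{k p_1} y = \lambda^{k n'_1}\, y\, \lambda^{k p_1}, \qquad \lambda^{k p_2} y = \lambda^{k n'_2}\, y\, \lambda^{k p_2},
\end{equation*}
valid for any integer $k$ and any homogeneous $y$. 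This is really the only input one needs; together with the fact that $p_1$ and $p_2$ commute (they generate a $\mathbb{T}^2$-action), everything else is bookkeeping.

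For part (a), my plan is to expand $l(x)\,r(y) = x\,\lambda^{n_2 p_1}\, y\, \lambda^{n'_1 p_2}$ and $r(y)\,l(x) = y\, \lambda^{n'_1 p_2}\, x\, \lambda^{n_2 p_1}$ separately, each time using the commutation rule above to pull the $\lambda$-factor past the operator sitting in the middle. In $l(x)r(y)$ this brings $\lambda^{n_2 p_1}$ past $y$, producing the scalar $\lambda^{n_2 n'_1}$; in $r(y)l(x)$ it brings $\lambda^{n'_1 p_2}$ past $x$, producing $\lambda^{n'_1 n_2}$. Since $[p_1,p_2]=0$ the two trailing operator factors coincide with $\lambda^{n_2 p_1 + n'_1 p_2}$, and the two scalar prefactors are equal, so subtracting gives exactly $\lambda^{n'_1 n_2}(xy-yx)\,\lambda^{n_2 p_1 + n'_1 p_2}$. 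The stated consequence $[l(x),r(y)]=0$ when $[x,y]=0$ then falls out as a trivial corollary.

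For part (b), I would expand $l(x)\,l(y) = x\,\lambda^{n_2 p_1}\, y\, \lambda^{n'_2 p_1}$, push $\lambda^{n_2 p_1}$ past $y$ using the same rule to pick up the scalar $\lambda^{n_2 n'_1}$, and then merge the two exponentials into $\lambda^{(n_2+n'_2)p_1}$. Since $xy$ is homogeneous of bidegree $(n_1+n'_1,\,n_2+n'_2)$, the result equals $l\!\left(\lambda^{n'_1 n_2} xy\right) = l(x*y)$ by the very definition of $l(\cdot)$ and of $*$.

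I do not foresee any real obstacle: the argument is entirely a spectral-calculus bookkeeping exercise, once one has the commutation rules above. The only subtle point worth mentioning is convergence of the series defining $l(T)$ and $r(T)$, but this is already taken care of in the discussion preceding the lemma (rapid decay of $\|\widehat{T}_{n_1,n_2}\|$ together with $|\lambda|=1$ and self-adjointness of $p_1,p_2$), so one is free to manipulate homogeneous components termwise.
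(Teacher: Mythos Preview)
Your proof is correct. The paper does not actually supply its own proof of this lemma; it is quoted verbatim from \cite{connes_landi:isospectral} and simply used as a black box. Your argument is the standard one: the commutation rules $\lambda^{kp_1}y=\lambda^{kn'_1}y\,\lambda^{kp_1}$ and $\lambda^{kp_2}x=\lambda^{kn_2}x\,\lambda^{kp_2}$ follow immediately from the definition of bidegree via conjugation by $U(s)=\exp(i(s_1p_1+s_2p_2))$, and from there both identities are routine computations exactly as you describe.
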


\noindent
The product $*$ defined in (\ref{star}) extends by linearity
to an associative product on the linear space of smooth operators and could
be called a $*$-product.
One could also define a deformed `right product'. If $x$ is homogeneous of
bidegree
$(n_1,n_2)$ and $y$ is homogeneous of bidegree $(n'_1,n'_2)$ the product is
defined by
\begin{equation*}
x *_{r} y = \lambda^{n_1 n'_2} \, x y \, .
\end{equation*}
Then, along the lines of the previous lemma one shows that $r(x) r(y) = r(x
*_{r} y)$.

We can now define a new spectral triple where both $\H$ and the operator
$\slashed D$ are unchanged while the
algebra $\Coo\left(M \right)$  is modified to $l(\Coo\left(M \right))$ . By
Lemma~{\ref{conn_landi_iso_lem}}~b) one checks that  $l\left( \Coo\left(M \right)\right) $ is still an algebra. Since $\slashed D$ is of bidegree $(0,0)$ one has,
\begin{equation}\label{isospectral_ld_eqn}
[\slashed D, \, l(a) ] = l([\slashed D, \, a]) 
\end{equation}
which is enough to check that $[\slashed D, x]$ is bounded for any $x \in l(\mathcal{A})$. There is a spectral triple $\left(l\left( \Coo\left(M \right)\right) , \H, \slashed D\right)$.
\paragraph{} Denote by $\Coo\left( M_\th\right)$ (resp.  $C\left(M_\th \right)$) the algebra $l\Coo\left( M\right)$ (resp. the operator norm completion of $l\left(\Coo\left( M\right)  \right)  $). Denote by $\rho: C\left(M\right) \to L^2\left( M, S\right) $ (resp. $\pi_\th: C\left(M_\th\right) \to B\left( L^2\left( M, S\right)\right) $ ) natural representations.

\subsection{Finite-fold coverings}\label{isosectral_fin_cov}
\subsubsection{Basic construction}
\paragraph{}
Let $M$ be a spin - manifold with the smooth action of $\T^2$. 
 Let $\widetilde{x}_0 \in \widetilde{M}$ and $x_0=\pi\left(\widetilde{x}_0 \right)$. Denote by $\varphi: \R^2 \to \R^2 / \Z^2 = \T^2$ the natural covering. There are two closed paths $\om_1, \om_2: \left[0,1 \right]\to M$ given by
\begin{equation*}
\begin{split}
\om_1\left(t \right) = \varphi\left(t, 0 \right) x_0,~
\om_2\left(t \right) = \varphi\left(0, t \right) x_0.
\end{split}
\end{equation*} 
There are  lifts of these paths, i.e. maps $\widetilde{\om}_1 , \widetilde{\om}_2: \left[0,1 \right] \to\widetilde{M}$ such that
\begin{equation*}
\begin{split}
\widetilde{\om}_1\left(0 \right)= \widetilde{\om}_2\left(0 \right)=\widetilde{x}_0,~ \\
\pi\left( \widetilde{\om}_1\left(t \right)\right)  = \om_1\left(t\right),\\
\pi\left( \widetilde{\om}_2\left(t \right)\right)  = \om_2\left(t\right).
\end{split}
\end{equation*}
Since $\pi$ is a finite-fold covering there are $N_1, N_2 \in \N$ such that if 
$$
\gamma_1\left(t \right) = \varphi\left(N_1t, 0 \right) x_0,~
\gamma_2\left(t \right) = \varphi\left(0, N_2t \right) x_0.
$$
and $\widetilde{\gamma}_1$ (resp. $\widetilde{\gamma}_2$) is the lift of $\gamma_1$ (resp. $\gamma_2$) then both $\widetilde{\gamma}_1$, $\widetilde{\gamma}_2$ are closed. Let us select minimal values of $N_1, N_2$. If $\text{pr}_n: S^1 \to S^1$ is an $n$ listed covering and $\text{pr}_{N_1, N2}$ the covering given by
$$
\widetilde{\T}^2 = S^1 \times S^1 \xrightarrow{\text{pr}_{N_1}\times\text{pr}_{N_2}} \to S^1 \times S^1 = \T^2
$$
then there is the action $\widetilde{\T}^2 \times \widetilde{M} \to  \widetilde{M}$ such that

\begin{tikzpicture}
\matrix (m) [matrix of math nodes,row sep=3em,column sep=4em,minimum width=2em]
{
	\widetilde{\T}^2 \times \widetilde{M}  	&  	 & \widetilde{M} \\
	\T^2. \times M 	 & 	 & M   \\};
\path[-stealth]
(m-1-1) edge node [above] {$ \ $} (m-1-3)
(m-1-1) edge node [right] {$\mathrm{pr}_{N_1N_2} \times \pi $} (m-2-1)
(m-1-3) edge node [right] {$\pi$} (m-2-3)
(m-2-1) edge node [above] {$ \ $} (m-2-3);

\end{tikzpicture}

where  $\widetilde{\T}^2 \approx \T^2$.
Let $\widetilde{p} = \left( \widetilde{p}_1, \widetilde{p}_2\right) $ be the generator of the associated with $\widetilde{\T}^2$ two-parameters group $\widetilde{U}\left(s \right) $
so that
\begin{equation*}
\widetilde{U}\left(s \right) = \exp\left( i\left( s_1 \widetilde{p}_1 + s_2 \widetilde{p}_2\right)\right).
\end{equation*}	
The covering $\widetilde{M} \to M$ induces an involutive injective homomorphism
\begin{equation*}
\varphi:\Coo\left(M \right)\to\Coo\left( \widetilde{M} \right).
\end{equation*}

Suppose $M \to M/\T^2$ is submersion, and suppose there is  a weak fibration $\T^2 \to M \to M/\T^2$ (cf. \cite{spanier:at})
There is the exact \textit{homotopy sequence of the weak fibration}
\bean
\dots \to \pi_n\left( \T^2, e_0\right) \xrightarrow{i_{\#}} \pi_n\left( M, e_0\right)  \xrightarrow{p_{\#}} \pi_n\left( M/\T^2, b_0\right) \xrightarrow{\overline\partial} \pi_{n-1}\left( \T^2, e_0\right) \to \dots\\
\dots \to \pi_2\left( M/\T^2, b_0\right) \xrightarrow{\overline\partial} \pi_1\left( \T^2, e_0\right) \xrightarrow{i_{\#}} \pi_1\left( M, e_0\right)  \xrightarrow{p_{\#}} \pi_1\left( M/\T^2, b_0\right) \xrightarrow{\overline\partial} \pi_0\left( \T^2, e_0\right) \to \dots 
\eean
(cf. \cite{spanier:at}) where $\pi_n$ is the $n^{\mathrm{th}}$ homotopical group for any $n\in \N^0$.
If $\pi:\widetilde{M} \to M$ is a finite-fold regular covering then there is the natural surjective homomorphism $
\pi_1\left( M, e_0\right) \to G\left(\widetilde{M}~|~M \right)$. If $\pi:\widetilde{M} \to M$ induces a covering $\pi:\widetilde{M}/\T^2 \to M / \T^2$  then 
the  homomorphism $
\varphi:\pi_1\left( M, e_0\right) \to G\left(\widetilde{M}~|~M \right)$ can be included into the following commutative diagram.

\begin{tikzpicture}
\matrix (m) [matrix of math nodes,row sep=3em,column sep=4em,minimum width=2em]
{
	\pi_1\left(\T^2, e_0 \right)\cong \Z^2 &  \pi_1\left( M, e_0\right) &  \pi_1\left( M/\T^2, b_0\right) &  \{e\} \\
	G\left(\widetilde{\T}^2~|\T^2 \right)  & G\left(\widetilde{M}~|~M \right) & G\left(\widetilde{M}/\widetilde{\T}^2~|~M/\T^2\right)  & \{e\} \ \\};
\path[-stealth]
(m-1-1)  edge node [right] {$\varphi'$} (m-2-1)
(m-1-2)  edge node [right] {$\varphi$} (m-2-2)
(m-1-3)  edge node [right] {$\varphi''$} (m-2-3)
(m-1-1)  edge node [above] {$i_{\#} $} (m-1-2)
(m-1-2)  edge node [above] {$p_{\#} $} (m-1-3)
(m-2-1)  edge node [above] {$i_*$} (m-2-2)
(m-2-2)  edge node [above] {$p_*$} (m-2-3)
(m-1-3)  edge node [right] {$ $} (m-1-4)
(m-1-4)  edge node [right] {$ $} (m-2-4)
(m-2-3)  edge node [right] {$ $} (m-2-4);
\end{tikzpicture}

Denote by $G \stackrel{\mathrm{def}}{=}G\left(\widetilde{M}~|~M \right)$, $~G' \stackrel{\mathrm{def}}{=}G\left(\widetilde{\T}^2~|\T^2 \right)$, $~G'' \stackrel{\mathrm{def}}{=}G\left(\widetilde{M}/\widetilde{\T}^2~|~M/\T^2\right)$. From the above construction it turns out that $G' =G\left(\widetilde{\T}^2~|\T^2 \right) = \Z_{N_1} \times \Z_{N_2}$. Otherwise there is an inclusion of Abelian groups $G\left(\widetilde{\T}^2~|\T^2 \right) \subset \widetilde{\T}^2$. The action $\widetilde{\T}^2 \times \widetilde{M} \to  \widetilde{M}$ is free, so the action $G' \times \widetilde{M} \to $ is free, so the natural homomorphism $G' \to G$ is injective, hence there is an exact sequence of groups
\be\label{isospectral_gr_eqn}
\{e\} \to G' \to G \to G'' \to \{e\}.
\ee

Let $\th, \widetilde{\th} \in \R$ be such that
$$
\widetilde{\th}= \frac{\th +  n}{N_1N_2}, \text{ where }n \in \Z.
$$
If $\lambda= e^{2\pi i \th}$, $\widetilde{\lambda}= e^{2\pi i \widetilde{\th}}$ then
$
\lambda = \widetilde{\lambda}^{N_1N_2}.
$
There are isospectral deformations $\Coo\left(M_\th \right), \Coo\left( \widetilde{M}_{\widetilde{\th}} \right)$ and $\C$-linear isomorphisms
$l:\Coo\left(M \right) \to \Coo\left(M_\th \right)$, $\widetilde{l}:\Coo\left( \widetilde{M} \right) \to \Coo\left( \widetilde{M}_{\widetilde{\th}} \right)$.
These isomorphisms and the inclusion $\varphi$ induces the inclusion
\begin{equation*}
\begin{split}
\varphi_\th:\Coo\left(M_\th \right)\to\Coo\left( \widetilde{M}_{\widetilde{\th}} \right),
\\
\varphi_{\widetilde{\th}}\left(\Coo\left(M_\th \right) \right)_{n_1,n_2} \subset \Coo\left( \widetilde{M}_{\widetilde{\th}} \right)_{n_1N_1,~ n_2N_2}.
\end{split}
\end{equation*}
Denote by $G = G\left(\widetilde{M}~|~M \right)$ the group of covering transformations.   Since $\widetilde{l}$ is a $\C$-linear isomorphism the action of $G$ on $\Coo\left( \widetilde{M} \right)$ induces a $\C$-linear action  $G \times \Coo\left( \widetilde{M}_{ \widetilde{\th}} \right)  \to \Coo\left( \widetilde{M}_{ \widetilde{\th}} \right)$. According to the definition of the action of $\widetilde{\T}^2$ on $\widetilde{M}$ it follows that the action of $G$ commutes with the action of $\widetilde{\T}^2$.
It turns out
$$
g \Coo\left( \widetilde{M} \right)_{n_1,n_2} = \Coo\left( \widetilde{M} \right)_{n_1,n_2}
$$
for any $n_1, n_2 \in \Z$ and $g \in G$.
If $\widetilde{a} \in \Coo\left( \widetilde{M} \right)_{n_1,n_2}$, $\widetilde{b} \in \Coo\left( \widetilde{M} \right)_{n'_1,n'_2}$ then  $g\left( \widetilde{a}\widetilde{b}\right)= \left(g\widetilde{a} \right) \left(g\widetilde{b} \right)\in \Coo\left( \widetilde{M} \right)_{n_1+n'_1,n_2+n'_2} $. One has
\begin{equation*}
	\begin{split}
		\widetilde{l}\left(\widetilde{a}\right)\widetilde{l}\left(\widetilde{b}\right)= \widetilde{\la}^{n'_1n_2}\widetilde{l}\left(\widetilde{a}\widetilde{b}\right), \\
		\widetilde{\la}^{n_2\widetilde{p}_1}l\left( \widetilde{b}\right) = \widetilde{\la}^{n'_1n_2}l\left( \widetilde{b}\right) \widetilde{\la}^{n_2\widetilde{p}_1},\\
		\widetilde{l}\left(g \widetilde{a}\right)\widetilde{l}\left(g \widetilde{b}\right)= g \widetilde{a}\widetilde{\la}^{n_2\widetilde{p}_1}g \widetilde{b}\widetilde{\la}^{n'_2\widetilde{p}_1}= \widetilde{\la}^{n'_1n_2} g\left(\widetilde{a}\widetilde{b} \right) \widetilde{\la}^{\left( n_2+n_2'\right) \widetilde{p}_1}.
	\end{split}
\end{equation*}
On the other hand
\begin{equation*}
	\begin{split}
		g\left( \widetilde{l}\left(\widetilde{a}\right)\widetilde{l}\left(\widetilde{b}\right)\right) = g\left( \widetilde{\la}^{n'_1n_2}\widetilde{l}\left(\widetilde{a}\widetilde{b}\right)\right)= \widetilde{\la}^{n'_1n_2} g\left(\widetilde{a}\widetilde{b} \right) \widetilde{\la}^{\left( n_2+n_2'\right) \widetilde{p}_1}. 
	\end{split}
\end{equation*}
From above equations it turns out
$$
\widetilde{l}\left(g \widetilde{a}\right)\widetilde{l}\left(g \widetilde{b}\right) = g\left( \widetilde{l}\left(\widetilde{a}\right)\widetilde{l}\left(\widetilde{b}\right)\right),
$$
i.e. $g$ corresponds to automorphism of $\Coo\left( \widetilde{M}_{ \widetilde{\th}}\right)$. It turns out that $G$ is the group of automorphisms of $\Coo\left( \widetilde{M}_{ \widetilde{\th}}\right)$. From $\widetilde{a} \in \Coo\left( \widetilde{M}_{ \widetilde{\th}}\right)_{n_1,n_2}$ it follows that $\widetilde{a}^* \in \Coo\left( \widetilde{M}_{ \widetilde{\th}}\right)_{-n_1,-n_2}$. One has
$$
g\left(\left( \widetilde{l}\left(\widetilde{a}\right)\right)^* \right) =  g\left( \widetilde{\la}^{-n_2\widetilde{p_1}}\widetilde{a}^*\right) =
g \left(\widetilde{\la}^{n_1 n_2} \widetilde{a}^*\widetilde{\la}^{-n_2\widetilde{p_1}}\right) = \widetilde{\la}^{n_1 n_2} g\left(\widetilde{l}\left(\widetilde{a}^* \right)  \right). 
$$
On the other hand
$$
\left(g \widetilde{l}\left(\widetilde{a}\right) \right)^*= \left(\left( g \widetilde{a}\right)\widetilde{\la}^{n_2\widetilde{p_1}}  \right)^*=\widetilde{\la}^{-n_2\widetilde{p_1}}\left(ga^* \right) = \widetilde{\la}^{n_1 n_2}\left(ga^* \widetilde{\la}^{-n_2\widetilde{p_1}}\right)= \widetilde{\la}^{n_1 n_2} g\left(\widetilde{l}\left(\widetilde{a}^* \right)  \right),
$$
i.e. $g\left(\left( \widetilde{l}\left(\widetilde{a}\right)\right)^* \right) = \left(g \widetilde{l}\left(\widetilde{a}\right) \right)^*$.
It follows that $g$ corresponds to the involutive automorphism of $\Coo\left( \widetilde{M}_{ \widetilde{\th}}\right)$. Since  $\Coo\left( \widetilde{M}_{ \widetilde{\th}}\right)$ is dense in $C\left( \widetilde{M}_{ \widetilde{\th}}\right)$ there is the unique involutive action $G \times C\left( \widetilde{M}_{ \widetilde{\th}}\right) \to C\left( \widetilde{M}_{ \widetilde{\th}}\right)$.
For any $y_0 \in M/\T^2$ there is a point $x_0 \in M$ mapped onto $y_0$ and a connected submanifold $\mathcal U \subset M$ such that:
\begin{itemize}
	\item $\dim \mathcal U= \dim M-2$,
	\item $\mathcal U$ is transversal to orbits of $\T^2$-action,
	\item The  fibration $\T^2 \to \mathcal U \times \T^2 \to \mathcal U \times \T^2 / \T^2$ is the restriction of the fibration $\T^2 \to M \to M/\T^2$,
	\item The image  $\mathcal V_{y_0} \in M/\T^2$ of $\mathcal U \times \T^2$ in $M/\T^2$ is an open neighborhood of $y_0$,
	\item $\mathcal V_{y_0}$ is evenly covered by  $\widetilde{M}/ \widetilde{\T}^2 \to M / \T^2$.
\end{itemize}
It is clear that
$$
M/\T^2 = \bigcup_{y_0 \in M/\T^2} \mathcal V_{y_0}.
$$
Since $M/\T^2$ is compact there is a finite subset $I \in M/\T^2$ such that
$$
M/\T^2 = \bigcup_{y_0 \in I} \mathcal V_{y_0}.
$$
Above equation will be rewritten as
\be\label{isospectral_p_eqn}
M/\T^2 = \bigcup_{\iota \in I} \mathcal V_{\iota}
\ee
where $\iota$ is just an element of the  finite set $I$ and we denote corresponding transversal submanifold by $\mathcal{U}_\iota$. There is a smooth partition of unity subordinated to \eqref{isospectral_p_eqn}, i.e. there is a set $\left\{a_\iota \in \Coo\left(M /\T^2\right)  \right\}_{\iota \in I}$ of positive elements such that
\be\label{isospectral_part_eqn}
1_{C\left(M /\T^2\right) }= \sum_{\iota \in I} a_\iota~,\\
\ee
\be\nonumber
a_\iota\left(\left( M/\T^2\right)  \backslash \mathcal V_{\iota}\right) = \{0\}.
\ee
Denote by 
\be\label{isospectral_e_eqn}e_\iota \stackrel{\mathrm{def}}{=} \sqrt{a_\iota} \in \Coo\left( M /\T^2\right).
\ee  For any $\iota \in I$ we select an open subset $\widetilde{\mathcal V}_{\iota} \subset \widetilde{M}/\T^2$ which is homeomorphically mapped onto $\mathcal V_{\iota}$.
If $\widetilde{I}= G'' \times I$ then for any $\left(g'', \iota \right) \in \widetilde{I}$ we define
\be\label{isospectral_vgi_eqn}
\begin{split}
	\widetilde{\mathcal V}_{\left(g'', \iota \right)} = g''	\widetilde{\mathcal V}_{\iota}.
\end{split}
\ee
 Similarly we select a transversal submanifold
\be\nonumber
\widetilde{\mathcal U}_{\iota} \subset \widetilde{M}
\ee 
 which is homeomorphially mapped onto $\mathcal U_{\iota}$. For any $\left(g'', \iota \right) \in \widetilde{I}$ we define
 \be\label{isospectral_wgi_eqn}
 \begin{split}
 	\widetilde{\mathcal U}_{\left(g'', \iota \right)} = g	\widetilde{\mathcal U}_{\iota}.
 \end{split}
 \ee
 where $g \in G$ is an arbitrary element mapped to $g''$.
   The set $\mathcal V_{\iota}$ is evenly covered by $\pi'':\widetilde{M}/\widetilde{\T}^2\to M/\T^2$, so one has
\be\label{isostectral_empt_eqn}
g\widetilde{\mathcal V}_{\iota}\bigcap \widetilde{\mathcal V}_{\iota} = \emptyset; \text{ for any nontrivial } g \in G''.
\ee

If $\widetilde{e}_\iota \in \Coo\left(\widetilde M / \widetilde \T^2 \right) $ is given by 
$$
\widetilde{e}_\iota\left(\widetilde{x} \right) = 
\left\{\begin{array}{c l}
e_\iota\left( \pi''\left( \widetilde{x} \right) \right)  & \widetilde{x} \in \widetilde{\mathcal U}_\iota \\
0 & \widetilde{x} \notin \widetilde{\mathcal U}_\iota
\end{array}\right.
$$
then from \eqref{isospectral_part_eqn} and \eqref{isostectral_empt_eqn} it turns out
\begin{equation}
\label{isospectral_partg_eqn}
\begin{split}
1_{C\left(\widetilde M/ \widetilde \T^2 \right) } = \sum_{g \in G''} \sum_{\iota \in I} \widetilde{e}^2_{\iota},\\
\left( g\widetilde{e}_{\iota}\right)\widetilde{e}_{\iota} = 0; \text{ for any nontrivial } g \in G''. 
\end{split}
\end{equation}
If $\widetilde{I}= G'' \times I$ and $\widetilde{e}_{\left(g, \iota\right)} = g \widetilde{e}_{ \iota}$ for any $\left(g, \iota\right)  \in G'' \times I$ then from  \eqref{isospectral_partg_eqn} it turns out
\begin{equation}
\label{isospectral_parti_eqn}
\begin{split}
1_{C\left( \widetilde M / \widetilde \T^2 \right) } =\sum_{\widetilde{\iota} \in \widetilde{I}} \widetilde{e}^2_{\widetilde{\iota}},\\
\left( g\widetilde{e}_{\widetilde{\iota}}\right)\widetilde{e}_{\widetilde{\iota}} = 0; \text{ for any nontrivial } g \in G'',\\
1_{C\left( \widetilde M / \widetilde \T^2 \right) } =  \sum_{\widetilde{\iota} \in \widetilde{I}} \widetilde{e}_{\widetilde{\iota}}\left\rangle \right\langle\widetilde{e}_{\widetilde{\iota}}~.
\end{split}
\end{equation}
It is known that $C\left(\T^2 \right)$ is an universal commutative $C^*$-algebra generated by two unitary elements $u, v$, i.e. there are following relations
\be\label{isospectral_gen_eqn}
\begin{split}
uu^*=u^*u=vv^*=v^*v= 1_{C\left(\T^2 \right)},\\
uv = vu, ~u^*v = vu^*,~uv^*= v^*u, ~ u^*v^*=v^*u^*.
\end{split}
\ee
If $J = I \times \Z \times \Z$ then for any $\left({\iota}, j, k \right) \in J$ there is an element ${f}'_{\left({\iota}, j, k \right)} \in \Coo\left( {\mathcal U}_{{\iota}} \times {\T}^2\right) $ given by
\be\label{isospectral_f's_eqn}
{f}'_{\left({\iota}, j, k \right)} = {e}_{{\iota}} {u}^j{
	v}^k
\ee
where ${e}_{{\iota}}\in \Coo\left( {M}/ {\T}^2\right)$ is regarded as element of $\Coo\left( {M}\right)$.  
Let $p: {M} \to {M} / {\T}^2$.
Denote by ${f}_{\left({\iota}, j, k \right)}\in \Coo\left({M} \right)$ an element given by
\be\label{isospectral_fs_eqn}
\begin{split}
{f}_{\left({\iota}, j, k \right)}\left( {x}\right) =
\left\{\begin{array}{c l}
	{f}'_{\left({\iota}, j, k \right)}\left( {x}\right) & p\left({x} \right)  \in {\mathcal V}_{{\iota}} \\
	0 & p\left( {x}\right)  \notin {\mathcal V}_{{\iota}} 
\end{array}\right.,\\
\text{where the right part of the above equation  assumes the inclusion } \mathcal U_{{\iota}} \times {\T}^2 \hookto M.
\end{split}
\ee 
If we denote by $\widetilde{u}, \widetilde{v} \in U\left( C\left( \widetilde{\T^2} \right) \right)$ unitary generators of $C\left( \widetilde{\T^2} \right)$ then the covering $\pi':\widetilde{\T^2} \to \T^2$ corresponds to a *-homomorphism $C\left(\T^2 \right) \to  C\left( \widetilde{\T^2}\right)$ given by
\bean
u \mapsto \widetilde{u}^{N_1},\\
v \mapsto \widetilde{v}^{N_2}.
\eean
There is the natural action of $G\left( \widetilde{\T}^2~|~{\T^2}\right)\cong \Z_{N_1} \times \Z_{N_2}$ on $C\left(\T^2 \right)$ given by
\be\label{isospectral_uv_eqn}
\begin{split}
\left(\overline{k}_1, \overline{k}_2 \right) \widetilde{u} = e^{\frac{2\pi i k_1}{N_1}}\widetilde{u},\\
\left(\overline{k}_1, \overline{k}_2 \right) \widetilde{v} = e^{\frac{2\pi i k_2}{N_1}}\widetilde{v}
\end{split}
\ee
where $\left(\overline{k}_1, \overline{k}_2 \right) \in \Z_{N_1} \times \Z_{N_2}$. 
If we consider $C\left( \widetilde{\T}^2\right)_{C\left( {\T^2}\right)}$ as a right Hilbert module which corresponds to a finite-fold noncommutative covering then one has
\be\label{isospectral_tor_eqn}
\begin{split}
\left\langle \widetilde{u}^{j'} \widetilde{v}^{k'}, \widetilde{u}^{j''} \widetilde{v}^{k''}  \right\rangle_{C\left( \widetilde{\T^2}\right)} = N_1N_2\delta_{j'j''} \delta_{k'k''}1_{C\left( {\T^2}\right)},\\
1_{C\left( \widetilde{\T^2}\right)}= \frac{1}{N_1N_2}\sum_{\substack{j = 0\\ k = 0}}^{\substack{j = N_1\\ k = N_2}} \widetilde{u}^{j} \widetilde{v}^{k}\left\rangle \right\langle \widetilde{u}^{j} \widetilde{v}^{k}.
\end{split}
\ee
If $\widetilde{J} = \widetilde{I} \times \left\{0, \dots, N_1-1\right\} \times \left\{0, \dots, N_2-1\right\}$ then for any $\left(\widetilde{\iota}, j, k \right) \in \widetilde{J}$ there is an element $\widetilde{f}'_{\left(\widetilde{\iota}, j, k \right)} \in \Coo\left( \widetilde{\mathcal U}_{\widetilde{\iota}} \times \widetilde{\T}^2\right) $ given by
\be\label{isospectral_f'_eqn}
\widetilde{f}'_{\left(\widetilde{\iota}, j, k \right)} = \widetilde{e}_{\widetilde{\iota}} \widetilde{u}^j\widetilde{
v}^k
\ee
where $\widetilde{e}_{\widetilde{\iota}}\in \Coo\left( \widetilde{M}/ \widetilde{\T}^2\right)$ is regarded as element of $\Coo\left( \widetilde{M}\right)$.  
Let $p: \widetilde{M} \to \widetilde{M} / \widetilde{\T}^2$.
Denote by $\widetilde{f}_{\left(\widetilde{\iota}, j, k \right)}\in \Coo\left(\widetilde{M} \right)$ an element given by
\be\label{isospectral_f_eqn}
\begin{split}
\widetilde{f}_{\left(\widetilde{\iota}, j, k \right)}\left( \widetilde{x}\right) =
\left\{\begin{array}{c l}
	\widetilde{f}'_{\left(\widetilde{\iota}, j, k \right)}\left( \widetilde{x}\right) & p\left(\widetilde{x} \right)  \in \widetilde{\mathcal V}_{\widetilde{\iota}} \\
	0 & p\left( \widetilde{x}\right)  \notin \widetilde{\mathcal V}_{\widetilde{\iota}} .
\end{array}\right.\\
\text{where right the part of the above equation assumes the inclusion }  \widetilde{\mathcal U}_{ \widetilde{\iota}} \times  \widetilde{\T}^2 \hookto  \widetilde{M}.
\end{split}
\ee 
Any element $\widetilde{e}_{\widetilde{\iota}} \in C\left(\widetilde{M}/\widetilde{   \T}^2 \right)$ is regarded as element of $C\left(\widetilde{M}\right)$. From $\widetilde{e}_{\widetilde{\iota}} \in \Coo\left(\widetilde{M}\right)_{0,0}$ it turns out $\widetilde{l}\left(\widetilde{e}_{\widetilde{\iota}} \right)  = \widetilde{e}_{\widetilde{\iota}}$,  $~~\left\langle \widetilde{l}\left( \widetilde{e}_{\widetilde{\iota}'}\right) , \widetilde{l}\left( \widetilde{e}_{\widetilde{\iota}''}\right)  \right\rangle_{C\left( \widetilde{M}_{ \widetilde{\th}}\right) } = \left\langle  \widetilde{e}_{\widetilde{\iota}'} ,  \widetilde{e}_{\widetilde{\iota}''}  \right\rangle_{C\left( \widetilde{M}\right) } $.

From \eqref{isospectral_parti_eqn}-\eqref{isospectral_f_eqn} it follows that
\be\label{isospectral_undec_eqn}
1_{C\left(\widetilde{M} \right) }= \frac{1}{N_1N_2}\sum_{\widetilde{\iota} \in \widetilde{I}} \sum_{\substack{j = 0\\ k = 0}}^{\substack{j = N_1-1\\ k = N_2-1}} \widetilde{f}_{\left(\widetilde{\iota}, j, k \right)}\left\rangle \right\langle \widetilde{f}_{\left(\widetilde{\iota}, j, k \right)},
\ee
\be\nonumber
\left\langle \widetilde{f}_{\left(\widetilde{\iota}', j', k' \right)}, \widetilde{f}_{\left(\widetilde{\iota}'', j'', k'' \right)} \right\rangle_{C\left( \widetilde{M}\right) } = \frac{1}{N_1N_2} \delta_{j'j''}\delta_{k'k''}\left\langle \widetilde{e}_{\widetilde{\iota}'}, \widetilde{e}_{\widetilde{\iota}''} \right\rangle_{C\left( \widetilde{M}\right) }\in \Coo\left(M \right),
\ee
\be\label{isospectral_fprodl_eqn}
\left\langle \widetilde{l}\left( \widetilde{f}_{\left(\widetilde{\iota}', j', k' \right)}\right) , \widetilde{l}\left( \widetilde{f}_{\left(\widetilde{\iota}'', j'', k'' \right)}\right)  \right\rangle_{C\left( \widetilde{M}_{\widetilde{   \th}}\right) } =\frac{1}{N_1N_2} \delta_{j'j''}\delta_{k'k''}\left\langle \widetilde{e}_{\widetilde{\iota}'}, \widetilde{e}_{\widetilde{\iota}''} \right\rangle_{C\left( \widetilde{M}_{\widetilde{   \th}}\right) }\in  \Coo\left(M_\th \right).
\ee
From the \eqref{isospectral_undec_eqn} it turns out that $C\left(\widetilde{M} \right)$  is a right $C\left(M \right)$ module generated by  finite set of elements $\widetilde{f}_{\left(\widetilde{\iota}, j, k \right)}$ where $\left(\widetilde{\iota}, j, k \right) \in \widetilde{J}$, i.e. any $\widetilde{a} \in C\left(\widetilde{M} \right)$ can be represented as
\be\label{isospectral_wa_eqn}
\widetilde{a} = \sum_{\widetilde{\iota} \in \widetilde{I}} \sum_{\substack{j = 0\\ k = 0}}^{\substack{j = N_1-1\\ k = N_2-1}} \widetilde{f}_{\left(\widetilde{\iota}, j, k \right)} a_{\left(\widetilde{\iota}, j, k \right)}; \text{ where } a_{\left(\widetilde{\iota}, j, k \right)} \in C\left( M\right). 
\ee
Moreover if $\widetilde{a} \in \Coo\left(\widetilde{M} \right)$ then one can select $a_{\left(\widetilde{\iota}, j, k \right)} \in \Coo\left(M \right)$. However any $a_{\left(\widetilde{\iota}, j, k \right)} \in \Coo\left(M \right)$ can be uniquely
written as a doubly infinite
norm convergent sum of homogeneous elements,
\begin{equation*}
a_{\left(\widetilde{\iota}, j, k \right)} = \sum_{n_1,n_2} \, \widehat{T}_{n_1,n_2} \, ,
\end{equation*}

with $\widehat{T}_{n_1,n_2}$ of bidegree $(n_1,n_2)$ and where the sequence
of norms $||
\widehat{T}_{n_1,n_2} ||$ is of
rapid decay in $(n_1,n_2)$. One has
\bea\label{isospectral_sec_eqn}
\widetilde{l}\left(\widetilde{f}_{\left(\widetilde{\iota}, j, k \right)} a_{\left(\widetilde{\iota}, j, k \right)}\right) = \sum_{n_1, n_2}  \widetilde{f}_{\left(\widetilde{\iota}, j, k \right)} \widehat{T}_{n_1,n_2} \widetilde{\lambda}^{\left( N_2n_2+j \right) \widetilde{p}_1}= \sum_{n_1, n_2} \widetilde{f}_{\left(\widetilde{\iota}, j, k \right)} \widetilde{\lambda}^{j \widetilde{p}_1}  \widetilde{\lambda}^{kN_1n_1}\widehat{T}_{n_1,n_2} \widetilde{\lambda}^{N_2n_2 \widetilde{p}_1} 
\eea

the sequence
of norms $||
\widetilde{\lambda}^{kN_2n_2}\widehat{T}_{n_1,n_2} ||=||
\widehat{T}_{n_1,n_2} ||$ is of
rapid decay in $(n_1,n_2)$ it follows that
\begin{equation}\label{isospectral_a_eqn}
a'_{\left(\widetilde{\iota}, j, k \right)} = \sum_{n_1,n_2} \, \widetilde{\lambda}^{kN_1n_1} \widehat{T}_{n_1,n_2} \in \Coo\left(M \right) 
\end{equation}
From \eqref{isospectral_wa_eqn} - \eqref{isospectral_a_eqn} it turns out
\be\label{isospectral_sum_eqn}
\widetilde{l}\left( \widetilde{a}\right)  = \sum_{\widetilde{\iota} \in \widetilde{I}} \sum_{\substack{j = 0\\ k = 0}}^{\substack{j = N_1-1\\ k = N_2-1}} \widetilde{l}\left( \widetilde{f}_{\left(\widetilde{\iota}, j, k \right)} \right) l\left( a'_{\left(\widetilde{\iota}, j, k \right)}\right) ; \text{ where } l\left( a'_{\left(\widetilde{\iota}, j, k \right)}\right) \in \Coo\left( M_\th \right).
\ee
However $C\left(\widetilde{M}_{\widetilde{\th}} \right)$ is the norm  completion of $\Coo\left(\widetilde{M}_\th \right)$, so from \eqref{isospectral_sum_eqn} it turns out that  $C\left(\widetilde{M}_{\widetilde{\th}} \right)$ is a right Hilbert $C\left({M}_{{\th}} \right)$-module generated by a finite set
\be\label{isospectral_xi_eqn}
\Xi=\left\{\widetilde{l}\left(\widetilde{f}_{\left(\widetilde{\iota}, j, k \right)} \right) \in   \Coo\left(\widetilde{M}_{\widetilde{\th}} \right)   \right\}_{\left(\widetilde{\iota}, j, k \right) \in \widetilde{J}}
\ee  From the Kasparov stabilization theorem it follows that the module is projective.
So one has the following theorem.
\begin{thm}\label{isospectral_fin_thm}\cite{ivankov:qnc}
	The triple $\left( C\left( M_\th\right), C\left( \widetilde{M}_{ \widetilde{\th}}\right), G\left(\widetilde{M}~|~ M \right)\right)   $ is an unital noncommutative finite-fold  covering.
\end{thm}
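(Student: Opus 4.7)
The plan is to verify the conditions of Definition~\ref{fin_def_uni} for $\left(C(M_\theta), C(\widetilde{M}_{\widetilde{\theta}}), G\right)$ with $G = G(\widetilde{M}~|~M)$, leveraging the machinery built up in the excerpt together with the numerical compatibility $\widetilde{\lambda}^{N_1 N_2} = \lambda$, which follows from $\widetilde{\theta} = (\theta + n)/(N_1 N_2)$. First I would confirm that $\varphi_\theta$ is an injective unital *-homomorphism of $C^*$-algebras: injectivity is inherited from $\varphi: C(M) \hookrightarrow C(\widetilde{M})$ since $\widetilde{l}$ is bijective, and multiplicativity reduces to a bidegree calculation. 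For $a \in \Coo(M)_{(n_1, n_2)}$ and $b \in \Coo(M)_{(n_1', n_2')}$ one has $\varphi(a) \in \Coo(\widetilde{M})_{(n_1 N_1, n_2 N_2)}$ and $\varphi(b) \in \Coo(\widetilde{M})_{(n_1' N_1, n_2' N_2)}$; Lemma~\ref{conn_landi_iso_lem}(b) then gives
\[
\varphi_\theta(l(a)) \cdot \varphi_\theta(l(b)) \;=\; \widetilde{l}\bigl(\widetilde{\lambda}^{n_1' n_2 N_1 N_2}\, \varphi(a)\varphi(b)\bigr) \;=\; \widetilde{l}\bigl(\lambda^{n_1' n_2}\,\varphi(ab)\bigr) \;=\; \varphi_\theta(l(a) \cdot l(b)),
\]
the middle equality being the point at which $\widetilde{\lambda}^{N_1 N_2} = \lambda$ enters; the involution is handled by an analogous bidegree calculation, and continuity extends everything to the $C^*$-completions.

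Next I would verify the $G$-action on $C(\widetilde{M}_{\widetilde{\theta}})$ is involutive, non-degenerate, and has fixed subalgebra $\varphi_\theta(C(M_\theta))$. Involutivity is already established in the excerpt, and non-degeneracy transports from the classical action on $C(\widetilde{M})$ through the $G$-equivariant $\C$-linear bijection $\widetilde{l}$. Since $\widetilde{l}$ preserves bidegrees and commutes with $G$, a smooth element of $C(\widetilde{M}_{\widetilde{\theta}})$ is $G$-fixed iff each of its bidegree-homogeneous components is, iff (by the classical covering) each component lies in $\varphi(\Coo(M))$, whence $C(\widetilde{M}_{\widetilde{\theta}})^G = \varphi_\theta(C(M_\theta))$ after closure. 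The main content is then the Hilbert-module structure: $C(\widetilde{M}_{\widetilde{\theta}})$ should be finitely generated projective over $C(M_\theta)$ under $\langle \widetilde{a}, \widetilde{b}\rangle = \sum_{g \in G} g(\widetilde{a}^* \widetilde{b})$. The candidate generating set is $\Xi$ of \eqref{isospectral_xi_eqn}: equation \eqref{isospectral_sum_eqn} shows it generates $\Coo(\widetilde{M}_{\widetilde{\theta}})$ as a right $\Coo(M_\theta)$-module, and norm density transfers this to generation of the $C^*$-algebra over $C(M_\theta)$. For projectivity, applying $\widetilde{l}$ to the resolution of the identity \eqref{isospectral_undec_eqn} produces, after the appropriate rescaling of the $\widetilde{l}(\widetilde{f}_\nu)$ to elements $\widetilde{g}_\nu$, an identity
\[
1_{C(\widetilde{M}_{\widetilde{\theta}})} \;=\; \sum_{\nu \in \widetilde{J}} \widetilde{g}_\nu \rangle\langle \widetilde{g}_\nu,
\]
so that the $C(M_\theta)$-linear map $\widetilde{a} \mapsto \bigl(\langle \widetilde{g}_\nu, \widetilde{a}\rangle\bigr)_\nu \in C(M_\theta)^{|\widetilde{J}|}$ is a split injection with splitting $(b_\nu) \mapsto \sum_\nu \widetilde{g}_\nu b_\nu$, exhibiting $C(\widetilde{M}_{\widetilde{\theta}})$ as a direct summand of a free module.

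The step requiring most care is the bidegree bookkeeping underlying \eqref{isospectral_fprodl_eqn}: one must track how $\widetilde{l}$ interacts with the $G$-averaging $\sum_{g \in G}$ applied to products of elements of mixed bidegree, and exploit the orthogonality built into the supports $\widetilde{\mathcal V}_{\widetilde{\iota}}$ together with the group-theoretic vanishing $\sum_{g \in G'} g(\widetilde{u}^{j''-j'}\widetilde{v}^{k''-k'}) = N_1 N_2\,\delta_{j'j''}\delta_{k'k''}$, in order to confirm that the inner products of the generators actually lie in $\varphi_\theta(C(M_\theta))$ with the normalization needed for the resolution-of-identity argument. Once this identity is secured, the remaining projectivity argument is formal, and the theorem follows.
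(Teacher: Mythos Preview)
Your proposal is correct and follows essentially the same route as the paper: establish that $G$ acts by $*$-automorphisms on $C(\widetilde{M}_{\widetilde{\theta}})$, then use the finite set $\Xi$ of \eqref{isospectral_xi_eqn} together with the decomposition \eqref{isospectral_sum_eqn} to see that $C(\widetilde{M}_{\widetilde{\theta}})$ is a finitely generated Hilbert $C(M_\theta)$-module. The only notable divergence is in the projectivity step: the paper simply notes finite generation and invokes the Kasparov stabilization theorem, whereas you transfer the resolution of the identity \eqref{isospectral_undec_eqn} through $\widetilde{l}$ (controlled by \eqref{isospectral_fprodl_eqn}) to build an explicit frame and hence a split $C(M_\theta)$-linear embedding into a free module. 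Your argument is more self-contained and gives the projection explicitly; the paper's is shorter but relies on the general machinery. You are also more explicit than the paper about checking that $\varphi_\theta$ is a $*$-homomorphism and that $C(\widetilde{M}_{\widetilde{\theta}})^G = \varphi_\theta(C(M_\theta))$, both of which the paper leaves largely implicit in the discussion preceding the theorem.
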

\subsubsection{Induced representation}
\paragraph*{}

From \eqref{isospectral_fprodl_eqn} it turns out
\be\label{nt_hilb_eqn}
\begin{split}
\left\langle \widetilde{l}\left(  \widetilde{f}_{\left(\widetilde{\iota}', j', k' \right)}\right) , \widetilde{l}\left(\widetilde{f}_{\left(\widetilde{\iota}'', j'', k'' \right)}\right)  \right\rangle_{C\left(\widetilde{ M}_{\widetilde{\th}}\right) } = \delta_{j'j''}\delta_{k'k''}\left\langle \widetilde{l}\left( \widetilde{e}_{\widetilde{\iota}'}\right) , \widetilde{l}\left( \widetilde{e}_{\widetilde{\iota}''}\right)  \right\rangle_{C\left( \widetilde{M}_{ \widetilde{\th}}\right) }= \\
=  \delta_{j'j''}\delta_{k'k''}\left\langle  \widetilde{e}_{\widetilde{\iota}'}, \widetilde{e}_{\widetilde{\iota}''}\right\rangle_{C\left( \widetilde{M}_{ \widetilde{\th}}\right)}={e}_{\widetilde{\iota}'} {e}_{\widetilde{\iota}''} \in  \Coo\left(M_\th \right).
\end{split}
\ee

Let $g \in G$ be any element, and let $\widetilde{l}\left(\widetilde{f}_{\left(\widetilde{\iota}, j, k \right)}\right)  \in \Xi$ where $\Xi$ is given by \eqref{isospectral_xi_eqn}. From \eqref{isospectral_f'_eqn}  and \eqref{isospectral_f_eqn} it turns out 
\be
\widetilde{l}\left( \widetilde{f}_{\left(\widetilde{\iota}, j, k \right)}\right)  = \widetilde{e}_{\widetilde{\iota}} \widetilde{l}\left( \widetilde{u}^j\widetilde{
	v}^k\right) = \widetilde{e}_{\widetilde{\iota}}  \widetilde{u}^j\widetilde{
	v}^k\widetilde{\la}^{k\widetilde{p}_1}.
\ee
If $g \in G$ be any element, then from the exact sequence \eqref{isospectral_gr_eqn} $\{e\} \to G' \to G \to G'' \to \{e\}$ it follows that there is $g''\in G''$ which is the image of $g$.  For any $\widetilde{\iota}\in \widetilde{I} = G''\times I$ there is $\widetilde{\iota}' \in \widetilde{I}$ such that $g''$ transforms $\widetilde{\iota}$ to  $\widetilde{\iota}~'$. If $\widetilde{\mathcal V}_{\widetilde{\iota}} \in \widetilde{M}/\widetilde{\T}^2$ is given by \eqref{isospectral_vgi_eqn}, and $\widetilde{\mathcal U}_{\widetilde{\iota}}\in \widetilde{M}$ is given by \eqref{isospectral_wgi_eqn} then there is $g' \in G'\cong \Z_{N_1} \times \Z_{N_2}$ such that
\be\nonumber
\begin{split}
g'' \widetilde{\mathcal V}_{\widetilde{\iota}}=\widetilde{\mathcal V}_{\widetilde{\iota}'},\\
g \widetilde{\mathcal U}_{\widetilde{\iota}}= g' \widetilde{\mathcal U}_{\widetilde{\iota}'}. 
\end{split}
\ee
If $g'$ corresponds  to $\left(\overline{k}_1, \overline{k}_2 \right) \in \Z_{N_1} \times \Z_{N_2}$ then from $g''\widetilde{e}_{\widetilde{\iota}} =\widetilde{e}_{\widetilde{\iota}'}$ and \eqref{isospectral_uv_eqn} it turns out
\be\label{isospectral_trans_eqn}
\begin{split}
	g \widetilde{f}_{\left(\widetilde{\iota}, j, k \right)} = g\left( \widetilde{e}_{\widetilde{\iota}}  \widetilde{u}^j\widetilde{
	v}^k\right)= \left( g''\widetilde{e}_{\widetilde{\iota}}\right) \left( g'\left( \widetilde{u}^j\widetilde{
	v}^k\right)\right) =\\= \widetilde{e}_{\widetilde{\iota}'} \left( e^{\frac{2\pi i jk_1}{N_1}}e^{\frac{2\pi i kk_2}{N_1}}\widetilde{u}^j\widetilde{
	v}^k\right)=e^{\frac{2\pi i jk_1}{N_1}}e^{\frac{2\pi i kk_2}{N_1}}\widetilde{f}_{\left(\widetilde{\iota}', j, k \right)}. 
\end{split}
	\ee
Form above equation it follows that for any $\widetilde{\iota}_1, \widetilde{\iota}_2 \in \widetilde{I}$, $~g_1, g_2 \in G$, $~j',j'' = 0,\dots N_1-1$, $~k',k'' = 0,\dots N_2-1$ where are $\widetilde{\iota}'_1, \widetilde{\iota}'_2 \in \widetilde{I}$, $l_1, l_2 \in \Z$ such that
\be\label{isospectral_smooth_eqn}
\begin{split}
\left\langle g_1 \widetilde{f}_{\left(\widetilde{\iota}_1, j', k' \right)} , g_2 \widetilde{f}_{\left(\widetilde{\iota}_2, j'', k'' \right)}  \right\rangle_{C\left( \widetilde{M}\right) } = e^{\frac{2\pi i l_1}{N_1}}e^{\frac{2\pi i l_2}{N_2}} \delta_{j'j''}\delta_{k'k''}\left\langle \widetilde{e}_{\widetilde{\iota}_1'}~,~\widetilde{e}_{\widetilde{\iota}_2'} \right\rangle_{C\left( \widetilde{M}\right) }\in \Coo\left(M \right), \\
\left\langle g_1\left( \widetilde{l}\left(  \widetilde{f}_{\left(\widetilde{\iota}_1, j', k' \right)}\right) \right)  , g_2 \left( \widetilde{l}\left( \widetilde{f}_{\left(\widetilde{\iota}_2, j'', k'' \right)} \right) \right)  \right\rangle_{C\left( \widetilde{M}_{\widetilde{   \th}}\right) } = \\=e^{\frac{2\pi i l_1}{N_1}}e^{\frac{2\pi i l_2}{N_2}} \delta_{j'j''}\delta_{k'k''}\left\langle \widetilde{e}_{\widetilde{\iota}_1'}~,~\widetilde{e}_{\widetilde{\iota}_2'} \right\rangle_{C\left( \widetilde{M}_{\widetilde{   \th}}\right) }\in \Coo\left(M_\th \right). 
\end{split}
\ee

From \eqref{isospectral_smooth_eqn} it turns out that the finite set $\widetilde{\Xi}=G\Xi$ satisfies to the condition (a) of the Lemma \ref{smooth_matr_lem}, i.e.
\be\label{isospectral_smooth_fin_eqn}
\left\langle \widetilde{l}\left( \widetilde{f}_{\left(\widetilde{\iota}', j', k' \right)}\right),\widetilde{l}\left( \widetilde{f}_{\left(\widetilde{\iota}'', j'', k'' \right)}\right)  \right\rangle_{C\left( \widetilde{M}_{\widetilde{   \th}}\right) } =  \Coo\left(M_\th \right); ~~ \forall \widetilde{l}\left( \widetilde{f}_{\left(\widetilde{\iota}', j', k' \right)}\right),\widetilde{l}\left( \widetilde{f}_{\left(\widetilde{\iota}'', j'', k'' \right)}\right) \in \widetilde{\Xi}.
\ee

\begin{lemma}\label{isospectral_smooth_lem}
	For any $\widetilde{a} \in C\left(  \widetilde{M}_{ \widetilde{\th}}\right) $ following conditions are equivalent
	\begin{enumerate}
		\item[(a)] $\widetilde{a} \in  \Coo\left(  \widetilde{M}_{ \widetilde{\th}}\right)$,
		\item[(b)] $\left\langle \widetilde{l} \left( \widetilde{f}_{\left(\widetilde{\iota}', j', k' \right)}\right) , \widetilde{a}\widetilde{l}\left( \widetilde{f}_{\left(\widetilde{\iota}'', j'', k'' \right)}\right) \right\rangle_{C\left(  \widetilde{M}_{ \widetilde{\th}}\right)} \in \Coo\left(M_\th \right); ~~\forall \widetilde{l} \left( \widetilde{f}_{\left(\widetilde{\iota}', j', k' \right)}\right) , \widetilde{l} \left( \widetilde{f}_{\left(\widetilde{\iota}'', j'', k'' \right)}\right)  \in \Xi$. 
	\end{enumerate}
\end{lemma}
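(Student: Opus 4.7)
My plan is to deduce the claim from Lemma \ref{smooth_matr_lem} applied to the finite generating set $\widetilde{\Xi}=G\Xi$, where $\Xi=\{\widetilde{l}(\widetilde{f}_{(\widetilde{\iota},j,k)})\}_{(\widetilde{\iota},j,k)\in\widetilde{J}}$ from \eqref{isospectral_xi_eqn}. The enlargement by $G$ is needed to meet the invariance hypothesis (b) of the lemma. First I would verify the three prerequisites: the set $\widetilde{\Xi}$ generates $C(\widetilde{M}_{\widetilde{\th}})_{C(M_\th)}$ as a right Hilbert module (this was established in \eqref{isospectral_sum_eqn} while proving Theorem \ref{isospectral_fin_thm}, and enlarging a generating set preserves the property); all pairwise inner products $\langle g_1\widetilde{l}(\widetilde{f}_{(\widetilde{\iota}_1,j_1,k_1)}),\,g_2\widetilde{l}(\widetilde{f}_{(\widetilde{\iota}_2,j_2,k_2)})\rangle_{C(\widetilde{M}_{\widetilde{\th}})}$ lie in $\Coo(M_\th)$, which is precisely the second line of \eqref{isospectral_smooth_eqn}; and $G\widetilde{\Xi}=\widetilde{\Xi}$, which is immediate from $\widetilde{\Xi}=G\Xi$.

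Once the hypotheses are in place, Lemma \ref{smooth_matr_lem}(i) yields
\begin{equation*}
C(\widetilde{M}_{\widetilde{\th}})\cap\mathbb{M}_n(\Coo(M_\th))=\left\{\widetilde{a}\in C(\widetilde{M}_{\widetilde{\th}})~|~\langle\widetilde{\xi}_j,\widetilde{a}\widetilde{\xi}_k\rangle_{C(\widetilde{M}_{\widetilde{\th}})}\in\Coo(M_\th);~\forall\widetilde{\xi}_j,\widetilde{\xi}_k\in\widetilde{\Xi}\right\},
\end{equation*}
where $n=|\widetilde{\Xi}|$. The $G$-invariance of the inner product combined with $G\widetilde{\Xi}=\widetilde{\Xi}$ shows that checking inner products against $\widetilde{\Xi}$ is equivalent to checking them against $\Xi$ alone (exactly as in the proof of part (ii) of Lemma \ref{smooth_matr_lem}). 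Thus the claim reduces to establishing the identification
\begin{equation*}
\Coo(\widetilde{M}_{\widetilde{\th}})=C(\widetilde{M}_{\widetilde{\th}})\cap\mathbb{M}_n(\Coo(M_\th)).
\end{equation*}

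For the inclusion $\subset$, given $\widetilde{a}\in\Coo(\widetilde{M}_{\widetilde{\th}})$, write $\widetilde{a}=\widetilde{l}(\widetilde{b})$ with $\widetilde{b}\in\Coo(\widetilde{M})$ and apply the decomposition \eqref{isospectral_sum_eqn}:
\begin{equation*}
\widetilde{a}=\sum_{(\widetilde{\iota},j,k)\in\widetilde{J}}\widetilde{l}(\widetilde{f}_{(\widetilde{\iota},j,k)})\,l(a'_{(\widetilde{\iota},j,k)})\text{ with }l(a'_{(\widetilde{\iota},j,k)})\in\Coo(M_\th).
\end{equation*}
Expanding $\langle\widetilde{\xi}_j,\widetilde{a}\widetilde{\xi}_k\rangle$ using the Hilbert-module identities and \eqref{isospectral_smooth_fin_eqn} produces a finite sum of products of elements of $\Coo(M_\th)$, hence a smooth element. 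For the reverse inclusion, I would take $\widetilde{a}$ lying in the right-hand side and recover an explicit decomposition of the form \eqref{isospectral_sum_eqn} with smooth coefficients. Concretely, using the reconstruction formula (available because the generators $\widetilde{l}(\widetilde{f}_{(\widetilde{\iota},j,k)})$ together with the dual frame coming from the projective module structure $C(\widetilde{M}_{\widetilde{\th}})\cong p\,C(M_\th)^n$ reconstruct any module element), one can write $\widetilde{a}$ as a finite $C(M_\th)$-linear combination of generators whose coefficients are polynomials in the smooth inner products $\langle\widetilde{\xi}_j,\widetilde{a}\widetilde{\xi}_k\rangle$; smoothness of these coefficients then forces $\widetilde{a}\in\Coo(\widetilde{M}_{\widetilde{\th}})$.

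The main obstacle will be this last step: upgrading the smoothness of the matrix coefficients to smoothness of $\widetilde{a}$ itself. The delicate point is that $\widetilde{l}$ is a $\C$-linear but not an algebra isomorphism, so one cannot directly transplant the commutative argument from Section 5.1.4. The cleanest route is to exhibit an explicit dual frame $\{\widetilde{\eta}_j\}\subset\Coo(\widetilde{M}_{\widetilde{\th}})$ satisfying $\widetilde{a}=\sum_j\widetilde{\eta}_j\langle\widetilde{\xi}_j,\widetilde{a}\rangle$ (obtained from the projector $p\in\mathbb{M}_n(\Coo(M_\th))$ described in Section 2.9), and observe that since $\widetilde{\eta}_j,\langle\widetilde{\xi}_j,\widetilde{a}\widetilde{\xi}_k\rangle\in\Coo(\widetilde{M}_{\widetilde{\th}}),\Coo(M_\th)$ respectively and these algebras are stable under the deformed product, the reconstructed $\widetilde{a}$ is smooth.
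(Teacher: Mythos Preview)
Your approach is correct in outline but takes a substantially longer detour than the paper's. The paper's argument for (b)$\Rightarrow$(a) bypasses Lemma~\ref{smooth_matr_lem} entirely and exploits a feature specific to the isospectral deformation: the bidegree-$(0,0)$ generators $\widetilde e_{\widetilde\iota}=\widetilde l(\widetilde f_{(\widetilde\iota,0,0)})$ are \emph{central} in $C(\widetilde M_{\widetilde\th})$ and satisfy the partition of unity $\sum_{\widetilde\iota}\widetilde e_{\widetilde\iota}^{\,2}=1$, so that $\widetilde a=\sum_{\widetilde\iota}\widetilde e_{\widetilde\iota}\,\widetilde a\,\widetilde e_{\widetilde\iota}$. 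A support/locality observation then gives the equivalence $\widetilde e_{\widetilde\iota}\,\widetilde a\,\widetilde e_{\widetilde\iota}\in\Coo(\widetilde M_{\widetilde\th})\Leftrightarrow\langle\widetilde e_{\widetilde\iota},\widetilde a\,\widetilde e_{\widetilde\iota}\rangle\in\Coo(M_\th)$, and only these diagonal $(j=k=0)$ inner products are actually used.

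By contrast, you first reduce everything to the identification $\Coo(\widetilde M_{\widetilde\th})=C(\widetilde M_{\widetilde\th})\cap\mathbb M_n(\Coo(M_\th))$, which is precisely Corollary~\ref{isospectral_cor} --- a statement the paper \emph{derives from} the present lemma rather than the other way around. Your frame-reconstruction sketch for this identification can be made to work, but note the slip in your last paragraph: the reconstruction $\widetilde a=\sum_j\widetilde\eta_j\langle\widetilde\xi_j,\widetilde a\rangle$ involves \emph{one-sided} inner products $\langle\widetilde\xi_j,\widetilde a\rangle$, whereas condition (b) only controls the \emph{two-sided} ones $\langle\widetilde\xi_j,\widetilde a\widetilde\xi_k\rangle$. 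The missing bridge is to insert $1=\tfrac1{N_1N_2}\sum_l\widetilde\xi_l\rangle\langle\widetilde\xi_l$ and write $\langle\widetilde\xi_j,\widetilde a\rangle=\tfrac1{N_1N_2}\sum_l\langle\widetilde\xi_j,\widetilde a\widetilde\xi_l\rangle\,\langle\widetilde\xi_l,1\rangle$, each factor now smooth. With this fix your route is complete; the paper's route is shorter and makes transparent the geometric reason (centrality of $\widetilde\T^2$-invariant functions under the twist) why smoothness can be tested locally.
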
 
\begin{proof} (a)$\Rightarrow$(b) For any $g \in G$ following condition holds  $g\widetilde{l}\left( \widetilde{f}_{\left(\widetilde{\iota}', j', k' \right)}\right) , g\widetilde{l}\left( \widetilde{f}_{\left(\widetilde{\iota}'', j'', k'' \right)}\right) \in \Coo\left(  \widetilde{M}_{ \widetilde{\th}}\right)$, hence
	$$ 
	\left\langle \widetilde{l}\left( \widetilde{f}_{\left(\widetilde{\iota}', j', k' \right)}\right) , \widetilde{a}\widetilde{l}\left( \widetilde{f}_{\left(\widetilde{\iota}'', j'', k'' \right)}\right) \right\rangle_{C\left(  \widetilde{M}_{ \widetilde{\th}}\right)}= \sum_{g\in G}g\left(\widetilde{l}\left( \widetilde{f}^*_{\left(\widetilde{\iota}', j', k' \right)}\right) , \widetilde{a}\widetilde{l}\left( \widetilde{f}_{\left(\widetilde{\iota}'', j'', k'' \right)} \right)\right) \in \Coo\left(  \widetilde{M}_{ \widetilde{\th}}\right). 
	$$
	Since $\left\langle \widetilde{l}\left( \widetilde{f}_{\left(\widetilde{\iota}', j', k' \right)}\right),\widetilde{a}\widetilde{l}\left( \widetilde{f}_{\left(\widetilde{\iota}'', j'', k'' \right)}\right)\right\rangle_{C\left(  \widetilde{M}_{ \widetilde{\th}}\right)}$ is $G$-invariant one has $$\left\langle \widetilde{l}\left( \widetilde{f}_{\left(\widetilde{\iota}', j', k' \right)}\right) ,\widetilde{a}\widetilde{l}\left(  \widetilde{f}_{\left(\widetilde{\iota}'', j'', k'' \right)}\right) \right\rangle_{C\left(  \widetilde{M}_{ \widetilde{\th}}\right)}\in \Coo\left(M \right).$$\\
	(b)$\Rightarrow$(a)
	There is the following equivalence
	$$
	\widetilde{e}_{\widetilde{\iota}} \widetilde{a} 	\widetilde{e}_{\widetilde{\iota}} \in \Coo\left(  \widetilde{M}_{ \widetilde{\th}}\right) \Leftrightarrow  \left\langle 	\widetilde{e}_{\widetilde{\iota}}, \widetilde{a} 	\widetilde{e}_{\widetilde{\iota}} \right\rangle_{C\left(  \widetilde{M}_{ \widetilde{\th}}\right)} \in \Coo\left({M}_{ {\th}}\right).
	$$
	Taking into account $\widetilde{e}_{\widetilde{\iota}} = \widetilde{l}\left(\widetilde{f}_{\left(\widetilde{\iota}, 0, 0 \right)} \right) $ one has a following logical equation
\bean
\forall\widetilde{\iota} \in \widetilde{I}~~	\left\langle 	\widetilde{l}\left(\widetilde{f}_{\left(\widetilde{\iota}, 0, 0 \right)}\right) , \widetilde{a} 	\widetilde{l}\left(\widetilde{f}_{\left(\widetilde{\iota}, 0, 0 \right)}\right)  \right\rangle_{C\left(  \widetilde{M}_{ \widetilde{\th}}\right)} \in \Coo\left({M}_{ {\th}}\right) \Leftrightarrow \\ \Leftrightarrow \widetilde{l}\left(\widetilde{f}_{\left(\widetilde{\iota}, 0, 0 \right)}\right)  \widetilde{a} 	\widetilde{l}\left(\widetilde{f}_{\left(\widetilde{\iota}, 0, 0 \right)}\right) \in \Coo\left(  \widetilde{M}_{ \widetilde{\th}}\right)\Rightarrow  \\ \Rightarrow
\widetilde{a} = \sum_{\widetilde{\iota} \in \widetilde{I}} 	\widetilde{e}_{\widetilde{\iota}}	\widetilde{a}	\widetilde{e}_{\widetilde{\iota}} = \sum_{\widetilde{\iota} \in \widetilde{I}} 	\widetilde{l}\left(\widetilde{f}_{\left(\widetilde{\iota}, 0, 0 \right)}\right) 	\widetilde{a}\widetilde{l}\left(\widetilde{f}_{\left(\widetilde{\iota}, 0, 0 \right)}\right) \in \Coo\left(  \widetilde{M}_{ \widetilde{\th}}\right).
\eean

 \end{proof}
\begin{corollary}\label{isospectral_cor}
	Following conditions hold:
	\begin{itemize}
		\item $C\left(\widetilde{M}_{\widetilde{\th}} \right) \bigcap \mathbb{M}_n\left(\Coo\left(M_\th\right) \right) = \Coo\left(\widetilde{M}_{\widetilde{\th}} \right)$,
		\item The unital noncommutative finite-fold covering  $$\left( C\left( M_\th\right), C\left( \widetilde{M}_{ \widetilde{\th}}\right), G\left(\widetilde{M}~|~ M \right)\right)$$ is {smoothly invariant}.
	\end{itemize}
\end{corollary}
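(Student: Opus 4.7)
The plan is to apply Lemma~\ref{smooth_matr_lem} to the unital noncommutative finite-fold covering $\left( C\left( M_\th\right), C\left( \widetilde{M}_{ \widetilde{\th}}\right), G\right)$ with the finite generating set $\widetilde{\Xi} \stackrel{\mathrm{def}}{=} G\Xi$, where $\Xi$ is the set from~\eqref{isospectral_xi_eqn}. First I would verify the hypotheses of the lemma: the set $\widetilde{\Xi}$ generates $C\left( \widetilde{M}_{ \widetilde{\th}}\right)$ as a right $C\left( M_\th\right)$-module since the smaller subset $\Xi \subset \widetilde{\Xi}$ already does so by~\eqref{isospectral_sum_eqn}; the inner products of pairs drawn from $\widetilde{\Xi}$ lie in $\Coo\left(M_\th\right)$ by the second line of~\eqref{isospectral_smooth_eqn}; and the $G$-invariance $G\widetilde{\Xi}=\widetilde{\Xi}$ is immediate from the definition $\widetilde{\Xi}=G\Xi$.

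Once these hypotheses are in place, part~(ii) of Lemma~\ref{smooth_matr_lem} gives directly the second bullet, namely that the covering $\left( C\left( M_\th\right), C\left( \widetilde{M}_{ \widetilde{\th}}\right), G\right)$ is smoothly invariant. For the first bullet, part~(i) of Lemma~\ref{smooth_matr_lem} yields the characterization
\begin{equation*}
C\left(\widetilde{M}_{\widetilde{\th}} \right) \bigcap \mathbb{M}_n\left(\Coo\left(M_\th\right) \right) = \left\{\widetilde{a} \in C\left(\widetilde{M}_{\widetilde{\th}} \right) \ \middle|\  \left\langle \widetilde{\xi}, \widetilde{a}\widetilde{\eta}\right\rangle_{C\left( \widetilde{M}_{\widetilde{\th}}\right)} \in \Coo\left(M_\th \right),\ \forall \widetilde{\xi},\widetilde{\eta} \in \widetilde{\Xi}\right\}
\end{equation*}
with $n=|\widetilde{\Xi}|$. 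I would then match this with Lemma~\ref{isospectral_smooth_lem}. Since $\Xi\subset\widetilde{\Xi}$, the right-hand side is contained in the set described by condition (b) of that lemma, which equals $\Coo\left(\widetilde{M}_{\widetilde{\th}} \right)$ by the equivalence (a)$\Leftrightarrow$(b). Conversely, because $G$ acts on $\Coo\left(\widetilde{M}_{\widetilde{\th}} \right)$ by $*$-automorphisms (as established earlier in Section~\ref{isosectral_fin_cov}), any $\widetilde{a}\in\Coo\left(\widetilde{M}_{\widetilde{\th}}\right)$ automatically satisfies the broader family of inner-product conditions involving all $G$-translates of $\Xi$-elements, giving the reverse inclusion and completing the identification.

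No substantively new calculation should be required: the computational heart is already recorded in~\eqref{isospectral_smooth_eqn}, whose explicit formula shows that an inner product of two $G$-translates from $\Xi$ differs from the corresponding inner product of two elements of $\Xi$ itself only by a scalar phase drawn from roots of unity together with a permutation of the indices $\widetilde{\iota}$, so it stays inside $\Coo\left(M_\th\right)$. The only mild obstacle is bookkeeping: tracking the correct cardinality $n=|\widetilde{\Xi}|$ of the matrix algebra, and being careful that the $G$-action really does preserve the smooth subalgebra under the twist $\widetilde{l}$ — a point that follows from the computation early in Section~\ref{isosectral_fin_cov} showing that $g\left(\widetilde{l}(\widetilde{a})\right)$ equals $\widetilde{l}(g\widetilde{a})$ up to the usual bigrading phases. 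Thus the proof should be a compact packaging of Lemmas~\ref{smooth_matr_lem} and~\ref{isospectral_smooth_lem}.
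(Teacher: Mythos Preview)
Your proposal is correct and matches the paper's own proof, which simply cites \eqref{isospectral_smooth_fin_eqn} together with Lemmas~\ref{smooth_matr_lem} and~\ref{isospectral_smooth_lem}. You have filled in precisely the bookkeeping the paper leaves implicit: verifying the hypotheses of Lemma~\ref{smooth_matr_lem} for the generating set $\widetilde{\Xi}=G\Xi$ via \eqref{isospectral_smooth_eqn}, and then matching the resulting characterization of the smooth intersection with the equivalence in Lemma~\ref{isospectral_smooth_lem}.
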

\begin{proof}
	Follows from \eqref{isospectral_smooth_fin_eqn}, and Lemmas \ref{smooth_matr_lem}, \ref{isospectral_smooth_lem}.
\end{proof}

From \eqref{isospectral_undec_eqn} for any $\widetilde{a} \in C\left( \widetilde{M}_{ \widetilde{\th}}\right)$ following condition holds
\be\label{isospectral_d_eqn}
\widetilde{a} = \frac{1}{N_1N_2}\sum_{\widetilde{\iota} \in \widetilde{I}} \sum_{\substack{j = 0\\ k = 0}}^{\substack{j = N_1-1\\ k = N_2-1}} \widetilde{l}\left( \widetilde{f}_{\left(\widetilde{\iota}, j, k \right)}\right) \left\langle \widetilde{a},\widetilde{l} \left( \widetilde{f}_{\left(\widetilde{\iota}, j, k \right)}\right) \right\rangle_{C\left( \widetilde{M}_{ \widetilde{\th}}\right)}= \sum_{\widetilde{\iota} \in \widetilde{I}} \sum_{\substack{j = 0\\ k = 0}}^{\substack{j = N_1-1\\ k = N_2-1}} \widetilde{l}\left( \widetilde{f}_{\left(\widetilde{\iota}, j, k \right)}\right)a_{\left(\widetilde{\iota}, j, k \right)}
\ee
where $a_{\left(\widetilde{\iota}, j, k \right)}=\frac{1}{N_1N_2}\left\langle \widetilde{a},\widetilde{l} \left( \widetilde{f}_{\left(\widetilde{\iota}, j, k \right)}\right) \right\rangle_{C\left( \widetilde{M}_{ \widetilde{\th}}\right)}\in C\left( {M}_{ {\th}}\right)$. If $\xi \in L^2\left(M, S \right)$ then
\be\label{isospectral_decomp_eqn}
\begin{split}
\widetilde{a} \otimes \xi = \sum_{\widetilde{\iota} \in \widetilde{I}}~~ \sum_{\substack{j = 0\\ k = 0}}^{\substack{j = N_1-1\\ k = N_2-1}} \widetilde{l}\left( \widetilde{f}_{\left(\widetilde{\iota}, j, k \right)}\right)a_{\left(\widetilde{\iota}, j, k \right)}\otimes \xi = \sum_{\widetilde{\iota} \in \widetilde{I}}~~ \sum_{\substack{j = 0\\ k = 0}}^{\substack{j = N_1-1\\ k = N_2-1}} \widetilde{l}\left( \widetilde{f}_{\left(\widetilde{\iota}, j, k \right)}\right)\otimes a_{\left(\widetilde{\iota}, j, k \right)} \xi=\\
= \sum_{\widetilde{\iota} \in \widetilde{I}} ~~\sum_{\substack{j = 0\\ k = 0}}^{\substack{j = N_1-1\\ k = N_2-1}} \widetilde{l}\left( \widetilde{f}_{\left(\widetilde{\iota}, j, k \right)}\right)\otimes \xi_{\left(\widetilde{\iota}, j, k \right)} \in C\left( \widetilde{M}_{ \widetilde{\th}}\right) \otimes_{C\left( {M}_{ {\th}}\right)} L^2\left(M, S \right),\\ \text{ where } \xi_{\left(\widetilde{\iota}, j, k \right)} = a_{\left(\widetilde{\iota}, j, k \right)}\xi \in   L^2\left(M, S \right).
\end{split}
\ee
Denote by $ \widetilde{\H}=C\left( \widetilde{M}_{ \widetilde{\th}}\right) \otimes_{C\left( {M}_{ {\th}}\right)} L^2\left(M, S \right)$ and let $\left(\cdot, \cdot \right)_{\widetilde{\H}}$ is the given by \eqref{induced_prod_equ} Hilbert product. If $\xi, \eta \in L^2\left(M, S \right)$ then from \eqref{nt_hilb_eqn} it turns out
\be\label{isospectral_hilb_p_eqn}
\left(\widetilde{l} \left( \widetilde{f}_{\left(\widetilde{\iota}', j', k' \right)}\right)  \otimes \xi, ~\widetilde{l} \left( \widetilde{f}_{\left(\widetilde{\iota}'', j'', k'' \right)}\right)  \otimes \eta \right)_{\widetilde{\H}}= N_1N_2\delta_{j'j''}\delta_{k'k''}\left(\xi,{e}_{\widetilde{\iota}'} {e}_{\widetilde{\iota}''} \eta  \right)_{ L^2\left(M, S \right)}.
\ee
From \eqref{isospectral_hilb_p_eqn} it turns out the orthogonal decomposition
\bean
\widetilde{\H} = \bigoplus_{\substack{j = 0\\ k = 0}}^{\substack{j = N_1-1\\ k = N_2-1}}\widetilde{\H}_{jk}~, ~~ \\ \text{ where } \widetilde{\H}_{jk}=\left\{\widetilde{\xi} \in \widetilde{\H}~|~\widetilde{\xi}= \sum_{\widetilde{\iota} \in \widetilde{I}}\widetilde{l}\left( \widetilde{f}_{\left(\widetilde{\iota}, j, k \right)}\right)\otimes \xi_{\left(\widetilde{\iota}, j, k \right)}\in C\left( \widetilde{M}_{ \widetilde{\th}}\right) \otimes_{C\left( {M}_{ {\th}}\right)} L^2\left(M, S \right) \right\}.
\eean
From \eqref{isospectral_hilb_p_eqn} it turns out than for any $0\le j',j''<N_1$ and $0\le k',k''<N_2$ there is an isomorphism of Hilbert spaces given by
\be\label{isospectral_hilb_iso_eqn}
\begin{split}
\Phi^{j'k'}_{j''k''}:\widetilde{\H}_{j'k'}\xrightarrow{\approx}\widetilde{\H}_{j''k''},\\
\widetilde{l} \left( \widetilde{f}_{\left(\widetilde{\iota}, j', k' \right)}\right)  \otimes \xi \mapsto \widetilde{l} \left( \widetilde{f}_{\left(\widetilde{\iota}, j'', k'' \right)}\right)  \otimes \xi.
\end{split}
\ee
Similarly if  $\widetilde{\H}^{\mathrm{comm}}=C\left( \widetilde{M}\right) \otimes_{C\left( {M}\right)} L^2\left(M, S \right)$ then there is the decomposition
\bean
\widetilde{\H}^{\mathrm{comm}} = \bigoplus_{\substack{j = 0\\ k = 0}}^{\substack{j = N_1-1\\ k = N_2-1}}\widetilde{\H}^{\mathrm{comm}}_{jk}~, ~~ \\ \text{ where } \widetilde{\H}^{\mathrm{comm}}_{jk}=\left\{\widetilde{\xi} \in \widetilde{\H}^{\mathrm{comm}}~|~\widetilde{\xi}= \sum_{\widetilde{\iota} \in \widetilde{I}} \widetilde{f}_{\left(\widetilde{\iota}, j, k \right)}\otimes \xi_{\left(\widetilde{\iota}, j, k \right)}\in C\left( \widetilde{M}\right) \otimes_{C\left( {M}\right)} L^2\left(M, S \right) \right\}.
\eean
From the Lemma \eqref{comm_ind_lem} it turns out $\widetilde{\H}^{\mathrm{comm}} = L^2\left( \widetilde{M},\widetilde{S}\right)$ the induced representation is given by the natural action of $C\left( {M}\right)$ on $L^2\left( \widetilde{M},\widetilde{S}\right)$. 
Similarly to \eqref{isospectral_hilb_iso_eqn} for any $0\le j',j''<N_1$ and $0\le k',k''<N_2$ there is an isomorphism of Hilbert spaces given by
\be\label{isospectral_hilb_iso_comm_eqn}
\begin{split}
	\Psi^{j'k'}_{j''k''}:\widetilde{\H}^{\mathrm{comm}}_{j'k'}\xrightarrow{\approx}\widetilde{\H}^{\mathrm{comm}}_{j''k''},\\
 \widetilde{f}_{\left(\widetilde{\iota}, j', k' \right)}  \otimes \xi \mapsto   \widetilde{f}_{\left(\widetilde{\iota}, j'', k'' \right)}  \otimes \xi.
\end{split}
\ee
From $\widetilde{l}\left( \widetilde{f}_{\left(\widetilde{\iota}, 0, 0 \right)}\right)= \widetilde{f}_{\left(\widetilde{\iota}, 0, 0 \right)}=\widetilde{e}_{\widetilde{\iota}}$ it turns out the natural ismomorphism  
\be\label{isospectral_nat_eqn}
\widetilde{\H}^{\mathrm{comm}}_{0,0} \cong \widetilde{\H}_{0,0}
\ee
We would like to define the isomorphism $\varphi:\widetilde{\H} \approx C\left( \widetilde{M}_{ \widetilde{\th}}\right) \otimes_{C\left( {M}_{ {\th}}\right)} L^2\left(M, S \right) \xrightarrow{\approx}\widetilde{\H}^{\mathrm{comm}}$ such that for any $\widetilde{a} \in C\left(\widetilde{M}_{\widetilde{\th}} \right)$, and $\xi \in L^2\left(M,S \right)$ following condition holds 
\be\label{isospectral_varphi_eqn}
\varphi\left( \widetilde{a}\otimes \xi \right)= \widetilde{a}\left(1_{C\left( M\right) } \otimes \xi \right) 
\ee
where the right part of the above equation assumes the action of $C\left(\widetilde{M}_{\widetilde{\th}} \right)$ on $L^2\left( \widetilde{M},\widetilde{S}\right)$ by operators  \eqref{l_defn}. From the decomposition \eqref{isospectral_d_eqn} it turns the equation  \eqref{isospectral_varphi_eqn} is true if and only if it is true for any $\widetilde{a} = \widetilde{l}\left( \widetilde{f}_{\left(\widetilde{\iota}, j, k \right)}\right) a$  where $a \in C\left( {M}_{ \th}\right)$, i.e.
$$
\varphi\left( \widetilde{l}\left( \widetilde{f}_{\left(\widetilde{\iota}, j, k \right)}\right) a\otimes \xi \right)= \widetilde{l}\left( \widetilde{f}_{\left(\widetilde{\iota}, j, k \right)}\right) a\left(1_{C\left( M\right) } \otimes \xi \right),
$$
or equivalently
\be\label{isospectral_varphi_p_eqn}
\varphi\left( \widetilde{l}\left( \widetilde{f}_{\left(\widetilde{\iota}, j, k \right)}\right) \otimes a\xi \right)= \widetilde{l}\left( \widetilde{f}_{\left(\widetilde{\iota}, j, k \right)}\right) \left(1_{C\left( M\right) } \otimes a\xi \right)
\ee
From \eqref{isospectral_hilb_iso_comm_eqn} the right part of \eqref{isospectral_varphi_p_eqn} is given by
$$
\widetilde{l}\left( \widetilde{f}_{\left(\widetilde{\iota}, j, k \right)}\right) \left(1_{C\left( M\right) } \otimes a\xi \right)= \Psi^{0,0}_{jk}\left(\widetilde{\la}^{kp_1} \left(\widetilde{f}_{\left(\widetilde{\iota}, 0, 0\right)} \otimes a\xi \right) \right) 
$$
From the above equation it turns out that if $\eta \in \widetilde{\H}_{j,k}$ then
\be\label{isospectral_jk_p_eqn}
\varphi\left( \eta\right)=\varphi|_{\widetilde{\H}_{j,k}}\left( \eta\right)=  \Psi^{0,0}_{jk}\left( \widetilde{\la}^{kp_1}\Phi^{jk}_{0,0} \left(\eta \right) \right) 
\ee
and 
\bean
\varphi = \bigoplus_{\substack{j = 0\\ k = 0}}^{\substack{j = N_1-1\\ k = N_2-1}} \varphi_{\widetilde{\H}_{j,k}}:\widetilde{\H}\xrightarrow{\approx} \widetilde{\H}^{\mathrm{comm}}
\eean
then from above construction it turns out that $\varphi$ satisfies to \eqref{isospectral_varphi_eqn}. In result we have the following lemma.
\begin{lemma}
	If $\widetilde{\rho}:C\left( \widetilde{M}_{ \widetilde{\th}}\right)\to B\left(\widetilde{\H} \right) $ is induced by $\left(\rho, \left( C\left( M_\th\right), C\left( \widetilde{M}_{ \widetilde{\th}}\right), G\left(\widetilde{M}, M \right)\right)\right)$ then $\widetilde{\rho}$ can be represented by action of $C\left( \widetilde{M}_{ \widetilde{\th}}\right)$ on $ L^2\left( \widetilde{M},\widetilde{S}\right)$ by operators \eqref{l_defn}.
\end{lemma}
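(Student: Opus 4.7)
The plan is to complete the construction of the isometric isomorphism $\varphi:\widetilde{\mathcal H} \xrightarrow{\approx} \widetilde{\mathcal H}^{\mathrm{comm}} = L^2\left(\widetilde M, \widetilde S\right)$ already sketched via \eqref{isospectral_jk_p_eqn} and to verify directly that it intertwines the induced representation $\widetilde\rho$ with the action of $C\left(\widetilde M_{\widetilde\th}\right)$ on $L^2\left(\widetilde M, \widetilde S\right)$ by the twisted multipliers $\widetilde l\left(\cdot\right)$ of \eqref{l_defn}. Once the intertwining is proved on a dense subset, the uniqueness of the induced representation up to unitary equivalence will deliver the claim.

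First I would check that the block maps $\varphi|_{\widetilde{\mathcal H}_{j,k}} = \Psi^{0,0}_{jk}\circ\widetilde\la^{k\widetilde p_1}\circ\Phi^{jk}_{0,0}$ assemble into a well-defined isometric isomorphism. The orthogonal decompositions $\widetilde{\mathcal H} = \bigoplus_{j,k}\widetilde{\mathcal H}_{jk}$ and $\widetilde{\mathcal H}^{\mathrm{comm}} = \bigoplus_{j,k}\widetilde{\mathcal H}^{\mathrm{comm}}_{jk}$ are already in place. Inner products on the $\widetilde{\mathcal H}_{jk}$ summands are computed by \eqref{isospectral_hilb_p_eqn}, and the analogous formula for $\widetilde{\mathcal H}^{\mathrm{comm}}_{jk}$ follows from combining \eqref{isospectral_tor_eqn} with \eqref{isospectral_partg_eqn}; the two match, and unitarity of $\widetilde\la^{k\widetilde p_1}$ on $L^2\left(\widetilde M, \widetilde S\right)$ gives the required isometry on each block.

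Next I would verify the intertwining identity $\varphi\left(\widetilde a \otimes \xi\right) = \widetilde l\left(\widetilde a\right)\left(1\otimes\xi\right)$ on the dense subspace spanned by elementary tensors of the form $\widetilde l\bigl(\widetilde f_{\left(\widetilde\iota, j, k\right)}\bigr)\,a\otimes\xi$ with $a\in C\left(M_\th\right)$ and $\xi\in L^2\left(M,S\right)$. Writing $a = l\left(a^{\sharp}\right)$ and expanding $a^{\sharp}$ into bihomogeneous components as in \eqref{isospectral_sec_eqn}, the identity reduces to individual homogeneous pieces. The key tool is Lemma~\ref{conn_landi_iso_lem}(b): the product $\widetilde l\bigl(\widetilde f_{\left(\widetilde\iota,j,k\right)}\bigr)\,\widetilde l\bigl(a^{\sharp}_{n_1,n_2}\bigr)$ equals $\widetilde l\bigl(\widetilde f_{\left(\widetilde\iota,j,k\right)}\ast a^{\sharp}_{n_1,n_2}\bigr)$, and the extra power of $\widetilde\la$ coming from the star product is absorbed precisely by the factor $\widetilde\la^{k\widetilde p_1}$ built into $\varphi$. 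This matches the right-hand side, which acts on $L^2\left(\widetilde M, \widetilde S\right)$ by multiplication by $\widetilde f_{\left(\widetilde\iota,j,k\right)}\widetilde\la^{k\widetilde p_1}$ followed by the appropriate twisted multiplier for the lifted homogeneous component.

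The main obstacle I anticipate is the careful bookkeeping of bidegrees across the covering: an element of $C^{\infty}\left(M\right)$ of bidegree $\left(n_1,n_2\right)$ for the $\T^2$-action lifts to bidegree $\left(N_1 n_1, N_2 n_2\right)$ for the $\widetilde\T^2$-action on $\widetilde M$, and one must confirm that the star product used to define $\widetilde l$ on $\widetilde M$ is compatible under this embedding with the star product defining $l$ on $M$, and that $\widetilde\la^{k\widetilde p_1}$ restricted to the image of $L^2\left(M,S\right)$ inside $L^2\left(\widetilde M,\widetilde S\right)$ reproduces $\la^{k p_1}$. Given these verifications and the commutative analogue (Lemma~\ref{comm_ind_lem}), which identifies the commutative induced representation of $C\left(\widetilde M\right)$ on $\widetilde{\mathcal H}^{\mathrm{comm}}$ with the natural action on $L^2\left(\widetilde M,\widetilde S\right)$, the identity \eqref{isospectral_varphi_eqn} follows on a dense set, extends by continuity to all of $C\left(\widetilde M_{\widetilde\th}\right)$, and the lemma is proved.
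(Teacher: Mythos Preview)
Your proposal is correct and follows essentially the same approach as the paper: the paper's argument is precisely the construction preceding the lemma statement, defining $\varphi$ blockwise via \eqref{isospectral_jk_p_eqn} on the orthogonal decomposition $\widetilde{\mathcal H}=\bigoplus_{j,k}\widetilde{\mathcal H}_{jk}$ and verifying \eqref{isospectral_varphi_eqn} on the generating tensors $\widetilde l\bigl(\widetilde f_{(\widetilde\iota,j,k)}\bigr)\otimes a\xi$. You are somewhat more explicit than the paper about the isometry verification and the bidegree compatibility under the covering (the relation between $\widetilde p_1$ and $p_1$), but these are exactly the details the paper leaves implicit in its sketch.
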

\subsubsection{Lift of the Dirac operator}
\paragraph*{}
If ${f}_{\left({\iota}, j, k \right)} \in \Coo \left(M \right)$ is given by \eqref{isospectral_fs_eqn} then from then from \eqref{isospectral_f's_eqn} it turns out 
\be\nonumber
{f}'_{\left({\iota}, j, k \right)} = {e}_{{\iota}} {u}^j{
	v}^k \in C_0\left({\mathcal{U}}_{\iota} \right)= C_0\left({\mathcal{V}}_{\iota} \times \T^2 \right)
\ee
In the above formula the product ${u}^j{
	v}^k$ can be regarded as element of both $C\left( \T^2\right)$ and  $C_b\left({\mathcal{V}}_{\iota} \times \T^2 \right) =  C_b\left({\mathcal{W}}_{\iota} \right)$, where $\mathcal{W}_{\iota}\subset M$ is the homeomorphic image of ${\mathcal{V}}_{\iota} \times \T^2$. Since the Dirac operator $\slashed D$ is invariant with respect to transformations 
$
u \mapsto e^{i\varphi_u}u, ~~ v \mapsto e^{i\varphi_v}v
$
one has
\be\label{isospectral_jdb_eqn}
\left[\slashed D, u \right]= d^{\iota}_uu,~~ \left[\slashed D, v \right]= d^{\iota}_vv
\ee
where $d^\iota_u, d^\iota_v: \mathcal V_\iota \to \mathbb{M}_{\dim S} \left(\C \right)$ are continuous matrix-valued functions. I would like to avoid functions in $C_b\left({\mathcal{U}}_{\iota} \right)$, so instead \eqref{isospectral_jdb_eqn} the following evident consequence of it will be used
\be\label{isospectral_jde_eqn}
\left[\slashed D,a u^jv^k \right]= \left[\slashed D,a \right] u^jv^k  + a\left( jd^\iota_u+kd^\iota_v\right) u^jv^k; ~~ a \in C_0\left( M/ \T^2\right), ~~ \supp a \subset \mathcal{V}_{\iota}.
\ee
In contrary to \eqref{isospectral_jdb_eqn} the equation \eqref{isospectral_jde_eqn} does not operate with $C_b\left({\mathcal{U}}_{\iota} \right)$, it operates with $C_0\left({\mathcal{U}}_{\iota} \right)$. Let $\pi: \widetilde M \to M$ and let $\widetilde{\slashed D}$ be the $\pi$-lift of $\slashed D$ (cf. Definition  \ref{inv_image_defn}). Suppose $\widetilde{  \mathcal V}_{\widetilde{\iota}} \subset \widetilde{M}/\widetilde{   \T}^2$ is mapped onto ${  \mathcal V}_{{\iota}} \subset {M}/{   \T}^2$. Then we set $d^{\widetilde{\iota}}_u \stackrel{\mathrm{def}}{=} d^{{\iota}}_u$, $d^{\widetilde{\iota}}_v \stackrel{\mathrm{def}}{=} d^{{\iota}}_v$. The covering $\pi$ maps $\widetilde{\mathcal{V}}_{ \widetilde\iota} \times \widetilde{\T}^2$ onto ${\mathcal{V}}_{\iota} \times \T^2$. If $\widetilde{u}, \widetilde{v} \in C\left(\widetilde{\T}^2 \right)$ are natural generators, then the covering $\widetilde{\T}^2 \to \T^2$ is given by
\bean
C\left(\T^2 \right) \to C\left(\widetilde{\T}^2 \right),\\
u \mapsto \widetilde{u}^{N_1},~~v \mapsto \widetilde{v}^{N_2}.
\eean 
From the above equation and taking into account \eqref{isospectral_jdb_eqn} one has
\be\label{isospectral_du_eqn}
\begin{split}
\left[\widetilde{\slashed D}, \widetilde{u} \right]= \frac{d^{\widetilde{\iota}}_u}{N_1}\widetilde{u},~~ \left[\widetilde{\slashed D}, \widetilde{v} \right]= \frac{d^{\widetilde{\iota}}_v}{N_2}\widetilde{v},\\
\left[\widetilde{\slashed D}, \widetilde{a} \widetilde{u}^j\widetilde{v}^k \right]=\left[\widetilde{\slashed D},\widetilde{a} \right] \widetilde{u}^j\widetilde{v}^k  + \widetilde{a} \left(\frac{j^{\widetilde{\iota}}_u}{N_1} +\frac{k^{\widetilde{\iota}}_v}{N_2}\right) \widetilde{u}^j\widetilde{v}^k;\\ ~~ \widetilde{a} \in C_0\left( \widetilde{M}/ \widetilde{\T}^2\right), ~~ \supp \widetilde{a} \subset \widetilde{\mathcal{V}}_{\widetilde{\iota}}.
\end{split}
\ee
For any $\widetilde{a}\in \Coo\left( \widetilde{M}\right) $ such that $\supp \widetilde{a} \subset  \widetilde{\mathcal{W}}_{\widetilde{\iota}}$ following condition holds
\be\label{isospectral_da_eqn}
\left[\widetilde{\slashed D}, \widetilde{a} \right]= \mathfrak{lift}_{\widetilde{\mathcal{W}}_{\widetilde{\iota}}}\left(\left[{\slashed D}, \mathfrak{desc}\left( \widetilde{a}\right)  \right] \right) 
\ee
If $b \in \Coo\left(M/\T^2 \right)\subset \Coo\left(M \right)$ then from \eqref{isospectral_du_eqn} and \eqref{isospectral_da_eqn} it turns out
\be\nonumber
\begin{split}
\left[\widetilde{\slashed D},  \widetilde{f}_{\left(\widetilde{\iota}, j, k \right)}b u^{j'}v^{k'}\right]   = \sqrt{\widetilde{e}_{\widetilde{\iota}}}\widetilde{u}^j\widetilde{v}^k  u^{j'}v^{k'}\otimes \left[ \slashed D, \sqrt{{e}_{\widetilde{\iota}}} b  \right]+ \\ +  b\sqrt{\widetilde{e}_{\widetilde{\iota}}}\widetilde{u}^j\widetilde{v}^k  u^{j'}v^{k'}\otimes \left[ \slashed D, \sqrt{{e}_{\widetilde{\iota}}}\right]+\\+b
	\sqrt{\widetilde{e}_{\widetilde{\iota}}}\widetilde{u}^j\widetilde{v}^k  u^{j'}v^{k'}\otimes  \sqrt{{e}_{\widetilde{\iota}}}\left(j'd_u+\frac{ jd_u}{N_1} +k'd_v + \frac{kd_v}{N_2}\right)\\
\end{split}
\ee 
and taking into account $\left[ \widetilde{\slashed D}, \widetilde{l}\right]=0$ one has
\be\label{isospectral_predu_eqn}
\begin{split}
	\left[\widetilde{\slashed D}, \widetilde{l}\left(  \widetilde{f}_{\left(\widetilde{\iota}, j, k \right)}b u^{j'}v^{k'}\right) \right]   = \sqrt{\widetilde{e}_{\widetilde{\iota}}}\widetilde{l}\left(\widetilde{u}^j\widetilde{v}^k  u^{j'}v^{k'}\right)\otimes \left[ \slashed D, \sqrt{{e}_{\widetilde{\iota}}} b  \right]+ \\ +  b\sqrt{\widetilde{e}_{\widetilde{\iota}}}\widetilde{l}\left(\widetilde{u}^j\widetilde{v}^k  u^{j'}v^{k'}\right)\otimes \left[ \slashed D, \sqrt{{e}_{\widetilde{\iota}}}\right]+\\+b
	\sqrt{\widetilde{e}_{\widetilde{\iota}}}\widetilde{l}\left(\widetilde{u}^j\widetilde{v}^k  u^{j'}v^{k'}\right)\otimes  \sqrt{{e}_{\widetilde{\iota}}}\left(j'd_u+\frac{ jd_u}{N_1} +k'd_v + \frac{kd_v}{N_2}\right)\\
\end{split}
\ee
Taking into account that  
$
\widetilde{l}\left(\widetilde{u}^j\widetilde{v}^k \right)\widetilde{l}\left( u^{j'}v^{k'}\right)= \widetilde{\la}^{j'N_2k} \widetilde{l}\left(\widetilde{u}^j\widetilde{v}^k  u^{j'}v^{k'}\right)
$ the equation \eqref{isospectral_predu_eqn} is equivalent to

\be\label{isospectral_pred_eqn}
\begin{split}
	\left[\widetilde{\slashed D},  \widetilde{l}\left( \widetilde{f}_{\left(\widetilde{\iota}, j, k \right)}\right) b \widetilde{l}\left( u^{j'}v^{k'}\right) \right]   = \sqrt{\widetilde{e}_{\widetilde{\iota}}}\widetilde{l}\left(\widetilde{u}^j\widetilde{v}^k \right)\widetilde{l}\left( u^{j'}v^{k'}\right)\otimes \left[ \slashed D, \sqrt{{e}_{\widetilde{\iota}}} b  \right]+ \\ +  b\sqrt{\widetilde{e}_{\widetilde{\iota}}}\widetilde{l}\left(\widetilde{u}^j\widetilde{v}^k \right) \widetilde{l}\left( u^{j'}v^{k'}\right)\otimes \left[ \slashed D, \sqrt{{e}_{\widetilde{\iota}}}\right]+\\+b
	\sqrt{\widetilde{e}_{\widetilde{\iota}}}\widetilde{l}\left(\widetilde{u}^j\widetilde{v}^k \right) \widetilde{l}\left( u^{j'}v^{k'}\right)\otimes  \sqrt{{e}_{\widetilde{\iota}}}\left(j'd_u+\frac{ jd_u}{N_1} +k'd_v + \frac{kd_v}{N_2}\right)\\
\end{split}
\ee

For any $\widetilde{a} \in C\left( \widetilde{M}_{ \widetilde{\th}}\right)$ there is the decomposition given by \eqref{isospectral_d_eqn}, i.e.
\be\nonumber 
\widetilde{a} = \sum_{\widetilde{\iota} \in \widetilde{I}} \sum_{\substack{j = 0\\ k = 0}}^{\substack{j = N_1-1\\ k = N_2-1}} \widetilde{l}\left( \widetilde{f}_{\left(\widetilde{\iota}, j, k \right)}\right)a_{\left(\widetilde{\iota}, j, k \right)}
\ee
Let $\Om^1_{\slashed D}$ be the {module of differential forms associated} with the spectral triple  $\left(\Coo\left(M_\th \right) , L^2\left(M,S \right) , \slashed D\right)$ (cf. Definition \ref{ass_cycle_defn}). Let us define a $\C$-linear map
\be\label{isospectral_conn_eqn}
\begin{split}
\nabla: \Coo\left(\widetilde{M}_{\widetilde{\th}} \right) \to \Coo\left(\widetilde{M}_{\widetilde{\th}} \right) \otimes_{\Coo\left(M_\th \right)}\Om^1_{\slashed D}~,\\
\sum_{\widetilde{\iota} \in \widetilde{I}}~~  \sum_{\substack{j = 0\\ k = 0}}^{\substack{j = N_1-1\\ k = N_2-1}} \widetilde{l}\left( \widetilde{f}_{\left(\widetilde{\iota}, j, k \right)}\right)a_{\left(\widetilde{\iota}, j, k \right)}\mapsto \sum_{\widetilde{\iota} \in \widetilde{I}}~~  \sum_{\substack{j = 0\\ k = 0}}^{\substack{j = N_1-1\\ k = N_2-1}} \sqrt{\widetilde{e}_{\widetilde{\iota}}}\widetilde{l}\left(\widetilde{u}^j\widetilde{v}^k \right)\otimes \left[ \slashed D, \sqrt{{e}_{\widetilde{\iota}}} a_{\left(\widetilde{\iota}, j, k \right)}\right]+ \\ + \sum_{\widetilde{\iota} \in \widetilde{I}}~~  \sum_{\substack{j = 0\\ k = 0}}^{\substack{j = N_1-1\\ k = N_2-1}} \sqrt{\widetilde{e}_{\widetilde{\iota}}}\widetilde{l}\left(\widetilde{u}^j\widetilde{v}^k \right) a_{\left(\widetilde{\iota}, j, k \right)}\otimes \left[ \slashed D, \sqrt{{e}_{\widetilde{\iota}}}\right]+\\+
 \sum_{\widetilde{\iota} \in \widetilde{I}}~~  \sum_{\substack{j = 0\\ k = 0}}^{\substack{j = N_1-1\\ k = N_2-1}}\sqrt{\widetilde{e}_{\widetilde{\iota}}}\widetilde{l}\left(\widetilde{u}^j\widetilde{v}^k \right) a_{\left(\widetilde{\iota}, j, k \right)}\otimes  \sqrt{{e}_{\widetilde{\iota}}}\left(\frac{jd_u}{N_1} +\frac{kd_v}{N_2}\right). 
\end{split}
\ee
For any $a \in \Coo\left( M_\th\right)$ following condition holds 
\be\label{isospectral_conn_p_eqn}
\begin{split}
\nabla\left(  \widetilde{l}\left( \widetilde{f}_{\left(\widetilde{\iota}, j, k \right)}\right)a_{\left(\widetilde{\iota}, j, k \right)}a\right) =  \sqrt{\widetilde{e}_{\widetilde{\iota}}}\widetilde{l}\left(\widetilde{u}^j\widetilde{v}^k \right)\otimes \left[ \slashed D, \sqrt{{e}_{\widetilde{\iota}}} a_{\left(\widetilde{\iota}, j, k \right)}a\right]+ \\+ \sqrt{\widetilde{e}_{\widetilde{\iota}}}\widetilde{l}\left(\widetilde{u}^j\widetilde{v}^k \right) a_{\left(\widetilde{\iota}, j, k \right)}a\otimes \left[ \slashed D, \sqrt{{e}_{\widetilde{\iota}}}\right]+
\sqrt{\widetilde{e}_{\widetilde{\iota}}}\widetilde{l}\left(\widetilde{u}^j\widetilde{v}^k \right) a_{\left(\widetilde{\iota}, j, k \right)}a\otimes  \sqrt{{e}_{\widetilde{\iota}}}\left(\frac{jd_u}{N_1} +\frac{kd_v}{N_2}\right)=\\=
 \sqrt{\widetilde{e}_{\widetilde{\iota}}}\widetilde{l}\left(\widetilde{u}^j\widetilde{v}^k \right)\otimes \left[ \slashed D, \sqrt{{e}_{\widetilde{\iota}}} a_{\left(\widetilde{\iota}, j, k \right)}\right]a+ {\widetilde{e}_{\widetilde{\iota}}}\widetilde{l}\left(\widetilde{u}^j\widetilde{v}^k \right)  a_{\left(\widetilde{\iota}, j, k \right)}\otimes \left[ \slashed D,a\right]+ \\+ \sqrt{\widetilde{e}_{\widetilde{\iota}}}\widetilde{l}\left(\widetilde{u}^j\widetilde{v}^k \right) a_{\left(\widetilde{\iota}, j, k \right)}a\otimes \left[ \slashed D, \sqrt{{e}_{\widetilde{\iota}}}\right]+
\sqrt{\widetilde{e}_{\widetilde{\iota}}}\widetilde{l}\left(\widetilde{u}^j\widetilde{v}^k \right) a_{\left(\widetilde{\iota}, j, k \right)}a\otimes  \sqrt{{e}_{\widetilde{\iota}}}\left(\frac{jd_u}{N_1} +\frac{kd_v}{N_2}\right)=\\
=\nabla\left(  \widetilde{l}\left( \widetilde{f}_{\left(\widetilde{\iota}, j, k \right)}\right)a_{\left(\widetilde{\iota}, j, k \right)}\right)a +\widetilde{l}\left( \widetilde{f}_{\left(\widetilde{\iota}, j, k \right)}\right)a_{\left(\widetilde{\iota}, j, k \right)}\left[ \slashed D,a\right]. 
\end{split}
\ee
From \eqref{isospectral_conn_eqn}, \eqref{isospectral_conn_p_eqn} and taking into account \eqref{conn_triple_eqn} one concludes that $\nabla$ is a connection (cf. Definition \ref{conn_triple_eqn}). If $\left(\overline{l}_1, \overline{l}_2 \right) \in \Z_{N_1}\times \Z_{N_2}$ and $\al=e^{\frac{2\pi i l_1j}{N_1}}e^{\frac{2\pi i l_2k}{N_2}}\in \C$ then following condition holds
\bean
\nabla\left(\left(\overline{l}_1, \overline{l}_2 \right) \left(   \widetilde{l}\left( \widetilde{f}_{\left(\widetilde{\iota}, j, k \right)}\right)a_{\left(\widetilde{\iota}, j, k \right)}\right)\right)=  \sqrt{\widetilde{e}_{\widetilde{\iota}}}\al\widetilde{l}\left(\widetilde{u}^j\widetilde{v}^k \right)\otimes \left[ \slashed D, \sqrt{{e}_{\widetilde{\iota}}} a_{\left(\widetilde{\iota}, j, k \right)}\right]+ \\+ \sqrt{\widetilde{e}_{\widetilde{\iota}}}\al\widetilde{l}\left(\widetilde{u}^j\widetilde{v}^k \right) a_{\left(\widetilde{\iota}, j, k \right)}\otimes \left[ \slashed D, \sqrt{{e}_{\widetilde{\iota}}}\right]+
\sqrt{\widetilde{e}_{\widetilde{\iota}}}\al\widetilde{l}\left(\widetilde{u}^j\widetilde{v}^k \right) a_{\left(\widetilde{\iota}, j, k \right)}\otimes  \sqrt{{e}_{\widetilde{\iota}}}\left(\frac{jd_u}{N_1} +\frac{kd_v}{N_2}\right)  =\\=
\al\nabla\left(   \widetilde{l}\left( \widetilde{f}_{\left(\widetilde{\iota}, j, k \right)}\right)a_{\left(\widetilde{\iota}, j, k \right)}\right)=\left(\overline{l}_1, \overline{l}_2 \right) \nabla\left(   \widetilde{l}\left( \widetilde{f}_{\left(\widetilde{\iota}, j, k \right)}\right)a_{\left(\widetilde{\iota}, j, k \right)}\right),
\eean 
i.e. the connection $\nabla$ is $\Z_{N_1}\times \Z_{N_2}$ equivariant (cf. \eqref{equiv_conn_eqn}).
If $a_{\left( \widetilde{\iota}, j',k'\right) } \in \Coo\left(M_\th \right)_{j',k'}$ is an element of bidegree $\left(j',k' \right)$ then there is $b\in \Coo\left(M_\th \right)_{0,0}$ such that
\be\label{isospectral_bas_eqn}
\widetilde{e}_{\widetilde{\iota}}a_{\left( \widetilde{\iota}, j',k'\right) }= \widetilde{e}_{\widetilde{\iota}}b u^{j'}v^{k'}.
\ee
From \eqref{isospectral_conn_p_eqn} it it follows that
\bean
\nabla\left(\widetilde{l}\left(  \widetilde{f}_{\left(\widetilde{\iota}, j, k \right)}a_{\left( \widetilde{\iota}, j',k'\right) }\right)\right)   = \sqrt{\widetilde{e}_{\widetilde{\iota}}}\widetilde{l}\left(\widetilde{u}^j\widetilde{v}^k \right)\otimes \left[ \slashed D, \sqrt{{e}_{\widetilde{\iota}}} b \widetilde{l}\left( u^{j'}v^{k'}\right) \right]+ \\ +  b\sqrt{\widetilde{e}_{\widetilde{\iota}}}\widetilde{l}\left(\widetilde{u}^j\widetilde{v}^k \right) \widetilde{l}\left( u^{j'}v^{k'}\right)\otimes \left[ \slashed D, \sqrt{{e}_{\widetilde{\iota}}}\right]+\\+b
\sqrt{\widetilde{e}_{\widetilde{\iota}}}\widetilde{l}\left(\widetilde{u}^j\widetilde{v}^k \right) \widetilde{l}\left( u^{j'}v^{k'}\right)\otimes  \sqrt{{e}_{\widetilde{\iota}}}\left(\frac{jd_u}{N_1} +\frac{kd_v}{N_2}\right),\\
\eean
and taking into account
\bean
\begin{split}
	\sqrt{\widetilde{e}_{\widetilde{\iota}}}\widetilde{l}\left(\widetilde{u}^j\widetilde{v}^k \right)\otimes \left[ \slashed D, \sqrt{{e}_{\widetilde{\iota}}} b \widetilde{l}\left( u^{j'}v^{k'}\right) \right]= \sqrt{\widetilde{e}_{\widetilde{\iota}}}\widetilde{l}\left(\widetilde{u}^j\widetilde{v}^k \right) \widetilde{l}\left( u^{j'}v^{k'}\right)\otimes \left[ \slashed D, \sqrt{{e}_{\widetilde{\iota}}} b  \right]+\\+
	b\sqrt{\widetilde{e}_{\widetilde{\iota}}}\widetilde{l}\left(\widetilde{u}^j\widetilde{v}^k \right) \widetilde{l}\left( u^{j'}v^{k'}\right)\otimes\sqrt{{e}_{\widetilde{\iota}}}\left(j'd_u+k'd_v\right)\\
\end{split}
\eean


one has
\be\label{isospectral_basq_eqn}
\begin{split}
\nabla\left(\widetilde{l}\left(  \widetilde{f}_{\left(\widetilde{\iota}, j, k \right)}a_{\left( \widetilde{\iota}, j',k'\right) }\right)\right)   = \sqrt{\widetilde{e}_{\widetilde{\iota}}}\widetilde{l}\left(\widetilde{u}^j\widetilde{v}^k \right)\widetilde{l}\left( u^{j'}v^{k'}\right)\otimes \left[ \slashed D, \sqrt{{e}_{\widetilde{\iota}}} b  \right]+ \\ +  b\sqrt{\widetilde{e}_{\widetilde{\iota}}}\widetilde{l}\left(\widetilde{u}^j\widetilde{v}^k \right) \widetilde{l}\left( u^{j'}v^{k'}\right)\otimes \left[ \slashed D, \sqrt{{e}_{\widetilde{\iota}}}\right]+\\+b
\sqrt{\widetilde{e}_{\widetilde{\iota}}}\widetilde{l}\left(\widetilde{u}^j\widetilde{v}^k \right) \widetilde{l}\left( u^{j'}v^{k'}\right)\otimes  \sqrt{{e}_{\widetilde{\iota}}}\left(j'd_u+\frac{ jd_u}{N_1} +k'd_v + \frac{kd_v}{N_2}\right)\\
\end{split}
\ee
From \eqref{isospectral_basq_eqn}  and \eqref{isospectral_pred_eqn} it turns out that $\nabla\left(\widetilde{l}\left(  \widetilde{f}_{\left(\widetilde{\iota}, j, k \right)}a_{\left( \widetilde{\iota}, j',k'\right) }\right)\right) = \left[\widetilde{\slashed D}, \widetilde{l}\left(  \widetilde{f}_{\left(\widetilde{\iota}, j, k \right)}\right)a_{\left( \widetilde{\iota}, j',k'\right) } \right]$. Any $\widetilde{a}\in \Coo\left(\widetilde{M}_{\widetilde{\th}} \right) $ is an infinite sum of elements $\widetilde{f}_{\left(\widetilde{\iota}, j, k \right)}a_{\left( \widetilde{\iota}, j',k'\right) }$ it turns out
\be\label{isospectral_dfin_eqn}
\nabla\left(\widetilde{a} \right)= \left[\widetilde{\slashed D}, \widetilde{a}  \right]. 
\ee
Taking into account the Corollary \ref{isospectral_cor} one has the following theorem.
\begin{theorem}
The noncommutative spectral triple   $$\left( \Coo\left(\widetilde{M}_{\widetilde{\th}}\right) , L^2\left(\widetilde{M},\widetilde{S} \right), \widetilde{ \slashed D}  \right)$$ is a $\left( C\left( M_\th\right), C\left( \widetilde{M}_{ \widetilde{\th}}\right), G\left(\widetilde{M}~|~ M \right)\right)   $-lift of $\left( \Coo\left(M_\th\right) , L^2\left(M,S \right), \slashed D  \right)$. 

\end{theorem}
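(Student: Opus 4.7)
The plan is to verify the three ingredients that the Definition \ref{triple_conn_lift_defn} of an $(A,\widetilde{A},G)$-lift requires: smooth invariance of the covering, existence and uniqueness of a $G$-equivariant connection on $\widetilde{\A}$, and identification of $\widetilde{\slashed D}$ (the $\pi$-lift, built geometrically) with the operator \eqref{d_defn} associated to that connection. Smooth invariance has already been established in Corollary \ref{isospectral_cor}, so I can invoke it directly. By Lemma \ref{conn_exist_lem} there is then a unique $G$-equivariant connection $\widetilde{\nabla}_{\mathrm{can}}\colon \Coo(\widetilde{M}_{\widetilde{\th}}) \to \Coo(\widetilde{M}_{\widetilde{\th}})\otimes_{\Coo(M_\th)}\Om^1_{\slashed D}$, and the $(A,\widetilde{A},G)$-lift of $\slashed D$ is by definition the unbounded operator attached to it by \eqref{d_defn}.

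The core of the argument is to produce this canonical connection \emph{explicitly} so that it can be compared with $[\widetilde{\slashed D},\,\cdot\,]$. The candidate is precisely the map $\nabla$ introduced in \eqref{isospectral_conn_eqn}, built using the finite generating set $\Xi$ of \eqref{isospectral_xi_eqn} coming from the partition of unity $\{\widetilde{e}_{\widetilde{\iota}}\}$ and the unitary generators $\widetilde{u},\widetilde{v}$. I would first check that $\nabla$ is well defined on $\Coo(\widetilde{M}_{\widetilde{\th}})$, using the decomposition \eqref{isospectral_d_eqn} and the fact (Corollary \ref{isospectral_cor}) that the matrix coefficients $a_{(\widetilde{\iota},j,k)}$ lie in $\Coo(M_\th)$; the Leibniz rule over $\Coo(M_\th)$ is exactly the computation \eqref{isospectral_conn_p_eqn}, and $G$-equivariance follows from the transformation rule \eqref{isospectral_trans_eqn} for the generators under $G$, which in the $\Z_{N_1}\times\Z_{N_2}$-direction only multiplies each term by a scalar and in the $G''$-direction permutes the $\widetilde{\iota}$-index while leaving the scalar factors $[\slashed D,\sqrt{e_{\widetilde{\iota}}}]$, $d_u^{\widetilde{\iota}}$, $d_v^{\widetilde{\iota}}$ intact (they descend from $M/\T^2$ to quantities indexed by the same orbit).

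The main step is then to verify the identity $\nabla(\widetilde{a}) = [\widetilde{\slashed D},\widetilde{a}]$ promised by \eqref{isospectral_dfin_eqn}. By $\C$-linearity and continuity it suffices to check this on an element $\widetilde{l}(\widetilde{f}_{(\widetilde{\iota},j,k)})\,a_{(\widetilde{\iota},j,k)}$ of bidegree-type form \eqref{isospectral_bas_eqn}, and here the computation is the one done in \eqref{isospectral_pred_eqn} and \eqref{isospectral_basq_eqn}: both sides produce the same three summands (the one involving $[\slashed D,\sqrt{e_{\widetilde{\iota}}}\,b]$, the one involving $[\slashed D,\sqrt{e_{\widetilde{\iota}}}]$, and the algebraic term with $j'd_u + \frac{j}{N_1}d_u + k'd_v + \frac{k}{N_2}d_v$ coming from \eqref{isospectral_du_eqn}). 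Once this identification is in hand, $\nabla$ coincides with $\widetilde{\nabla}_{\mathrm{can}}$ by uniqueness, and $\widetilde{\slashed D}$ is therefore the $(C(M_\th),C(\widetilde{M}_{\widetilde{\th}}),G)$-lift of $\slashed D$ in the sense of Definition \ref{triple_conn_lift_defn}.

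I expect the main obstacle to be purely bookkeeping rather than conceptual: one has to track simultaneously the twist introduced by $\widetilde{l}$ (which moves factors of $\widetilde{\la}^{k\widetilde{p}_1}$ past generators and produces extra $\widetilde{\la}^{j'N_2k}$ phases when multiplying $\widetilde{l}(\widetilde{u}^j\widetilde{v}^k)\,\widetilde{l}(u^{j'}v^{k'})$), the fact that $[\widetilde{\slashed D},\widetilde{l}] = 0$ because $\widetilde{\slashed D}$ commutes with the $\widetilde{\T}^2$-action, and the correct transformation of the local derivatives $d_u,d_v$ under the pullback from $\slashed D$ to $\widetilde{\slashed D}$, which introduces the $1/N_1$ and $1/N_2$ factors in \eqref{isospectral_du_eqn}. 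Once these bookkeeping steps are in place, everything collapses to the already-performed calculations, and the theorem follows.
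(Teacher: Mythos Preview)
Your proposal is correct and follows essentially the same route as the paper: the paper carries out exactly the computations you cite (the connection \eqref{isospectral_conn_eqn}, the Leibniz rule \eqref{isospectral_conn_p_eqn}, the equivariance check, and the identification \eqref{isospectral_dfin_eqn}) in the paragraphs immediately preceding the theorem, and then states the theorem as a corollary of these together with Corollary \ref{isospectral_cor}. If anything, your outline of the full $G$-equivariance via the exact sequence \eqref{isospectral_gr_eqn} (handling the $G''$-direction by the permutation of the $\widetilde{\iota}$-index from \eqref{isospectral_trans_eqn}) is more explicit than the paper, which only writes out the $\Z_{N_1}\times\Z_{N_2}$ part.
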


\subsection{Infinite coverings}
\paragraph{} Let $\mathfrak{S}_M =\left\{M = M^0 \leftarrow M^1 \leftarrow ... \leftarrow M^n \leftarrow ... \right\} \in \mathfrak{FinTop}$ be an infinite sequence of spin  - manifolds and regular finite-fold covering. Suppose that there is the action $\T^2 \times M \to M$ given by \eqref{isospectral_sym_eqn}. From the Theorem \ref{isospectral_fin_thm} it follows that there is the  algebraical  finite covering sequence
\begin{equation*}\label{isospectral_sequence_eqn}
\mathfrak{S}_{C\left(M_\th \right) }  = \left\{C\left(M_\th \right)\to ... \to C\left(M^n_{\th_n} \right)\to ...\right\}.
\end{equation*}

So one can calculate a finite noncommutative limit of the above sequence. This article does not contain detailed properties of this noncommutative limit, because it is not known yet by the author of this article.

\section*{Acknowledgment}

\paragraph*{}
I am very grateful to Prof. Joseph C Varilly  and Arup Kumar Pal
 for advising me on the properties of Moyal planes resp. equivariant spectral triples. Author would like to acknowledge members of the Moscow State University Seminar
 “Noncommutative geometry and topology” leaded by professor A. S. Mishchenko and others for a discussion
 of this work.

\end{document}